\newcommand{\Dbc}{D^{\mathrm{b}}_{\mathrm{c}}}
\newcommand{\Dc}{D_{\mathrm{c}}}
\newcommand{\sH}{\mathsf{H}}
\newcommand{\Mod}{\mathsf{Mod}}
\newcommand{\gMod}{\mathsf{gMod}}
\newcommand{\gproj}{\mathsf{gproj}}
\newcommand{\comod}{\mathsf{comod}}
\newcommand{\modf}{\mathsf{mod}}
\newcommand{\cH}{\mathcal{H}}
\newcommand{\cM}{\mathcal{M}}
\newcommand{\bk}{\Bbbk}
\newcommand{\Z}{\mathbb{Z}}
\newcommand{\R}{\mathbb{R}}
\newcommand{\C}{\mathbb{C}}
\newcommand{\Q}{\mathbb{Q}}
\newcommand{\F}{\mathbb{F}}
\newcommand{\scO}{\mathscr{O}}
\newcommand{\Gm}{\mathbb{G}_{\mathrm{m}}}
\newcommand{\cE}{\mathcal{E}}
\newcommand{\cG}{\mathcal{G}}
\newcommand{\cU}{\mathcal{U}}
\newcommand{\cB}{\mathcal{B}}
\newcommand{\IC}{\mathscr{I} \hspace{-1pt} \mathscr{C}}
\newcommand{\cT}{\mathcal{T}}
\newcommand{\cP}{\mathcal{P}}
\newcommand{\Gr}{\mathrm{Gr}}
\newcommand{\sph}{{\mathrm{sph}}}
\newcommand{\Perv}{\mathsf{Perv}}
\newcommand{\Rep}{\mathsf{Rep}}
\newcommand{\Db}{D^{\mathrm{b}}}
\newcommand{\sHom}{\mathscr{H}\hspace{-2pt} \mathit{om}}
\DeclareMathOperator{\Hom}{Hom}
\DeclareMathOperator{\Ext}{Ext}
\DeclareMathOperator{\End}{End}
\DeclareMathOperator{\pr}{pr}
\DeclareMathOperator{\colim}{colim}
\DeclareMathOperator{\Tor}{Tor}
\newcommand{\id}{\mathrm{id}}
\newcommand{\simto}{\xrightarrow{\sim}}
\newcommand{\la}{\langle}
\newcommand{\ra}{\rangle}
\newcommand{\coH}{\sH}
\newcommand{\bK}{\mathbb{K}}
\newcommand{\bO}{\mathbb{O}}
\newcommand{\bF}{\mathbb{F}}
\newcommand{\scF}{\mathscr{F}}
\newcommand{\scP}{\mathscr{P}}
\newcommand{\scG}{\mathscr{G}}
\newcommand{\scJ}{\mathscr{J}}
\newcommand{\res}{\mathrm{res}}
\newcommand{\pH}{{}^{\mathrm{p}} \hspace{-1pt} \mathscr{H}}
\newcommand{\bbX}{\mathbb{X}}
\def\lotimes{\@ifnextchar_{\@lotimessub}{\@lotimesnosub}}
\def\@lotimessub_#1{\mathchoice{\mathbin{\mathop{\otimes}^L}_{#1}}%
  {\otimes^L_{#1}}{\otimes^L_{#1}}{\otimes^L_{#1}}}
\def\@lotimesnosub{\mathbin{\mathop{\otimes}^L}}
\def\lboxtimes{\@ifnextchar_{\@lboxtimessub}{\@lboxtimesnosub}}
\def\@lboxtimessub_#1{\mathchoice{\mathbin{\mathop{\boxtimes}^L}_{#1}}%
  {\boxtimes^L_{#1}}{\boxtimes^L_{#1}}{\boxtimes^L_{#1}}}
\def\@lboxtimesnosub{\mathbin{\mathop{\boxtimes}^L}}
\newcommand{\wttimes}{\mathbin{\widetilde{\times}}}
\newcommand{\pot}[1]{ [\hspace{-0,5mm}[ {#1} ]\hspace{-0,5mm}] }
\newcommand{\rpot}[1]{ (\hspace{-0,7mm}( {#1} )\hspace{-0,7mm}) }
\newcommand{\Spec}{\mathrm{Spec}}
\newcommand{\Diag}{\mathrm{D}}
\mathchardef\mhyphen="2D
\newcommand{\Loop}{\mathrm{L}}
\newcommand{\Lie}{\mathscr{L}\hspace{-1.5pt}\mathit{ie}}
\newcommand{\fa}{\mathfrak{a}}
\newcommand{\cA}{\mathcal{A}}
\newcommand{\ucG}{\underline{\cG}}
\newcommand{\et}{\mathrm{\acute{e}t}}
\newcommand{\fppf}{\mathrm{fppf}}
\newcommand{\Hk}{\mathrm{Hk}}
\newcommand{\Conv}{\mathrm{Conv}}
\newcommand{\HkConv}{\mathrm{HkConv}}
\newcommand{\pD}{{}^{\mathrm{p}} \hspace{-1pt} D}
\newcommand{\sF}{\mathsf{F}}
\newcommand{\sFeq}{\mathsf{F}^{\mathrm{eq}}}
\newcommand{\CT}{\mathsf{CT}}
\newcommand{\abs}{\mathrm{abs}}
\newcommand{\Gammac}{\Gamma_{\mathrm{c}}}
\newcommand{\corr}{\mathrm{corr}}
\newcommand{\Cent}{\mathrm{Z}}
\newcommand{\ad}{\mathrm{ad}}
\newcommand{\rmA}{\mathrm{A}}
\newcommand{\rmB}{\mathrm{B}}
\newcommand{\rmS}{\mathrm{S}}
\newcommand{\rmT}{\mathrm{T}}
\numberwithin{equation}{section}
\newtheorem{thm}{Theorem}[section]
\newtheorem{lem}[thm]{Lemma}
\newtheorem{prop}[thm]{Proposition}
\newtheorem{cor}[thm]{Corollary}
\theoremstyle{definition}
\theoremstyle{remark}
\newtheorem{rmk}[thm]{Remark}
\title{A modular ramified geometric Satake equivalence}
\author{Pramod N. Achar}
\address{Department of Mathematics\\
  Louisiana State University\\
  Baton Rouge, LA 70803\\
  U.S.A}
\email{pramod.achar@math.lsu.edu}
\author{Jo{\~a}o Louren{\c c}o}
\address{Universität Münster, Einsteinstrasse 62, Münster, Germany}
\email{j.lourenco@uni-muenster.de}
\author{Timo Richarz}
\address{Technische Universit\"at Darmstadt, Department of Mathematics, 64289 Darmstadt, Germany}
\email{richarz@mathematik.tu-darmstadt.de}
\author{Simon Riche}
\address{Universit\'e Clermont Auvergne, CNRS, LMBP, F-63000 Clermont-Ferrand, Fran\-ce}
\email{simon.riche@uca.fr}
\thanks{P.A. was supported by NSF Grant Nos.~DMS-1802241 and DMS-2202012. J.L. was supported by the Max-Planck-Institut für Mathematik, the Excellence Cluster of the Universität Münster, and by the ERC Consolidator Grant 770936 via Eva Viehmann. This project has received
funding from the European Research Council (ERC) under the European Union's Horizon 2020
research and innovation programme (S.R. \& T.R., grant agreement No.~101002592).
Also, T.R.~acknowledges support by the Deutsche Forschungsgemeinschaft (DFG, German Research Foundation) TRR 326 \textit{Geometry and Arithmetic of Uniformized Structures}, project number 444845124 and the LOEWE professorship in Algebra.
}
\begin{document}

\begin{abstract}
We extend the ramified geometric Satake equivalence due to Zhu (for tamely ramified groups) and the third named author (in full generality) from rational coefficients to include modular and integral coefficients.
\end{abstract}

\maketitle
\setcounter{tocdepth}{1}
\tableofcontents

\section{Introduction}

\subsection{Ramified geometric Satake equivalence}

The geometric Satake equivalence, first fully established by Mirkovi{\'c}--Vilonen~\cite{mv} after important contributions of Lusztig~\cite{lusztig}, Ginzburg~\cite{ginzburg} and Be{\u\i}linson--Drinfeld~\cite{beilinson-drinfeld}, has now become a cornerstone of geometric approaches to a variety of subjects in representation theory and number theory. It consists of an equivalence of monoidal categories relating the category of perverse sheaves on the affine Grassmannian of a reductive algebraic group to representations of the Langlands dual reductive group.
This construction has many variants, in which one e.g.~changes the coefficients of the sheaf theory, or the field of definition of the geometric objects. See, for instance,~Cass--van den Hove--Scholbach~\cite{cvdhs} and the references cited therein, as well as Table~\ref{tab:summary}.

The present paper is a contribution to another variant of this story, initially developed by Zhu~\cite{zhu}, where one replaces the (split) reductive algebraic group over the base field $\F$ (of characteristic $p$) of which one takes the affine Grassmannian by a \emph{possibly nonsplit} reductive group over $\F\rpot{t}$ and a parahoric model over $\F\pot{t}$ attached to a special facet. 
For $\ell$-adic coefficients with $\ell\neq p$, a version of the Satake equivalence in this setting was obtained in \cite{zhu} assuming the reductive group splits over a tame extension, and then by the third author~\cite{richarz} in full generality. 
Here we extend these constructions to \textit{modular and integral coefficients}, i.e.~to categories of perverse sheaves with coefficients in a finite field of characteristic $\ell$ or the ring of integers of a finite extension of $\Q_\ell$ with $\ell\neq p$.  
(See also Remark~\ref{rmk:analytic} regarding the analytic setting and Remark~\ref{rmk:motivic} for an extension to the motivic setting.)

\begin{table}
\begin{center}
\footnotesize
\begin{tabular}{lllll}
\textit{Name} & \textit{Input group} & \textit{Coefficients} & \textit{Tannakian side} & \textit{Reference} \\
\hline
absolute & $G$ split / $\C$ & comm.~ring $\bk$ & $\Rep(G^\vee_\bk)$ & \cite{mv} \\
motivic & $G$ split / scheme $S$ & motives & $\Rep_{G^\vee}(\mathrm{MTM}(S))$ & \cite{cvdhs} \\
tamely ramified & $\cG$ spec.~parahoric / $\F\pot{t}$ & $\Q_\ell$ & $\Rep((G^\vee_{\Q_\ell})^I)$ &\cite{zhu} \\
ramified & $\cG$ spec.~parahoric / $\F\pot{t}$ & $\Q_\ell$ & $\Rep((G^\vee_{\Q_\ell})^I)$ &\cite{richarz} \\
mod.~ramified & $\cG$ spec.~parahoric / $\F\pot{t}$ & $\Lambda = \Q_\ell, \Z_\ell, \F_\ell$ & $\Rep((G^\vee_\Lambda)^I)$ & this paper \\
\end{tabular}
\end{center}
\bigskip
\caption{Some variants of the geometric Satake equivalence}\label{tab:summary}
\end{table}
\subsection{Ramified affine Grassmannians}

Quite generally, one can associate a positive loop group $\Loop^+ \cH$ and an affine Grassmannian (sometimes called an \emph{affine flag variety}) to any smooth affine group scheme $\cH$ over $\F\pot{t}$. 
(Here $\F$ is a separably closed field; for this part of the discussion it could be an arbitrary field.) Of particular interest is the case when $\cH$ is a parahoric integral model of a connected reductive algebraic group over $\F\rpot{t}$ attached to a facet in the associated Bruhat--Tits building, see in particular~Pappas--Rapoport \cite{pr}. 
This class is however too vast to expect a geometric Satake equivalence (e.g.~since it contains \emph{all} partial affine flag varieties attached to split groups). A geometric study of the cases where some of the important properties of the affine Grassmannians arising in~\cite{mv} (in particular, regarding parity of dimensions of orbits) was undertaken in~\cite{richarz}; it turns out that a nice class of cases is that when the facet involved is \emph{special}. 
This case covers the setting considered in~\cite{mv} (corresponding to hyperspecial facets of split groups), and also those arising in~\cite{zhu}.
More geometric properties of such {\it twisted} affine Grassmannians (in particular, regarding semi-infinite orbits) were later established in Ansch\"utz--Gleason--Lourenço--Richarz~\cite{aglr}.

For twisted affine Grassmannians, and for $\ell$-adic coefficients, the third author established an equivalence generalizing those of~\cite{mv} and~\cite{zhu}, relating the category of equivariant perverse sheaves on the affine Grassmannian, equipped with the convolution monoidal structure, to the category of representations of a certain algebraic group, described as follows. 
The reductive group $G$ over $F=\F\rpot{t}$ involved in the constructions splits over the separable closure $F^{\mathrm{s}}$ of $F$. 
One can then consider the pinned group $G^\vee_{\Q_\ell}$ over $\Q_\ell$ which is Langlands dual to this split group. 
Since the dual group arises from a group over $\F\rpot{t}$, it is equipped with an action of the Galois group $I$ of $F^{\mathrm{s}}$ over $F$ preserving the pinning. The Tannakian group considered above is then identified with the fixed point subgroup $(G^\vee_{\Q_\ell})^I$. 
This group is a possibly disconnected algebraic group over $\Q_\ell$, whose neutral component is reductive.

\begin{rmk}
It is quite remarkable that the Tannakian group above only depends on $G$, and not on the choice of special facet. In fact there are reductive groups that admit essentially different special facets; the associated affine Grassmannians are not isomorphic (and, in fact, have quite different geometric properties), yet the associated categories of perverse sheaves are equivalent. See~\cite[p.~411]{zhu} for more detailed comments on this phenomenon. The same phenomenon occurs for the categories with integral or modular coefficients considered below.
\end{rmk}

\subsection{Fixed points of groups of pinning-preserving automorphisms of reductive group schemes}
\label{ss:intro-fixed-pts}

The equivalence of~\cite{mv} (considered here in the setting of \'etale shea\-ves) has a version where the field of coefficients is replaced by a ring $\Lambda$ which is either a finite field of characteristic $\ell \neq p$ or the ring of integers of a finite extension of $\Q_\ell$. It therefore seemed reasonable to expect a generalization of the equivalences of~\cite{zhu, richarz} for such coefficients, involving the group scheme $(G^\vee_\Lambda)^I$ of fixed points of the action as above on the dual split reductive group scheme $G^\vee_\Lambda$ over the given ring of coefficients $\Lambda$.

As a first step towards this goal, in the companion paper~\cite{alrr} we study the group schemes that arise in this way, exploiting crucially the fact that the action of $I$ stabilizes the pinning of $G^\vee_\Lambda$.
We show in particular that these group schemes are always flat over $\Lambda$ (so, their categories of representations are abelian), but not necessarily smooth; in particular, for an appropriate action of $\Z/2\Z$ on $\mathrm{GL}_{2n+1, \Z_2}$ the group of fixed points turns out to be an example of a non-reductive quasi-reductive group scheme in the sense of Prasad--Yu~\cite{py}.

\subsection{Nearby cycles and the relation with the dual group}
\label{ss:intro-nc}

The reason why the dual group $G^\vee_\Lambda$ of $F^{\mathrm{s}} \otimes_{F} G$ occurs in this story is the following. Consider our integral model $\cG$ over $\F\pot{t}$ and the associated affine Grassmannian $\Gr_\cG$, but also the group scheme over the ring $F^{\mathrm{s}}\pot{z}$ (where $z$ is another formal variable) obtained by base change from the split group $F^{\mathrm{s}} \otimes_{F} G$. Then $\Gr_\cG$ can be described as a degeneration of the affine Grassmannian $\Gr_{F^{\mathrm{s}}\pot{z} \otimes_F G}$ (an ind-scheme over $F^{\mathrm{s}}$): there exists an ind-scheme $\Gr_{\cG, \overline{S}}$ over the spectrum $\overline{S}$ of the integral closure of $\F\pot{t}$ in $F^{\mathrm{s}}$ and a diagram with cartesian squares
\begin{equation}\label{eqn:nc-diagram}
\begin{tikzcd}
 \Gr_{F^{\mathrm{s}}\pot{z} \otimes_F G} \ar[hook, r] \ar[d] &  \Gr_{\cG, \overline{S}} \ar[d] & \Gr_\cG \ar[d] \ar[hook', l] \\
 \Spec(F^{\mathrm{s}}) \ar[hookrightarrow, r] & \overline{S} & \Spec(\F). \ar[hook', l]
\end{tikzcd}
\end{equation}
Associated with this diagram we have a nearby cycles functor sending perverse sheaves on $\Gr_{F^{\mathrm{s}}\pot{z} \otimes_F G}$ (which can be described as representations of $G^\vee_\Lambda$ via the ``usual'' geometric Satake equivalence) to perverse sheaves on $\Gr_\cG$. The main result of the paper is an equivalence of monoidal categories
\begin{equation}
\label{eqn:main-equiv-intro}
\Perv_{\Loop^+ \cG}(\Gr_\cG, \Lambda) \cong \Rep((G^\vee_\Lambda)^I),
\end{equation}
between the categories of $\Loop^+\cG$-equivariant $\Lambda$-perverse sheaves on $\Gr_\cG$ and of representations of $(G^\vee_\Lambda)^I$ on finitely generated $\Lambda$-modules, under which the nearby cycles functor considered above corresponds to the functor of restriction along the natural embedding $(G^\vee_\Lambda)^I \subset G^\vee_\Lambda$. 
This is summarized in the following commutative diagram:
\[
\begin{tikzcd}[column sep=9em]
\Perv_{\Loop^+G}(\Gr_{F^{\mathrm{s}}\pot{z} \otimes_F G}, \Lambda) \ar[r, "\sim"', "\text{``usual'' geometric Satake}"] \ar[d, "\text{nearby cycles}"'] &
\Rep(G^\vee_\Lambda) \ar[d, "\text{restriction}"] \\
\Perv_{\Loop^+ \cG}(\Gr_\cG, \Lambda) \ar[r,"\sim"', "\eqref{eqn:main-equiv-intro}"] & \Rep((G^\vee_\Lambda)^I).
\end{tikzcd}
\]
As for the usual geometric Satake equivalence, for a perverse sheaf $\scF$, the underlying $\Lambda$-module of the representation associated with $\scF$ is the total cohomology $\mathsf{H}^\bullet(\Gr_\cG,\scF)$.

\subsection{Comments on the proof}
\label{ss:intro-proof}

The proof of the equivalence~\eqref{eqn:main-equiv-intro} in the $\ell$-adic case is based on a general result of Bezrukavnikov regarding central functors with domain the category of representations of an algebraic group. 
We do have such a functor for arbitrary coefficients thanks to the constructions explained in~\S\ref{ss:intro-nc},
but Bezrukavnikov's result has no counterpart for integral coefficients, and its direct application for positive-characteristic coefficients in our setting presents some difficulties. We therefore follow a different, and in some sense more explicit, route and treat in parallel the cases of a finite extension $\bK$ of $\Q_\ell$ (for which we provide a proof which is different from those in~\cite{zhu,richarz}), its ring of integers $\bO$, and the residue field $\bk$ of $\bO$, using in a crucial way some ``change of scalars'' arguments to transfer information from one case to another.

Part of our constructions are parallel to those used in~\cite{mv}: we consider some ``weight functors'' constructed using semi-infinite orbits (building on geometric facts established in~\cite{aglr}), and show representability of these functors to construct a flat $\Lambda$-bialgebra $\rmB_\cG(\Lambda)$ and an equivalence of monoidal categories between $\Perv_{\Loop^+ \cG}(\Gr_\cG, \Lambda)$ and the category of $\rmB_\cG(\Lambda)$-comodules which are finitely generated over $\Lambda$. 
Further, we show that we have canonical isomorphisms
\begin{equation}
\label{eqn:isom-change-coeff}
\rmB_\cG(\bK) \cong \bK \otimes_{\bO} \rmB_\cG(\bO), \quad \rmB_\cG(\bk) \cong \bk \otimes_{\bO} \rmB_\cG(\bO).
\end{equation}
The nearby cycles functor of~\S\ref{ss:intro-nc} and the Tannakian formalism provide a morphism of $\Lambda$-bialgebras $\scO(G^\vee_\Lambda) \to \rmB_\cG(\Lambda)$, which is easily seen to factor through a morphism
\begin{equation}
\label{eqn:morph-bialgebras}
\scO((G^\vee_\Lambda)^I) \to \rmB_\cG(\Lambda),
\end{equation}
and what remains to be seen is that this morphism is an isomorphism.

Here we note a first important difference with the constructions in~\cite{mv}: in the present setting, we do not know any geometric construction of the commutativity constraint for convolution in $\Perv_{\Loop^+ \cG}(\Gr_\cG, \Lambda)$ that corresponds under~\eqref{eqn:main-equiv-intro} to the obvious commutativity constraint for the tensor product of representations. (This is due to the absence, in this setting, of the Be{\u\i}linson--Drinfeld deformations of the affine Grassmannian over products of curves.) To bypass this difficulty, we prove surjectivity of the morphism~\eqref{eqn:morph-bialgebras} in case $\Lambda=\bK$; this implies in particular that $\rmB_\cG(\bK)$ is commutative. Using~\eqref{eqn:isom-change-coeff} we deduce that $\rmB_\cG(\bO)$ is commutative, and then that $\rmB_\cG(\bk)$ is commutative too.

The other argument for which the Be{\u\i}linson--Drinfeld deformations are crucially used in~\cite{mv} is in the construction of the monoidal structure on the ``fiber functor'' given by total cohomology (which induces the comultiplication in $\rmB_\cG(\Lambda)$). Here we bypass this difficulty in a different way, constructing this monoidal structure by a method which is new even in the setting of~\cite{mv}, and which corrects a pervasive error in the literature on this topic (see Remark~\ref{monoidality-remark}).

Once these properties are known it is not difficult to check that
\[
\cG^\vee_\Lambda := \Spec(\rmB_\cG(\Lambda))
\]
is a group scheme over $\Lambda$, and that~\eqref{eqn:morph-bialgebras} provides a morphism of group schemes $\cG^\vee_\Lambda \to (G^\vee_\Lambda)^I$. To prove that this morphism is an isomorphism we first treat the case of relative rank $1$, and exploit once again the possibility of transferring information between coefficients $\bK$, $\bO$ and $\bk$. We also use a number of properties of the groups $(G^\vee_\Lambda)^I$ proved in~\cite{alrr}, and specific arguments to treat the case when the quasi-reductive group over $\Z_2$ mentioned in~\S\ref{ss:intro-fixed-pts} appears.

\begin{rmk}
We emphasize that $(G^\vee_\Lambda)^I$ is not a reductive group scheme over $\Lambda$ in general. We find it remarkable that such groups can arise as a Tannakian group for a category of perverse sheaves.
\end{rmk}

\begin{rmk}\label{rmk:analytic}
In the case where $\F = \C$, one can also consider $\Gr_\cG(\C)$ with the \emph{analytic} topology, and work with sheaves of $\Lambda$-modules where $\Lambda$ is any unital, commutative, noetherian ring of finite global dimension as in \cite{mv}.  
In particular, in the analytic setting, one can take $\Lambda = \Z$.  
The main results of this paper should remain valid in the analytic setting, with minor modifications to the proofs, but with one caveat:
in this paper, we work with \'etale sheaves on algebraic stacks, and we are not aware of a reference that treats analytic sheaves on stacks in the appropriate generality.  However, this issue can likely be circumvented by working with ``equivariant derived categories in families'' as in~\cite[Chapter~10]{ar-book}.
\end{rmk}

\begin{rmk}\label{rmk:motivic}
Thibaud van den Hove has announced a motivic refinement of the ramified Satake equivalence with coefficients in $\mathbb Z[\frac{1}{p}]$, $\Q$, or $\F_\ell$ for $\ell\neq p$.
\end{rmk}

\subsection{Motivation}

The main motivation for the constructions in~\cite{zhu, richarz} was the application to some properties of Shimura varieties. At this point it does not seem that our integral and modular versions lead to any specific new application in this direction; in fact our desire to establish the equivalence~\eqref{eqn:main-equiv-intro} rather came from representation theory. Namely, in many cases the group $(G^\vee_\bk)^I$ is still a reductive group. It was conjectured by Brundan~\cite{brundan}, and proved by him in most cases, that the restriction to $(G^\vee_\bk)^I$ of any tilting $G^\vee_\bk$-module remains tilting. (The remaining cases were later treated by van der Kallen~\cite{vdk}). 
This proof is based on case-by-case considerations, which in our opinion does not explain the true meaning of this property. 
We hope that the geometric description of the restriction functor in terms of nearby cycles will lead to an alternative (and more satisfactory) proof of this property.

Let us also point out that in the recent update of Fargues--Scholze \cite[\S VIII.5]{fs} a uniform proof of this result, not relying on case-by-case considerations, is given under the assumption that $I$ acts on $G^\vee_\bk$ through of finite quotient of order prime to the characteristic of $\bk$, see \cite[Theorem VIII.5.15]{fs}. 
It would be interesting to see whether the coprimality assumption can be removed in the present setting. 

\subsection{Contents}

In Section~\ref{sec:aff-Grass} we recall (and sometimes complete) some results on the geometry of affine Grassmannians associated with special parahoric models of reductive groups, following~\cite{zhu,richarz}. 
In Section~\ref{sec:semi-infinite} we recall (and, again, sometimes complete) results from~\cite{aglr} and Haines--Richarz \cite{HainesRicharz_TestFunctions} which form the basis for the construction of the weight (or ``constant term'') functors. 
In Section~\ref{sec:sheaves-Hecke} we introduce the category of equivariant sheaves on our affine Grassmannian, and some of the structures we have on this category. 
In Sections~\ref{sec:monoidality} we construct a monoidal structure on the total cohomology functor, and in Section~\ref{sec:construction} we construct the bialgebra $\rmB_\cG(\Lambda)$ from~\S\ref{ss:intro-proof}, and establish some of its basic properties. Section~\ref{sec:monoidality-unramified} links our constructions to the ``absolute'' case studied in~\cite{mv}.  In Section~\ref{sec:nc} we explain the construction of the nearby cycles functor from~\S\ref{ss:intro-nc}, and how to obtain from it the morphism~\eqref{eqn:morph-bialgebras}.
Finally, in Section~\ref{sec:equivalence} we show that this morphism is an isomorphism, which completes the proof of the equivalence~\eqref{eqn:main-equiv-intro}.

The paper finishes with two appendices: in Appendix~\ref{app:equiv-const} we prove that $\Loop^+ \cG$-equivariance is automatic for perverse sheaves on $\Gr_\cG$ which are constant along $\Loop^+ \cG$-orbits, and in Appendix~\ref{app:sheaves} we collect the results on \'etale sheaves on stacks that are used in the main text.


\subsection{Acknowledgements}

We thank Julien Bichon and Victor Ostrik for answering some questions on Hopf algebras, Tom Haines for providing the argument in Lemma~\ref{lem:dominant-coweights}\eqref{it:dominant-coweights-lifting}, Thibaud van den Hove for helpful comments on a prelimary version of this manuscript, Peter Scholze for discussions surrounding Brundan's conjecture, Jize Yu for discussions surrounding~\cite{yu-integral}, and Weizhe Zheng for answering some questions about~\cite{liu-zheng,liu-zheng-2}.

\section{Affine Grassmannians associated with special facets}
\label{sec:aff-Grass}

\subsection{Loop groups and affine Grassmannians}
\label{ss:loop-gps}

Let $k$ be a field and $x$ be an indeterminate. 
Given a $k$-algebra $R$ we consider the rings $R\pot{x}$ and $R\rpot{x}$ of power and Laurent power series in $x$ with coefficients in $R$ respectively. Given an affine group scheme $\cH$ over $k \pot{x}$, resp.~an affine group scheme $H$ over $k \rpot{x}$, we can define the positive loop group $\Loop^+ \cH$, resp.~the loop group $\Loop H$, as the functor from $k$-algebras to groups defined by
\[
 \Loop^+ \cH(R) = \cH(R \pot{x}), \quad \text{resp.} \quad \Loop H(R)=H(R\rpot{x}).
\]
It is well known (see e.g.~\cite[Lemma~3.17]{richarz-basics}) that
the functor $\Loop \cH$ is represented by an ind-affine group ind-scheme over $k$, and that $\Loop^+ \cH$ is represented by an affine group scheme over $k$. Regarding $\Loop^+ \cH$, more specifically, for any $i \geq 0$ we can consider the functor $\Loop^+_i \cH$ defined by $\Loop^+_i \cH(R) = \cH(R [x]/(x^{i+1}))$. Then we have
\[
\Loop^+ \cH = \lim_{i \geq 0} \Loop^+_i \cH,
\] 
and for any $i \geq 0$ the functor $\Loop^+_i \cH$ is an affine scheme, which is smooth over $k$ if $\cH$ is smooth over $k \pot{x}$. Note also that
we have an obvious morphism of group ind-schemes
\[
 \Loop^+ \cH \to  \Loop (\cH \otimes_{k \pot{x}} k \rpot{x}),
\]
which is representable by a closed immersion if $\cH$ is of finite type.

Let $\cH$ be an affine group scheme over $k \pot{x}$, and set $H:= \cH \otimes_{k \pot{x}} k \rpot{x}$.
The \emph{affine Grassmannian}
associated with $\cH$ is the fppf quotient $[\Loop H / \Loop^+ \cH]_{\fppf}$, i.e.~the fppf sheaf associated with the presheaf
\begin{equation}
\label{eq:Gr-presheaf}
 R \mapsto \Loop H(R) / \Loop^+ \cH(R).
\end{equation}
It is a standard fact (see e.g.~\cite[Proposition~3.18]{richarz-basics}) that if $\cH$ is smooth over $k\pot{x}$ this sheaf identifies with the functor $\Gr_\cH$ from $k$-algebras to sets sending $R$ to the set of isomorphism classes of pairs $(\cE,\alpha)$ where $\cE \to \Spec(R\pot{x})$ is an fppf\footnote{Since $\cH$ is assumed to be smooth here, the notions of fpqc, fppf or \'etale torsors coincide. Moreover, any such torsor is representable by an $\cH$-principal bundle, that is, a (necessarily smooth affine) scheme which is étale locally on the base isomorphic to $\cH$.} $\cH$-torsor and $\alpha$ is a section of $\cE$ over $\Spec(R\rpot{x})$. In particular, this functor is represented by a separated ind-scheme of ind-finite type over $k$; see~\cite[Theorem~3.4]{richarz-basics}. In fact,~\cite[Proposition~3.18]{richarz-basics} also contains the following claim, which will be useful in the present paper.

\begin{lem}
\label{lem:quotient-etale}
If $\cH$ is a smooth affine group scheme over $k\pot{x}$, then we have $\Gr_{\cH} = [\Loop H/\Loop^+ \cH]_{\et}$, i.e.~the functor $\Gr_\cH$ is also the \emph{\'etale} sheafification of the presheaf~\eqref{eq:Gr-presheaf}.
\end{lem}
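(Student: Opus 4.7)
The plan is to show that the natural map from the étale sheafification of the presheaf~\eqref{eq:Gr-presheaf}, call it $P^{\et}$, to $\Gr_\cH$ (described as the functor of pairs $(\cE,\alpha)$) is an isomorphism. There is an obvious morphism from the presheaf~\eqref{eq:Gr-presheaf} to $\Gr_\cH$, sending a class $[g] \in \Loop H(R)/\Loop^+ \cH(R)$ to the trivial torsor $\cH_{R\pot{x}}$ equipped with the section $g \in H(R\rpot{x})$; since $\Gr_\cH$ is an étale sheaf (being even an fppf sheaf), this factors through $P^{\et}$. It then suffices to show that this morphism is both a monomorphism and an epimorphism of étale sheaves.

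For the monomorphism part, note that since automorphisms of the trivial $\cH$-torsor over $\Spec(R\pot{x})$ are simply elements of $\cH(R\pot{x}) = \Loop^+\cH(R)$, two elements $g_1,g_2 \in \Loop H(R)$ induce the same pair in $\Gr_\cH(R)$ if and only if $g_1^{-1}g_2 \in \Loop^+\cH(R)$, i.e.~if and only if $[g_1]=[g_2]$ already in the presheaf, with no sheafification required.

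For the epimorphism part — which is the main content — I would argue that every $(\cE,\alpha) \in \Gr_\cH(R)$ is étale-locally on $\Spec R$ in the image of $\Loop H \to \Gr_\cH$, equivalently, that $\cE$ is étale-locally on $\Spec R$ trivial as an $\cH$-torsor over $\Spec R\pot{x}$. I would do this in two steps. First, set $\cH_0 := \cH \otimes_{k\pot{x}} k$ via $x \mapsto 0$, which is smooth affine over $k$, and consider the restriction $\cE_0 := \cE \times_{\Spec R\pot{x}} \Spec R$, an $\cH_0$-torsor on $\Spec R$; since smooth torsors are étale-locally trivial, there is an étale cover $R \to R'$ over which $\cE_0$ admits a section. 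Second, I would lift this section from $x=0$ to all of $\Spec R'\pot{x}$: the ring $R'\pot{x}$ is $(x)$-adically complete by construction, and the morphism $\cE|_{\Spec R'\pot{x}} \to \Spec R'\pot{x}$ is smooth because $\cH$ is smooth over $k\pot{x}$, so the standard lifting property for smooth morphisms along complete adic pairs produces the desired section, trivializing $\cE$ over $\Spec R'\pot{x}$.

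The main obstacle is really just this lifting step; once one accepts that $R'\pot{x}$ is $(x)$-adically complete (which follows from its definition as an inverse limit $\lim_n R'[x]/(x^{n+1})$) and that $\cE \to \Spec R'\pot{x}$ is smooth, the conclusion is a formal application of smooth lifting. Everything else is bookkeeping with the two descriptions of $\Gr_\cH$.
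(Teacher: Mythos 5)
Your proof is correct and follows essentially the same route as the paper's, which gives no argument of its own but simply cites~\cite[Proposition~3.18]{richarz-basics}: restrict the torsor along $x \mapsto 0$ to get an $\cH_0$-torsor on $\Spec(R)$, trivialize it after an \'etale cover $R \to R'$ using smoothness, and lift the section over the $(x)$-adically complete ring $R'\pot{x}$ by formal smoothness of $\cE$, using affineness of $\cE$ to pass to the limit of the sections modulo $x^{n+1}$. The mono/epi bookkeeping (automorphisms of the trivial torsor being $\Loop^+\cH(R)$, and mono plus epi implying iso for sheaves) is also as expected, so nothing is missing.
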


\begin{rmk}
\phantomsection
\begin{enumerate}
\item
What we call the \emph{affine Grassmannian} of $\cH$ is sometimes called the \emph{partial affine flag variety} of $\cH$, at least when $\cH$ is a parahoric group scheme in the sense of Bruhat--Tits (see e.g.~\cite{pr}). 
This is justified by the fact that partial affine flag varieties attached to split reductive groups over $k$ (as e.g.~in~G\"ortz~\cite[Definition~2.6]{goertz}) are special cases of this construction. 
The cases that we will consider below give rise to ind-schemes whose properties are very close to those of the ``usual'' affine Grassmannians as considered in~\cite{mv, goertz}. To simplify terminology and notation, and following the conventions in~\cite{zhu-notes, richarz-basics}, we will call all of these ind-schemes affine Grassmannians.
\item
It is proved in~Česnavičius~\cite[Theorem~2.5]{cesnavicius} that if $\cH$ is the base change of a reductive group scheme over $k$, then $\Gr_{\cH}$ is the \emph{Zariski} sheafification of the presheaf quotient $\Loop H/\Loop^+ \cH$, and moreover that no sheafification is needed under additional assumptions, see~\cite[Theorem~3.4]{cesnavicius}. These results do not apply in the general setting considered below, and we do not know if these properties are satisfied in our case.
\end{enumerate}
\end{rmk}

The following fact is clear from the definitions as functors.

\begin{lem}
\label{lem:Gr-base-change}
Let $\cH$ be a smooth affine group scheme over $k\pot{x}$.
If $k'$ is an extension of $k$, there exist canonical isomorphisms of $k'$-(ind-)schemes
\begin{gather*}
 \Loop (k'\rpot{x} \otimes_{k\rpot{x}} H) \simto k' \otimes_k (\Loop H), \qquad  
 \Loop^+ (k'\pot{x} \otimes_{k\pot{x}} \cH) \simto k' \otimes_k (\Loop^+ \cH), \\
 \Gr_{k'\pot{x} \otimes_{k\pot{x}}\cH} \simto k' \otimes_k \Gr_\cH.
\end{gather*}
\end{lem}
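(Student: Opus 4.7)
The plan is to verify all three isomorphisms at the level of $R$-points for $R$ a $k'$-algebra, noting that the only thing to check is compatibility of the formation of $R\pot{x}$ and $R\rpot{x}$ under base change of scalars, plus compatibility of fppf sheafification with base change for the third isomorphism.

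First I would handle the loop group. For a $k'$-algebra $R$, restriction of scalars along $k \to k'$ gives
\[
(k' \otimes_k \Loop H)(R) = \Loop H(R|_k) = H(R\rpot{x}),
\]
where $R|_k$ denotes $R$ viewed as a $k$-algebra. On the other hand, since $R$ is a $k'$-algebra, $R\rpot{x}$ is naturally a $k'\rpot{x}$-algebra, and
\[
\Loop(k'\rpot{x} \otimes_{k\rpot{x}} H)(R) = (k'\rpot{x} \otimes_{k\rpot{x}} H)(R\rpot{x}) = H(R\rpot{x})
\]
by the universal property of base change. The two identifications agree functorially in $R$, yielding the first isomorphism. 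The same argument with $R\pot{x}$ in place of $R\rpot{x}$ gives the second isomorphism for $\Loop^+ \cH$.

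For the third isomorphism I would use that $\Gr_\cH$ is by definition the fppf sheafification of the presheaf quotient $\Loop H / \Loop^+ \cH$, together with the fact that fppf sheafification commutes with pullback along $\Spec(k') \to \Spec(k)$ (the inverse image of an fppf covering is an fppf covering, and the pullback is exact on presheaves). Combining this with the first two isomorphisms, one obtains
\[
k' \otimes_k \Gr_\cH = k' \otimes_k [\Loop H / \Loop^+ \cH]_{\fppf} \simto [\Loop(k'\rpot{x}\otimes_{k\rpot{x}} H) / \Loop^+(k'\pot{x}\otimes_{k\pot{x}} \cH)]_{\fppf}
\]
which is precisely $\Gr_{k'\pot{x}\otimes_{k\pot{x}} \cH}$. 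Alternatively, since $\cH$ is smooth one may invoke the torsor-theoretic description of $\Gr_\cH$ recalled just before the lemma and observe that pulling back an $\cH$-torsor on $\Spec(R\pot{x})$ with a trivialization over $\Spec(R\rpot{x})$ is compatible with the scalar extension $k \to k'$.

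The proof is essentially formal; the only potential obstacle is being careful that fppf sheafification genuinely commutes with the base change $\Spec(k')\to \Spec(k)$, but this is a standard general fact about sheafification on small sites with pullback functors that preserve coverings and finite limits.
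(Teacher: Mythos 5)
Your proof is correct and takes essentially the same route as the paper: the paper offers no separate argument, stating only that the lemma ``is clear from the definitions as functors,'' and your functor-of-points verification (identifying $R\pot{x}$ and $R\rpot{x}$ for a $k'$-algebra $R$ viewed over $k$ or over $k'$, then noting compatibility of the quotient-sheaf, or equivalently torsor, description with the scalar extension) is precisely that intended argument.
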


For any $k$-scheme $Y$ and any $y \in Y(k)$ we have the corresponding tangent space $T_y Y$, see~\cite[\href{https://stacks.math.columbia.edu/tag/0B2C}{Tag 0B2C}]{stacks-project}, which identifies with the subset of $Y(k[\varepsilon]/\varepsilon^2)$ consisting of points whose image under $\epsilon \mapsto 0$ in $Y(k)$ is $y$. 
This definition extends in the obvious way to ind-schemes.
In particular, if $\cH$ is a smooth affine group scheme over $k\pot{x}$, and setting as above $H:=\cH \otimes_{k \pot{x}} k \rpot{x}$, we can consider the Lie algebras $\Lie(\Loop H)$ and $\Lie(\Loop^+ \cH)$ of the group ind-scheme $\Loop H$ and of the group scheme $\Loop^+ \cH$, defined as the tangent spaces at their unit point. We can also consider the base point $e$ of $\Gr_\cH$, and the associated tangent space $T_e \Gr_\cH$.

\begin{lem}
\label{lem:tangent-space-Fl}
Let $\cH$ be a smooth affine group scheme over $k\pot{x}$.
There exists a canonical identification
\[
T_e \Gr_\cH \cong \Lie(\Loop H) / \Lie(\Loop^+ \cH).
\]
\end{lem}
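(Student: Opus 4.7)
The plan is to construct a canonical map
\[
\phi \colon \Lie(\Loop H)/\Lie(\Loop^+\cH) \longrightarrow T_e \Gr_\cH
\]
and verify that it is bijective using the étale sheaf description of $\Gr_\cH$ from Lemma~\ref{lem:quotient-etale}. The composition $\Loop^+ \cH \hookrightarrow \Loop H \to \Gr_\cH$ is the constant map at $e$, so the differential at $1$ of the quotient map $\pi \colon \Loop H \to \Gr_\cH$ vanishes on $\Lie(\Loop^+ \cH)$ and factors through the asserted $\phi$.

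To prove bijectivity of $\phi$, I would first reduce to the case where $k$ is separably closed. By Lemma~\ref{lem:Gr-base-change}, all objects involved (loop groups, their Lie algebras, and the affine Grassmannian together with its tangent space at $e$) are compatible with base change along $k^{\mathrm{s}}/k$; since this extension is faithfully flat, an isomorphism after extension of scalars descends. Assume henceforth $k = k^{\mathrm{s}}$. Then $R := k[\varepsilon]/\varepsilon^2$ is a strictly henselian local ring, so every étale cover of $\Spec R$ (resp.\ of $\Spec k$) is a finite disjoint union of copies of the base; hence the étale sheafification appearing in Lemma~\ref{lem:quotient-etale} is trivial on these rings, and
\[
\Gr_\cH(R) = \Loop H(R)/\Loop^+\cH(R), \qquad \Gr_\cH(k) = \Loop H(k)/\Loop^+\cH(k).
\]
Consequently, $T_e \Gr_\cH$ is identified with the set of classes of elements $g \in \Loop H(R)$ whose reduction modulo $\varepsilon$ lies in $\Loop^+\cH(k)$, up to the action of $\Loop^+\cH(R)$.

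The final ingredient is the surjectivity of $\Loop^+\cH(R) \twoheadrightarrow \Loop^+\cH(k)$. Level by level, this follows from the smoothness of each truncation $\Loop^+_i \cH$ recorded in~\S\ref{ss:loop-gps}. Passage to the inverse limit requires compatible inductive lifting: the transition morphisms $\Loop^+_{i+1}\cH \to \Loop^+_i\cH$ are themselves smooth (their fibers are affine spaces coming from the Lie algebra of $\cH$), so any lift constructed through level $i$ extends compatibly through level $i+1$. Granting this surjectivity, every class in $T_e \Gr_\cH$ admits a representative $g$ with $g|_{\varepsilon=0} = 1$, that is, $g \in \Lie(\Loop H)$, and the residual ambiguity is exactly $\ker(\Loop^+\cH(R) \to \Loop^+\cH(k)) = \Lie(\Loop^+\cH)$, yielding the bijectivity of $\phi$. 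The only step requiring genuine care is this compatible lifting in the inverse limit; the rest of the argument is a formal consequence of étale sheafification becoming trivial on strictly henselian local rings.
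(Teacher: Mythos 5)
Your argument is correct and follows essentially the same route as the paper's proof: construct the map from the quotient presentation, reduce to a separably (the paper: algebraically) closed base field, and invoke Lemma~\ref{lem:quotient-etale} over the strictly henselian ring $k[\varepsilon]/\varepsilon^2$ to identify $\Gr_\cH(k[\varepsilon]/\varepsilon^2)$ with the naive quotient. The only real difference is that you spell out the surjectivity of $\Loop^+\cH(k[\varepsilon]/\varepsilon^2)\to\Loop^+\cH(k)$ via the truncations $\Loop^+_i\cH$, a step the paper leaves implicit; note it also follows in one line from formal smoothness of $\cH$ applied to the square-zero extension $(k[\varepsilon]/\varepsilon^2)\pot{x}\to k\pot{x}$, which avoids the compatible-lifting discussion in the inverse limit.
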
 

\begin{proof}
The exact sequence of pointed sheaves $1\to \Loop^+\cH \to \Loop H \to \Gr_\cH \to 1$ induces an exact sequence of $k$-vector spaces
	\begin{equation*}
	0\to \Lie(\Loop^+\cH) \to \Lie(\Loop H)\to T_e\Gr_\cH.
	\end{equation*}
	In fact this sequence is also exact on the right: 
	by Lemma~\ref{lem:Gr-base-change} and compatibility of tangent spaces with extension of the base field, without loss of generality we may and do assume that $k$ is algebraically closed. 
	Then this easily follows from the fact that
	\[
	\Gr_\cH(k[\varepsilon]/\varepsilon^2) = \Loop H(k[\varepsilon]/\varepsilon^2) / \Loop^+ \cH(k[\varepsilon]/\varepsilon^2)
	\]
	by Lemma~\ref{lem:quotient-etale},
	since $k[\varepsilon]/\varepsilon^2$ is a strictly henselian ring.
\end{proof}

In this paper, we will apply the affine Grassmannian construction in two different settings, that we now explain. Let $\F$ be an algebraically closed field, and set
\[
F:=\F \rpot{t}, \quad O_F:=\F\pot{t}
\]
where $t$ is an indeterminate. We denote by $F^s$ a separable closure of $F$, and set 
\[
I := \mathrm{Gal}(F^s / F),
\]
which is the inertia group of $F$ as the residue field $\F$ is algebraically closed. For any $F$-scheme $X$ we will write $X_{F^s}$ for $F^s \otimes_F X$.

Given a smooth affine group scheme $\cH$ over $O_F$, we consider the associated affine Grassmannian $\Gr_{\cH}$ over $\F$ (where the indeterminate $x$ is equal to $t$). On the other hand, if $K$ is either $F$ or $F^s$ and $P$ is a smooth affine group scheme over $K$, and if $z$ is an indeterminate, we also consider the above construction for the smooth group scheme $K\pot{z} \otimes_K P$ over $K\pot{z}$ (and the indeterminate $x=z$) to obtain the $K$-ind-scheme $\Gr_{K\pot{z} \otimes_K P}$. This ind-scheme is the object considered in~\cite{mv} or~\cite[Definition~2.5]{goertz}. Lemma~\ref{lem:Gr-base-change} implies that if $P$ is a smooth affine group scheme over $F$ we have an identification
\begin{equation}
\label{eqn:Gr-base-change}
\Gr_{F^s\pot{z} \otimes_{F^s} (F^s \otimes_F P)} \simto F^s \otimes_F \Gr_{F\pot{z} \otimes_F P}.
\end{equation}

\subsection{Reductive groups and special facets}
\label{ss:special-facets}

From now on we fix a smooth affine group scheme $\cG$ over $O_F$, and set
\[
G:=F \otimes_{O_F} \cG.
\]
It is important to know exactly for which $\cG$ the associated ind-scheme $\Gr_\cG$ is ind-proper (equivalently, ind-projective, as it is always ind-quasi-projective). A full criterion can be found in~\cite[Th\'eor\`eme~5.2]{lourenco-BT}, and relates to (a generalization of) Bruhat--Tits theory. For our purposes, we will assume that $G$ is a connected reductive group over $F$ and that the special fiber $\F \otimes_{O_F} \cG$ is connected; in this case it was shown (earlier) in \cite[Theorem~A]{richarz} that $\Gr_\cG$ is ind-proper if and only if $\cG$ is a parahoric group scheme in the sense of Bruhat--Tits~\cite{BT84}. 

The case that will be relevant in this paper is the following. Consider the (extended) Bruhat--Tits building $\mathscr{B}(G,F)$ associated with the reductive group $G$ over the local field $F$. The parahoric group schemes are attached to the facets in $\mathscr{B}(G,F)$.  We henceforth impose the following two assumptions:
\begin{itemize}
\item $\cG$ is the parahoric group scheme attached to some facet $\fa \subset \mathscr{B}(G,F)$.
\item The facet $\fa$ is \emph{special}.
\end{itemize}

The latter assumption is equivalent to a certain geometric condition on $\Gr_\cG$: see~\cite[Theorem~B]{richarz}.  For an explicit example of this setting with $G$ nonsplit, see~\cite[\S 4]{richarz-schubert}.

\subsection{Tori, (co)weights, and (co)roots}
\label{ss:tori-weights}

By Steinberg's theorem,\footnote{Here ``Steinberg's theorem'' refers to~\cite[Corollary~10.2(a)]{steinberg-reg}. This statement has the assumption that the base field is perfect, but it is well known (and explicitly stated in~\cite[\S 8.6]{bs}) that this assumption can be removed if the group is assumed to be reductive.} $G$ is quasi-split, i.e.~there exist Borel subgroups $B \subset G$ defined over $F$. By~\cite[Corollaire~4.16]{bot}, any such $B$ contains a maximal $F$-split torus $A \subset G$ in its radical, and by~\cite[Th\'eor\`eme~4.15]{bot} the centralizer $T:=Z_G(A)$ of $A$ is then a Levi subgroup of $B$, and thus a maximal torus of $G$.
The facet $\fa$ belongs to the apartment $\mathscr{A}(G,A',F)$ associated with some maximal $F$-split torus $A'$. By conjugacy of maximal $F$-split tori in $G$ (see~\cite[Th\'eor\`eme~4.21]{bot}), we can and will assume that $A=A'$.

Let us denote by $\cA$, resp.~$\cT$, the scheme-theoretic closure of $A$, resp.~$T$, in $\cG$. By~\cite[Appendix~1]{richarz}, $\cT$ is the unique parahoric group scheme of $T$. As explained in~\cite[\S 3.b]{pr}, $\cT$ therefore identifies with the connected N\'eron model of $T$. The group scheme $\cA$ is also the scheme-theoretic closure of $A$ in $\cT$; in view of~\cite[\S 4.4]{BT84}, $\cA$ is therefore the natural split torus extending $A$,
which coincides with its connected N\'eron model. 

Recall that if $H$ is an $F$-torus, the $F^s$-torus $H_{F^s}$ is split, see~\cite[Proposition~1.5]{bot}; we will denote by $\bbX^*(H)$, resp.~$\bbX_*(H)$, its character, resp.~cocharacter, lattice. (We emphasize that, contrary to what the notation might suggest, in general $\bbX^*(H)$ and $\bbX_*(H)$ consist of characters and cocharacters of $H_{F^s}$ and not of $H$. If $H$ is already split however, they can be seen as characters and cocharacters of $H$.) Given a group $C$ and a $\Z$-module $M$ endowed with a linear action of $C$, we will denote by $M_C := \Z \otimes_{\Z[C]} M$ the module of coinvariants. (Here $\Z[C]$ is the group algebra of $C$ over $\Z$, and $\Z$ is endowed with the trivial action of $C$.)

In our present setting, we will in particular consider the torus $T$, its base change $T_{F^s}$, and the lattices $\bbX^*(T)$ and $\bbX_*(T)$.
The group $I$ acts on $\bbX_*(T)$, and this action factors through a finite quotient since $T$ in fact splits over a finite separable extension; we will consider the associated coinvariants $\bbX_*(T)_I$.
Recall the Kottwitz homomorphism $T(F) \to \bbX_\ast(T)_I$, see e.g.~\cite[\S 2.a.2]{pr}; this morphism is functorial on the category of $F$-tori, compatible with products and is given for $T=\Gm$ by the natural map $F^\times \to F^\times/O_F^\times = \bbX_*(\Gm)$, where the identification is induced by $\mu\mapsto \mu(t)$.
As explained in~\cite[\S 5.a]{pr}, this morphism factors through an isomorphism
\begin{equation}
\label{eqn:Gr-torus}
T(F) / \cT(O_F) \simto \bbX_*(T)_I.
\end{equation}

The reductive group
$G_{F^s}$
over $F^s$ is split since it admits the split maximal torus $T_{F^s}$, see~\cite[Exp.~XXII, Proposition~2.2]{sga33}. 
We can therefore consider its roots and coroots with respect to $T_{F^s}$, which will be denoted
\[
\Phi_\abs \subset \bbX^*(T) \quad \text{and} \quad \Phi^\vee_\abs \subset \bbX_*(T)
\]
respectively. (The subscript ``$\abs$'' stands for ``absolute.'') These subsets are stable under the $I$-actions on $\bbX^*(T)$ and $\bbX_*(T)$. The nonzero weights of $T_{F^s}$ in the Lie algebra of the Borel subgroup
$B_{F^s} \subset G_{F^s}$
form a system of positive roots $\Phi_\abs^+ \subset \Phi_\abs$ which is stable under the action of $I$. The subset of dominant coweights of $T_{F^s}$ (with respect to this choice of positive roots) will be denoted $\bbX_*(T)^+$.

We will consider in particular the sublattice $\Z \Phi^\vee_\abs \subset \bbX_*(T)$ generated by the coroots. The quotient $\bbX_*(T)/\Z \Phi^\vee_\abs$ is called the \emph{algebraic fundamental group} of $G$, and is denoted $\pi_1(G)$.

\begin{lem}
\label{lem:coroots-coweights-coinv}
The exact sequence $\Z \Phi^\vee_\abs \hookrightarrow \bbX_*(T) \twoheadrightarrow \pi_1(G)$ induces an exact sequence
\[
(\Z \Phi^\vee_\abs)_I \hookrightarrow \bbX_*(T)_I \twoheadrightarrow \pi_1(G)_I.
\]
\end{lem}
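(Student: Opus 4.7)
The plan is to use the long exact sequence of $I$-homology attached to $\Z\Phi^\vee_\abs \hookrightarrow \bbX_*(T) \twoheadrightarrow \pi_1(G)$. Since $(-)_I$ is right exact, the displayed sequence is automatically exact at the last two terms, and the kernel of $(\Z\Phi^\vee_\abs)_I \to \bbX_*(T)_I$ coincides with the image of the connecting map $H_1(I, \pi_1(G)) \to (\Z\Phi^\vee_\abs)_I$. Because the $I$-action on all these lattices factors through a finite quotient $\Gamma$, both coinvariants and $H_1$ can be computed as those of $\Gamma$.

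The key point is that $\Z\Phi^\vee_\abs$ is a \emph{permutation} $\Gamma$-module. Since $G$ is quasi-split with Borel $B$ defined over $F$ (see~\S\ref{ss:tori-weights}), the action of $I$ preserves the positive system $\Phi_\abs^+$ and therefore permutes the set $\Delta^\vee$ of simple coroots, which forms a $\Z$-basis of the coroot lattice. In particular
\[
(\Z\Phi^\vee_\abs)_\Gamma \cong \Z[\Delta^\vee/\Gamma]
\]
is a torsion-free abelian group.

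By contrast, $H_1(\Gamma, \pi_1(G))$ is torsion: a standard restriction–corestriction argument shows that $|\Gamma|$ annihilates $H_n(\Gamma, M)$ for any $\Gamma$-module $M$ and any $n \geq 1$, and since $\pi_1(G)$ is finitely generated, this homology group is actually finite. Any homomorphism from a torsion abelian group to a torsion-free one vanishes, hence the connecting map is zero, as required. There is no real obstacle; the only step deserving care is to notice that the quasi-split hypothesis (together with the choice of $B$ fixed in~\S\ref{ss:tori-weights}) is precisely what delivers the permutation-module structure on $\Z\Phi^\vee_\abs$.
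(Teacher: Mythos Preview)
Your proof is correct. Both you and the paper begin with the same key observation: since $I$ permutes the simple coroots, $(\Z\Phi^\vee_\abs)_I$ is free on the $I$-orbits, hence torsion-free. From there the two arguments diverge. The paper reduces to checking injectivity after $\otimes\,\Q$ (valid because the source is torsion-free) and then exhibits an explicit $I$-stable complement to $\Q\otimes_\Z \Z\Phi^\vee_\abs$ inside $\Q\otimes_\Z \bbX_*(T)$, namely the subspace orthogonal to all roots, which splits the sequence rationally. You instead pass to the finite quotient $\Gamma$ through which $I$ acts, invoke the long exact sequence in $\Gamma$-homology, and kill the connecting map $H_1(\Gamma,\pi_1(G)) \to (\Z\Phi^\vee_\abs)_\Gamma$ by observing that its source is $|\Gamma|$-torsion while its target is torsion-free. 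Your route is slightly more homological and avoids naming the complement; the paper's route is more hands-on and has the side benefit of making the $\Q$-splitting explicit. One small wording point: your sentence ``both coinvariants and $H_1$ can be computed as those of $\Gamma$'' is best read as ``since $M_I = M_\Gamma$ for all three modules, we may run the long exact sequence for $\Gamma$-homology instead'' --- you do not need (and should not claim) that $H_1(I,-) = H_1(\Gamma,-)$.
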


\begin{proof}
By right exactness of the coinvariants functor, it suffices to prove that the morphism $(\Z \Phi^\vee_\abs)_I \to \bbX_*(T)_I$ is injective. However, $\Z \Phi^\vee_\abs$ has a basis consisting of the simple coroots, which is permuted by $I$. As a consequence $(\Z \Phi^\vee_\abs)_I$ is free, with a basis in bijection with $I$-orbits of simple coroots. To prove the claim, it therefore suffices to prove that the induced morphism
\begin{equation}
\label{eqn:coroots-coweights-coinv}
\Q \otimes_\Z \bigl( (\Z \Phi^\vee_\abs)_I \bigr) \to \Q \otimes_\Z \bigl( \bbX_*(T)_I \bigr)
\end{equation}
is injective. Here the first term identifies with $(\Q \otimes_{\Z} \Z \Phi^\vee_\abs)_I$, and the second one with $(\Q \otimes_\Z \bbX_*(T))_I$. Now the subspace $\Q \otimes_{\Z} \Z \Phi^\vee_\abs \subset \Q \otimes_\Z \bbX_*(T)$ has an $I$-stable complement, consisting of the elements orthogonal to all roots, which implies the injectivity of~\eqref{eqn:coroots-coweights-coinv}.
\end{proof}

The modules considered in Lemma~\ref{lem:coroots-coweights-coinv} appear in the description of the set of connected components $\pi_0(\Gr_{\cG})$ of $\Gr_{\cG}$; namely,
by~\cite[Theorem~0.1]{pr} the Kottwitz morphism induces a bijection
\begin{equation}
\label{eqn:conn-comp-Gr}
\pi_0(\Gr_{\cG}) \simto \pi_1(G)_I.
\end{equation}

Now we consider the lattices $\bbX^*(A)$ and $\bbX_*(A)$ 
associated with the split torus $A$; the system of (relative) roots of $(G,A)$ will be denoted
\[
\Phi \subset \bbX^*(A).
\]
Restriction along $A\subset T$ induces a canonical morphism $\bbX^*(T) \to \bbX^*(A)$, which is $I$-equivariant with respect to the trivial action on $\bbX^*(A)$. 
Since $G$ is quasi-split, this morphism sends $\Phi_\abs$ onto $\Phi$.
The image $\Phi^+$ of $\Phi^+_\abs$ is a system of positive roots for $\Phi$; moreover, if we denote by $\Phi_\abs^{\mathrm{s}}$, resp.~$\Phi^{\mathrm{s}}$, the associated basis of $\Phi_\abs$, resp.~$\Phi$, then $\Phi_\abs^{\mathrm{s}}$ is stable under the action of $I$, and our morphism $\bbX^*(T) \to \bbX^*(A)$ restricts to a surjection
\begin{equation}
\label{eqn:surjection-Phis}
\Phi_\abs^{\mathrm{s}} \twoheadrightarrow \Phi^{\mathrm{s}}
\end{equation}
whose fibers are exactly the $I$-orbits in $\Phi_\abs^{\mathrm{s}}$. (For all of this, see~\cite[\S 4.1.2]{BT84}.)

\subsection{Iwahori--Weyl group and Schubert varieties}
\label{ss:Iwahori-Weyl-Schubert}

The $\Loop^+\cG$-orbit subsche\-mes inside $\Gr_{\cG}$ are smooth and locally closed, giving rise to a topological stratification. We will now describe these orbits together with their closures, following~\cite[\S\S 1-2]{richarz-schubert} and~\cite[\S 2.1 and~\S 3]{richarz}.

The \emph{Iwahori--Weyl group} of $(G,A)$ is the quotient
\[
W := \bigl( N_G(A) \bigr) (F) / \cT(O_F),
\]
where $N_G(A)$ is the normalizer of $A$ in $G$. This group contains the quotient
\[
T(F) / \cT(O_F) \cong \bbX_*(T)_I
\]
(see~\eqref{eqn:Gr-torus})
as a normal subgroup, and the quotient is the finite Weyl group
\[
W_0 := \bigl( N_G(A) \bigr) (F) / T(F).
\]
Setting
\[
W_\fa := \bigl( N_G(A) (F) \cap \cG(O_F) \bigr) / \cT(O_F),
\]
the composition
\[
W_\fa \hookrightarrow W \to W_0
\]
is an isomorphism (see~\cite[Appendix, Proposition~13]{pr} or~\cite[Remark~1.4]{richarz-schubert}). 
So we obtain an identification
\[
W \cong W_\fa \ltimes \bbX_*(T)_I.
\]

Consider the natural
composition $\bbX_*(A) \to \bbX_*(T) \to \bbX_*(T)_I$; the induced morphism
\begin{equation}
\label{eqn:identification-cocharacters}
a \colon  \Q \otimes_\Z \bbX_*(A) \to
\Q \otimes_\Z (\bbX_*(T)_I)
\end{equation}
is an isomorphism. We set
\[
\bbX_*(T)_I^+ = \{ \lambda \in \bbX_*(T)_I \mid \forall \alpha \in \Phi^+, \, \langle a^{-1}(\overline{\lambda}), \alpha \rangle \geq 0 \},
\]
where $\overline{\lambda}$ denotes the image of $\lambda$ in $\Q \otimes_\Z (\bbX_*(T)_I)$.

\begin{lem}
\phantomsection
\label{lem:dominant-coweights}
\begin{enumerate}
\item
\label{it:dominant-coweights-orbits}
The composition
\[
\bbX_*(T)_I^+ \hookrightarrow \bbX_*(T)_I \twoheadrightarrow \bbX_*(T)_I / W_0
\]
 is a bijection; in other words, $\bbX_*(T)_I^+$ is a system of representatives for the $W_0$-orbits in $\bbX_*(T)_I$.
 \item
 \label{it:dominant-coweights-lifting}
 The composition
 \[
 \bbX_*(T)^+ \hookrightarrow \bbX_*(T) \twoheadrightarrow \bbX_*(T)_I
 \]
 factors through a map $\bbX_*(T)^+ \to \bbX_*(T)_I^+$. 
 This map is surjective if one of the following equivalent conditions holds, where $Z$ is the (scheme-theoretic) center of $G$:
  \begin{enumerate}
  	\item
	\label{it:dominant-coweights-lifting-cond-1}
	$Z$ is a torus; 
	\item
	\label{it:dominant-coweights-lifting-cond-2}
	the abelian group $\bbX^*(Z)$ of characters of $F^s \otimes_F Z$ is torsion free;
	\item
	\label{it:dominant-coweights-lifting-cond-3}
	the natural map $\bbX_*(T)\to \bbX_*(T/Z)$ is surjective.
  \end{enumerate}
\end{enumerate}
\end{lem}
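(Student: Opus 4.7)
My plan is to treat the four distinct assertions in order. The equivalence of (a)--(c) is standard bookkeeping for the subgroup scheme $Z \subset T$, which is diagonalizable: (a)$\Leftrightarrow$(b) because a diagonalizable group is a torus iff connected iff its character group is torsion-free, and (b)$\Leftrightarrow$(c) by dualizing the exact sequence $0 \to \bbX^*(T/Z) \to \bbX^*(T) \to \bbX^*(Z) \to 0$ with $\Hom(-,\Z)$, identifying the cokernel of the induced map $\bbX_*(T) \to \bbX_*(T/Z)$ with $\Ext^1(\bbX^*(Z),\Z) = \bbX^*(Z)_{\mathrm{tors}}$ (using that $\bbX^*(T)$ is torsion-free so $\Ext^1(\bbX^*(T),\Z) = 0$).

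For \eqref{it:dominant-coweights-orbits}, since $G$ is quasi-split the group $W_0 = N_G(A)(F)/T(F)$ is identified with the Weyl group of the relative root system $\Phi \subset \bbX^*(A)$; the space $\Q \otimes \bbX_*(A)$ is therefore its reflection representation, with the closed dominant chamber a strict fundamental domain. Because $a$ is $W_0$-equivariant, each $W_0$-orbit in $\Q\otimes\bbX_*(T)_I$ meets the image of $\bbX_*(T)_I^+$ in a unique point, yielding existence of a representative in $\bbX_*(T)_I^+$ and uniqueness modulo the torsion subgroup of $\bbX_*(T)_I$. The remaining (and, to my mind, most delicate) step is to upgrade this to uniqueness on the nose by showing that $W_0$ acts trivially on this torsion; I would do so using the identification $W_0 = W(G_{F^{\mathrm{s}}},T_{F^{\mathrm{s}}})^I$ (valid because $G$ is quasi-split), which forces $W_0$ to commute with the $I$-action on $\bbX_*(T)$, together with a direct manipulation of $(1-I)\bbX_*(T)$.

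For the factoring part of \eqref{it:dominant-coweights-lifting}, I would realise $a^{-1}(\overline\lambda)$ as the $I$-average $\frac{1}{|I'|}\sum_{\sigma\in I'}\sigma\lambda \in (\Q\otimes\bbX_*(T))^I \simto \Q\otimes\bbX_*(A)$, where $I'$ denotes the finite quotient of $I$ acting on $\bbX_*(T)$; lift $\alpha\in\Phi^+$ to some $\tilde\alpha\in\Phi_{\abs}^+$ via~\eqref{eqn:surjection-Phis}; and compute
\[
\langle a^{-1}(\overline\lambda),\alpha\rangle = \tfrac{1}{|I'|}\sum_{\sigma\in I'}\langle\lambda,\sigma^{-1}\tilde\alpha\rangle,
\]
each summand being $\geq 0$ because $\Phi_{\abs}^+$ is $I$-stable, so $\sigma^{-1}\tilde\alpha \in \Phi_{\abs}^+$ and $\lambda \in \bbX_*(T)^+$.

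For the surjectivity claim I would first handle the adjoint case $Z = 1$: then $\bbX_*(T)$ has the $\Z$-basis $\{\omega^\vee_\beta\}_{\beta\in\Phi_\abs^{\mathrm{s}}}$ of fundamental coweights, which $I$ permutes via its action on $\Phi_\abs^{\mathrm{s}}$, and the same averaging calculation identifies $\bbX_*(T)_I^+$ with the non-negative integer combinations of orbit classes $\sum_\scO c_\scO[\omega^\vee_{\beta_\scO}]$ (with $\beta_\scO$ any chosen representative of the orbit $\scO$); such a combination lifts tautologically to a dominant element of $\bbX_*(T)^+$ by the same formula. For general $G$ satisfying (c), I would reduce to the adjoint case via the surjection $\pi\colon\bbX_*(T)\twoheadrightarrow\bbX_*(T_{\mathrm{ad}})$ with $T_{\mathrm{ad}} = T/Z$: since $\Phi_{\abs}^+\subset\bbX^*(T_{\mathrm{ad}})\subset\bbX^*(T)$ (roots are trivial on the center), the dominance of any $\lambda$ depends only on $\pi(\lambda)$, and given $\nu\in\bbX_*(T)_I^+$ (whose image $\nu_{\mathrm{ad}}\in\bbX_*(T_{\mathrm{ad}})_I$ is dominant by the same compatibility), the adjoint case produces $\mu\in\bbX_*(T_{\mathrm{ad}})^+$ lifting $\nu_{\mathrm{ad}}$. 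Lift $\mu$ to any $\lambda'\in\bbX_*(T)$ via $\pi$ (automatically in $\bbX_*(T)^+$), measure the defect $\nu - \overline{\lambda'}\in\ker(\bbX_*(T)_I\to\bbX_*(T_{\mathrm{ad}})_I)$, which equals the image of $\bbX_*(Z)_I$ by right-exactness of $I$-coinvariants applied to $0\to\bbX_*(Z)\to\bbX_*(T)\to\bbX_*(T_{\mathrm{ad}})\to 0$, and correct $\lambda'$ by a lift $\tilde\zeta\in\bbX_*(Z)\subset\ker\pi$ of the defect. The corrected element $\lambda = \lambda' + \tilde\zeta$ then satisfies both $\pi(\lambda) = \mu$ (so $\lambda\in\bbX_*(T)^+$) and $\overline\lambda = \nu$, completing the proof.
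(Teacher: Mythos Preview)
Your treatment of the equivalence (a)--(c), of the factoring claim in~\eqref{it:dominant-coweights-lifting} via $I$-averaging, and of the surjectivity (adjoint case, then reduction under condition~(c)) is correct and essentially identical to the paper's argument.

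For~\eqref{it:dominant-coweights-orbits} there is a gap. Your plan is to reduce uniqueness to the claim that $W_0$ acts trivially on the torsion subgroup of $\bbX_*(T)_I$, and to justify this using only that $W_0=W(G_{F^s},T_{F^s})^I$ commutes with the $I$-action on $\bbX_*(T)$. But commuting with $I$ only tells you that $W_0$ \emph{preserves} $(1-I)\bbX_*(T)$ and hence the torsion subgroup; it does not by itself force the action on torsion to be trivial, and your ``direct manipulation of $(1-I)\bbX_*(T)$'' is not spelled out. The quickest correct justification is exactly the paper's: since $\bbX_*(T_{\mathrm{ad}})$ has an $I$-permuted basis of fundamental coweights, $\bbX_*(T_{\mathrm{ad}})_I$ is torsion free, so the torsion of $\bbX_*(T)_I$ lies in $\ker\bigl(\bbX_*(T)_I\to\bbX_*(T_{\mathrm{ad}})_I\bigr)$, which is the image of the central cocharacters $K=\ker(\bbX_*(T)\to\bbX_*(T_{\mathrm{ad}}))$; the full absolute Weyl group, and a fortiori $W_0$, acts trivially on $K$.

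Once that is fixed, your route and the paper's diverge slightly but both work. You would pass to the torsion-free quotient $\bbX_*(T)_I/\mathrm{tors}$ and use the standard fundamental-domain property of $W_0$ acting on a lattice inside $\Q\otimes\bbX_*(A)$; the fibres of $\bbX_*(T)_I^+$ and of $\bbX_*(T)_I/W_0$ over this quotient are both torsors under the torsion subgroup (the latter precisely because $W_0$ is trivial on torsion), so bijectivity downstairs gives bijectivity upstairs. The paper instead pushes all the way to the image $Q\subset\bbX_*(T_{\mathrm{ad}})_I$ and invokes the \'echelonnage root system $\Sigma$ to place $Q$ between $Q^\vee(\Sigma)$ and $P^\vee(\Sigma)$, then cites standard facts for reduced root systems. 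Your variant is a bit more elementary (no \'echelonnage needed), while the paper's has the virtue of working with a reduced root system throughout.
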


\begin{proof}
\eqref{it:dominant-coweights-orbits}
Let $Z$ be the center of $G$, and consider the adjoint group $G_{\mathrm{ad}}:=G/Z$. Let also $T_{\mathrm{ad}}$, resp.~$A_{\mathrm{ad}}$, be the image of $T$, resp.~$A$, in $G_{\mathrm{ad}}$; then $T_{\mathrm{ad}}$, resp.~$A_{\mathrm{ad}}$, is a maximal, resp.~maximal split, torus of $G_{\mathrm{ad}}$, see~\cite[Th\'eor\`eme~2.20]{bot2}.
If we denote by $\Phi_{\mathrm{ad}}$, resp.~$W_{{\rm ad},0}$, the (relative) root system, resp.~Weyl group, of $(G_{\mathrm{ad}},A_{\mathrm{ad}})$, then
the quotient map $G\to G_{\mathrm{ad}}$ induces bijections $\Phi \cong \Phi_{\mathrm{ad}}$ and $W_0 \cong W_{{\rm ad},0}$, compatible with the action of the latter on the former.
So the map $T\to T_{\mathrm{ad}}$ induces a commutative diagram of pointed sets
\[
\begin{tikzcd}
 \bbX_*(T)^+_I \ar[r] \ar[d] &  Q^+ \ar[d] \ar[hookrightarrow, r] & \bbX_*(T_{\mathrm{ad}})_I^+ \ar[d]\\
 \bbX_*(T)_I/W_0 \ar[r] & Q/W_0  \ar[hookrightarrow, r] & \bbX_*(T_{\mathrm{ad}})_I/W_0,
\end{tikzcd}
\]
where $Q$ is the image of $\bbX_*(T)_I\to \bbX_*(T_{\mathrm{ad}})_I$
and $Q^+$ is its intersection with $\bbX_*(T_{\mathrm{ad}})_I^+$.
The left horizontal maps are surjective. Moreover,
since the action of $W_0$ and the pairing with the relative roots is trivial on $\ker(\bbX_*(T)_I\to \bbX_*(T_{\rm ad})_I)$, their fibers are canonically bijective to this kernel,
which is a subgroup of the monoid $\bbX_*(T)^+_I$.
Hence, to prove our claim it suffices to prove bijectivity of the map $Q^+\to Q/W_0$.

For this it is convenient to make the connection with the ``\'echelonnage root system'' as follows.
Fixing a point in $\fa$ defines an identification of
\[
V:=\bbX_*(A_{\rm ad}) \otimes_\Z \R = \bbX_*(T_{\rm ad})_I \otimes_\Z \R
\]
(see~\eqref{eqn:identification-cocharacters})
with the apartment $\mathscr{A}(G_{\rm ad},A_{\rm ad},F)$ on which the Iwahori--Weyl group $W_{\mathrm{ad}}=W(G_{\rm ad},A_{\rm ad})$ acts by affine transformations.
There exists a reduced root system $\Sigma \subset V^*$, called \emph{\'echelonnage root system} by Bruhat--Tits, such that the associated affine Weyl group $W_{\rm af}(\Sigma)$ is isomorphic to the Iwahori--Weyl group $W_{\rm sc}=W(G_{\rm sc}, A_{\rm sc})$ of the simply connected cover $G_{\rm sc}\to G_{\rm ad}$ (see~\cite[Proposition~2.24]{bot2}) with $A_{\rm sc}$ being the preimage of $A_{\rm ad}$ in $G_{\rm sc}$, and such that the identification $W_{\rm af}(\Sigma)=W_{\rm sc}$ is compatible with the actions on $V$; see \cite[Section 6.1]{Haines:Dualities}.
In particular, $W_0(\Sigma)=W_0$ for the finite Weyl groups. 
One necessarily has $\mathbb Q\cdot \Phi=\mathbb Q\cdot \Sigma$ (in fact, those are the same up to multiples of $2^{\pm 1}$), so the positive relative roots $\Phi^+$ determine a system of positive roots $\Sigma^+ \subset \Sigma$.
In $V$ we have (in general strict) inclusions of $\Z$-sublattices
\[
Q^\vee(\Sigma)={\rm image}(\bbX_*(T_{\rm sc})_I\to \bbX_*(T_{\rm ad})_I)\subset Q \subset \bbX_*(T_{\rm ad})_I\subset P^\vee(\Sigma),
\]
where $Q^\vee(\Sigma)\subset P^\vee(\Sigma)$ is the coroot (resp.~coweight) lattice of $\Sigma$, and $T_{\mathrm{sc}}$ is the centralizer of $A_{\mathrm{sc}}$ in $G_{\mathrm{sc}}$ (which is the same as the preimage of $T_{\mathrm{ad}}$ in $G_{\mathrm{sc}}$).
Here we note that $\bbX_*(T_{\mathrm{ad}})_I$ is torsion free because $\bbX_*(T_{\mathrm{ad}})$ admits a basis permuted by $I$ (namely, the fundamental coweights).
Hence, the bijectivity of $Q^+\to Q/W_0$ follows from standard facts on reduced root systems.

\eqref{it:dominant-coweights-lifting}
Since the positive relative roots $\Phi^+$ are restrictions of the positive absolute roots $\Phi_{\rm abs}^+$, the restriction of the quotient map $\bbX_*(T)\to \bbX_*(T)_I$ to $\bbX_*(T)^+$ factors through a map $\bbX_*(T)^+\to \bbX_*(T)^+_I$.

To see that the conditions 
\eqref{it:dominant-coweights-lifting-cond-1}, \eqref{it:dominant-coweights-lifting-cond-2} and \eqref{it:dominant-coweights-lifting-cond-3}
are equivalent we note that the center $Z$ of $G$ is a $F$-group scheme of multiplicative type. 
So $Z$ is a torus if and only if $\bbX^*(Z)$ is torsion free, which shows the equivalence of~\eqref{it:dominant-coweights-lifting-cond-1} and~\eqref{it:dominant-coweights-lifting-cond-2}. 
Next, the exact sequence $1\to Z\to T\to T_{\rm ad}\to 1$ (where we use the notation of the proof of~\eqref{it:dominant-coweights-orbits}) induces a short exact sequence $0\to \bbX^*(T_{\rm ad})\to \bbX^*(T)\to \bbX^*(Z)\to 0$ of $\Z$-modules where $\bbX^*(T_{\rm ad})$ and $\bbX^*(T)$ are torsion free.
Applying the functor $\Hom_{\Z{\rm \text{-}Mod}}({\rm \text{-}},\Z)$ we obtain an exact sequence
\[
0\to 
\Hom_{\Z{\rm \text{-}Mod}}(\bbX^*(Z),\Z) \to \bbX_*(T)\to \bbX_*(T_{\rm ad}) \to {\rm Ext}^1_{\Z{\rm \text{-}Mod}}(\bbX^*(Z),\Z)\to 0.
\]
Hence, $\bbX^*(Z)$ being torsion free is equivalent to the surjectivity of $\bbX_*(T)\to \bbX_*(T_{\rm ad})$, which shows the equivalence of~\eqref{it:dominant-coweights-lifting-cond-2} and~\eqref{it:dominant-coweights-lifting-cond-3}.

Next, we show surjectivity of the map $\bbX_*(T)^+ \to \bbX_*(T)_I^+$ in case $G=G_{\mathrm{ad}}$ in the notation used above.
In this case, the fundamental coweights $(\omega^\vee_\alpha : \alpha\in \Phi_\abs^{\mathrm{s}})$ form a $\Z$-basis of $\bbX_*(T)$, and a $\Z_{\geq 0}$-basis of $\bbX_*(T)^+$.
For any $\alpha\in \Phi_\abs^{\mathrm{s}}$, the image $\bar\omega^\vee_\alpha$ of $\omega^\vee_\alpha$ in $\bbX_*(T)_I$ only depends on the orbit $I \cdot \alpha$, and the family
$(\bar\omega^\vee_\alpha : \alpha\in \Phi_\abs^{\mathrm{s}}/I)$ forms a $\Z$-basis of $\bbX_*(T)_I$. We claim that this family is also a $\Z_{\geq 0}$-basis of $\bbX_*(T)_I^+$, which will imply the desired claim.

In fact, the map $a$ in~\eqref{eqn:identification-cocharacters} is a composition of the natural maps
\[
 \Q \otimes_{\Z} \bbX_*(A) \xrightarrow{a_1} \Q \otimes_{\Z} \bbX_*(T) \xrightarrow{a_2} \Q \otimes_{\Z} \bbX_*(T)_I.
\]
For $\alpha,\beta \in \Phi_\abs^{\mathrm{s}}$ one has 
\[
\langle \bar\omega^\vee_\alpha,\beta_{|A} \rangle = \langle a_1(\bar\omega^\vee_\alpha),\beta \rangle.
\]
If we fix a finite quotient $\bar I$ of $I$ through which the action on $\bbX_*(T)$ factors, then the element $a_1(\bar\omega^\vee_\alpha)$ is $\bar I$-invariant, so 
\[
\textstyle{
\langle a_1(\bar\omega^\vee_\alpha),\beta \rangle = \frac{1}{|\bar I|} \langle \sum_{i\in \bar I} i \cdot a_1(\bar\omega^\vee_\alpha),\beta \rangle = \frac{1}{|\bar I|} \langle a_1(\bar\omega^\vee_\alpha), \sum_{i\in \bar I} i \cdot\beta \rangle.
}
\]
Now $\sum_{i\in \bar I} i \cdot\beta$ is $I$-invariant, and the elements $a_1(\bar\omega^\vee_\alpha)$ and $\omega^\vee_\alpha$ have the same image in $\Q \otimes_{\Z} \bbX_*(T)_I$, hence we have
\[
 \textstyle{
\frac{1}{|\bar I|} \langle a_1(\bar\omega^\vee_\alpha), \sum_{i\in \bar I} i \cdot\beta \rangle = \frac{1}{|\bar I|} \langle \omega^\vee_\alpha, \sum_{i\in \bar I} i \cdot\beta \rangle
}
= \begin{cases}
\frac{1}{| I \cdot \alpha|} & \text{if $\beta\in I\alpha$;} \\
0 & \text{otherwise.}
\end{cases}
\]
From this computation we deduce that a linear combination of the $\bar\omega^\vee_\alpha$ has nonnegative pairing with any simple (relative) root if and only if the coefficient of each $\bar\omega^\vee_\alpha$ is nonegative, which proves the desired claim.

Finally, we will prove that the map $\bbX_*(T)^+ \to \bbX_*(T)_I^+$ is surjective in case the morphism $\bbX_*(T)\to \bbX_*(T_{\rm ad})$ is surjective.
Setting  $K:=\ker(\bbX_*(T)\to \bbX_*(T_{\rm ad}))$, we consider the diagram of natural maps
\[
\begin{tikzcd}
0\ar[r] &K \ar[r] \ar[d, two heads] & \bbX_*(T) \ar[r] \ar[d, two heads] &  \bbX_*(T_{\rm ad}) \ar[d, two heads] \ar[r] & 0\\
 & K_I \ar[r] & \bbX_*(T)_I \ar[r] &  \bbX_*(T_{\rm ad})_I\ar[r] & 0.
\end{tikzcd}
\]
If $\bar\lambda\in \bbX_*(T)^+_I$, then its image in $\bbX_*(T_{\rm ad})_I$ belongs to $\bbX_*(T_{\rm ad})_I^+$, hence is the image of some $\mu \in \bbX_*(T_{\rm ad})^+$ by the case treated above. If $\nu$ is any preimage of $\mu$ in $\bbX_*(T)$, then its image $\bar \nu$ in $\bbX_*(T)_I$ has the same image as $\bar \lambda$ in $\bbX_*(T_{\rm ad})_I$. Hence there exists $\eta \in K$ whose image in $K_I$ has image $\bar\lambda - \bar\nu$ in $\bbX_*(T)_I$. Then $\lambda := \nu + \eta$ has image $\bar \lambda$ in $\bbX_*(T)_I$, and its image in $\bbX_*(T_{\mathrm{ad}})$ is $\mu$. Since the pairing of any coweight of $T$ with an absolute root only depends on its image in $\bbX_*(T_{\rm ad})$ we have $\lambda \in \bbX_*(T)^+$, which finishes the proof.
\end{proof}

\begin{rmk}
\label{rmk:counterexample-surjectivity}
Contrary to what is asserted in~\cite[Remark~3.8]{richarz} and \cite[page 3227, lines 30--31]{HainesRicharz:Smoothness},
the surjectivity of $\bbX_*(T)^+\to \bbX_*(T)_I^+$ fails in general.
Both references use the false claim implicitly through \cite[Proof of Corollary 2.8]{zhu} where the same false claim appears.
The proof can be fixed using Corollary \ref{cor:passing-to-adjoint} below by passing to adjoint groups and referring to Lemma \ref{lem:dominant-coweights}\eqref{it:dominant-coweights-lifting} in this case.

For an explicit example, let $G={\rm SU}_3$ be the special unitary group on a $3$-dimensional hermitian vector space defined by some separable quadratic extension $F'/F$. 
Then, $\bbX_*(T)$ can be identified with the group $\Z^3_{\Sigma =0}$ of elements $(a,b,c)\in \Z^3$ with $a+b+c=0$, with
the non-trivial Galois involution in ${\rm Gal}(F'/F)$ acting by $(a,b,c)\mapsto (-c,-b,-a)$.
The subgroup $(1-I)\Z^3_{\Sigma =0}$ identifies with the subset of vectors $(-b,2b,-b)$ with $b\in \Z$, and the map $(a,b,c)\mapsto a-c$ induces an isomorphism $\Z^3_{\Sigma =0}/(1-I)\Z^3_{\Sigma =0}\simeq\Z$.
Hence, $\bbX_*(T)\to \bbX_*(T)_I$ identifies with $\Z^3_{\Sigma =0}\to \Z$, $(a,b,c)\mapsto a-c$.
The dominant elements $(\Z^3_{\Sigma =0})^+$ with respect to the upper triangular Borel subgroup are given by the condition $a\geq b\geq c$ in $\Z^3_{\Sigma =0}$ and by $a\geq 0$ in $\Z$. 
An easy calculation shows that the image of $(\Z^3_{\Sigma =0})^+\to \Z^+=\Z_{\geq 0}$ is $2\Z_{\geq 0}$, so the map is not surjective.
\end{rmk}

The Iwahori--Weyl group $W$ acts on the apartment $\mathscr{A}(G,A,F)$ by affine transformations. Any choice of a point in $\fa$ defines an identification of
\[
\bbX_*(A) \otimes_\Z \R = \bbX_*(T)_I \otimes_\Z \R
\]
(see~\eqref{eqn:identification-cocharacters})
with $\mathscr{A}(G,A,F)$ such that an element $\mu$ of the subgroup $T(F)/\cT(O_F)\cong \bbX_*(T)_I$ (see~\eqref{eqn:Gr-torus}) of $W$ acts by translation by $-\mu$. 
Let $\fa_0$ be the alcove in $\mathscr{A}(G,A,F)$ containing $\fa$ in its closure and which belongs to the chamber corresponding under this identification to the chamber in $X_*(A)\otimes_\Z\R$ determined by $B$. 
This choice determines a structure of a quasi-Coxeter group on $W$; see~\cite[Appendix, Lemma~14]{pr} and~\cite[\S 1]{richarz-schubert} for details. In terms of this structure, one can consider a subset
\[
{}_\fa W^\fa \subset W
\]
characterized in~\cite[(1.5)]{richarz-schubert}, which is a system of representatives for the double quotient
\[
W_\fa \backslash W / W_\fa \cong \bbX_*(T)_I / W_0,
\]
and is thus in canonical bijection with the double quotient
\[
\cG(O_F)\backslash G(F)/\cG(O_F) = \Loop^+ \cG(\F) \backslash \Loop G(\F) / \Loop^+ \cG(\F);
\]
see~\cite[Lemma~1.3]{richarz-schubert}.
Since $\fa$ is special, the subset ${}_\fa W^\fa$ has an explicit description, which follows from~\cite[Corollary~1.8]{richarz-schubert} or the more general claim~\cite[Lemma~3.9]{richarz}: it coincides with the subset
\[
\bbX_*(T)^+_I \subset \bbX_*(T)_I \subset W.
\]

Any $\mu \in \bbX_*(T)_I$ determines a point in $T(F)/\cT(O_F)$ (see~\eqref{eqn:Gr-torus}), and hence an $\F$-point of $\Gr_{\cG}$, which we denote $t^\mu$. We denote by $\Gr_{\cG}^\mu$ the $\Loop^+ \cG$-orbit of $t^\mu$, and by $\Gr_{\cG}^{\leq \mu}$ the scheme-theoretic closure of $\Gr_{\cG}^\mu$. 
We also denote by
\begin{equation}
\label{eqn:jmu}
j^\mu \colon \Gr_{\cG}^\mu \to \Gr_{\cG}, \qquad j^{\leq \mu} \colon \Gr_{\cG}^{\leq \mu} \to \Gr_{\cG}
\end{equation}
the natural immersions.
Then $\Gr_{\cG}^{\leq \mu}$ is a projective variety over $\F$, and $\Gr_{\cG}^\mu$ is a smooth open dense subscheme of $\Gr_{\cG}^{\leq \mu}$, which admits a paving by affine spaces;\footnote{The paving by affine spaces in this general setting can be deduced from \cite[Proposition~8.7]{pr} using Demazure resolutions.} see~\cite[\S 2.1]{richarz} for details. 
This notation is justified by the following fact. Recall from Lemma~\ref{lem:coroots-coweights-coinv} and its proof that the submodule $(\Z \Phi_\abs^\vee)_I \subset \bbX_*(T)_I$ admits a basis in natural bijection with the $I$-orbits of simple roots. We can therefore define an order $\leq$ on $\bbX_*(T)_I$ by declaring that $\lambda \leq \mu$ iff $\mu-\lambda$ belongs to the submonoid generated by this basis.
Then on the underlying sets we have
\begin{equation}
\label{eqn:stratif-Schubert}
| \Gr_{\cG}^{\leq \mu} | = \bigsqcup_{ \substack{\lambda \in \bbX_*(T)_I^+ \\ \lambda \leq \mu }} |\Gr_{\cG}^\lambda| ;
\end{equation}
see~\cite[Corollary~1.8 and Proposition~2.8]{richarz-schubert}.

Let $\rho \in \Q \otimes_\Z \bbX^*(T)$ be one-half the sum of the roots in $\Phi^+_\abs$. Since this element is $I$-invariant, the pairing
\[
\langle -, 2\rho \rangle \colon \bbX_*(T) \to \Z
\]
factors through a map $\bbX_*(T)_I \to \Z$, which we denote similarly. With this notation, for any $\lambda \in \bbX_*(T)_I^+$ we have
\begin{equation}
\label{eqn:dim-orbits}
\dim(\Gr_{\cG}^\lambda) = \langle \lambda, 2\rho \rangle,
\end{equation}
see again~\cite[Corollary~1.8 and Proposition~2.8]{richarz-schubert}.
 
\begin{rmk}
It is known that $\Gr_{\cG}^{\leq \mu}$ is always geometrically unibranch (i.e.~the normalization morphism is a universal homeomorphism, see~\cite[Corollary~32]{kollar}), see~\cite[Proposition~3.1]{HainesRicharz:CM}.
Under some additional mild assumptions, $\Gr_{\cG}^{\leq \mu}$ turns out to also be normal and Cohen--Macaulay, due to results of Faltings \cite{faltings-loops}, Pappas--Rapoport \cite{pr}, Fakhruddin--Haines--Louren{\c c}o--Richarz~\cite{FHLR} and the second named author \cite{lourenco-normality} (see also \cite{HLR, br-affgrass} for failures of this property). 
We do not need these results below, since universal homeomorphisms induce equivalences on small étale topoi.
\end{rmk}

\subsection{Convolution schemes}
\label{ss:convolution-schemes}

Given a smooth affine group scheme $\cH$ over $O_F$, we can
consider the ``convolution functor'' $\Conv_{\cH}$ over $\F$ whose $R$-points consist of isomorphism classes of triples $(\cE_1, \cE_2, \alpha,\beta)$ where $\cE_1,\cE_2$ are $\cH$-torsors over $\Spec(R\pot{t})$, $\alpha$ is a trivialization of $\cE_1$ over $\Spec(R\rpot{t})$, and $\beta$ is an isomorphism between the restrictions of $\cE_1$ and $\cE_2$ to $\Spec(R\rpot{t})$. We have a ``multiplication'' morphism
\[
m \colon \Conv_{\cH} \to \Gr_{\cH}, \; (\cE_1, \cE_2, \alpha,\beta)\mapsto (\cE_2, \beta \circ \alpha),
\]
where we use the moduli description of $\Gr_{\cH}$ recalled in~\S\ref{ss:loop-gps}, as well as the projection map 
\[
\pr_1 \colon \Conv_{\cH} \to \Gr_{\cH}, \; (\cE_1, \cE_2, \alpha,\beta)\mapsto (\cE_1, \alpha).
\]
It is clear that the product map
\begin{equation}
\label{eqn:Conv-product}
(\pr_1,m) \colon \Conv_{\cH} \to \Gr_{\cH} \times \Gr_{\cH}
\end{equation}
is an isomorphism; in particular, this shows that $\Conv_{\cH}$ is representable by an ind-scheme, which is ind-projective if $\cH$ is parahoric.
In this case, $m$ is ind-projective as well.

This ind-scheme can also be constructed as a twisted product
\begin{equation}\label{eqn:Conv-twisted}
\Conv_\cH \cong \Loop H \times^{\Loop^+\cH} \Loop H/\Loop^+\cH
\end{equation}
(where, as usual, $H=F \otimes_{O_F} \cH$, and we consider \'etale quotients).
From this perspective, given any locally closed subschemes $X, Y \subset \Gr_\cH$ where $Y$ is $\Loop^+\cH$-stable, we can define a locally closed subscheme
\[
X \wttimes Y := (X \times_{\Gr_\cH} \Loop H) \times^{\Loop^+\cH} Y \qquad\subset \Conv_\cH.
\]
For instance, if $\cG$ is as in~\S\ref{ss:special-facets}, given $\lambda,\mu \in \bbX_*(T)_I^+$, we set
\[
\Conv_\cG^{(\lambda,\mu)} := \Gr_{\cG}^\lambda \wttimes \Gr_{\cG}^\mu.
\]
The closures of these schemes are denoted by
\[
\Conv_{\cG}^{\leq (\lambda,\mu)} := \overline{\Conv_\cG^{(\lambda,\mu)}} = \Gr_{\cG}^{\leq \lambda} \wttimes \Gr_{\cG}^{\leq \mu}.
\]
In view of~\eqref{eqn:stratif-Schubert} we have
\begin{equation}
 \label{eqn:stratif-Schubert-Conv}
 | \Conv_{\cG}^{\leq (\lambda,\mu)} | = \bigsqcup_{\substack{\lambda',\mu' \in \bbX_*(T)_I^+ \\ \lambda' \leq \lambda, \, \mu' \leq \mu}} | \Conv_\cG^{(\lambda',\mu')} |.
\end{equation}

Note that in the special case where $\cH$ is abelian, 
the left multiplication action of $\Loop^+\cH$ on $\Gr_\cH$ is trivial. 
We deduce an identification 
\begin{equation}\label{eqn:Conv-ab}
\Conv_\cH \cong \Loop \cH \times^{\Loop^+\cH} \Gr_\cH \cong \Gr_\cH \times \Gr_\cH \qquad\text{if $\cH$ is abelian}.
\end{equation}
We emphasize that this isomorphism is \emph{different} from the one in~\eqref{eqn:Conv-product}.

\subsection{Some quotient stacks}
\label{ss:quotient-stacks}

In this subsection we introduce some stacks that will be used later in our study of sheaves on $\Gr_{\cH}$.

Let $\Hk_{\cH}$ be the Hecke stack (over $\F$) associated with $\Gr_{\cH}$, such that $\Hk_{\cH}(R)$ is the category of triples $(\cE_1, \cE_2, \alpha)$ where $\cE_1, \cE_2$ are $\cH$-bundles on $\Spec(R\pot{t})$ and $\alpha$ is an isomorphism between their restrictions to $\Spec(R\rpot{t})$. We have a natural morphism
\[
h \colon \Gr_{\cH} \to \Hk_{\cH}
\]
sending $(\cE,\alpha)$ to $(\cE_0,\cE,\alpha)$ where $\cE_0$ is the trivial $\cH$-bundle. This morphism
is an $\Loop^+ \cH$-bundle; as in~\cite[Proposition~6.1.7]{fs} it factors through an isomorphism
\begin{equation}\label{eqn:Hk-quotient}
[\Loop^+ \cH \backslash \Gr_{\cH}]_{\et} \simto \Hk_{\cH}
\end{equation}
where the left-hand side is the \'etale quotient stack of $\Gr_{\cH}$ by the action of $\Loop^+ \cH$. (In particular, the left-hand side identifies with the fppf quotient stack of $\Gr_{\cH}$ by the action of $\Loop^+ \cH$.)

By the proof of its representability (see e.g.~\cite{richarz-basics}, or \cite[Lemma~A.5]{rs-intersection} for a more general result), the ind-scheme $\Gr_{\cH}$ admits a presentation $\Gr_{\cH} = \colim_{i \geq 0} X_i$ where each $X_i$ is an $\F$-scheme of finite type such that the action of $\Loop^+ \cH$ on $\Gr_{\cH}$ factors through an action on $X_i$, and such that the latter action factors though an action of $\Loop^+_{n_i} \cH$ for some $n_i \geq 0$. One can also assume that if $i \leq j$ we have $n_i \leq n_j$, so that the composition $\Loop^+ \cH \to \Loop^+_{n_j} \cH$ factors through the quotient morphism $\Loop^+_{n_i} \cH \to \Loop^+_{n_j} \cH$.
Then we have
\begin{equation}
\label{eqn:Hk-colim-quotients}
\Hk_{\cH} = \colim_i [\Loop^+ \cH \backslash X_i]_{\et}, \quad [\Loop^+ \cH \backslash X_i]_{\et} = \lim_{n \geq n_i} \, [\Loop^+_{n} \cH \backslash X_i]_{\et}.
\end{equation}
(Here again, each \'etale quotient stack identifies with the corresponding fppf quotient stack. Note that each $[\Loop^+_{n} \cH \backslash X_i]_{\et}$ is an algebraic stack over $\F$, see~\cite[\href{https://stacks.math.columbia.edu/tag/06FI}{Tag 06FI}]{stacks-project}, which is moreover of finite type.)

We will also consider the Hecke convolution stack $\HkConv_{\cH}$ over $\F$, defined in such a way that $\HkConv_{\cH}(R)$ is the category of tuples $(\cE_1,\cE_2,\cE_3,\alpha,\beta)$ where $\cE_1,\cE_2,\cE_3$ are $\cH$-bundles over $\Spec(R\pot{t})$, $\alpha$ is an isomorphism between the restrictions of $\cE_1$ and $\cE_2$ to $\Spec(R\rpot{t})$, and $\beta$ is an isomorphism between the restrictions of $\cE_2$ and $\cE_3$ to $\Spec(R\rpot{t})$. 
We have a canonical morphism
\[
\tilde h \colon \Conv_{\cH} \to \HkConv_{\cH}
\]
sending $(\cE_1,\cE_2,\alpha,\beta)$ to $(\cE_0,\cE_1,\cE_2,\alpha,\beta)$ where $\cE_0$ is the trivial $\cH$-bundle. 
This morphism is an $\Loop^+ \cH$-bundle, and factors through an isomorphism
\[
[\Loop^+ \cH \backslash \Conv_{\cH}]_{\et} \simto \HkConv_{\cH}.
\]
We also have descriptions of this stack similar to those in~\eqref{eqn:Hk-colim-quotients}.

The morphisms $\pr_1 \colon \Conv_{\cH} \to \Gr_{\cH}$ and $m \colon \Conv_\cH \to \Gr_\cH$ have analogues at the level of quotient stacks, which will also be denoted
\[
\pr_1 \colon \HkConv_{\cH} \to \Hk_{\cH}, \qquad
m \colon \HkConv_{\cH} \to \Hk_{\cH},
\]
and which send $(\cE_1,\cE_2,\cE_3,\alpha,\beta)$ to $(\cE_1, \cE_2, \alpha)$ and $(\cE_1,\cE_3,\beta \circ \alpha)$, respectively. We also have morphisms
\[
p \colon \HkConv_{\cH} \to \Hk_{\cH} \times \Hk_{\cH},
\qquad
p \colon \Conv_\cH \to \Gr_\cH \times \Hk_\cH,
\]
where the former sends $(\cE_1,\cE_2,\cE_3,\alpha,\beta)$ to $\bigl( (\cE_1,\cE_2,\alpha), (\cE_2,\cE_3,\beta) \bigr)$, and the latter is defined similarly.  These maps fit into cartesian squares
\begin{equation}
\label{eqn:ppr-cartesian}
\begin{tikzcd}[column sep=small]
\HkConv_\cH \ar[r, "p"] \ar[d, "\pr_1"' ] & \Hk_\cH \times \Hk_\cH \ar[d] \\
\Hk_\cH \ar[r] & \Hk_\cH \times [\Loop^+\cH\backslash \mathrm{pt}]_\et,
\end{tikzcd}
\quad
\begin{tikzcd}[column sep=small] 
\Conv_\cH \ar[r, "p"] \ar[d, "\pr_1"' ] & \Gr_\cH \times \Hk_\cH \ar[d] \\
\Gr_\cH \ar[r] & \Gr_\cH \times [\Loop^+\cH\backslash \mathrm{pt}]_\et,
\end{tikzcd}
\end{equation}
where the bottom horizontal arrows are induced by the obvious morphism $\mathrm{pt} \to [\Loop^+\cH\backslash \mathrm{pt}]_\et$.

In the case where $\cH$ is abelian, we have a counterpart to~\eqref{eqn:Conv-ab}: an isomorphism
\begin{equation}\label{eqn:HkConv-ab}
\HkConv_\cH \cong \Hk_\cH \times \Gr_\cH \qquad\text{if $\cH$ is abelian}.
\end{equation}
In this case, the following diagram commutes:
\begin{equation}\label{eqn:HkConv-ab-commute}
\begin{tikzcd}[row sep=tiny]
& \Gr_\cH \times \Gr_\cH \ar[r, "\sim"', "\eqref{eqn:Conv-ab}"] \ar[dd, "h \times \id"] \ar[dl, "h \times h"'] 
  & \Conv_\cH \ar[dd, "\tilde h"] \ar[r, "m"] 
  & \Gr_\cH \ar[dd, "h"] \\
\Hk_\cH \times \Hk_\cH \\
& \Hk_\cH \times \Gr_\cH \ar[r, "\sim"', "\eqref{eqn:HkConv-ab}"] \ar[ul, "\id \times h"'] 
  & \HkConv_\cH \ar[r, "m"] \ar[ull, bend left=35, "p"]
  & \Hk_\cH.
\end{tikzcd}
\end{equation}

In later sections we will consider the morphisms $m$, $p$, $h$ and $\tilde h$ for the group $\cG$, but also for some subgroups. 
To avoid confusions, when necessary we will add a subscript to these notations to indicate which group is considered.

\section{Semi-infinite orbits}
\label{sec:semi-infinite}

\subsection{Attractors and fixed points in Bruhat--Tits theory}
\label{ss:group-theory}

Recall that for any base scheme $S$, given an $S$-scheme $X$ endowed with an action of the multiplicative group $\mathbb{G}_{\mathrm{m}, S}$ we have associated \emph{attractor} and \emph{fixed points} functors from $S$-schemes to sets, denoted $X^+$ and $X^0$; see e.g.~\cite{richarz-Gm}. These functors are not always representable, but they are under reasonable assumptions that will be satisfied in all the examples we consider below. We also have natural morphisms of functors $X^0 \to X^+$ and $X^+ \to X$.  The former admits a left inverse $X^+ \to X^0$ (the ``limit'' morphism). If $X$ is separated over $S$ then the morphism $X^+ \to X$ is a monomorphism, see~\cite[Remark~1.19(ii)]{richarz-Gm}.

Let us now consider the setting of Section~\ref{sec:aff-Grass}, fix a cocharacter
\[
\lambda\colon \mathbb{G}_{{\mathrm{m}},F} \to A \subset G,
\]
and then make $\mathbb{G}_{{\mathrm{m}},F}$ act on $G$ by conjugation via this cocharacter.  Denote the attractor and fixed-points functors by
\[
P_\lambda := G^+ \qquad\text{and}\qquad
M_\lambda := G^0,
\]
respectively.  These functors are represented by smooth subgroup schemes of $G$.  Indeed, by~\cite[Proposition~2.2.9]{CGP10}, $P_\lambda$ is a parabolic subgroup of $G$ containing $A$, and $M_\lambda$ is a Levi factor of $P_\lambda$.  Moreover, every parabolic subgroup of $G$ can be described in this way as an attractor.

\begin{rmk}
\phantomsection
\label{rmk:parabolics-cocharacters}
\begin{enumerate}
\item
\label{it:parabolics-cocharacters}
To amplify the preceding comment: if $P \subset G$ is a parabolic subgroup containing $A$, then $P = P_\lambda$ for some $\lambda \in \bbX_*(A)$.  (This follows from the conjugacy of maximal $F$-split tori in $P$: see~\cite[Lemme~4.6 and Proposition~11.6]{bot}.)
In this case, $M_\lambda$ is the unique Levi factor of $P$ containing $A$. The parabolic subgroup $P_\lambda$ is standard with respect to $B$ (i.e., contains $B$) if and only if $\lambda$ is dominant (with respect to $\Phi^+_\abs \subset \bbX^*(T)$).
\item
\label{it:opposite-parabolic}
If one replaces $\lambda$ by the opposite cocharacter $-\lambda$, then we obtain the same Levi subgroup: $M_{-\lambda}=M_\lambda$, and the parabolic subgroup $P_{-\lambda}$ is the opposite parabolic subgroup such that $P_\lambda \cap P_{-\lambda} = M_\lambda$.
\end{enumerate}
 \end{rmk}

Next, we observe that $\lambda$ extends uniquely to a cocharacter
\[
\lambda_{O_F} \colon \mathbb{G}_{\mathrm{m},O_F} \to \cA \subset \cG
\]
of $O_F$-groups by the universal property of the N\'eron model $\cA$ (or, in a more elementary way, by the equality
$\Hom_{O_F\mhyphen\mathrm{gps}}(\mathbb{G}_{\mathrm{m}, O_F},\cA)=\Hom_{F\mhyphen\mathrm{gps}}(\mathbb{G}_{\mathrm{m}, F},A)$, 
which follows from the facts that $\cA$ is a split torus and $\Spec(O_F)$ is connected). 
We can therefore consider, as above, the action of $\mathbb{G}_{\mathrm{m}, O_F}$ on $\cG$ induced by $\lambda_{O_F}$, as well as the corresponding attractors and fixed points
\[
\cP_\lambda := \cG^+
\qquad\text{and}\qquad \cM_\lambda := \cG^0
\]
over $O_F$.  According to~\cite[Lemma 4.5]{HainesRicharz_TestFunctions} (and its proof), $\cP_\lambda$ and $\cM_\lambda$ are both smooth affine group schemes over $O_F$, and they coincide with the scheme-theoretic closures of their counterparts in $G$:
\begin{equation}
\label{eqn:parabolics-cocharacters-2}
\cP_\lambda = \overline{P_\lambda}
\qquad\text{and}\qquad
\cM_\lambda = \overline{M_\lambda}.
\end{equation}
In particular, the group schemes $\cP_\lambda$ and $\cM_\lambda$ depend only on $P_\lambda$ and $M_\lambda$, and not on the choice of the cocharacter $\lambda$.  The natural limit morphism $\cP_\lambda \to \cM_\lambda$ is split by the embedding $\cM_\lambda \to \cP_\lambda$, and its kernel $\cU_\lambda$ is a smooth affine $O_F$-group scheme with connected unipotent geometric fibers.

Since $M_\lambda$ contains $A$, we can consider the apartment $\mathscr{A}(M_\lambda, A, F)$ in the building $\mathscr{B}(M_\lambda, F)$ of $M_\lambda$. 
This identifies with $\mathscr{A}(G, A, F)$, hence we can consider the unique facet $\fa_{M_\lambda}$ in $\mathscr{A}(M_\lambda, A, F)$ containing $\fa$. In view of~\cite[Appendix~1]{richarz}, we know that:
\begin{itemize}
\item $\cM_\lambda$ is the parahoric group scheme attached to the facet $\fa_{M_\lambda} \subset \mathscr{B}(M_\lambda,F)$;
\item the facet $\fa_{M_\lambda}$ is special.
\end{itemize}
In other words, $\cM_\lambda$ matches the set-up of~\S\ref{ss:special-facets}. (As usual, these data depend only on $M_\lambda$, and not on the choice of $\lambda$.)

\subsection{Attractors and fixed points on the affine Grassmannian}
\label{ss:attractor-fixed-pts}

We continue with the setting of~\S\ref{ss:group-theory}.
The composition 
\begin{equation}\label{eqn:gm-loop-g}
\mathbb{G}_{{\mathrm{m}},\F}\subset \Loop^+\mathbb{G}_{\mathrm{m}, O_F} \xrightarrow{\Loop^+\lambda_{O_F}} \Loop^+\cG
\end{equation}
and the $\Loop^+\cG$-action on $\Gr_{\cG}$
provide a $\mathbb{G}_{\mathrm{m},\F}$-action on the ind-scheme $\Gr_{\cG}$. 
It is clear by construction that this action preserves Schubert varieties. 
Furthermore, by~\cite[Lemma~5.3]{HainesRicharz_TestFunctions} it is Zariski locally linearizable in the sense of~\cite{richarz-Gm}, and thus the fixed points $\Gr_{\cG}^0$ and the attractor $\Gr_{\cG}^+$ (defined in the obvious way, generalizing the definition for schemes) are representable by ind-schemes (see~\cite[Theorem~2.1]{HainesRicharz_TestFunctions}). Moreover the natural morphism $\Gr_{\cG}^0 \to \Gr_{\cG}$ is representable by a closed immersion; the natural morphism $\Gr_{\cG}^+ \to \Gr_{\cG}$ is bijective (but not a homeomorphism); and its restriction to each connected component of $\Gr_{\cG}^+$ is representable by a locally closed immersion.
Below we identify precisely this sub-ind-scheme, confirming the expectation in~\cite[Remark 4.8]{HainesRicharz_TestFunctions}.

The morphisms of group schemes
\[
\cM_\lambda \to \cP_\lambda \to \cG
\]
induce $\mathbb{G}_{\mathrm{m},\F}$-equivariant morphisms of the corresponding affine Grassmannians
\[
\Gr_{\cM_\lambda} \to \Gr_{\cG} \quad \text{and} \quad \Gr_{\cP_\lambda} \to \Gr_{\cG}.
\]
Since $\mathbb{G}_{\mathrm{m},\F}$ acts trivially on $\Gr_{\cM_\lambda}$, the former factors through a morphism
\begin{equation}
\label{eqn:fixed-pts-map}
\Gr_{\cM_\lambda} \to \Gr_{\cG}^0.
\end{equation}
Similarly, since the action of $\mathbb{G}_{\mathrm{m},O_F}$ on $\cP_\lambda$ extends to an action of the monoid $\mathbb{A}^1_{O_F}$, the action of $\mathbb{G}_{\mathrm{m},\F}$ on $\Gr_{\cP_\lambda}$ extends to an action of $\mathbb{A}^1_\F$, which implies that the monomorphism $(\Gr_{\cP_\lambda})^+ \to \Gr_{\cP_\lambda}$ is an isomorphism. We deduce a morphism
\begin{equation}
\label{eqn:attractor-map}
\Gr_{\cP_\lambda} = (\Gr_{\cP_\lambda})^+ \to \Gr_{\cG}^+.
\end{equation}

\begin{prop}
\label{prop:attractors-fixed-pts}
	The morphisms~\eqref{eqn:fixed-pts-map} and~\eqref{eqn:attractor-map} are isomorphisms.
\end{prop}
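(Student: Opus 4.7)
The approach is to establish both isomorphisms in parallel, first at the level of loop groups and then transporting to the affine Grassmannian before comparing tangent spaces.

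As a first step, I would verify the corresponding identifications for loop groups:
\begin{gather*}
(\Loop \cG)^0 \cong \Loop \cM_\lambda, \quad (\Loop^+ \cG)^0 \cong \Loop^+ \cM_\lambda,\\
(\Loop \cG)^+ \cong \Loop \cP_\lambda, \quad (\Loop^+ \cG)^+ \cong \Loop^+ \cP_\lambda,
\end{gather*}
where the $\Gm$-action is induced from $\lambda_{O_F}$ as in~\eqref{eqn:gm-loop-g}. These are essentially formal consequences of the functor-of-points descriptions together with~\eqref{eqn:parabolics-cocharacters-2}: an $R$-point of $(\Loop^+\cG)^0$ is a $\Gm$-fixed element of $\cG(R\pot{t})$, which is exactly an element of $\cG^0(R\pot{t}) = \cM_\lambda(R\pot{t})$, and the attractor and unbounded cases are analogous.

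For the fixed-point map~\eqref{eqn:fixed-pts-map}, the source inclusion $\Gr_{\cM_\lambda} \to \Gr_\cG$ is a monomorphism (as $\cM_\lambda \hookrightarrow \cG$ is a closed immersion of smooth affine group schemes), and the target $\Gr_\cG^0 \hookrightarrow \Gr_\cG$ is representable by a closed immersion by~\cite[Theorem~2.1]{HainesRicharz_TestFunctions}. It therefore suffices to check bijectivity on $k$-points (for $k/\F$ algebraically closed) and equality of tangent spaces. Bijectivity on $k$-points uses the étale-sheaf presentation $\Gr_\cG = [\Loop G / \Loop^+ \cG]_\et$ from Lemma~\ref{lem:quotient-etale}: a $\Gm$-fixed $k$-point lifts étale-locally to a $\Gm$-fixed element of $\Loop G(k)$, which by the loop-group identification belongs to $\Loop M_\lambda(k)$. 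The tangent-space equality then follows from Lemma~\ref{lem:tangent-space-Fl} together with the loop-group identifications, using that $\Lie(\Loop G)$ and $\Lie(\Loop^+\cG)$ carry locally finite $\Gm$-actions so that the functor of taking fixed vectors is exact on the resulting short exact sequence.

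For the attractor map~\eqref{eqn:attractor-map}, the argument is parallel but with locally closed immersions replacing closed ones on each connected component of $\Gr_\cG^+$. Bijectivity on $k$-points is again reduced to the loop-group identification, and comparing tangent spaces yields the isomorphism component by component. The main obstacle is justifying the loop-level bijectivity on $k$-points: this amounts to showing that every $\Gm$-fixed (resp.\ $\Gm$-attracted) point of $\Loop G(k)/\Loop^+\cG(k)$ is represented, modulo $\Loop^+\cG(k)$, by an element of $\Loop M_\lambda(k)$ (resp.\ $\Loop P_\lambda(k)$). In the ramified special-facet setting, this is the substantive content of the semi-infinite orbit decomposition developed in~\cite{aglr}, which together with the $\Gm$-linearizability established in~\cite[Lemma~5.3]{HainesRicharz_TestFunctions} provides the required input.
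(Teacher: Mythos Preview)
Your proposal has two genuine gaps.

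First, the lifting claim is incorrect. You assert that ``a $\Gm$-fixed $k$-point lifts \'etale-locally to a $\Gm$-fixed element of $\Loop G(k)$''. Since $k$ is algebraically closed, \'etale-locally on $\Spec(k)$ is just $\Spec(k)$, so you are claiming that a $\Gm$-fixed coset $g\,\Loop^+\cG(k)$ admits a $\Gm$-fixed representative. But the fixed-point condition on the coset only says $g^{-1}\lambda(a)g\lambda(a)^{-1}\in\Loop^+\cG(k)$ for all $a$, not that $g$ itself is fixed. Taking fixed points (and likewise attractors) does not commute with quotients in general. The paper does not attempt this lift; for field-valued points it instead cites \cite[Proposition~4.7(iii)]{HainesRicharz_TestFunctions}, which establishes bijectivity on geometric points by a separate argument.

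Second, your proposed criterion---bijectivity on $k$-points plus equality of tangent spaces at the base point---is not sufficient to conclude that a closed immersion of ind-schemes is an isomorphism: the target $\Gr_\cG^0$ or $\Gr_\cG^+$ is not a priori reduced, so one must rule out nilpotents. The paper handles this by invoking \cite[Lemma~8.6 and Remark~8.7]{HLR}, which reduces the question to checking $R$-valued points for all local artinian $\F$-algebras $R$ with algebraically closed residue field, and then runs an induction on the nilpotency index of the maximal ideal. The inductive step uses formal smoothness of $\Loop P_\lambda$ to translate an arbitrary $R$-point back to the base point, where the tangent-space computation (the paper's Lemma~\ref{lem:tangent-spaces}, proved via the big-cell decomposition rather than your loop-group identifications) finishes the argument. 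Your tangent-space idea is the right ingredient, but it needs to be deployed inside this artinian-induction framework, not as a standalone check at a single point.
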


\begin{rmk}
In particular, Proposition~\ref{prop:attractors-fixed-pts} and~\eqref{eqn:parabolics-cocharacters-2} show that $\Gr_{\cG}^+$, resp.~$\Gr_{\cG}^0$, only depends on $\cG$ and $P_\lambda$, resp.~$M_\lambda$, and not on the actual choice of $\lambda$.
\end{rmk}

For the proof of this proposition we will need a preliminary result on tangent spaces (see~\S\ref{ss:loop-gps}). 

\begin{lem}
\label{lem:tangent-spaces}
Let $e \in \Gr_{\cP_\lambda}(\F)$ be the base point, and denote similarly its image under~\eqref{eqn:attractor-map}. Then~\eqref{eqn:attractor-map} induces an isomorphism
\[
T_e \Gr_{\cP_\lambda} \simto T_e \Gr_{\cG}^+.
\]
Similarly, if we denote by $e' \in \Gr_{\cM_\lambda}(\F)$ the base point and its image under~\eqref{eqn:fixed-pts-map}, then~\eqref{eqn:fixed-pts-map} induces an isomorphism
\[
T_{e'} \Gr_{\cM_\lambda} \simto T_{e'} \Gr_{\cG}^0.
\]
\end{lem}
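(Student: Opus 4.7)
My plan is to reduce both isomorphisms to an identification of weight pieces, using Lemma~\ref{lem:tangent-space-Fl} to make all tangent spaces explicit and then comparing with the weight decomposition coming from the adjoint action of $\lambda$. By Lemma~\ref{lem:tangent-space-Fl} and unraveling the definition of the loop groups, we have canonical identifications of $\F$-vector spaces
\[
T_e \Gr_\cG \cong \Lie(G)/\Lie(\cG), \qquad T_e \Gr_{\cP_\lambda} \cong \Lie(P_\lambda)/\Lie(\cP_\lambda),
\]
and similarly $T_{e'} \Gr_{\cM_\lambda} \cong \Lie(M_\lambda)/\Lie(\cM_\lambda)$. The $\Gm$-action on $\Gr_\cG$ from~\eqref{eqn:gm-loop-g} fixes $e$ and acts on the tangent space $T_e \Gr_\cG$ through the adjoint action of $\lambda$ on the Lie algebras above, so in order to prove the lemma it suffices to match up the appropriate $\lambda$-weight pieces of $\Lie(G)/\Lie(\cG)$ with the tangent spaces of the attractor and of the fixed points.

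Under the $\Gm$-action on $\cG$, the $O_F$-Lie algebra $\Lie(\cG)$ decomposes into weight pieces, and by construction of the integral Levi and parabolic, together with~\eqref{eqn:parabolics-cocharacters-2}, the weight-$0$ part is $\Lie(\cM_\lambda)$ and the non-negative weight part is $\Lie(\cP_\lambda)$. The analogous statements hold for $\Lie(G)$ after base change to $F$. Taking quotients, I obtain a weight decomposition of $T_e \Gr_\cG$ whose weight-$0$ part identifies with $\Lie(M_\lambda)/\Lie(\cM_\lambda) \cong T_{e'} \Gr_{\cM_\lambda}$ and whose non-negative weight part identifies with $\Lie(P_\lambda)/\Lie(\cP_\lambda) \cong T_e \Gr_{\cP_\lambda}$, the identifications being induced by the morphisms~\eqref{eqn:attractor-map} and~\eqref{eqn:fixed-pts-map} of the statement.

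It remains to identify these weight pieces with $T_e \Gr_\cG^+$ and $T_{e'} \Gr_\cG^0$. For a finite-type separated scheme over $\F$ equipped with a $\Gm$-action, this is classical: the tangent space at a fixed point to the fixed-point scheme (respectively, to the attractor) is the weight-$0$ (respectively, non-negative weight) part of the ambient tangent space; see~\cite{richarz-Gm}. The main obstacle is extending this to the ind-scheme $\Gr_\cG$, but this should be handled by exhausting $\Gr_\cG$ by the $\Gm$-stable finite-dimensional closed subschemes $\Gr_\cG^{\leq \mu}$ and invoking the Zariski-local linearizability from~\cite[Lemma~5.3]{HainesRicharz_TestFunctions}, since tangent spaces, fixed points, and attractors all commute with filtered colimits along $\Gm$-equivariant closed immersions.
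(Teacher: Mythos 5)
Your proposal is correct, and its skeleton coincides with the paper's: identify the tangent spaces via Lemma~\ref{lem:tangent-space-Fl}, use the fact that forming attractors/fixed points commutes with tangent spaces at fixed points to reduce to a statement about weight pieces of $\Lie(\Loop G)/\Lie(\Loop^+\cG)$, and then identify those weight pieces with $\Lie(\Loop P_\lambda)/\Lie(\Loop^+\cP_\lambda)$ (resp.\ the analogue for $\cM_\lambda$). Where you genuinely diverge is in the last identification. The paper proves that $\Lie(\Loop G)^+=\Lie(\Loop P_\lambda)$ and $\Lie(\Loop^+\cG)^+=\Lie(\Loop^+\cP_\lambda)$ by an explicit computation with the Bruhat--Tits big cell~\eqref{eqn:big-cell-cG}: the open immersion $\prod_\alpha \cU_\alpha\times\cT\times\prod_\alpha\cU_\alpha\to\cG$ and its attractor (via \cite[Corollary~2.3]{HainesRicharz_TestFunctions}) give direct sum decompositions of all four loop Lie algebras into $\Lie(\Loop^{(+)}\cT)$ and root-group pieces, which are then compared term by term. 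You instead assert directly that $\Lie(\cP_\lambda)$ (resp.\ $\Lie(\cM_\lambda)$) is the nonnegative-weight (resp.\ zero-weight) part of $\Lie(\cG)$ as an $O_F$-module, and then pass to $O_F$- and $F$-points, which is legitimate because the cocharacter acts through constant loops, so the grading is by $O_F$-submodules and is preserved under $(-)(O_F)$ and $(-)(F)$. Your route is shorter and avoids Bruhat--Tits root-group theory, at the price of being less explicit; the paper's big-cell argument additionally records the precise weights $\{\langle\alpha,\lambda\rangle,2\langle\alpha,\lambda\rangle\}$ on each $\Lie(\Loop\cU_\alpha)$, which makes the comparison completely transparent in the possibly non-reduced relative root system setting. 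One small repair: the justification ``by construction of the integral Levi and parabolic, together with~\eqref{eqn:parabolics-cocharacters-2}'' is not quite the right reference, since the closure statement~\eqref{eqn:parabolics-cocharacters-2} by itself does not identify Lie algebras; what you actually need is that $\cP_\lambda=\cG^+$ and $\cM_\lambda=\cG^0$ \emph{by definition}, together with the compatibility of attractors/fixed points with tangent spaces at fixed points applied along the identity section of the affine group scheme $\cG$ (e.g.\ \cite[Proposition~1.4.11(vi)]{Dr13}, the same result the paper invokes for $\Gr_\cG$, or the explicit description of attractors of affine schemes as the vanishing locus of negative-weight functions). Your reduction of the ind-scheme statement to the finite-type pieces $\Gr_\cG^{\leq\mu}$ is fine, and is equivalent to the paper's direct citation of \cite{Dr13} at the ind-scheme level.
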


\begin{proof}
We give the proof of the first claim, and leave that of the second one to the reader.
	As the formation of attractors commutes with that of tangent spaces at fixed points (see~\cite[Proposition~1.4.11(vi)]{Dr13}), we have
	\[
	T_e \Gr_{\cG}^+ = (T_e \Gr_{\cG})^+;
	\]
	moreover the right-hand side identifies with the subspace in $T_e \Gr_{\cG}$ spanned by weight vectors of nonnegative weight for the action of $\mathbb{G}_{\mathrm{m},\F}$ induced by the action on $\Gr_{\cG}$. To prove the desired claim, we therefore have to show that~\eqref{eqn:attractor-map} induces an isomorphism
	\begin{equation}\label{equation.tangent.spaces}
	(T_e\Gr_{\cG})^+=T_e\Gr_{\cP_\lambda}.
	\end{equation}

By Lemma~\ref{lem:tangent-space-Fl}, we have isomorphisms
	\[
	T_e\Gr_{\cG} = \Lie(\Loop G) / \Lie(\Loop^+\cG), \qquad
	T_e \Gr_{\cP_\lambda} = \Lie(\Loop P_\lambda) / \Lie(\Loop^+ \cP_\lambda).
	\]
	The first isomorphism implies that
	\[
	(T_e\Gr_{\cG})^+ = \bigl( \Lie(\Loop G) \bigr)^+ / \bigl( \Lie(\Loop^+\cG) \bigr)^+
	\]
	where $(\Lie(\Loop G))^+$ is the subspace of $\Lie(\Loop G)$ spanned by weight vectors of nonnegative weights, and similarly for $(\Lie(\Loop^+\cG))^+$. Hence proving~\eqref{equation.tangent.spaces} amounts to proving an identification
	\begin{equation}\label{equation.tangent.spaces2}
	\bigl( \Lie(\Loop G) \bigr)^+ / \bigl( \Lie(\Loop^+\cG) \bigr)^+ = \Lie(\Loop P_\lambda) / \Lie(\Loop^+ \cP_\lambda).
	\end{equation}
	
	By \cite[\S\S 3.8.1--3.8.2, \S 4.6.2]{BT84}, the product map $(u^-,t,u^+)\mapsto u^-\cdot t\cdot u^+$ gives an open immersion of $O_F$-schemes 
	\begin{equation}
	\label{eqn:big-cell-cG}
	\prod_{\alpha\in \Phi^{\mathrm{nd},-}}\cU_{\alpha}\times \cT\times \prod_{\alpha\in \Phi^{\mathrm{nd}, +}}\cU_{\alpha}\to \cG,
	\end{equation}
	where $\Phi^{\mathrm{nd}} \subset \Phi$ is the system of non-divisible, relative roots for $(G,A)$, and $\Phi^{\mathrm{nd}, +} = \Phi^+ \cap \Phi^{\mathrm{nd}}$, $\Phi^{\mathrm{nd}, -} = -\Phi^{\mathrm{nd}, +}$.
	(Here, for any $\alpha$, $\cU_\alpha$ is the ``root group'' associated with $\alpha$.)
	After applying the loop functor $\Loop$, resp.~positive loop functor $\Loop^+$, the product map is still formally \'etale (and therefore identifies the tangent spaces), so we obtain isomorphisms
	\begin{align}
	\label{eqn:Lie-LG-+}
	\Lie(\Loop G) &= \Lie(\Loop\cT)\oplus\bigoplus_{\alpha\in \Phi^\mathrm{nd}}\Lie(\Loop\cU_\alpha), \\
	\label{eqn:Lie-LG-+2}
	\Lie(\Loop^+\cG) &= \Lie(\Loop^+\cT)\oplus\bigoplus_{\alpha\in \Phi^\mathrm{nd}}\Lie(\Loop^+\cU_\alpha).
	\end{align}
	Since $\mathbb{G}_{\mathrm{m},\F}$ acts on $\Lie(\Loop\cU_\alpha)$ with weights in $\{ \langle \alpha,\lambda\rangle, 2\langle \alpha,\lambda\rangle \}$,
	passing to attractors yields 
		\begin{align*}
	\Lie(\Loop G)^+ &= \Lie(\Loop\cT)\oplus\bigoplus_{\alpha\in \Phi_{\lambda}^\mathrm{nd}}\Lie(\Loop\cU_\alpha), \\
	\Lie(\Loop^+\cG)^+ &= \Lie(\Loop^+\cT)\oplus\bigoplus_{\alpha\in \Phi_{\lambda}^\mathrm{nd}}\Lie(\Loop^+\cU_\alpha),
	\end{align*}
	where $\Phi_{\lambda}^\mathrm{nd} = \{\alpha \in \Phi^\mathrm{nd} \mid \langle \lambda,\alpha \rangle \geq 0\}$. 
	
On the other hand, passing to attractors in the open immersion~\eqref{eqn:big-cell-cG}, by~\cite[Corollary~2.3]{HainesRicharz_TestFunctions} we obtain an open immersion
\[
	\prod_{\alpha\in \Phi^{\mathrm{nd},-}_{\lambda}}\cU_{\alpha}\times \cT\times \prod_{\alpha\in \Phi^{\mathrm{nd}, +}_{\lambda}}\cU_{\alpha}\to \cP_\lambda,
\]
where $\Phi^{\mathrm{nd},\pm}_{\lambda} = \Phi^{\mathrm{nd},\pm} \cap \Phi_{\lambda}^\mathrm{nd}$. As above we deduce identifications
		\begin{align}
		\label{eqn:Lie-LP}
	\Lie(\Loop P_\lambda) &= \Lie(\Loop\cT)\oplus\bigoplus_{\alpha\in \Phi_{\lambda}^\mathrm{nd}}\Lie(\Loop\cU_\alpha), \\
	\label{eqn:Lie-LP2}
	\Lie(\Loop^+\cP_\lambda) &= \Lie(\Loop^+\cT)\oplus\bigoplus_{\alpha\in \Phi_{\lambda}^\mathrm{nd}}\Lie(\Loop^+\cU_\alpha).
	\end{align}
Comparing~\eqref{eqn:Lie-LG-+}--\eqref{eqn:Lie-LG-+2} with~\eqref{eqn:Lie-LP}--\eqref{eqn:Lie-LP2} we deduce the identification~\eqref{equation.tangent.spaces2}, which finishes the proof.
\end{proof}

\begin{proof}[Proof of Proposition~\ref{prop:attractors-fixed-pts}]
In this proof we will use the following property of tangent spaces. Let $X$ be an $\F$-scheme of finite type. For any $\F$-algebra $R$ and any $y \in X(R) = (R \otimes_{\F} X)(R)$ we have the associated conormal sheaf $\omega_y$, see~\cite[II, \S 4, 3.1]{dg}, which is an $R$-module and has the property that if $J \subset R$ is an ideal such that $J^2=0$ there is a canonical identification between $\Hom_{R}(\omega_y, J)$ and the set of points $y' \in X(R)$ whose image in $X(R/J)$ is the image of $y$; see~\cite[II, \S 4, 3.2]{dg}. In particular, if $x \in X(\F)$, $\omega_x$ identifies with the dual of the finite-dimensional $\F$-vector space $T_x X$, and if we denote by $x_R$ the image of $x$ in $X(R)$ we have $\omega_{x_R} = R \otimes_{\F} \omega_x$, see~\cite[I, \S 4, 1.4]{dg}. For any $\F$-algebra $R$ and any ideal $J \subset R$ such that $J^2=0$, we deduce an identification between $J \otimes_\F T_x X$ and the set of points $y \in X(R)$ whose image in $X(R/J)$ is (the image of) $x$.
This property of course extends to ind-schemes of ind-finite type over $\F$ by passing to the colimit of tangent spaces in any presentation.

Below we will consider various local artinian $\F$-algebras. If $R$ is such an algebra, we will denote by $\mathrm{rad}(R)$ its Jacobson radical, i.e.~its unique maximal ideal. We will also denote by $\mathrm{n}(R)$ the minimal positive integer $n$ such that $\mathrm{rad}(R)^n = 0$. Note that if the field $R/\mathrm{rad}(R)$ is algebraically closed, then $R$ is a strictly henselian local ring. 
Using Lemma~\ref{lem:quotient-etale} we deduce that in this case we have
\begin{equation}
\label{eqn:points-Fl}
\Gr_{\cP_\lambda}(R) = \Loop P_\lambda (R)/ \Loop^+ \cP_\lambda(R).
\end{equation}

	By~\cite[Proposition~4.7(i)]{HainesRicharz_TestFunctions}, 
	our maps are closed immersions.
	To finish the proof, in view of~\cite[Lemma 8.6 and Remark~8.7]{HLR} it therefore suffices to check that for any local artinian $\F$-algebra $R$ such that $R/\mathrm{rad}(R)$ is algebraically closed, every $R$-valued point $x_R$ of $\Gr_{\cG}^0$, resp.~$\Gr_{\cG}^+$, already lies in $\Gr_{\cM_\lambda}(R)$, resp.~$\Gr_{\cP_\lambda}(R)$. 
	We shall treat the case of the attractor and leave the case of fixed points to the reader. 
	
We proceed by induction on $\mathrm{n}(R)$. If $\mathrm{n}(R) = 1$, then $R$ is an algebraically closed field. In this case, using base change we can assume that $R=\F$; this case is treated in~\cite[Proposition~4.7(iii)]{HainesRicharz_TestFunctions}.
	
	Now assume that $\mathrm{n}(R) >1$, and let $J \subset R$ be an ideal such that $J^2=0$ and $\mathrm{n}(R/J) < \mathrm{n}(R)$. (For instance, one can take $J=\mathrm{rad}(R)^{\mathrm{n}(R)-1}$.) Consider the $R/J$-valued point $x_{R/J}$ of $\Gr_{\cG}^+$ associated with $x_R$. 
	By induction, the point $x_{R/J}$ lies in $\Gr_{\cP_\lambda}(R/J)$. 
	By~\eqref{eqn:points-Fl}, we can lift $x_{R/J}$ to an $R/J$-valued point $y_{R/J}$ of $\Loop P_\lambda$, and then further to an $R$-valued point $y_R$ of $\Loop P_\lambda$ by formal smoothness. 
	In other words, replacing $x_R$ by $y_R^{-1}x_R$ we may assume that $x_{R/J}$ is the base point $e$ of $\Gr_{\cG}^+$. By the reminder at the beginning of the proof, $x_R$ corresponds to an element in $T_e(\Gr_{\cG}^+) \otimes_\F J$. By Lemma~\ref{lem:tangent-spaces} and the same considerations, there exists an $R$-point of $\Gr_{\cP_\lambda}$ whose image in $\Gr_{\cP_\lambda}(R/J)$ is $e$ and whose image under~\eqref{eqn:attractor-map} is $x_R$, which finishes the proof.
\end{proof}

We conclude this subsection with a remark on the convolution schemes $\Conv_\cG$, $\Conv_{\cM_\lambda}$ and $\Conv_{\cP_\lambda}$ (see~\S\ref{ss:convolution-schemes}).  Since $\Loop^+\cG$ acts on $\Conv_\cG$ on the left, the homomorphism~\eqref{eqn:gm-loop-g} yields a $\mathbb{G}_{\mathrm{m},\F}$-action on $\Conv_\cG$, and one can consider the attractor and fixed-point ind-schemes for this action.  The same reasoning that led to~\eqref{eqn:fixed-pts-map} and~\eqref{eqn:attractor-map} lets us construct maps
\begin{gather}
\Conv_{\cM_\lambda} \to \Conv_\cG^0, \label{eqn:fixed-pts-conv} \\
\Conv_{\cP_\lambda} = (\Conv_{\cP_\lambda})^+ \to \Conv_\cG^+. \label{eqn:attractor-conv}
\end{gather}
In view of the isomorphism~\eqref{eqn:Conv-product} (which is $\Loop^+\cG$-equivariant, and hence $\mathbb{G}_{\mathrm{m},\F}$-equivariant), we obtain the following immediate consequence of Proposition~\ref{prop:attractors-fixed-pts}.

\begin{prop}
\label{prop:attractors-fixed-pts-conv}
	The morphisms~\eqref{eqn:fixed-pts-conv} and~\eqref{eqn:attractor-conv} are isomorphisms.
\end{prop}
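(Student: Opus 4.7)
The plan is to deduce this from Proposition~\ref{prop:attractors-fixed-pts} using the product decomposition~\eqref{eqn:Conv-product}. Concretely, I would first verify that under the isomorphism
\[
(\pr_1,m) \colon \Conv_\cG \simto \Gr_\cG \times \Gr_\cG,
\]
the $\mathbb{G}_{\mathrm{m},\F}$-action on $\Conv_\cG$ coming from left multiplication by $\Loop^+\cG$ via~\eqref{eqn:gm-loop-g} transports to the \emph{diagonal} $\mathbb{G}_{\mathrm{m},\F}$-action on $\Gr_\cG \times \Gr_\cG$. This is a direct computation on $R$-points: if $g \in \Loop^+\cG(R)$ acts on $(\cE_1,\cE_2,\alpha,\beta)$ by modifying the trivialization $\alpha$ (and leaving the twisting isomorphism $\beta$ fixed), then both $\pr_1 = (\cE_1,\alpha)$ and $m = (\cE_2,\beta\circ\alpha)$ inherit the same $g$-action, so $(\pr_1,m)$ is equivariant for the diagonal target action.

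Next I would observe that the same identity~\eqref{eqn:Conv-product} applied to $\cM_\lambda$ and $\cP_\lambda$ gives isomorphisms
\[
\Conv_{\cM_\lambda} \simto \Gr_{\cM_\lambda} \times \Gr_{\cM_\lambda}, \qquad
\Conv_{\cP_\lambda} \simto \Gr_{\cP_\lambda} \times \Gr_{\cP_\lambda},
\]
and that the maps~\eqref{eqn:fixed-pts-conv} and~\eqref{eqn:attractor-conv} fit into commutative squares with~\eqref{eqn:fixed-pts-map} $\times$~\eqref{eqn:fixed-pts-map} and~\eqref{eqn:attractor-map} $\times$~\eqref{eqn:attractor-map} respectively (these squares commute by functoriality of the construction $(\pr_1,m)$ in the group scheme, noting that for $\cM_\lambda$ the target sits inside the fixed points and for $\cP_\lambda$ it sits inside the attractor by the same argument as in~\S\ref{ss:attractor-fixed-pts}).

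Since the formation of attractors and of fixed points commutes with finite products of $\mathbb{G}_{\mathrm{m}}$-schemes (or ind-schemes with Zariski-locally linearizable action), and since the target action is diagonal, we get
\[
(\Conv_\cG)^0 \cong (\Gr_\cG)^0 \times (\Gr_\cG)^0, \qquad
(\Conv_\cG)^+ \cong (\Gr_\cG)^+ \times (\Gr_\cG)^+.
\]
Combining these identifications with Proposition~\ref{prop:attractors-fixed-pts} applied to each factor yields that the two maps in question are isomorphisms.

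The only step that requires any care is the equivariance check, i.e.~verifying that the isomorphism $(\pr_1,m)$ intertwines the $\Loop^+\cG$-action on $\Conv_\cG$ with the diagonal $\Loop^+\cG$-action on $\Gr_\cG \times \Gr_\cG$ (after which $\mathbb{G}_{\mathrm{m},\F}$-equivariance follows automatically from the homomorphism~\eqref{eqn:gm-loop-g}). This is a moduli-theoretic unwinding rather than a genuine obstacle; once it is in place, the proposition is a formal consequence of Proposition~\ref{prop:attractors-fixed-pts}.
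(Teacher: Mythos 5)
Your proposal is correct and is essentially the paper's own argument: the paper likewise notes that the isomorphism $(\pr_1,m)$ of~\eqref{eqn:Conv-product} is $\Loop^+\cG$-equivariant (hence $\mathbb{G}_{\mathrm{m},\F}$-equivariant for the diagonal action on the target) and deduces the statement immediately from Proposition~\ref{prop:attractors-fixed-pts}. Your explicit equivariance check and the compatibility with the analogous decompositions for $\cM_\lambda$ and $\cP_\lambda$ are exactly the unwinding the paper leaves implicit.
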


\subsection{Parabolic subgroups and cartesian diagrams}
\label{ss:cartesian-diagram}

Let us now consider two parabolic subgroups $P,P' \subset G$ containing $A$ and such that $P \subset P'$. If we denote by $M$ and $M'$ their Levi factors containing $A$, then we have $M \subset M'$. Moreover, $P \cap M'$ is a parabolic subgroup of the reductive group $M'$ containing $A$, and $M$ is its Levi factor containing $A$. Let $\cP$, $\cP'$, $\cM$, and $\cM'$ be their scheme-theoretic closures inside $\cG$ (cf.~\eqref{eqn:parabolics-cocharacters-2}).

Point~\eqref{it:cart-diag-3} of the following lemma involves fiber products of ind-schemes. For this notion, see~\cite[Lemma~1.10]{richarz-basics}.

\begin{lem}
\phantomsection\label{lem:cart-diag}
\begin{enumerate}
\item
\label{it:cart-diag-1}
The intersection $\cP \cap \cM'$ is the scheme-theoretic closure of the group scheme $P \cap M'$ inside $\cM'$, and is smooth over $O_F$.
\item
\label{it:cart-diag-2}
The following commutative square of group schemes is cartesian:
\[
\begin{tikzcd}
\cP \ar[r] \ar[d] & \cP \cap \cM' \ar[d] \\ \cP' \ar[r] & \cM'.
\end{tikzcd}
\]
\item
\label{it:cart-diag-3}
The following commutative squares of ind-schemes are cartesian:
\[
\begin{tikzcd}
\Gr_{\cP} \ar[r] \ar[d] & \Gr_{\cP \cap \cM'} \ar[d] \\ \Gr_{\cP'} \ar[r] & \Gr_{\cM'},
\end{tikzcd}
\qquad
\begin{tikzcd}
\Conv_{\cP} \ar[r] \ar[d] & \Conv_{\cP \cap \cM'} \ar[d] \\ \Conv_{\cP'} \ar[r] & \Conv_{\cM'}.
\end{tikzcd}
\]
\end{enumerate}
\end{lem}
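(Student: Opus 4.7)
My approach is to realize $P, P', M'$ as attractors and fixed points of $\Gm$-actions on $G$ via Remark~\ref{rmk:parabolics-cocharacters}\eqref{it:parabolics-cocharacters}, so that the attractor formalism of~\S\ref{ss:group-theory} can be applied repeatedly. Concretely, I fix $\mu, \mu' \in \bbX_*(A)$ with $P = P_\mu$, $P' = P_{\mu'}$, and $M' = M_{\mu'}$; each extends to a cocharacter of $\cA \subset \cG$ and induces a $\Gm$-action on $\cG$ by conjugation, and each preserves all of $\cP, \cP', \cM'$, and $\cP \cap \cM'$ (since $A \subset P \cap M'$).

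For~\eqref{it:cart-diag-1}, I use the $\mu$-action. Since attractors commute with closed immersions and $\cM' \subset \cG$ is a closed $\mu$-stable subscheme, one has $(\cM')_\mu = \cM' \cap \cP_\mu = \cM' \cap \cP$. Applying the formalism of~\S\ref{ss:group-theory} and~\eqref{eqn:parabolics-cocharacters-2} to the reductive group $M'$ equipped with its parahoric model $\cM'$ (which matches the setting of~\S\ref{ss:special-facets} by the discussion at the end of~\S\ref{ss:group-theory}) and the cocharacter $\mu \in \bbX_*(A) \subset \bbX_*(M')$, the attractor $(\cM')_\mu$ is the scheme-theoretic closure in $\cM'$ of the parabolic $(M')_\mu = P \cap M'$, and is smooth over $O_F$.

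The heart of the argument is~\eqref{it:cart-diag-2}. For the $\mu'$-action on $\cG$ one has $\cG^+ = \cP'$ and $\cG^0 = \cM'$, with the limit morphism $\cP' \to \cM'$ forming the bottom row of the square. Restricting this action to the stable closed subscheme $\cP$, the attractor is $\cP \cap \cP' = \cP$ itself and the fixed-point subscheme is $\cP \cap \cM'$, so the resulting limit morphism $\cP \to \cP \cap \cM'$ provides the top row. The crucial integral fact is the containment $\cU_{\mu'} \subset \cP$ of the unipotent radical of $\cP'$: at the generic fibre the inclusion $P \subset P'$ forces $U_{P'} \subset P$ (because $P/U_{P'}$ is a parabolic of the Levi $M' = P'/U_{P'}$), and both $\cU_{\mu'}$ and $\cP$ are the flat scheme-theoretic closures of their generic fibres in $\cG$ by~\eqref{eqn:parabolics-cocharacters-2}. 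The attractor (``big cell'') decomposition, applied separately to $\cP'$ and to $\cP$ under the $\mu'$-action, then yields semidirect product decompositions $\cP' \cong \cU_{\mu'} \rtimes \cM'$ and $\cP \cong \cU_{\mu'} \rtimes (\cP \cap \cM')$ of schemes over $O_F$. Consequently both $\cP$ and $\cP' \times_{\cM'} (\cP \cap \cM')$ canonically identify with $\cU_{\mu'} \times (\cP \cap \cM')$, and the natural morphism between them is the identity under these identifications, so the square is cartesian.

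Part~\eqref{it:cart-diag-3} follows formally from~\eqref{it:cart-diag-2} via the moduli interpretation. For any cartesian square $\cH = \cH_1 \times_{\cH_2} \cH_3$ of smooth affine $O_F$-group schemes, the groupoid of $\cH$-torsors on any base is the $2$-fiber product of the groupoids of $\cH_i$-torsors ($i = 1, 3$) over that of $\cH_2$-torsors, and this equivalence is compatible with trivializations on $\Spec(R\rpot{t})$. Hence $\Gr_\cH \cong \Gr_{\cH_1} \times_{\Gr_{\cH_2}} \Gr_{\cH_3}$, which, applied to~\eqref{it:cart-diag-2}, yields the cartesian square for affine Grassmannians. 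The convolution square then follows from the natural-in-$\cH$ isomorphism $\Conv_\cH \cong \Gr_\cH \times \Gr_\cH$ of~\eqref{eqn:Conv-product}. The main obstacle throughout is the verification in~\eqref{it:cart-diag-2}, and specifically the passage from the generic-fibre Levi decomposition $P = U_{P'} \cdot (P \cap M')$ to its integral counterpart over $O_F$; everything else flows either from the attractor formalism or from descent for torsors.
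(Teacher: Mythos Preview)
Your proofs of~\eqref{it:cart-diag-1} and~\eqref{it:cart-diag-3} follow the same line as the paper's.

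For~\eqref{it:cart-diag-2} you take a genuinely different route. The paper argues as follows: the map $\cP \to \cP' \times_{\cM'} (\cP \cap \cM')$ is a closed immersion (since $\cP \hookrightarrow \cP'$ is), the target is flat over $O_F$ (because $\cP' \to \cM'$ is smooth and $\cP \cap \cM'$ is smooth over $O_F$ by~\eqref{it:cart-diag-1}), and the map is an isomorphism over $F$; a closed immersion into a flat scheme which is generically an isomorphism is an isomorphism. You instead exploit the integral inclusion $\cU_{\mu'} \subset \cP$ to upgrade the Levi decomposition $\cP' \cong \cU_{\mu'} \rtimes \cM'$ to $\cP \cong \cU_{\mu'} \rtimes (\cP \cap \cM')$ (this is the general semidirect-product splitting $H^+ \cong \ker(H^+ \to H^0) \rtimes H^0$ of the limit map, applied to the $\mu'$-action on $\cP$), from which the cartesian square drops out by inspection. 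Your argument is more explicit and makes the unipotent radical visible; the paper's is shorter and works for any closed immersion into a flat target that is generically an isomorphism, without ever naming $\cU_{\mu'}$.

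One caution on the phrasing in~\eqref{it:cart-diag-3}: the assertion that for \emph{every} cartesian square $\cH = \cH_1 \times_{\cH_2} \cH_3$ of smooth affine group schemes the torsor groupoids form a $2$-fibre product is false in that generality (e.g.\ take $\cH_1 = \cH_3$ trivial and $\cH_2$ nontrivial: the $2$-fibre product of points over $B\cH_2$ is $\cH_2$, not a point). What makes the torsor equivalence hold here is precisely the split surjection $\cP' \to \cM'$, equivalently the semidirect-product structure you established in~\eqref{it:cart-diag-2}; the paper's proof is equally terse at this step and relies on the same implicit fact.
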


\begin{proof}
\eqref{it:cart-diag-1} 
Let $\lambda \in \bbX_*(A)$ be a cocharacter such that  $P=P_\lambda$ (see Remark~\ref{rmk:parabolics-cocharacters}). Then from the definitions one sees that $\cP \cap \cM'$ is the attractor associated with the $\mathbb{G}_{\mathrm{m},O_F}$-action on $\cM'$ defined by $\lambda$, so our claim is a special case of~\eqref{eqn:parabolics-cocharacters-2}.

\eqref{it:cart-diag-2} The morphism $\cP \to \cP'$ is a closed immersion, and hence so is the induced morphism  $\cP \to \cP' \times_{\cM'} (\cP \cap \cM')$; in other words, the associated morphism
\[
\scO(\cP' \times_{\cM'} (\cP \cap \cM')) \to \scO(\cP)
\]
is surjective.
On the other hand this morphism becomes an isomorphism after tensor product with $F$, and the fiber product $\cP' \times_{\cM'} (\cP \cap \cM')$ is flat over $\Spec(O_F)$. (Indeed the projection $\cP' \to \cM'$ is smooth, so this fiber product is smooth over $\cP \cap \cM'$, which is itself smooth over $\Spec(O_F)$ by~\eqref{it:cart-diag-1}.) This morphism is therefore an isomorphism, which finishes the proof.

\eqref{it:cart-diag-3}
Let us first consider the left-hand diagram.
If $R$ is an $\F$-algebra, $\Gr_{\cP}(R)$ classifies pairs consisting of an $\cP$-torsor on $\Spec(R\pot{t})$ and a trivialization on $\Spec(R\rpot{t})$. On the other hand, by~\eqref{it:cart-diag-1},
\[
(\Gr_{\cP'} \times_{\Gr_{\cM'}} \Gr_{\cP \cap \cM'})(R) = \Gr_{\cP'}(R) \times_{\Gr_{\cM'}(R)} \Gr_{\cP \cap \cM'}(R)
\]
parametrizes an $\cP'$-torsor on $\Spec(R\pot{t})$ with a trivialization on $\Spec(R\rpot{t})$, an $\cP \cap \cM'$-torsor on $\Spec(R\pot{t})$ with a trivialization on $\Spec(R\rpot{t})$, and an isomorphism between the induced $\cM'$-bundles and their trivializations. Now from~\eqref{it:cart-diag-2} we deduce
that the datum of an $\cP$-torsor is equivalent to that of an $\cP'$-torsor, an $\cP \cap \cM'$-torsor, and an isomorphism between the induced $\cM'$-bundles. The desired claim follows.

The fact that the right-hand diagram is cartesian is immediate from the similar property of the left-hand diagram and~\eqref{eqn:Conv-product}.
\end{proof}

\subsection{Semi-infinite orbits}
\label{ss:semi-inf-orbits}

Let us now study further the case where the cocharacter in $\bbX_*(A)$ is such that the attractor and fixed point sets in $G$ are given by
\[
G^+ = B \qquad\text{and}\qquad G^0 = T.
\]
(This is indeed possible thanks to Remark~\ref{rmk:parabolics-cocharacters}.)  Let $\cB$ be the scheme-theoretic closure of $B$ in $\cG$.  Then, by~\eqref{eqn:parabolics-cocharacters-2}, $\cB$ is smooth and we have
\[
\cG^+ = \cB \qquad\text{and}\qquad \cG^0 = \cT.
\]
It is well known (see e.g.~\cite[\S 3.b]{pr}) that the underlying topological space of $\Gr_\cT$ is discrete, with
\begin{equation}
\label{eqn:GrT-X}
|\Gr_\cT| = \bbX_*(T)_I
\end{equation}
(see~\eqref{eqn:Gr-torus}).
Since the morphism $\Gr_{\cG}^+ \to \Gr_{\cG}^0$ induces a bijection on the sets of connected components (see~\cite[Proposition~1.17]{richarz-Gm} and~\cite[Theorem~2.1]{HainesRicharz_TestFunctions}), we deduce a bijection between the set of connected components of $\Gr_{\cG}^+$, i.e.~of $\Gr_{\cB}$, and $\bbX_*(T)_I$.

For $\lambda \in \bbX_*(T)_I$, we will denote by $\rmS_\lambda$ the associated connected component of $\Gr_{\cG}^+$. Then the natural map $\rmS_\lambda \to \Gr_{\cG}$ is representable by a locally closed immersion, and the natural map
\[
\bigsqcup_{\lambda \in \bbX_*(T)_I} \rmS_\lambda \to \Gr_{\cG}
\]
is bijective. If we denote by $\rmS_{\leq \lambda}$ the ind-schematic closure of $\rmS_{\lambda}$ inside $\Gr_{\cG}$, then we have
\begin{equation}
\label{eqn:closure-S}
| \rmS_{\leq \lambda} | = \bigsqcup_{\mu \leq \lambda} | \rmS_\mu |.
\end{equation}
(This property is proved in~\cite[Proposition 5.4]{aglr} for Witt vector affine Grassmannians; the same proof goes through in our present setting.)
Choosing a presentation $\Gr_{\cG} = \mathrm{colim}_i X_i$ by $\Loop^+ \cG$-stable projective $k$-schemes (see~\S\ref{ss:quotient-stacks}), for any $i$ we have
\begin{equation}
\label{eqn:attractor-X_i}
| \Gr_{\cG}^+ \times_{\Gr_{\cG}} X_i | = \bigsqcup_{\lambda \in \bbX_*(T)_I^+} | \rmS_\lambda \times_{\Gr_{\cG}} X_i |,
\end{equation}
and $\Gr_{\cG}^+ \times_{\Gr_{\cG}} X_i$ identifies canonically with the attractor for the induced $\mathbb{G}_{\mathrm{m},\F}$-action on $X_i$. The intersection $\rmS_\lambda \times_{\Gr_{\cG}} X_i$ is a locally closed subscheme of $X_i$ (in particular, a scheme of finite type by~\cite[Example~3.45]{goertz-wedhorn}) and it is empty unless $t^\lambda \in X_i$, which happens only for a finite number of $\lambda$'s.

If $\mu \in \bbX_*(T)_I$ and $\lambda \in \bbX_*(T)^+_I$ then we can consider the intersection
\[
\rmS_\mu \cap \Gr_{\cG}^{\leq \lambda},
\]
a locally closed subscheme of the projective scheme $\Gr_{\cG}^{\leq \lambda}$, which is in particular an $\F$-scheme of finite type.
For the statement of the next lemma, recall that $\bbX_*(T)^+_I$ is a system of representatives for $\bbX_*(T)_I/W_0$, see Lemma~\ref{lem:dominant-coweights}\eqref{it:dominant-coweights-orbits}.

\begin{lem}
\label{lem:dim-estimate}
If $\mu \in \bbX_*(T)_I$ and $\lambda \in \bbX_*(T)^+_I$, the scheme
\[
\rmS_{\mu} \cap \Gr_{\cG}^{\leq \lambda}
\]
is nonempty if and only if the only element $\mu' \in (W_0 \cdot \mu) \cap \bbX_*(T)_I^+$ satisfies $\mu' \leq \lambda$. In this case, this scheme is affine and equidimensional of dimension $\la \mu +\lambda, \rho \ra$.
\end{lem}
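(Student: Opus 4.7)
\emph{Proof plan.} The claim splits naturally into three sub-claims: non-emptiness, affinity, and the equidimensional dimension count.

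For non-emptiness I would use a $\mathbb{G}_{\mathrm{m},\F}$-limit argument. By Proposition~\ref{prop:attractors-fixed-pts} together with~\eqref{eqn:GrT-X}, any point of $\rmS_\mu$ has $\mathbb{G}_{\mathrm{m},\F}$-limit equal to $t^\mu$, the unique point of the $\mu$-component of $\Gr_\cT$. Since $\Gr_\cG^{\leq\lambda}$ is proper and $\mathbb{G}_{\mathrm{m},\F}$-stable (it is $\Loop^+\cG$-stable), non-emptiness of $\rmS_\mu \cap \Gr_\cG^{\leq\lambda}$ forces $t^\mu \in \Gr_\cG^{\leq\lambda}$. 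Moreover $\Gr_\cG^{\leq\lambda}$ is $W_0$-stable (the lift $W_\fa \simto W_0$ is contained in $\cG(O_F)/\cT(O_F)$), so we also have $t^{\mu'} \in \Gr_\cG^{\leq\lambda}$, and~\eqref{eqn:stratif-Schubert} then gives $\mu' \leq \lambda$. The converse is immediate: if $\mu' \leq \lambda$, then $t^{\mu'}$ and hence $t^\mu \in \rmS_\mu$ lies in $\Gr_\cG^{\leq\lambda}$.

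For affinity, the $\mathbb{G}_{\mathrm{m},\F}$-action on the projective scheme $\Gr_\cG^{\leq\lambda}$ is Zariski locally linearizable by~\cite[Lemma~5.3]{HainesRicharz_TestFunctions} and has a finite discrete fixed locus, namely the set $\{t^\nu\}$ for $\nu$ ranging over $W_0$-conjugates of the dominant coweights $\mu'' \leq \lambda$. The Białynicki-Birula theorem applied to $\Gr_\cG^{\leq\lambda}$ thus shows that the attractor cell over the isolated fixed point $t^\mu$ is affine; by Proposition~\ref{prop:attractors-fixed-pts} applied with the parabolic $\cB$, this cell coincides with $\rmS_\mu \cap \Gr_\cG^{\leq\lambda}$.

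For the dimension formula and equidimensionality, I would compute on the smooth open stratum $\Gr_\cG^\lambda$, which has dimension $\la \lambda, 2\rho\ra$ by~\eqref{eqn:dim-orbits}. Running Białynicki-Birula on $\Gr_\cG^\lambda$ (as a smooth open $\mathbb{G}_{\mathrm{m},\F}$-stable subvariety of the projective $\Gr_\cG^{\leq\lambda}$), the intersection $\rmS_\mu \cap \Gr_\cG^\lambda$ is a disjoint union of smooth affine cells over the fixed points $t^{w\lambda}$ whose attractor in $\Gr_\cG^{\leq\lambda}$ is contained in $\rmS_\mu$. The dimension of each such cell equals the sum of positive $\mathbb{G}_{\mathrm{m},\F}$-weights on $T_{t^{w\lambda}}\Gr_\cG^\lambda$, which by the big-cell decomposition~\eqref{eqn:big-cell-cG} and the tangent-space calculation from the proof of Lemma~\ref{lem:tangent-spaces} (translated to base point $t^{w\lambda}$), together with the $I$-invariance of $\rho$, evaluates to $\la \mu + \lambda, \rho\ra$. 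To upgrade this to equidimensionality of $\rmS_\mu \cap \Gr_\cG^{\leq\lambda}$, one must show that $\rmS_\mu \cap \Gr_\cG^\lambda$ is non-empty (and hence, by the closure relation, dense in $\rmS_\mu \cap \Gr_\cG^{\leq\lambda}$) as soon as $\mu' \leq \lambda$. This is the ramified analogue of the non-vanishing of the $\mu$-weight space of $V(\lambda)$, which I would establish by exhibiting explicit MV-type cycles obtained as $\Loop\cU_\alpha$-translates of $t^\lambda$ for an appropriate sequence of non-divisible relative roots $\alpha \in \Phi^{\mathrm{nd}}$.

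The main obstacle will be the dimension calculation, which must carefully manage the dichotomy between divisible and non-divisible relative roots in~\eqref{eqn:big-cell-cG} (whose $\mathbb{G}_{\mathrm{m},\F}$-weights enter with different multiplicities), and the equidimensionality claim, which requires ruling out ``spurious'' lower-dimensional components arising from boundary strata $\Gr_\cG^{\lambda'}$ with $\lambda' < \lambda$. Once the cell-dimension identity $\la \mu + \lambda, \rho\ra$ is checked at each fixed point $t^{w\lambda}$ and density of the open intersection is verified by the MV-cycle construction, both conclusions follow.
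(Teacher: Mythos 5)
Your non-emptiness argument is fine (and is essentially the standard one: limits exist in the proper $\mathbb{G}_{\mathrm{m}}$-stable scheme $\Gr_{\cG}^{\leq\lambda}$, the limit of a point of $\rmS_\mu$ is $t^\mu$ by Proposition~\ref{prop:attractors-fixed-pts}, and one concludes via $W_0$-stability and~\eqref{eqn:stratif-Schubert}). But the two substantive claims --- affineness and the dimension formula --- have genuine gaps. Note first that the paper does not reprove this lemma: it cites~\cite[Lemma~5.5]{aglr} and observes that the argument there (which follows~\cite{mv}) goes through. That argument is of a different nature from yours. For affineness, Bia{\l}ynicki-Birula does not apply: $\Gr_{\cG}^{\leq\lambda}$ is in general a singular Schubert variety, and affineness of attracting cells is a statement about \emph{smooth} varieties; the attractor at an isolated fixed point of a singular projective scheme need not be affine, so nothing follows from ``BB applied to $\Gr_\cG^{\leq\lambda}$.'' The actual proof realizes $\rmS_\mu\cap\Gr_{\cG}^{\leq\lambda}$ as the complement, inside the projective scheme $\overline{\rmS_\mu}\cap\Gr_{\cG}^{\leq\lambda}$, of the boundary $\bigl(\bigcup_{\nu<\mu}\rmS_\nu\bigr)\cap\Gr_{\cG}^{\leq\lambda}$ (cf.~\eqref{eqn:closure-S}), which is exhibited as the zero locus of a section of an ample line bundle; the complement of an ample divisor in a projective scheme is affine.

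The dimension count is structurally off as well. The attracting cell of $\Gr_{\cG}^{\leq\lambda}$ at the fixed point $t^{w\lambda}$ is by definition contained in $\rmS_{w\lambda}$, so your decomposition of $\rmS_\mu\cap\Gr_{\cG}^{\lambda}$ into BB cells ``over the fixed points $t^{w\lambda}$ whose attractor is contained in $\rmS_\mu$'' is vacuous unless $\mu\in W_0\cdot\lambda$. For general $\mu$ the point $t^\mu$ lies in a strictly smaller stratum, the points of $\rmS_\mu\cap\Gr_{\cG}^{\lambda}$ flow out of the open stratum, and this intersection is \emph{not} a union of attracting cells of $\Gr_{\cG}^{\lambda}$; hence tangent-weight computations at the $t^{w\lambda}$ compute $\dim(\rmS_{w\lambda}\cap\Gr_{\cG}^{\lambda})$ only, and say nothing about $\dim(\rmS_\mu\cap\Gr_{\cG}^{\lambda})$. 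What is actually needed is an upper bound $\dim(\rmS_\mu\cap\Gr_{\cG}^{\lambda'})\leq\la\mu+\lambda',\rho\ra$ for every $\lambda'\leq\lambda$ (obtained in~\cite{mv,aglr} by induction along the stratification, or via Iwahori-orbit estimates), together with a lower bound of $\la\mu+\lambda,\rho\ra$ on \emph{every} irreducible component, which again comes from the ample-divisor description of the boundary (cutting by a divisor drops dimension by at most one, and induction from the extremal case $\mu\in W_0\cdot\lambda$ handled by the smooth-stratum computation). Your plan supplies neither: the proposed ``MV-type cycles'' would at best show $\rmS_\mu\cap\Gr_{\cG}^{\lambda}\neq\varnothing$, which gives density of one piece but not equidimensionality, and the possibility of excess-dimensional components supported in boundary strata is never ruled out.
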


Here we write $\la \mu +\lambda, \rho \ra$ for $\frac{1}{2}\la \mu +\lambda, 2\rho \ra$ (which is an integer in the case considered in the statement).
	Again, this result is proved in~\cite[Lemma 5.5]{aglr} for ramified groups over $p$-adic fields, but the arguments also apply in our current setting.

Of course one can play the same game with the Borel subgroup $B^-$ opposite to $B$ (with respect to $T$), see Remark~\ref{rmk:parabolics-cocharacters}\eqref{it:opposite-parabolic}. The connected components of the associated affine Grassmannian will be denoted $(\rmT_\lambda : \lambda \in \bbX_*(T)_I)$.

\subsection{Semi-infinite orbits on convolution schemes}
\label{ss:semi-inf-orbits-Conv}

We finish this section by explaining how to adapt the discussion of~\S\ref{ss:semi-inf-orbits} to the setting of convolution schemes.  In view of~\eqref{eqn:Conv-product}, the ind-scheme $\Conv_\cT$ is discrete, and the connected components of both $\Conv_\cT$ and $\Conv_\cB$ are in bijection with $\bbX_*(T)_I \times \bbX_*(T)_I$.  However, we will use a labeling of these components that is \emph{not} compatible with~\eqref{eqn:Conv-product}: given $\lambda,\mu \in \bbX_*(T)_I$, we let
\[
\rmS_\lambda \wttimes \rmS_\mu \subset \Conv_\cB
\]
be the connected component that identifies under~\eqref{eqn:Conv-product} with $\rmS_\lambda \times \rmS_{\mu + \lambda} \subset \Gr_\cB \times \Gr_\cB$.  This labeling has the advantage that it makes the statement of the following lemma (which follows from the same considerations as for Lemma~\ref{lem:dim-estimate}) cleaner.

\begin{lem}
\label{lem:dim-estimate-Conv}
If $\mu, \mu' \in \bbX_*(T)_I$ and $\lambda, \lambda' \in \bbX_*(T)^+_I$,
	the scheme
	\[
	(\rmS_{\mu} \wttimes \rmS_{\mu'}) \cap \Conv_{\cG}^{\leq (\lambda,\lambda')}
	\]
	is nonempty if and only if the only elements $\nu \in (W_0 \cdot \mu) \cap \bbX_*(T)_I^+$ and $\nu' \in (W_0 \cdot \mu') \cap \bbX_*(T)_I^+$ satisfy $\nu \leq \lambda$ and $\nu' \leq \lambda'$. In this case, this scheme is affine and equidimensional of dimension $\la \mu + \mu' +\lambda + \lambda', \rho \ra$.
\end{lem}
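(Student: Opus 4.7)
The plan is to translate the problem to $\Gr_\cG \times \Gr_\cG$ via the isomorphism $(\pr_1,m)\colon \Conv_\cG \simto \Gr_\cG \times \Gr_\cG$ of \eqref{eqn:Conv-product}. Under this isomorphism $\rmS_\mu \wttimes \rmS_{\mu'}$ corresponds by definition to $\rmS_\mu \times \rmS_{\mu+\mu'}$, while $\Conv_\cG^{\leq(\lambda,\lambda')}$ corresponds to the pairs $(x,y)$ with $x \in \Gr_\cG^{\leq \lambda}$ and $\tilde x^{-1} y \in \Gr_\cG^{\leq \lambda'}$ (for any lift $\tilde x \in \Loop G$ of $x$; the condition is independent of the choice by $\Loop^+\cG$-stability of $\Gr_\cG^{\leq \lambda'}$). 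Since $m(\Conv_\cG^{\leq(\lambda,\lambda')}) \subset \Gr_\cG^{\leq \lambda + \lambda'}$, the intersection of interest identifies with
\[
\mathcal I := \bigl\{ (x,y) \in (\rmS_\mu \cap \Gr_\cG^{\leq \lambda}) \times (\rmS_{\mu+\mu'} \cap \Gr_\cG^{\leq \lambda+\lambda'}) : \tilde x^{-1} y \in \Gr_\cG^{\leq \lambda'} \bigr\}.
\]

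For non-emptiness and dimension, I would project $\mathcal I \to \rmS_\mu \cap \Gr_\cG^{\leq \lambda}$ via the first coordinate and analyze the fibers. Over $x$ with lift $\tilde x = b \cdot t^\mu$ (for some $b \in \Loop B$, using that $x \in \rmS_\mu$), setting $h := \tilde x^{-1} y$ gives a bijection between the fiber and the set of $h \in \Gr_\cG^{\leq \lambda'}$ with $\tilde x \cdot \tilde h \in \Loop B \cdot t^{\mu+\mu'} \cdot \Loop^+\cG$. Using that $t^\mu$ normalizes $\Loop B$, this simplifies to $\tilde h \in \Loop B \cdot t^{\mu'} \cdot \Loop^+\cG$, i.e.\ $h \in \rmS_{\mu'} \cap \Gr_\cG^{\leq \lambda'}$. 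Thus the projection realizes $\mathcal I$ as a Zariski-locally-trivial $(\rmS_{\mu'} \cap \Gr_\cG^{\leq \lambda'})$-bundle over $\rmS_\mu \cap \Gr_\cG^{\leq \lambda}$, the local triviality being inherited from the twisted product construction of $\Conv_\cG^{\leq(\lambda,\lambda')}$. Applying Lemma~\ref{lem:dim-estimate} to base and fiber then gives the stated non-emptiness criterion (namely $\nu \leq \lambda$ and $\nu' \leq \lambda'$) and equidimensionality with dimension $\la \mu+\lambda,\rho\ra + \la \mu'+\lambda',\rho\ra = \la \mu+\mu'+\lambda+\lambda',\rho\ra$.

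For affineness, the cleanest approach is to observe that $\mathcal I$ is cut out by the closed condition $\tilde x^{-1} y \in \Gr_\cG^{\leq \lambda'}$ inside the ambient product $(\rmS_\mu \cap \Gr_\cG^{\leq \lambda}) \times (\rmS_{\mu+\mu'} \cap \Gr_\cG^{\leq \lambda+\lambda'})$; both factors are affine by Lemma~\ref{lem:dim-estimate}, so $\mathcal I$ is a closed subscheme of an affine scheme and therefore affine. The principal obstacle I expect is verifying the fiber computation cleanly---particularly the choice of lift $\tilde x = b \cdot t^\mu$ and the use of $t^\mu$ normalizing $\Loop B$---but this bookkeeping parallels the analogous arguments for Lemma~\ref{lem:dim-estimate} in \cite{aglr}.
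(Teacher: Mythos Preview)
Your argument is correct and is precisely the natural unpacking of the paper's one-line justification (``follows from the same considerations as for Lemma~\ref{lem:dim-estimate}''): you exhibit the intersection as a locally trivial $(\rmS_{\mu'} \cap \Gr_\cG^{\leq\lambda'})$-bundle over $\rmS_\mu \cap \Gr_\cG^{\leq\lambda}$ via the twisted-product structure of $\Conv_\cB$, then invoke Lemma~\ref{lem:dim-estimate} on base and fiber. Two small points: the local triviality is a priori only \emph{\'etale} (coming from the $\Loop^+\cB$-torsor $\Loop B \to \Gr_\cB$, cf.\ Lemma~\ref{lem:quotient-etale}), not Zariski, but this suffices for equidimensionality and non-emptiness; and the phrase ``$t^\mu$ normalizes $\Loop B$'' is slightly awkward since $t^\mu \in \Loop B$ --- the cleaner formulation is that any $\tilde x \in \Loop B$ lifting a point of $\rmS_\mu$ sends $\rmS_{\mu'}$ to $\rmS_{\mu+\mu'}$, because $\Loop B$ acts on $\pi_0(\Gr_\cB) \cong \bbX_*(T)_I$ through the translation action of $\Loop T$ on $\Gr_\cT$.
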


\section{Sheaves on the Hecke stack}
\label{sec:sheaves-Hecke}

\subsection{Constructible sheaves on the affine Grassmannian and the Hecke stack}
\label{ss:Perv-Gr-Hk}

From now on we fix a prime number $\ell$ invertible in $\F$, and a finite extension $\bK$ of $\Q_\ell$. The ring of integers of $\bK$ will be denoted $\bO$, and the residue field of $\bO$ will be denoted $\bk$. We will use the notation $\Lambda$ to denote one of the rings $\bK$, $\bO$, $\bk$, when the choice does not matter. Below we will use the bounded constructible categories of \'etale $\Lambda$-sheaves on Artin stacks of finite type; see~\S\ref{ss:const} for a brief review of the relevant ingredients, and for references on the construction.

Recall the notation of~\S\ref{ss:quotient-stacks}. For any $i \geq 0$ we have the algebraic stack of finite type $[\Loop^+_{n_i} \cG \backslash X_i]_{\et}$ over $\F$, and we can consider the bounded constructible derived category
\[
 \Dbc([\Loop^+_{n_i} \cG \backslash X_i ]_{\et}, \Lambda).
\]
For any $n \geq n_i$, as in~\cite[Proposition~VI.1.10]{fs}, the quotient of the surjective morphism $\Loop^+_n \cG \to \Loop^+_{n_i} \cG$ is an extension of copies of the additive group $\mathbb{G}_{\mathrm{a},\F}$, so that by standard arguments the pullback functor
\begin{equation}
\label{eqn:pullback-Xi}
 \Dbc([\Loop^+_{n_i} \cG \backslash X_i]_{\et}, \Lambda) \to \Dbc([\Loop^+_{n} \cG \backslash X_i]_{\et}, \Lambda)
\end{equation}
is an equivalence of categories. We can therefore set
\[
 \Dbc([\Loop^+ \cG \backslash X_i]_{\et}, \Lambda) := \lim_{n \geq n_i} \Dbc([\Loop^+_{n_i} \cG \backslash X_i]_{\et}, \Lambda),
\]
this limit being stationary. If $i \leq j$ we have an obvious pushforward functor $\Dbc([\Loop^+ \cG \backslash X_i]_{\et}, \Lambda) \to \Dbc([\Loop^+ \cG \backslash X_j]_{\et}, \Lambda)$, and we can therefore set
\[
 \Dbc(\Hk_{\cG}, \Lambda) := \colim_i \Dbc([\Loop^+ \cG \backslash X_i]_{\et}, \Lambda).
\]
Standard arguments show that this category does not depend (up to equivalence) on the choice of presentation $\Gr_{\cG} = \colim_{i \geq 0} X_i$ as in~\S\ref{ss:quotient-stacks}.

Of course, in this construction one can forget the $\Loop^+ \cG$-action, and consider the category
\[
 \Dbc(\Gr_{\cG}, \Lambda) := \colim_i \Dbc(X_i, \Lambda).
\]
The quotient morphism $h \colon \Gr_\cG \to \Hk_{\cG}$ induces a pullback (or ``forgetful'') functor
\begin{equation}
\label{eqn:pullback-Hk-Gr}
 h^* \colon \Dbc(\Hk_{\cG}, \Lambda) \to \Dbc(\Gr_{\cG}, \Lambda).
\end{equation}

For any $i \geq 0$ one can consider the perverse t-structure on $\Dbc(X_i, \Lambda)$. These t-structures ``glue'' to define a (bounded) t-structure on $\Dbc(\Gr_{\cG}, \Lambda)$, which once again does not depend on the choice of presentation $\Gr_{\cG} = \colim_{i \geq 0} X_i$, and which will be called the perverse t-structure. Its heart will be denoted $\Perv(\Gr_{\cG}, \Lambda)$.

If $i \geq 0$ and $n \geq n_i$, one can also consider the perverse t-structure on the category $\Dbc([\Loop^+_{n} \cG \backslash X_i]_{\et}, \Lambda)$.
This t-structure \emph{does} depend on the choice of $n$; to remedy this we introduce a shift in this definition, so that the pullback functor
\[
 \Dbc([\Loop^+_{n} \cG \backslash X_i]_{\et}, \Lambda) \to \Dbc(X_i, \Lambda)
\]
becomes t-exact. With this normalization, for any $n \geq n_i$ the equivalence~\eqref{eqn:pullback-Xi} is t-exact, so that we obtain an induced t-structure on $\Dbc([\Loop^+ \cG \backslash X_i]_{\et}, \Lambda)$. If $j \geq i$ the pushforward functor $\Dbc([\Loop^+ \cG \backslash X_i]_{\et}, \Lambda) \to \Dbc([\Loop^+ \cG \backslash X_j ]_{\et}, \Lambda)$ is t-exact, so that once again these t-structures ``glue'' to define a (bounded) t-structure on $\Dbc(\Hk_{\cG}, \Lambda)$, which is called the perverse t-structure, and whose heart will be denoted $\Perv(\Hk_\cG,\Lambda)$. By construction of the perverse t-structure for stacks 
and our choice of normalization the functor~\eqref{eqn:pullback-Hk-Gr} is t-exact; in fact it ``detects perversity'' in the sense that for $\scF \in \Dbc(\Hk_{\cG}, \Lambda)$ the complex $\scF$ is concentrated in nonpositive perverse degrees, resp.~concentrated in nonnegative perverse degrees, resp.~perverse, if and only if so is $h^*(\scF)$.

The constructions in~\S\ref{ss:const-scalars} provide canonical ``extension of scalars'' functors
\begin{multline}
\label{eqn:functors-change-scalars-1}
\bk \lotimes_{\bO} (-) \colon \Dbc(\Hk_\cG, \bO) \to \Dbc(\Hk_\cG, \bk), \\
\bK \otimes_{\bO} (-) \colon \Dbc(\Hk_\cG, \bO) \to \Dbc(\Hk_\cG, \bK),
\end{multline}
and a canonical ``restriction of scalars'' functor
\begin{equation}
\label{eqn:functors-change-scalars-2}
 \Dbc(\Hk_\cG, \bk) \to \Dbc(\Hk_\cG, \bO)
\end{equation}
which is right adjoint to $\bk \lotimes_{\bO} (-)$. (This functor will usually be omitted from notation.)
The right-hand functor in~\eqref{eqn:functors-change-scalars-1} and the functor in~\eqref{eqn:functors-change-scalars-2} 
are t-exact for the perverse t-structures, hence induce exact functors
\[
\bK \otimes_{\bO} (-) \colon \Perv(\Hk_\cG,\bO) \to \Perv(\Hk_\cG,\bK)
\]
and
\[
 \Perv(\Hk_\cG,\bk) \to \Perv(\Hk_\cG,\bO).
\]
On the other hand the left-hand functor in~\eqref{eqn:functors-change-scalars-1} is right t-exact; we therefore obtain a right exact functor
 \[
 \pH^0(\bk \lotimes_{\bO} (-)) \colon \Perv(\Hk_\cG,\bO) \to \Perv(\Hk_\cG,\bk).
\]
Similar comments apply for categories of sheaves on $\Gr_\cG$, or for locally closed subschemes, or for the various stacks we have already considered, and we will use similar notation in these cases.

For any $\lambda \in \bbX_*(T)_I^+$, we can consider the perverse sheaves
\[
\scJ_!(\lambda,\Lambda) := \pH^0(j^\lambda_! \underline{\Lambda}_{\Gr_\cG^\lambda}[\langle \lambda, 2\rho \rangle]), \quad 
\scJ_*(\lambda,\Lambda) := \pH^0(j^\lambda_* \underline{\Lambda}_{\Gr_\cG^\lambda}[\langle \lambda, 2\rho \rangle])
\]
in $\Perv(\Hk_\cG,\Lambda)$, where we use the notation of~\eqref{eqn:jmu}. By adjunction there exists a canonical morphism
\[
 \scJ_!(\lambda,\Lambda) \to \scJ_*(\lambda,\Lambda)
\]
whose image is denoted $\scJ_{!*}(\lambda, \Lambda)$. In case $\Lambda \in \{\bk,\bK\}$ each $\scJ_{!*}(\lambda, \Lambda)$ is simple, and the assignment $\lambda \mapsto \scJ_{!*}(\lambda, \Lambda)$ induces a bijection between $\bbX_*(T)_I^+$ and the set of isomorphism classes of simple objects in $\Perv(\Hk_\cG,\Lambda)$; see e.g.~\cite[\S 8]{laszlo-olsson-perv}.

\subsection{Convolution}
\label{ss:convolution}

Recall now the ind-scheme $\Conv_{\cG}$ defined in~\S\ref{ss:convolution-schemes}, and the stack $\HkConv_{\cG}$ introduced in~\S\ref{ss:quotient-stacks}. Considerations similar to those of~\S\ref{ss:Perv-Gr-Hk} allow to define the triangulated categories
\[
 \Dbc(\Conv_{\cG}, \Lambda) \quad \text{and} \quad \Dbc(\HkConv_{\cG}, \Lambda),
\]
and the pullback functor
\begin{equation}
\label{eqn:pullback-HkConv}
  \tilde{h}^* \colon \Dbc(\HkConv_{\cG}, \Lambda) \to \Dbc(\Conv_{\cG}, \Lambda).
\end{equation}
One can also endow these categories with perverse t-structures, in such a way that the functor~\eqref{eqn:pullback-HkConv} is t-exact.

Recall the maps
\[
\Hk_\cG \times \Hk_\cG \xleftarrow{p} \HkConv_\cG \xrightarrow{m} \Hk_\cG
\]
from~\S\ref{ss:quotient-stacks}.
We can now define the convolution bifunctor
\[
\star \colon \Dbc(\Hk_{\cG},\Lambda) \times \Dbc(\Hk_{\cG},\Lambda)\to \Dbc(\Hk_{\cG},\Lambda)
\]
by setting
\[
\scF \star \scG := m_* p^*(\scF \lboxtimes_\Lambda \scG).
\]
Our goal in this subsection is to show that this bifunctor is right t-exact (on both sides) for the perverse t-structure. 

The morphism $m$ fits into a cartesian square
\[
\begin{tikzcd}
\Conv_\cG \ar[r, "\tilde{h}"] \ar[d, "m"'] & \HkConv_\cG \ar[d, "m"] \\
\Gr_\cG \ar[r, "h"] & \Hk_\cG.
\end{tikzcd}
\]
For any $\lambda,\mu,\nu \in \bbX_*(T)_I^+$ such that $\lambda + \mu \leq \nu$, the left-hand morphism $m$ restricts to a morphism
\[
m^\nu_{\lambda,\mu} \colon \Conv_{\cG}^{\leq (\lambda,\mu)} \to \Gr_{\cG}^{\leq \nu}.
\]
Recall also the notions of a stratified locally trivial morphism and a stratified semi\-small morphism from~\S\ref{ss:semismall}.

\begin{prop}
\label{prop:semismall}
For any $\lambda,\mu,\nu \in \bbX_*(T)_I^+$ such that $\lambda + \mu \leq \nu$, the morphism $m_{\lambda,\mu}^\nu$ is stratified locally trivial and
stratified semismall with respect to the stratifications
\[
| \Conv_{\cG}^{\le(\lambda,\mu)} | = \bigsqcup_{\substack{\lambda' \leq \lambda \\ \mu' \leq \mu}} | \Conv_{\cG}^{(\lambda',\mu')} |, \qquad | \Gr_{\cG}^{\leq \nu} | = \bigsqcup_{\nu' \leq \nu} | \Gr_{\cG}^{\nu'} |,
\]
see~\eqref{eqn:stratif-Schubert} and~\eqref{eqn:stratif-Schubert-Conv}.
\end{prop}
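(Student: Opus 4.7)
The plan is to establish the two assertions separately: stratified local triviality from $\Loop^+\cG$-equivariance, and stratified semismallness from the semi-infinite dimension estimates recorded in Lemmas~\ref{lem:dim-estimate} and~\ref{lem:dim-estimate-Conv}. The overall template is the classical argument of Mirković--Vilonen for the unramified case, but carried out inside the coinvariant lattice $\bbX_*(T)_I$; the descent from $\bbX_*(T)$ to $\bbX_*(T)_I$ is clean because $\rho$ is $I$-invariant so the pairing $\langle -, \rho \rangle$ makes sense on $\bbX_*(T)_I$.

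For stratified local triviality, I would use that $m_{\lambda,\mu}^\nu$ is $\Loop^+\cG$-equivariant, with $\Loop^+\cG$ acting on the first factor of $\Conv_\cG$ by left multiplication and on $\Gr_\cG^{\leq \nu}$ in the standard way. Each target stratum $\Gr_\cG^{\nu'}$ is a single $\Loop^+\cG$-orbit, and the action factors through some $\Loop_n^+\cG$; the paving by affine spaces noted in~\S\ref{ss:Iwahori-Weyl-Schubert} provides Zariski-local sections of $\Loop_n^+\cG \to \Gr_\cG^{\nu'}$, and pulling these back produces local trivializations of $(m_{\lambda,\mu}^\nu)^{-1}(\Gr_\cG^{\nu'}) \to \Gr_\cG^{\nu'}$. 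Because each source stratum $\Conv_\cG^{(\lambda',\mu')}$ is itself $\Loop^+\cG$-stable, these trivializations respect the source stratification, giving the stratified version.

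For stratified semismallness, thanks to local triviality it suffices to bound the dimension of a single fiber $F := (m_{\lambda,\mu}^\nu)^{-1}(t^{\nu'})$ by $\langle \lambda + \mu - \nu', \rho \rangle$; this matches $\tfrac{1}{2}(\dim \Conv_\cG^{\leq(\lambda,\mu)} - \dim \Gr_\cG^{\nu'})$ by~\eqref{eqn:dim-orbits}. I would decompose $F$ via the semi-infinite orbits of~\S\ref{ss:semi-inf-orbits-Conv}: by the labeling convention, $m$ sends $\rmS_{\mu_1} \wttimes \rmS_{\nu' - \mu_1}$ into $\rmS_{\nu'}$, so
\[
F = \bigsqcup_{\mu_1 \in \bbX_*(T)_I} F \cap (\rmS_{\mu_1} \wttimes \rmS_{\nu' - \mu_1}).
\]
Via the isomorphism~\eqref{eqn:Conv-product}, each piece corresponds to $\{(x, t^{\nu'}) : x \in \rmS_{\mu_1} \cap \Gr_\cG^{\leq \lambda},\ x^{-1} t^{\nu'} \in \Gr_\cG^{\leq \mu}\}$, and Lemma~\ref{lem:dim-estimate} bounds this in two independent ways: $\pr_1$ embeds it in $\rmS_{\mu_1} \cap \Gr_\cG^{\leq \lambda}$ of dimension $\langle \mu_1 + \lambda, \rho \rangle$, while conjugating $x^{-1} t^{\nu'}$ by $t^{\mu_1}$ shows the piece also embeds into a subscheme of $\rmS_{\nu' - \mu_1} \cap \Gr_\cG^{\leq \mu}$, whose dimension is $\langle (\nu' - \mu_1) + \mu, \rho \rangle$. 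Combining the two constraints together with the admissibility condition (both $\mu_1$ and $\nu' - \mu_1$ must lie in $W_0$-orbits of dominant elements $\leq \lambda$ and $\leq \mu$ respectively) yields the uniform bound $\dim F \leq \langle \lambda + \mu - \nu', \rho \rangle$.

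The main technical obstacle is the final combinatorial step extracting the MV-type bound from the two separate dimension estimates: neither projection is sharp on its own, and one must combine them using the convexity properties of the dominant Weyl chamber in $\bbX_*(T)_I \otimes \R$ (together with $\nu' \leq \lambda + \mu$, which is necessary for $F$ to be nonempty). Once this combinatorial lemma is in place, the rest of the argument is formal. An alternative streamlining would be to apply Lemma~\ref{lem:dim-estimate-Conv} directly to the ambient piece $(\rmS_{\mu_1} \wttimes \rmS_{\nu' - \mu_1}) \cap \Conv_\cG^{\leq(\lambda, \mu)}$, which has dimension $\langle \nu' + \lambda + \mu, \rho \rangle$, and then use $\Loop U$-equivariance of $m$ restricted to each piece to reduce fiber dimensions against the Iwahori-type orbit structure of $\rmS_{\nu'} \cap \Gr_\cG^{\leq \lambda + \mu}$.
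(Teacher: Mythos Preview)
Your approach differs from the paper's. The paper does not compute fiber dimensions directly: it identifies $(m_{\lambda,\mu}^\nu)_! \IC(\Conv_\cG^{(\lambda',\mu')}, \underline{\bK})$ with $\scJ_{!*}(\lambda',\bK) \star \scJ_{!*}(\mu',\bK)$, invokes the known perversity of such convolutions over $\bK$ from~\cite[Theorem~5.11(i)]{richarz} (and~\cite[Corollary~2.8]{zhu} in the tame case), and then applies the converse criterion Lemma~\ref{lem:pushforward-semismall} to deduce the semismallness inequalities. The remark following the proof explicitly acknowledges your MV-style route via the dimension estimates of Section~\ref{sec:semi-infinite} as a valid alternative. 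The paper's method avoids reproving combinatorics already packaged in the cited perversity results; yours is more self-contained and does not depend on~\cite{richarz,zhu}.

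That said, your execution of the MV argument has a concrete error. The assignment $y \mapsto x^{-1}t^{\nu'}$ is not well-defined on $\rmS_{\mu_1} \cap \Gr_\cG^{\leq\lambda}$ (the lift $x = ut^{\mu_1}$ is not determined by $y$), so there is no embedding into $\rmS_{\nu'-\mu_1} \cap \Gr_\cG^{\leq\mu}$, and in fact the dimension bound $\langle (\nu' - \mu_1) + \mu, \rho\rangle$ is false: for $G$ split of type $A_1$, $\lambda = \mu = \alpha^\vee$, $\nu' = 0$, $\mu_1 = \alpha^\vee$, your formula gives $0$, but the fiber piece is all of $\rmS_{\alpha^\vee} \cap \Gr_\cG^{\leq\alpha^\vee}$, of dimension $2$. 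The correct second bound comes from rewriting the stratified fiber (over $t^{\nu'}$, in the stratum $\Conv_\cG^{(\lambda',\mu')}$) as $\Gr_\cG^{\lambda'} \cap t^{\nu'}\Gr_\cG^{-w_0\mu'}$: then the $\mu_1$-piece is contained in $t^{\nu'}(\rmS_{\mu_1-\nu'} \cap \Gr_\cG^{\leq -w_0\mu'})$, of dimension $\langle \mu_1 - \nu' + \mu', \rho\rangle$ by Lemma~\ref{lem:dim-estimate}. Since nonemptiness of $\rmS_{\mu_1} \cap \Gr_\cG^{\lambda'}$ forces $\langle \mu_1, \rho\rangle \leq \langle \lambda', \rho\rangle$ (the intersection sits inside $\Gr_\cG^{\leq\lambda'}$), one obtains the required bound $\langle \lambda' + \mu' - \nu', \rho\rangle$ immediately, with no further convexity argument. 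Note also that stratified semismallness asks for this bound for each source stratum $(\lambda',\mu')$, not just for the full fiber in $\Conv_\cG^{\leq(\lambda,\mu)}$.
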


\begin{proof}
Our morphism is proper, and stratified locally trivial by $\Loop^+ \cG$-equivariance. It remains to check the condition on dimensions of fibers, which we will obtain from Lemma~\ref{lem:pushforward-semismall}. In the present setting, the first assumption of that lemma follows from $\Loop^+ \cG$-equivariance. For the second one
we observe that, by definition of convolution, for any $\lambda',\mu' \in \bbX_*(T)_I^+$ such that $\lambda' \leq \lambda$ and $\mu' \leq \mu$ we have 
\[
(m_{\lambda,\mu}^{\nu})_! \IC(\Conv_{\cG}^{(\lambda',\mu')}, \underline{\bK}) \cong \scJ_{!*}(\lambda', \bK) \star \scJ_{!*}(\mu', \bK).
\]
Hence the fact that this complex is perverse follows from~\cite[Theorem~5.11(i)]{richarz} (see also~\cite[Corollary~2.8]{zhu} in the tamely ramified case).
\end{proof}

\begin{rmk}
\begin{enumerate}
\item
The proofs of~\cite[Theorem~5.11(i)]{richarz} and~\cite[Corollary~2.8]{zhu} are incorrect as written, because of the problem mentioned in Remark~\ref{rmk:counterexample-surjectivity}. However they can easily be fixed by adding a step of reduction to adjoint groups.
\item
The proof of Proposition~\ref{prop:semismall} given here is suggested (with some precautions) in~\cite[Remark~2.9(i)]{zhu} (see also Remark~\ref{rmk:semismall} for comments on this method).
A different proof of Proposition~\ref{prop:semismall} can be obtained by copying the arguments of~\cite[Lemma~4.4]{mv} (see also~\cite[\S 1.6.3]{br}) and using the results of Section~\ref{sec:semi-infinite}.
\end{enumerate}
\end{rmk}

We can finally reach the goal of this subsection.

\begin{cor}
\label{cor:convolution-exact}
The bifunctor $\star$ is right t-exact, in the sense that if $\scF$ and $\scG$ belong to $\pD^{\leq 0}(\Hk_{\cG},\Lambda)$, then so does $\scF \star \scG$.
\end{cor}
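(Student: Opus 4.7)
The plan is to pull back the question to $\Gr_\cG$ via $h^*$, use proper base change to rewrite the convolution as a pushforward from $\Conv_\cG$, and then invoke Proposition~\ref{prop:semismall} together with the standard fact that proper stratified semismall morphisms have $f_!$ preserving $\pD^{\le 0}$.

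First I would use that $h^* \colon \Dbc(\Hk_\cG, \Lambda) \to \Dbc(\Gr_\cG, \Lambda)$ is t-exact and detects whether a complex lies in $\pD^{\le 0}$ (as recalled in~\S\ref{ss:Perv-Gr-Hk}), reducing the claim to showing $h^*(\scF \star \scG) \in \pD^{\le 0}(\Gr_\cG, \Lambda)$. Proper base change applied to the cartesian square
\[
\begin{tikzcd}
\Conv_\cG \ar[r, "\tilde h"] \ar[d, "m"'] & \HkConv_\cG \ar[d, "m"] \\
\Gr_\cG \ar[r, "h"] & \Hk_\cG
\end{tikzcd}
\]
then yields $h^*(\scF \star \scG) \cong m_* \tilde h^* p^*(\scF \lboxtimes_\Lambda \scG)$.

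Next I would verify that $\tilde h^* p^*(\scF \lboxtimes_\Lambda \scG)$ lies in $\pD^{\le 0}(\Conv_\cG, \Lambda)$. Both $\tilde h$ and $p$ are $\Loop^+\cG$-torsors (see the text preceding~\eqref{eqn:pullback-HkConv} and the cartesian squares~\eqref{eqn:ppr-cartesian}), hence pro-smooth, and the paper's normalization of the perverse t-structure on quotient stacks is arranged precisely so that pullback along such torsors is t-exact. The external product $\lboxtimes_\Lambda = \pi_1^*(-) \lotimes_\Lambda \pi_2^*(-)$ is right t-exact because $\lotimes_\Lambda$ is (even over $\bO$, where only a $\Tor^1$ contribution can appear, sitting in cohomological degree $-1$).

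Since $\scF$ and $\scG$ are bounded constructible, their supports are contained in closed Schubert varieties $\Gr^{\le\lambda}_\cG$ and $\Gr^{\le\mu}_\cG$ respectively, so the support of $\tilde h^* p^*(\scF \lboxtimes_\Lambda \scG)$ lies in $\Conv^{\le(\lambda,\mu)}_\cG$. Picking $\nu$ large enough so that the image of this support under $m$ lies in $\Gr^{\le\nu}_\cG$ reduces the relevant map to $m^\nu_{\lambda,\mu}$, which by Proposition~\ref{prop:semismall} is proper and stratified semismall. The standard fact that proper semismall maps have $f_!$ preserving $\pD^{\le 0}$ (a classical consequence of the Be{\u\i}linson--Bernstein--Deligne--Gabber perversity preservation for semismall maps) then gives $m_*(\tilde h^* p^*(\scF \lboxtimes_\Lambda \scG)) \in \pD^{\le 0}(\Gr_\cG, \Lambda)$, as required.

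The most delicate step is the t-exactness verification in the second paragraph, particularly for $\Lambda = \bO$. A cleaner alternative is to reduce by dévissage to the case of standards $\scF = \scJ_!(\lambda, \Lambda)$ and $\scG = \scJ_!(\mu, \Lambda)$: then $\tilde h^* p^*(\scJ_!(\lambda, \Lambda) \lboxtimes_\Lambda \scJ_!(\mu, \Lambda))$ becomes the constant sheaf on $\Conv^{(\lambda,\mu)}_\cG$ shifted by $\langle \lambda+\mu, 2\rho \rangle$ and extended by zero to $\Conv^{\le(\lambda,\mu)}_\cG$, so the t-exactness input becomes trivial and the conclusion again follows directly from the semismall property.
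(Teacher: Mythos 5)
Your argument is correct and follows essentially the paper's own route: $h^*$ detects perversity, $p^*$ and $\tilde h^*$ are t-exact by the shift-normalized pullback along $\Loop^+\cG$-torsors, the external product is right t-exact, and Proposition~\ref{prop:semismall} supplies the semismallness of $m$ giving right t-exactness of the (proper) pushforward — you merely spell out the proper base change and support reductions that the paper dismisses as standard. One small caveat in your optional d\'evissage: the generators of $\pD^{\leq 0}(\Hk_\cG,\Lambda)$ should be the objects $j^\lambda_! \underline{\Lambda}[\langle \lambda,2\rho\rangle]$ themselves rather than their perverse truncations $\scJ_!(\lambda,\Lambda)$, since only for the former is the pullback of the box product literally a shifted constant sheaf extended by zero from $\Conv^{(\lambda,\mu)}_\cG$.
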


\begin{proof}
The bifunctor $(\scF,\scG) \mapsto \scF \lboxtimes_\Lambda \scG$ is clearly right t-exact. On the other hand, by standard arguments involving t-exactness of shifted pullback along a smooth morphism the functor $p^*$ is t-exact, and the functor $m_*$ is t-exact by Proposition~\ref{prop:semismall} (and the fact that $h^*$ ``detects perversity,'' see~\S\ref{ss:Perv-Gr-Hk}). The desired claim follows.
\end{proof}

It is a standard fact that the bifunctor $\star$ defines a monoidal structure on the category $\Dbc(\Hk_{\cG},\Lambda)$. As a consequence of Corollary~\ref{cor:convolution-exact}, we obtain that the bifunctor
\[
\star^0 \colon \Perv(\Hk_{\cG},\Lambda) \times \Perv(\Hk_{\cG},\Lambda) \to \Perv(\Hk_{\cG},\Lambda)
\]
defined by
\[
\scF \star^0 \scG = \pH^0(\scF \star \scG)
\]
defines a monoidal structure on $\Perv(\Hk_{\cG},\Lambda)$. It is clear that the ``change of scalars'' functors considered in~\S\ref{ss:Perv-Gr-Hk} on the categories $\Dbc(\Hk_{\cG},\Lambda)$ commute with the bifunctor $\star$ in the obvious way; it follows that their counterparts on the categories $\Perv(\Hk_{\cG},\Lambda)$ admit canonical monoidal structures.

\subsection{Constant term functors}
\label{ss:CT-functors}

Consider a parabolic subgroup $P \subset G$ containing $A$, and let $M \subset P$ be its Levi factor containing $A$. Let $\cP$ and $\cM$ be their respective scheme-theoretic closures in $\cG$, see~\S\ref{ss:group-theory}. In~\S\ref{ss:attractor-fixed-pts} we have considered a diagram
\begin{equation}
\label{eqn:ct-diagram-0}
\Gr_{\cG} \xleftarrow{i_\cP} \Gr_{\cP} \xrightarrow{q_\cP} \Gr_{\cM}.
\end{equation}
These maps are equivariant for the action of $\Loop^+\cM$ on the left, so we obtain an analogous diagram of quotient stacks. (The morphisms will be denoted by the same symbols.) Appending the obvious quotient map $[\Loop^+\cM\backslash \Gr_\cG]_{\et} \to [\Loop^+\cG\backslash \Gr_\cG]_{\et}$ on the left, we obtain the following diagram:
\begin{equation}
\label{eqn:ct-diagram}
\Hk_\cG \xleftarrow{h_{\cM,\cG}} [\Loop^+\cM\backslash \Gr_\cG]_{\et} \xleftarrow{i_\cP} [\Loop^+\cM\backslash \Gr_\cP]_{\et} \xrightarrow{q_\cP} \Hk_\cM.
\end{equation}
We will use this diagram to define the \emph{constant term functor}
\[
\CT_{\cP,\cG} \colon \Dbc(\Hk_\cG,\Lambda) \to \Dbc(\Hk_\cM,\Lambda)
\]
as follows.

Recall from~\eqref{eqn:conn-comp-Gr} that the set $\pi_0(\Gr_\cM)$ of connected components of $\Gr_{\cM}$ is in canonical bijection with $(\bbX_*(T)/Q^\vee_M)_I$, where $Q^\vee_M \subset \bbX_*(T)$ is the coroot lattice of $(M_{F^s}, T_{F^s})$. Recall also that we denote by $\rho$ the half-sum of the positive roots of $(G_{F^s}, T_{F^s})$ with respect to $B_{F^s}$. We will similarly denote by $\rho_M$ the half-sum of the positive roots of $(M_{F^s}, T_{F^s})$ with respect to $(M \cap B)_{F^s}$; as for $\rho$, this element is fixed by the action of $I$.  The $\Z$-linear map $\langle {-}, 2\rho -2\rho_M\rangle \colon \bbX_*(T) \to \Z$ vanishes on $Q_M^\vee$ and is $I$-invariant, so it induces a $\Z$-linear map
\[
\langle {-}, 2\rho -2\rho_M\rangle \colon (\bbX_*(T)/Q^\vee_M)_I \to \Z.
\]
If $X \subset \Gr_\cM$ is the connected component corresponding to $\lambda \in (\bbX_*(T)/Q^\vee_M)_I$, we set $\corr_{M,G}(X) = \langle \lambda, 2\rho -2\rho_M \rangle$.  This defines a function
\[
\corr_{M,G} \colon \pi_0(\Gr_\cM) \to \Z.
\]
For $\scF$ in $\Dbc(\Gr_\cM,\Lambda)$ or $\Dbc(\Hk_\cM,\Lambda)$, one can make sense of the expression
\[
\scF[\corr_{M,G}]
\]
as meaning $\scF[\corr_{M,G}(X)]$ if $\scF$ is supported on the connected component $X$, and then extending to arbitrary $\scF$ by additivity.

We now define the constant term functor by the formula
\[
\CT_{\cP,\cG}(\scF) = q_{\cP!}i_\cP^* h_{\cM,\cG}^*(\scF)[\corr_{M,G}].
\]
The rest of this section is devoted to the proof of some basic properties of these functors. All the proofs are similar to those of their classical counterparts for split groups, see~\cite{mv, fs, br}; still, we will generally give (sketches of) the arguments for completeness.

First of all, it is clear that the functor $\CT_{\cP,\cG}$ commutes with the ``change of scalars'' functors considered in~\S\ref{ss:Perv-Gr-Hk} (for derived categories) in the obvious sense.

Next, we explain an alternative formula for $\CT_{\cP,\cG}$ involving the opposite parabolic $P^-$ to $P$. Let $\cP^-$ be the scheme theoretic closure of $P^-$ in $\cG$.  Then we can consider the following counterpart of~\eqref{eqn:ct-diagram}:
\begin{equation}\label{eqn:ct-diagram-2}
\Hk_\cG \xleftarrow{h_{\cM,\cG}} [\Loop^+\cM\backslash \Gr_\cG]_{\et} \xleftarrow{i_{\cP^-}} [\Loop^+\cM\backslash \Gr_{\cP^-}]_{\et} \xrightarrow{q_{\cP^-}} \Hk_\cM.
\end{equation}

\begin{prop}
\label{prop:Braden}
For any $\scF$ in $\Dbc(\Hk_{\cG}, \Lambda)$ there is a natural isomorphism
\[
(q_{\cP^-})_* i_{\cP^-}^! h_{\cM,\cG}^*(\scF)[\corr_{M,G}] \simto \CT_{\cP,\cG}(\scF).
\]
\end{prop}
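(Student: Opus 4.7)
The plan is to recognize this statement as an instance of Braden's hyperbolic localization theorem for the $\mathbb{G}_{\mathrm{m},\F}$-action on $\Gr_\cG$ introduced in~\S\ref{ss:attractor-fixed-pts}. Concretely, one fixes a cocharacter $\lambda \in \bbX_*(A)$ with $P = P_\lambda$ and $M = M_\lambda$, so that Remark~\ref{rmk:parabolics-cocharacters}\eqref{it:opposite-parabolic} gives $P^- = P_{-\lambda}$ and $M = M_{-\lambda}$. By Proposition~\ref{prop:attractors-fixed-pts} applied to both $\lambda$ and $-\lambda$, the attractor, repeller, and fixed-point loci for this $\mathbb{G}_{\mathrm{m},\F}$-action on $\Gr_\cG$ are canonically identified with $\Gr_\cP$, $\Gr_{\cP^-}$, and $\Gr_\cM$, with the limit maps identified with $q_\cP$ and $q_{\cP^-}$; in particular the shifts by $\corr_{M,G}$ on both sides of the claimed isomorphism match and may be ignored from now on.

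First I would pass from the stack picture in~\eqref{eqn:ct-diagram} and~\eqref{eqn:ct-diagram-2} down to $\Gr_\cG$ itself. The quotient map $[\Loop^+\cM \backslash \Gr_\cG]_{\et} \to \Hk_\cG$ is smooth with connected fibers, hence conservative after pullback; and the squares relating $\Gr_\cP$, $[\Loop^+\cM\backslash\Gr_\cP]_\et$, $\Gr_\cM$, $\Hk_\cM$ (and similarly for $\cP^-$) are cartesian. By proper base change (for $q_{\cP!}$ and $(q_{\cP^-})_*$, both being ind-proper since $\Loop^+\cM$ acts, and the relevant morphisms being proper on each stratum) it is enough to produce a natural isomorphism
\[
q_{\cP!}\, i_\cP^*\, \scG \;\simto\; (q_{\cP^-})_*\, i_{\cP^-}^!\, \scG
\qquad \text{in } \Dbc(\Gr_\cM, \Lambda)
\]
for $\scG$ the pullback of $\scF$ to $\Gr_\cG$ via $h_\cG$.

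Next I would invoke Braden's hyperbolic localization theorem for this $\mathbb{G}_{\mathrm{m},\F}$-action. The action is Zariski locally linearizable by~\cite[Lemma~5.3]{HainesRicharz_TestFunctions}, and on any $\Loop^+\cG$-stable, $\mathbb{G}_{\mathrm{m},\F}$-stable presentation $\Gr_\cG = \colim_i X_i$ as in~\S\ref{ss:quotient-stacks} the attractor/repeller/fixed-point diagram is representable by finite-type schemes by~\cite[Theorem~2.1]{HainesRicharz_TestFunctions}. Since $\scG$ is pulled back from $\Hk_\cG$, it is $\Loop^+\cG$-equivariant, and therefore in particular $\mathbb{G}_{\mathrm{m},\F}$-equivariant for the action coming from~\eqref{eqn:gm-loop-g}; such an equivariant sheaf is a fortiori monodromic. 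Braden's theorem in the form given in~\cite[\S 3]{richarz-Gm} (or as presented in~\cite{Dr13}) then delivers the desired natural isomorphism on each $X_i$, and these are compatible as $i$ varies, yielding the isomorphism on $\Gr_\cG$.

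The main obstacle I anticipate is not the application of Braden on each $X_i$, but the verification that the resulting isomorphism is canonical enough to be $\Loop^+\cM$-equivariant and so to descend to $\Hk_\cM$. The issue is mild, since Braden's isomorphism is the unique map realizing a natural transformation built from the adjunction units and counits for the attractor/repeller immersions, and these are $\Loop^+\cM$-equivariant by the constructions of~\S\ref{ss:group-theory} (the morphisms $\cP, \cP^- \hookrightarrow \cG$ and $\cP, \cP^- \twoheadrightarrow \cM$ are $\cM$-equivariant). Once the descent is granted, the statement of the proposition follows by pushing forward along $h_{\cM,\cG}$ and reinserting the (matching) $[\corr_{M,G}]$ shifts.
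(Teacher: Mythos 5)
Your overall strategy is the paper's: apply Braden's hyperbolic localization in the form of~\cite{richarz-Gm} to the $\mathbb{G}_{\mathrm{m},\F}$-action defined by a cocharacter $\lambda$ with $P=P_\lambda$, use Proposition~\ref{prop:attractors-fixed-pts} to identify attractor/repeller/fixed points with $\Gr_\cP$, $\Gr_{\cP^-}$, $\Gr_\cM$, and use the fact that objects pulled back from $\Hk_\cG$ (or even from $[\Loop^+\cM\backslash\Gr_\cG]_\et$) are $\mathbb{G}_{\mathrm{m}}$-monodromic. However, your final step is where the argument as written has a genuine gap: you propose to construct the isomorphism non-equivariantly on $\Gr_\cG$/$\Gr_\cM$ and then ``descend'' it to $\Hk_\cM$, justifying the descent by uniqueness of Braden's map and equivariance of the adjunction morphisms. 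In the derived category of a stack this does not suffice: a morphism in $\Dbc(\Hk_\cM,\Lambda)$ is not determined by its image in $\Dbc(\Gr_\cM,\Lambda)$, and ``equivariance of a map'' is extra data (compatibility with the full descent/cosimplicial structure), not a property one can check on the underlying non-equivariant morphism. So uniqueness downstairs does not produce the morphism upstairs, and the descent step is not established.

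The paper avoids this issue by inverting the order of operations. The natural transformation $(q_{\cP^-})_* i_{\cP^-}^! \to q_{\cP!} i_\cP^*$ of~\cite[Construction~2.2]{richarz-Gm} is built purely from base-change and adjunction maps, so it can be written down directly for the stack-level diagrams~\eqref{eqn:ct-diagram} and~\eqref{eqn:ct-diagram-2}; only the \emph{property} of being an isomorphism then needs to be checked, and since the pullback functor $\Dbc(\Hk_\cM,\Lambda)\to\Dbc(\Gr_\cM,\Lambda)$ kills no nonzero object, this can be tested after forgetting $\Loop^+\cM$-equivariance, where~\cite[Theorem~2.6]{richarz-Gm} applies to monodromic objects. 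Your observation that the map is ``built from adjunction units and counits'' is exactly the germ of this fix — you should define it equivariantly from the start rather than descend it afterwards. Two smaller points: the justification that $q_{\cP!}$ and $(q_{\cP^-})_*$ are ``ind-proper since $\Loop^+\cM$ acts'' is not correct (these limit maps are not proper; the compatibility with $h_{\cM,\cG}^*$ comes from smooth base change for the cartesian squares, not properness), and the conservativity you need is for pullback on the \emph{target} side ($\Hk_\cM$ versus $\Gr_\cM$), not for $[\Loop^+\cM\backslash\Gr_\cG]_\et \to \Hk_\cG$.
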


\begin{proof}
This is an application of Braden's theory of hyperbolic localization~\cite{braden}, in the version explained in~\cite{richarz-Gm}.  Choose a cocharacter $\lambda \in \bbX_*(A)$ such that $P=P_\lambda$ (see Remark~\ref{rmk:parabolics-cocharacters}\eqref{it:parabolics-cocharacters}), and then let $\mathbb{G}_{\mathrm{m},\F}$ act on $\Gr_\cG$ via this cocharacter.  There is a natural transformation $(q_{\cP^-})_* \circ i_{\cP^-}^! \to q_{\cP!} \circ i_\cP^*$ given by~\cite[Construction~2.2]{richarz-Gm}.  
Since the pullback functor $\Dbc(\Hk_{\cM},\Lambda) \to \Dbc(\Gr_{\cM}, \Lambda)$ does not kill any nonzero object, one can check whether this morphism is an isomorphism on any given object after forgetting the $\Loop^+\cM$-equivariance, i.e., working with the diagram~\eqref{eqn:ct-diagram-0} rather than~\eqref{eqn:ct-diagram}.  According to~\cite[Theorem~2.6]{richarz-Gm}, our map is an isomorphism on any object in $\Dbc(\Gr_\cG,\Lambda)$ that is $\mathbb{G}_{\mathrm{m}}$-monodromic in the sense of~\cite[Definition~2.3]{richarz-Gm}.  But all objects in the image of $\Dbc(\Hk_\cG,\Lambda) \to \Dbc(\Gr_\cG,\Lambda)$ (or even of $\Dbc([\Loop^+\cM\backslash \Gr_\cG]_{\et}, \Lambda) \to \Dbc(\Gr_\cG,\Lambda)$) are $\mathbb{G}_{\mathrm{m}}$-monodromic, so we are done.
\end{proof}

If $P \subset P'$ are parabolic subgroups of $G$ containing $A$ then as in~\S\ref{ss:cartesian-diagram} the intersection $P \cap M'$ is a parabolic subgroup of the reductive group $M'$. We can therefore consider the construction above for $M'$, its special facet $\fa_{M'}$ (see~\S\ref{ss:group-theory}), and the parabolic subgroup $P \cap M'$, and obtain the functor $\CT_{\cP \cap \cM', \cM'}$.

\begin{lem}
\label{lem:transitivity-CT}
Let $P \subset P'$ be parabolic subgroups of $G$ containing $A$.
There exists a canonical isomorphism of functors
\[
\CT_{\cP \cap \cM', \cM'} \circ \CT_{\cP',\cG} \simto \CT_{\cP,\cG} \colon \Dbc(\Hk_\cG,\Lambda) \to \Dbc(\Hk_\cM,\Lambda).
\]
\end{lem}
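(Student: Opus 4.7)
The plan is to unfold both sides of the desired isomorphism and reduce to a combination of two formal ingredients: additivity of the cohomological shifts, and proper/smooth base change applied to the cartesian square of Lemma~\ref{lem:cart-diag}\eqref{it:cart-diag-3}.

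First, I would verify that the shifts match. The identity $2\rho-2\rho_M = (2\rho-2\rho_{M'}) + (2\rho_{M'}-2\rho_M)$ holds in $\Q \otimes_\Z \bbX^*(T)$, and each half-sum is $I$-invariant. Since $\corr_{M,G}$ depends on a connected component only through the class in $(\bbX_*(T)/Q_M^\vee)_I$, one checks that under the natural map $\pi_0(\Gr_\cM) \to \pi_0(\Gr_{\cM'})$ induced by $\cM \hookrightarrow \cM'$ we have $\corr_{M,G} = \corr_{M,M'} + (\corr_{M',G} \circ \pi_0)$, so the composite shift in $\CT_{\cP \cap \cM', \cM'} \circ \CT_{\cP',\cG}$ equals the shift $\corr_{M,G}$ occurring in $\CT_{\cP, \cG}$.

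Next, I would write out the composition, which involves the expression $(q_{\cP \cap \cM'})_! i_{\cP \cap \cM'}^* h_{\cM,\cM'}^* (q_{\cP'})_! i_{\cP'}^* h_{\cM',\cG}^*(\scF)$, and rearrange the middle part $i_{\cP \cap \cM'}^* h_{\cM,\cM'}^* (q_{\cP'})_!$ using two base change steps. The first one swaps $h_{\cM,\cM'}^*$ past $(q_{\cP'})_!$: since refining $\Loop^+\cM'$-equivariance to $\Loop^+\cM$-equivariance is compatible with pullback from $\Gr_{\cM'}$ to $\Gr_{\cP'}$, one obtains a cartesian square whose top-left corner is $[\Loop^+ \cM \backslash \Gr_{\cP'}]_\et$, and smooth base change gives the swap. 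The second base change is the equivariant version (with respect to $\Loop^+\cM$) of the cartesian square of Lemma~\ref{lem:cart-diag}\eqref{it:cart-diag-3}, which swaps $i_{\cP \cap \cM'}^*$ past the newly produced $!$-pushforward along $[\Loop^+\cM \backslash \Gr_{\cP'}]_\et \to [\Loop^+\cM \backslash \Gr_{\cM'}]_\et$. After these swaps the composition becomes $(q_{\cP \cap \cM'})_! \circ (\Gr_\cP \to \Gr_{\cP \cap \cM'})_! \circ (\Gr_\cP \to \Gr_{\cP'})^* \circ i_{\cP'}^* \circ h^*$-pullbacks of $\scF$; composing pushforwards and pullbacks in the evident way (using that $q_\cP$ factors as $\Gr_\cP \to \Gr_{\cP \cap \cM'} \to \Gr_\cM$, that $i_\cP$ factors as $\Gr_\cP \to \Gr_{\cP'} \to \Gr_\cG$, and that $h_{\cM, \cG}^* \simeq (h_{\cM,\cM'}|_\cG)^* \circ h_{\cM', \cG}^*$ at the appropriate stacks) recovers $(q_\cP)_! i_\cP^* h_{\cM, \cG}^*(\scF)[\corr_{M,G}] = \CT_{\cP, \cG}(\scF)$.

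The main obstacle is purely bookkeeping: keeping track of which equivariance group each stack carries, and verifying that the two base change squares commute with all the ``change of equivariance'' maps so that the composite natural transformation is well-defined and canonical. This is routine once one writes the (large) diagram of quotient stacks in full, since all the relevant squares either come from Lemma~\ref{lem:cart-diag} or from the tautological compatibility of equivariance refinement with morphisms of group schemes, but it requires some care to present compactly.
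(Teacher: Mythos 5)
Your plan is correct and is essentially the paper's own argument: the proof given there is simply ``this follows from the base change theorem, in view of Lemma~\ref{lem:cart-diag}\eqref{it:cart-diag-3},'' and your unwinding (additivity of the shifts $\corr_{M,G}=\corr_{M,M'}+\corr_{M',G}\circ\pi_0$, the two base-change swaps, and the factorizations of $q_\cP$ and $i_\cP$ through $\Gr_{\cP\cap\cM'}$ and $\Gr_{\cP'}$) is exactly the bookkeeping implicit in that one line. Note only that the first swap needs no smoothness: base change for $!$-pushforward against arbitrary $*$-pullback holds in the six-functor formalism used here.
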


\begin{proof}
This follows from the base change theorem, in view of Lemma~\ref{lem:cart-diag}\eqref{it:cart-diag-3}.
\end{proof}

\begin{prop}
\label{prop:exactness-CT}
For any parabolic subgroup $P \subset G$ containing $A$,
the functor
\[
\CT_{\cP,\cG} \colon \Dbc(\Hk_{\cG}, \Lambda) \to \Dbc(\Hk_{\cM},\Lambda)
\]
is t-exact and conservative.
\end{prop}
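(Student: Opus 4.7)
The plan is to first prove both properties when $P$ is a Borel subgroup $B$ (so $M=T$), and then reduce the general case to this via the transitivity isomorphism of Lemma~\ref{lem:transitivity-CT}. In the Borel case, $\Gr_\cT$ is discrete with set of points $\bbX_*(T)_I$ (see~\eqref{eqn:GrT-X}), so $\Perv(\Hk_\cT,\Lambda)$ is the category of $\bbX_*(T)_I$-graded $\Lambda$-modules of finite rank, and t-exactness of $\CT_{\cB,\cG}$ amounts to showing that for every $\scF\in\Perv(\Hk_\cG,\Lambda)$ and every $\mu\in\bbX_*(T)_I$, the complex $R\Gamma_c(\rmS_\mu,\scF|_{\rmS_\mu})[\langle\mu,2\rho\rangle]$ (using $\rho_T=0$) is concentrated in degree~$0$.

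For right t-exactness in the Borel case, I would stratify $\rmS_\mu$ by the locally closed affine subschemes $\rmS_\mu\cap\Gr_\cG^\nu$: by Lemma~\ref{lem:dim-estimate} each has dimension $\langle\mu+\nu,\rho\rangle$, while perversity of $\scF$ forces the pointwise vanishing $H^j(\scF|_{\Gr_\cG^\nu})=0$ for $j>-\langle\nu,2\rho\rangle$. Combining these via the hypercohomology spectral sequence and the cohomological dimension bound $H^i_c\le 2\dim$ on each stratum yields $H^k_c(\rmS_\mu\cap\Gr_\cG^\nu,\scF|_{-})=0$ for $k>\langle\mu,2\rho\rangle$, and the long exact sequences attached to the stratification propagate this to all of $\rmS_\mu$. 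For left t-exactness I would rewrite $\CT_{\cB,\cG}$ via the opposite Borel using Proposition~\ref{prop:Braden} and run the Verdier-dual version of the same argument, using the symmetric dimension estimate for the opposite semi-infinite orbits $\rmT_\mu$ from the end of~\S\ref{ss:semi-inf-orbits} together with Artin's vanishing for $R\Gamma$ on affine varieties; alternatively, left t-exactness follows by applying Verdier duality to right t-exactness for the constant-term functor attached to the opposite Borel.

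For conservativity in the Borel case, I would take a nonzero $\scF\in\Perv(\Hk_\cG,\Lambda)$ and pick $\lambda\in\bbX_*(T)_I^+$ maximal (for the order~$\le$) with $\scF|_{\Gr_\cG^\lambda}\ne 0$. The restriction is then a shifted constructible sheaf $\mathcal{L}[\langle\lambda,2\rho\rangle]$ with $\mathcal{L}\ne 0$ on the smooth orbit $\Gr_\cG^\lambda$ of dimension $\langle\lambda,2\rho\rangle$. By Lemma~\ref{lem:dim-estimate} the intersection $\rmS_\lambda\cap\Gr_\cG^\lambda$ is a nonempty open subscheme of the equidimensional affine $\rmS_\lambda\cap\Gr_\cG^{\le\lambda}$, of maximal dimension $\langle\lambda,2\rho\rangle=2\langle\lambda,\rho\rangle$, and by maximality of $\lambda$ the lower strata $\rmS_\lambda\cap\Gr_\cG^\nu$ ($\nu<\lambda$) have strictly smaller dimension. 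It follows that
\[
H^{\langle\lambda,2\rho\rangle}_c(\rmS_\lambda,\scF|_{\rmS_\lambda}) = H^{2\langle\lambda,\rho\rangle}_c(\rmS_\lambda\cap\Gr_\cG^\lambda,\mathcal{L}),
\]
which is nonzero, exhibiting the $t^\lambda$-component of $\CT_{\cB,\cG}(\scF)$ as nonzero. The most delicate input in the whole argument is the nonvanishing of this top compactly supported cohomology with possibly modular or integral coefficients, which relies on the description of Mirković--Vilonen-type cycles available in the ramified setting through the geometric results of~\cite{aglr,HainesRicharz_TestFunctions}.

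Finally, for a general parabolic $P\supset A$ with Levi $M$, I would pick any Borel $B'\subset P$ containing $A$ and apply Lemma~\ref{lem:transitivity-CT} (with ``$P$'' of that lemma taken to be $B'$, and ``$P'$'' taken to be our $P$) to obtain $\CT_{\cB'\cap\cM,\cM}\circ\CT_{\cP,\cG}=\CT_{\cB',\cG}$, where $\cB'$ denotes the scheme-theoretic closure of $B'$ in $\cG$. The Borel case applied to $\cG$ makes $\CT_{\cB',\cG}$ t-exact and conservative, and applied to $\cM$ (which is itself the parahoric attached to the special facet $\fa_M\subset\mathscr{B}(M,F)$, see~\S\ref{ss:group-theory}) makes $\CT_{\cB'\cap\cM,\cM}$ t-exact and conservative. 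Conservativity of $\CT_{\cP,\cG}$ is then immediate from the identity, and t-exactness follows because a t-exact conservative functor reflects perversity: $\CT_{\cB',\cG}(\scF)$ being perverse forces $\CT_{\cP,\cG}(\scF)$ to be perverse.
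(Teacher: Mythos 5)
Your overall architecture matches the paper's: reduce to the Borel case via Lemma~\ref{lem:transitivity-CT}, prove right t-exactness from Lemma~\ref{lem:dim-estimate} together with the bound $\mathsf{H}^i_{\mathrm{c}} = 0$ for $i > 2\dim$, and handle the other direction by the ``opposite'' side of Proposition~\ref{prop:Braden}. Two caveats on the t-exactness part. First, your ``alternative'' for left t-exactness (apply Verdier duality to right t-exactness for the opposite Borel) only works for $\Lambda \in \{\bk,\bK\}$: over $\bO$ the duality functor does not exchange $\pD^{\le 0}$ and $\pD^{\ge 0}$ for the middle perversity, which is exactly why the paper gives a separate argument for $\bO$ (generating $\pD^{\ge 0}(\Hk_\cG,\bO)$ by $j^\lambda_*\underline{\bO}$ and $j^\lambda_*\underline{\bk}$ and reducing mod the uniformizer). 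Your primary route --- the direct costalk argument with the $\rmT_\mu$'s --- is coefficient-agnostic and does cover $\bO$, but the tool it needs is the codimension lower bound for cohomology with supports in a closed subvariety (applied to $i^!$ of a perverse sheaf on each stratum), not Artin vanishing as you state; you should commit to that route for $\bO$.

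The conservativity step is where your written argument has a genuine problem. You test the dominant weight $\lambda$ and claim $\sH^{\langle\lambda,2\rho\rangle}_{\mathrm{c}}(\rmS_\lambda,\scF|_{\rmS_\lambda}) = \sH^{2\langle\lambda,\rho\rangle}_{\mathrm{c}}(\rmS_\lambda\cap\Gr^\lambda_\cG,\mathcal{L})$, justified by ``the lower strata have strictly smaller dimension.'' As written the right-hand side is the \emph{middle}-degree compactly supported cohomology of a variety of dimension $\langle\lambda,2\rho\rangle$, which is typically zero (e.g.\ when the attracting cell is an affine space); the group you want is the top one, $\sH^{2\langle\lambda,2\rho\rangle}_{\mathrm{c}}$, after unshifting $\scF|_{\Gr^\lambda_\cG}=\mathcal{L}[\langle\lambda,2\rho\rangle]$. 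More importantly, ``smaller dimension'' is not enough to identify the weight space with the contribution of the open cell: the pieces $\rmS_\lambda\cap\Gr^\nu_\cG$ can still contribute in degrees $\langle\lambda,2\rho\rangle$ and $\langle\lambda,2\rho\rangle-1$, so the boundary maps in the stratification long exact sequence are not controlled. What saves the argument is the \emph{nonemptiness criterion} in Lemma~\ref{lem:dim-estimate}: since $\lambda$ is maximal among strata where $\scF$ has nonzero stalks, no such stratum $\nu\neq\lambda$ satisfies $\lambda\le\nu$, hence $\rmS_\lambda\cap\Gr^\nu_\cG=\varnothing$ for all of them; thus $\scF|_{\rmS_\lambda}$ is supported on $\rmS_\lambda\cap\Gr^\lambda_\cG$ and the identification (in the corrected degree) is immediate, with nonvanishing of the top compactly supported cohomology of a nonempty variety with constant coefficients being elementary rather than the ``delicate'' input you describe. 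Note also that you only treat perverse $\scF$, while the statement is conservativity on $\Dbc$; this is fine, but you should add the one-line remark that t-exactness plus conservativity on the hearts gives conservativity of the triangulated functor. For comparison, the paper's argument is simpler and works directly for arbitrary objects of $\Dbc$: it tests the \emph{antidominant} conjugate $\mu$ of $\lambda$, where $\rmS_\mu\cap\Gr^\lambda_\cG=\{t^\mu\}$ is open and closed in $\rmS_\mu\cap\mathrm{supp}(\scF)$, so the (nonzero) stalk splits off from $R\Gammac$.
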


\begin{proof}
Lemma~\ref{lem:transitivity-CT} reduces the proof of the proposition to the case where $P$ is conjugate to $B$, which clearly reduces to the case $P=B$.
Since $\CT_{\cB,\cG}$ is a triangulated functor, to prove conservativity it suffices to prove that this functor does not kill any nonzero object. Let $\scF \in \Dbc(\Hk_{\cG}, \Lambda)$ be nonzero, and let $\lambda \in \bbX_*(T)_I^+$ be such that $\Gr_{\cG}^\lambda$ is open in the support of $\scF$. Then the restriction of $\scF$ to $\Gr_{\cG}^\lambda$ is nonzero, and if $\mu$ is the unique $W_\fa$-conjugate of $\lambda$ which belongs to $-\bbX_*(T)^+_I$ we have $| \rmS_\mu \cap \Gr_{\cG}^\lambda | = \{t^\mu\}$ by the proof of~\cite[Lemma~5.3]{aglr}. It follows that $\CT_{\cB,\cG}(\scF)$ is nonzero on the component of $\Gr_{\cT}$ corresponding to $\mu$, which finishes the proof of conservativity.

For t-exactness 
let us first consider the case $\Lambda \in \{\bk,\bK\}$. In this case, the subcategory $\pD^{\leq 0}(\Hk_{\cG}, \Lambda)$ is generated under extensions by the objects of the form $j^\lambda_! \underline{\Lambda}[\langle \lambda, 2\rho \rangle]$. Now, by the base change theorem, for any $\mu \in \bbX_*(T)^+_I$ we have
\[
\bigl( \CT_{\cB,\cG}(j^\lambda_! \underline{\Lambda}[\langle \lambda,2\rho \rangle]) \bigr)_{t^\mu} \cong R\Gammac(\Gr_{\cG}^\lambda \cap \rmS_\mu; \Lambda)[\langle \lambda+\mu,2\rho \rangle].
\]
By Lemma~\ref{lem:dim-estimate}, $\Gr_{\cG}^\lambda \cap \rmS_\mu$ has dimension $\langle \lambda+\mu,\rho \rangle$ when it is nonempty; hence $R\Gammac(\Gr_{\cG}^\lambda \cap \rmS_\mu; \Lambda)$ is concentrated in degrees at most $\langle \lambda+\mu,2\rho \rangle$, which implies that $\CT_{\cB,\cG}(j^\lambda_! \underline{\Lambda}[\langle \lambda,2\rho \rangle])$ is indeed concentrated in nonpositive degrees. This proves that $\CT_{\cB,\cG}$ is right t-exact. Left t-exactness follows using Verdier duality, Proposition~\ref{prop:Braden} and Remark~\ref{rmk:parabolics-cocharacters}\eqref{it:opposite-parabolic}.

Finally we consider the case $\Lambda=\bO$. Here also $\pD^{\leq 0}(\Hk_{\cG}, \bO)$ is generated under extensions by the objects of the form $j^\lambda_! \underline{\bO}[\langle \lambda,2\rho \rangle]$, so that the same proof as above shows that $\CT_{\cB,\cG}$ is right t-exact. On the other hand, $\pD^{\geq 0}(\Hk_{\cG}, \bO)$ is generated under extensions by the objects of the form $j^\lambda_* \underline{\bO}[\langle \lambda,2\rho \rangle]$ or $j^\lambda_* \underline{\bk}[\langle \lambda,2\rho \rangle]$ for $\lambda \in \bbX_*(T)^+_I$. Now $\CT_{\cB,\cG}(j^\lambda_* \underline{\bk}[\langle \lambda,2\rho \rangle])$ is concentrated in nonnegative degrees by the case of $\bk$ treated above, and so is
\[
\bk \lotimes_{\bO} \CT_{\cB,\cG}(j^\lambda_* \underline{\bO}[\langle \lambda,2\rho \rangle]) \cong \CT_{\cB,\cG}(j^\lambda_* \underline{\bk}[\langle \lambda,2\rho \rangle]).
\]
By standard arguments this implies that $\CT_{\cB,\cG}(j^\lambda_* \underline{\bO}[\langle \lambda, 2\rho \rangle])$ is concentrated in nonnegative  degrees, and finishes the proof.
\end{proof}

\begin{rmk}
\label{rmk:exactness-CT-vanishing}
Concretely, what t-exactness of the functor $\CT_{\cB,\cG}$ means is that for $\scF \in \Perv(\Hk_\cG, \Lambda)$, $i \in \Z$ and $\mu \in \bbX_*(T)_I$ we have
\[
\mathsf{H}_{\mathrm{c}}^i(\rmS_\mu, \scF) = 0 \quad \text{unless $i= \langle \mu,2\rho \rangle$.}
\]
Similarly, by Proposition~\ref{prop:Braden}, we have
\[
\mathsf{H}^i_{\rmT_\mu}(\Gr_\cG, \scF) = 0 \quad \text{unless $i= \langle \mu,2\rho \rangle$,}
\]
where $\mathsf{H}^\bullet_{\rmT_\mu}(\Gr_\cG,-)$ denotes cohomology with support in $\rmT_\mu$.
\end{rmk}

Proposition~\ref{prop:exactness-CT} implies that $\CT_{\cP,\cG}$ restricts to an exact functor
\[
 \Perv(\Hk_{\cG}, \Lambda) \to \Perv(\Hk_{\cM},\Lambda),
\]
which will be denoted similarly.
These functors commute with the ``change of scalars'' functors for categories of perverse sheaves considered in~\S\ref{ss:Perv-Gr-Hk}.

\subsection{Compatibility with total cohomology}
\label{ss:total-cohomology}

Recall the pullback functor $h^*$ from~\eqref{eqn:pullback-Hk-Gr}.
We now define the functor
\[
\sF_\cG \colon \Dbc(\Hk_{\cG}, \Lambda) \to \modf_\Lambda
\]
(where $\modf_\Lambda$ is the category of finitely generated $\Lambda$-modules) by
\[
\sF_\cG(\scF) := \bigoplus_{n \in \Z} \sH^n(\Gr_{\cG}, h^*\scF).
\]
(We ignore the $\Z$-grading on the right-hand side.) For any parabolic subgroup $P \subset G$ containing $A$, with Levi factor containing $A$ denoted $M$, we can consider the analogous functor $\sF_\cM \colon \Dbc(\Hk_{\cM}, \Lambda) \to \modf_\Lambda$.

\begin{prop}
\label{prop:CT-fiber}
For any $P$ as above, there exists a canonical isomorphism 
\[
\sF_\cG \cong \sF_\cM \circ \CT_{\cP,\cG}
\]
of functors from $\Perv(\Hk_{\cG}, \Lambda)$ to $\modf_\Lambda$.
\end{prop}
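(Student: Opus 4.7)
My plan is to reduce to the Borel case and then exploit the semi-infinite stratification together with the vanishing provided by Remark~\ref{rmk:exactness-CT-vanishing}.

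\emph{Step 1: Reduction to $P=B$.} Using the transitivity isomorphism $\sF_\cM \circ \CT_{\cP,\cG} \cong \sF_\cM \circ \CT_{\cP \cap \cM, \cM} \circ \CT_{\cM \cap \cB, \cG}$ (combining Lemma~\ref{lem:transitivity-CT} for the pair $P \subset G$ applied with an intermediate Borel-containing parabolic --- or more directly, writing the Borel $B$ as contained in $P$ and using Lemma~\ref{lem:transitivity-CT} twice), it suffices to construct a canonical isomorphism $\sF_\cG \cong \sF_\cT \circ \CT_{\cB,\cG}$, and to know the analogous statement has already been proved for the smaller group $M$ in place of $G$. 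An induction on the semisimple rank therefore reduces the proposition to the case $P=B$, $M=T$.

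\emph{Step 2: Identification of $\sF_\cT \circ \CT_{\cB,\cG}$.} Since $\Gr_\cT$ is discrete with $\F$-points indexed by $\bbX_*(T)_I$, and since base change for the cartesian square defining $\CT_{\cB,\cG}$ identifies the costalk of $\CT_{\cB,\cG}(\scF)$ at $t^\mu$ with $R\Gamma_c(\rmS_\mu, \scF)[\langle \mu, 2\rho \rangle]$, Remark~\ref{rmk:exactness-CT-vanishing} tells us that this costalk is concentrated in cohomological degree $0$ and equals $\mathsf{H}_c^{\langle \mu, 2\rho\rangle}(\rmS_\mu, \scF)$. Summing over components,
\[
\sF_\cT(\CT_{\cB,\cG}(\scF)) \;=\; \bigoplus_{\mu \in \bbX_*(T)_I} \mathsf{H}_c^{\langle \mu, 2\rho\rangle}(\rmS_\mu, \scF).
\]

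\emph{Step 3: Filtration of $\sF_\cG(\scF)$ by semi-infinite orbits.} Fix a $\Loop^+\cG$-stable closed $Z \subset \Gr_\cG$ of finite type containing the support of $\scF$. For each $n \in \Z$, set $Z_{\le n} := \bigsqcup_{\langle \mu, 2\rho\rangle \le n}(\rmS_\mu \cap Z)$. I claim this is a closed subset of $Z$: by~\eqref{eqn:closure-S}, a specialization of a point of $\rmS_\mu$ lies in some $\rmS_\nu$ with $\nu \le \mu$, and since $\mu-\nu$ is a nonnegative integer combination of the basis of $(\Z\Phi^\vee_\abs)_I$ consisting of images of simple coroots (which all pair to $2$ with $\rho$, as $\rho$ is $I$-invariant with $\langle \alpha^\vee, \rho \rangle = 1$ on each absolute simple coroot), we have $\langle \nu, 2\rho\rangle \le \langle \mu, 2\rho\rangle$. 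Hence $\{Z_{\le n}\}_{n}$ is an exhaustive filtration by closed subschemes of $Z$, with successive open strata $Z_{\le n}\setminus Z_{\le n-1} = \bigsqcup_{\langle \mu, 2\rho \rangle = n} (\rmS_\mu \cap Z)$.

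\emph{Step 4: Degeneration.} The filtration yields a convergent spectral sequence computing $\mathsf{H}^{*}(Z, \scF)$ whose $E_1$-page is $\bigoplus_{\mu} \mathsf{H}_c^{*}(\rmS_\mu \cap Z, \scF)$. By Remark~\ref{rmk:exactness-CT-vanishing}, the contribution indexed by $\mu$ is concentrated in cohomological degree $\langle \mu, 2\rho\rangle$, which equals the filtration weight of that stratum by Step~3. Thus no two strata contribute in the same total degree, all differentials vanish, and the filtration of $\sF_\cG(\scF)$ splits canonically via the increasing filtration of $Z$, giving the required identification with $\bigoplus_\mu \mathsf{H}_c^{\langle \mu, 2\rho\rangle}(\rmS_\mu, \scF)$. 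Comparing with Step~2 yields the canonical isomorphism. The main obstacle is verifying that the comparison produced in Step~4 is indeed the morphism coming from the natural transformation defining $\CT_{\cB,\cG}$ (and not just an abstract isomorphism of modules); this is handled by tracing through the base change isomorphism stratum-by-stratum and observing compatibility with the canonical map $\mathsf{H}^*(Z,\scF) \to \mathsf{H}_c^*(\rmS_\mu \cap Z, \scF)$ associated with each locally closed inclusion.
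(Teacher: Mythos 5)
Your Steps 1--3 are essentially the paper's own route: reduce to $P=B$ via Lemma~\ref{lem:transitivity-CT}, identify $\sF_\cT\circ\CT_{\cB,\cG}$ using the vanishing of Remark~\ref{rmk:exactness-CT-vanishing}, and filter a projective closed piece $Z$ of $\Gr_\cG$ by the sublevel sets of $\langle-,2\rho\rangle$ on the semi-infinite orbits, using~\eqref{eqn:closure-S} (your direct verification that $Z_{\le n}$ is closed is a mild repackaging of the paper's filtration by closures of strata). The problem is Step 4. The differentials of the spectral sequence of the filtration --- equivalently, the connecting maps in the long exact sequences attached to the triangles $(a_n)_!a_n^*\scF'\to(a_{\le n})_!a_{\le n}^*\scF'\to(a_{\le n-1})_!a_{\le n-1}^*\scF'\to{}$ --- raise total cohomological degree by exactly $1$. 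So the fact that the stratum of weight $n$ contributes only in total degree $n$ does \emph{not} make them vanish: a connecting map $\mathsf{H}^{n}(Z_{\le n},\scF)\to \mathsf{H}^{n+1}_{\mathrm{c}}(\rmS_\mu\cap Z,\scF)$ with $\langle\mu,2\rho\rangle=n+1$ is perfectly consistent with your degree bookkeeping, and if any such map were nonzero the total cohomology would be strictly smaller than $\bigoplus_\mu\mathsf{H}^{\langle\mu,2\rho\rangle}_{\mathrm{c}}(\rmS_\mu,\scF)$, i.e.\ precisely the statement you are trying to prove would fail. (The phrase ``no two strata contribute in the same total degree'' is also literally false when $\langle\mu,2\rho\rangle=\langle\mu',2\rho\rangle$, but that part is harmless since such strata lie in the same filtration step.)

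What closes this gap in the paper is a parity input that you never invoke: one first decomposes $\scF$ according to the connected components of $\Gr_\cG$ (cf.~\eqref{eqn:conn-comp-Gr}), and on a fixed component the integers $\langle\mu,2\rho\rangle$ of all occurring strata have constant parity. Hence all $E_1$-contributions sit in, say, even total degree, while the connecting maps shift degree by $1$ and therefore vanish; the resulting short exact sequences then split degreewise because in each cohomological degree only one filtration step contributes. Without restricting to a single component and using this parity constancy, your degeneration claim is unjustified, so Step 4 needs to be repaired along these lines (after which the remaining verification that the splitting agrees with the base-change map defining $\CT_{\cB,\cG}$ goes through as you indicate).
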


\begin{proof}
Lemma~\ref{lem:transitivity-CT} reduces the proof to the case $P=B$. In this case $\Gr_{\cT}$ is discrete (see~\S\ref{ss:semi-inf-orbits}), and for $\scF$ in $\Perv(\Hk_{\cG}, \Lambda)$ we have
\[
\sF_\cT \circ \CT_{\cB,\cG}(\scF) = \bigoplus_{\lambda \in \bbX_*(T)^+_I} \sH_{\mathrm{c}}^\bullet(\rmS_\lambda, \scF'_{|\rmS_\lambda})
\]
where $\scF':=h^* \scF$.
By Remark~\ref{rmk:exactness-CT-vanishing} we moreover have that
\begin{equation}
\label{eqn:vanishing-weight-functors}
\sH_{\mathrm{c}}^n(\rmS_\lambda, \scF'_{|\rmS_\lambda})=0 \quad \text{unless $n=\langle \lambda, 2\rho \rangle$.}
\end{equation}
What we have to construct is therefore a canonical identification
\begin{equation}
\label{eqn:cohomology-wt-spaces}
\sF_\cG(\scF) = \bigoplus_{\lambda \in \bbX_*(T)^+_I} \sH_{\mathrm{c}}^{\langle \lambda,2\rho \rangle}(\rmS_\lambda, \scF'_{|\rmS_\lambda}).
\end{equation}

Decomposing $\scF$ as a direct sum of its restrictions to each connected component of $\Gr_{\cG}$, we can assume that it is supported on one such component $X$.
Choose a presentation $X = \mathrm{colim}_i X_i$ by $\Loop^+ \cG$-stable projective $k$-schemes, and some index $i$ such that $\scF$ is supported on $X_i$. Recall the decomposition
\[
| X_i | = \bigsqcup_{\lambda \in Z} | \rmS_\lambda \times_{\Gr_{\cG}} X_i |
\]
where $Z$ is the finite subset of $\bbX_*(T)^+_I$ consisting of the elements $\lambda$ such that $\rmS_\lambda \times_{\Gr_{\cG}} X_i \neq \varnothing$; see in particular~\eqref{eqn:attractor-X_i}. The parity of the integers $\langle \lambda,2\rho \rangle$ is constant on this set since $X_i$ is contained in a connected component of $\Gr_{\cG}$; to fix notation we will assume that they are all even. For $n \in \Z$ we then set
\[
X_i^n = \bigcup_{\substack{\lambda \in Z \\ \langle \lambda, 2\rho \rangle \leq 2n}} \overline{\rmS_\lambda \times_{\Gr_{\cG}} X_i}
\]
where $\overline{\rmS_\lambda \times_{\Gr_{\cG}} X_i}$ is the scheme-theoretic closure of $\rmS_\lambda \times_{\Gr_{\cG}} X_i$. Then for some integers ${n_1} < {n_2}$ we have a finite filtration
\[
\varnothing = X_i^{n_1} \subset X_i^{{n_1}+1} \subset \cdots \subset X_i^{{n_2}-1} \subset X_i^{n_2} = X_i
\]
by closed subschemes, and by~\eqref{eqn:closure-S} we have
\[
X_i^n \smallsetminus X_i^{n-1} = \bigsqcup_{\substack{\lambda \in Z \\ \langle \lambda, 2\rho \rangle = 2n}} \rmS_\lambda \times_{\Gr_{\cG}} X_i
\]
for any $n \in \{{n_1}+1, \ldots, {n_2}\}$. For $n \in \{{n_1}, \dots, {n_2}\}$, resp.~$n \in \{{n_1}+1, \ldots, {n_2}\}$, we will denote by
\[
a_{\leq n} \colon X_i^n \to X_i, \quad \text{resp.} \quad a_n \colon X_i^n \smallsetminus X_i^{n-1} \to X_i
\]
the immersions. Then for any $n \in \{{n_1}+1, \ldots, {n_2}\}$ we have a canonical distinguished triangle
\[
(a_n)_! a_n^* \scF' \to (a_{\leq n})_! a_{\leq n}^* \scF' \to (a_{\leq n-1})_! a_{\leq n-1}^* \scF' \xrightarrow{[1]}.
\]
Using~\eqref{eqn:vanishing-weight-functors} and parity arguments one proves by induction on $n$ that we have canonical isomorphisms
\[
\sH^q(X_i^n, \scF'_{|X_i^n}) = \begin{cases}
\bigoplus_{\substack{\lambda \in Z \\ \langle \lambda, 2\rho \rangle = q}} \sH^q_{\mathrm{c}}(\rmS_\lambda, \scF'_{| \rmS_\lambda}) & \text{if $q$ is even and $q \leq 2n$;} \\
0 & \text{otherwise.}
\end{cases}
\]
Taking $n={n_2}$ we deduce~\eqref{eqn:cohomology-wt-spaces}, which finishes the proof.
\end{proof}

\begin{rmk}
\phantomsection
\begin{enumerate}
\item
As in the usual geometric Satake context (see~\cite[Theorem~1.10.4]{br}), Proposition~\ref{prop:CT-fiber} (in case $P=B$) implies that the functor $\sF_\cG \colon \Perv(\Hk_{\cG}, \Lambda) \to \modf_\Lambda$ is exact.
\item
 Recall the ``change of scalars'' functors from~\S\ref{ss:Perv-Gr-Hk}. It is clear that for $\scF$ in $\Perv(\Hk_\cG,\bO)$ we have a canonical isomorphism
 \[
  \bK \otimes_{\bO} \sF_\cG(\scF) \cong \sF_\cG(\bK \otimes_{\bO} \scF).
 \]
On the other hand, there exists a canonical morphism
\begin{equation}
\label{eqn:mod-reduction-F}
  \bk \otimes_{\bO} \sF_\cG(\scF) \to \sF_\cG(\pH^0(\bk \otimes_{\bO} \scF)).
 \end{equation}
 It is clear that the analogous morphism for the category $\Perv(\Hk_\cT,\bO)$ is an isomorphism. Using Proposition~\ref{prop:CT-fiber} we deduce that~\eqref{eqn:mod-reduction-F} is an isomorphism.
 \end{enumerate}
\end{rmk}

The proof of Proposition~\ref{prop:CT-fiber} depends crucially on the fact that we are working with perverse sheaves; one cannot expect it to hold for general non-perverse objects in $\Dbc(\Hk_\cG,\Lambda)$.  Nevertheless, the following statement shows that Proposition~\ref{prop:CT-fiber} does generalize to direct sums of shifts of perverse sheaves.

\begin{cor}
\label{cor:CT-fiber-sum}
Let $M \in \Dbc(\mathrm{pt},\Lambda)$, and assume that $\sH^i(M) = 0$ for $i$ odd.
For $\scF \in \Perv(\Hk_\cG,\Lambda)$, there is a natural isomorphism
\[
\sF_\cG(\scF \lotimes_\Lambda M) \cong \sF_\cM(\CT_{\cP,\cG}(\scF \lotimes_\Lambda M))
\]
\end{cor}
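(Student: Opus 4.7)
The plan is to reduce to Proposition~\ref{prop:CT-fiber} via a formality argument for the relevant global-sections complexes. First, since $M$ is pulled back from $\mathrm{pt}$, the constant term functor $\CT_{\cP,\cG}$ and the derived global-sections functors $R\Gamma(\Gr_\cG, h^*-)$ and $R\Gamma(\Gr_\cM, h^*_\cM -)$ all commute with $(-) \lotimes_\Lambda M$, by the projection formula for proper pushforwards together with the compatibility of $*$-pullback and shifts with derived tensor products. Setting $A := R\Gamma(\Gr_\cG, h^*\scF)$ and $B := R\Gamma(\Gr_\cM, h^*_\cM \CT_{\cP,\cG}(\scF))$ in $\Dbc(\mathrm{pt},\Lambda)$, the corollary reduces to producing a canonical isomorphism
\[
\bigoplus_q \sH^q(A \lotimes_\Lambda M) \cong \bigoplus_q \sH^q(B \lotimes_\Lambda M).
\]

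The key step is the formality of $A$ and $B$ in $\Dbc(\mathrm{pt},\Lambda)$. Decomposing $\scF$ according to the connected components of $\Gr_\cG$, the proof of Proposition~\ref{prop:CT-fiber} gives, on each component, an identification $\sH^n(A) \cong \bigoplus_{\lambda : \langle\lambda, 2\rho\rangle = n} \sH^n_c(\rmS_\lambda, \scF'|_{\rmS_\lambda})$, whose support is concentrated in degrees of a single parity (since $\langle\alpha^\vee,2\rho\rangle \in 2\Z$ for every coroot $\alpha^\vee$). Because $\Lambda$ has global dimension at most $1$, any bounded complex of $\Lambda$-modules whose cohomology sits in degrees pairwise differing by at least $2$ is formal: the extension classes in its Postnikov tower all lie in $\Ext^{\geq 2}_\Lambda$-groups and hence vanish. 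Thus $A \cong \bigoplus_n \sH^n(A)[-n]$. The analogous argument on $\Hk_\cM$ applied to the perverse sheaf $\CT_{\cP,\cG}(\scF)$ (perverse by Proposition~\ref{prop:exactness-CT}) shows that $B$ is formal as well.

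Given these formalities, distributivity of $\lotimes$ and of $\sH^k$ over direct sums yields
\[
\bigoplus_q \sH^q(A \lotimes_\Lambda M) = \bigoplus_{n,k} \sH^k\bigl(\sH^n(A) \lotimes_\Lambda M\bigr) = \bigoplus_k \sH^k\bigl(\sF_\cG(\scF) \lotimes_\Lambda M\bigr),
\]
where $\sF_\cG(\scF) = \bigoplus_n \sH^n(A)$ is viewed as a $\Lambda$-module in degree $0$. The symmetric computation for $B$ produces the same expression with $\sF_\cM(\CT_{\cP,\cG}(\scF))$ in place of $\sF_\cG(\scF)$, and these two $\Lambda$-modules are canonically identified by Proposition~\ref{prop:CT-fiber}, delivering the desired isomorphism.

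The main obstacle is the formality statement over $\Lambda = \bO$: it relies essentially on both the single-parity property coming from Proposition~\ref{prop:CT-fiber} and the global-dimension bound on $\Lambda$. I note in passing that the hypothesis $\sH^i(M) = 0$ for $i$ odd does not appear to be strictly needed for this argument, so it presumably reflects the context in which the corollary will be applied.
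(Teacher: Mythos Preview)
Your argument is correct and takes a genuinely different route from the paper's. The paper applies the universal coefficient (K\"unneth) short exact sequence
\[
0 \to \bigoplus_{i+j=n} \sH^i(A)\otimes_\Lambda \sH^j(M) \to \sH^n(A\lotimes_\Lambda M) \to \bigoplus_{i+j=n+1} \Tor_1^\Lambda(\sH^i(A),\sH^j(M)) \to 0
\]
and uses the parity hypothesis on \emph{both} $A$ and $M$ to force one of the outer terms to vanish for each $n$, obtaining an explicit natural formula for $\sF_\cG(\scF\lotimes_\Lambda M)$ in terms of $\sF_\cG(\scF)$ and $\sH^\bullet(M)$; the same formula for $B$ then matches via Proposition~\ref{prop:CT-fiber}. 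Your approach instead exploits the canonical formality of $A$ and $B$ directly, and you are right that the parity hypothesis on $M$ is not needed for this version of the argument: it is present in the statement only because the sole application (in Corollary~\ref{cor:CT-conv-split}, with $M=R\Gamma(\Gr_\cG,h^*\scG)$) happens to satisfy it, and the paper's proof method requires it. Your route is cleaner and slightly more general; the paper's buys an explicit description of the isomorphism in terms of $\otimes$ and $\Tor_1$.

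One small point to tighten: for the word ``natural'' in the statement you need not just that $A$ is formal but that the formality isomorphism is \emph{canonical} and \emph{functorial} in $\scF$. Your Postnikov argument shows the connecting maps vanish (existence of a splitting); you should add that the space of splittings is a torsor under $\Hom(\sH^n(A)[-n],\tau^{<n}A)$, which by the same single-parity reasoning lands in $\Ext^{\ge 2}_\Lambda$ and hence vanishes, so the splitting is unique. The same computation shows that any morphism between two such formal complexes is diagonal, giving functoriality. With this addition your proof is complete.
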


\begin{proof}
For brevity, let $\scF' = h^*\scF$.  It is enough to prove the claim in the case where $\scF$ is supported on a single connected component of $\Gr_\cG$.  In this case, by (the proof of) Proposition~\ref{prop:CT-fiber}, $\sH^i(\Gr_\cG,\scF')$ can be nonzero only for $i$ of a single parity. To fix notation, we assume that $\sH^i(\Gr_\cG,\scF') \ne 0$ only for $i$ even.

By an appropriate version of the universal coefficient theorem, for any $n \in \Z$ there is a natural short exact sequence
\begin{multline*}
0 \to \bigoplus_{i+j=n} \sH^i(\Gr_\cG,\scF') \otimes_\Lambda \sH^j(M) \to \sH^n(\Gr_\cG, \scF' \lotimes_\Lambda M) \\
\to \bigoplus_{i+j=n+1} \Tor_1^\Lambda(\sH^{i}(\Gr_\cG,\scF'), \sH^j(M)) \to 0.
\end{multline*}
Our assumptions imply that the first term vanishes when $n$ is odd, and the last term vanishes when $n$ is even.  We deduce that
\[
\sH^n(\Gr_\cG, \scF' \lotimes_\Lambda M) \cong
\begin{cases}
\bigoplus_{i+j=n} \sH^i(\Gr_\cG,\scF') \otimes_\Lambda \sH^j(M)  & \text{if $n$ is even,} \\
\bigoplus_{i+j=n+1} \Tor_1^\Lambda(\sH^{i}(\Gr_\cG,\scF'), \sH^j(M)) & \text{if $n$ is odd,}
\end{cases}
\]
and hence that there is a natural isomorphism
\begin{equation}\label{eqn:CTfs1}
\sF_\cG(\scF \lotimes_\Lambda M) \cong (\sF_\cG(\scF) \otimes_\Lambda \sH^\bullet(M)) \oplus \Tor_1^\Lambda(\sF_\cG(\scF), \sH^\bullet(M)).
\end{equation}
Applying this isomorphism with the group $\cG$ replaced by $\cM$ and with $\scF$ replaced by $\CT_{\cP,\cG}(\scF)$, we obtain an isomorphism
\begin{multline}\label{eqn:CTfs2}
\sF_\cM(\CT_{\cP,\cG}(\scF \lotimes_\Lambda M)) \cong \sF_\cM(\CT_{\cP,\cG}(\scF) \lotimes_\Lambda M)  \\
 \cong (\sF_\cM(\CT_{\cP,\cG}(\scF)) \otimes_\Lambda \sH^\bullet(M)) \oplus \Tor_1^\Lambda(\sF_\cM(\CT_{\cP,\cG}(\scF)), \sH^\bullet(M)).
\end{multline}
By Proposition~\ref{prop:CT-fiber}, the right-hand sides of~\eqref{eqn:CTfs1} and~\eqref{eqn:CTfs2} are naturally isomorphic, so the left-hand sides are as well.
\end{proof}

\subsection{Constant term functors for convolution schemes}

Let $P \subset G$ be a parabolic subgroup containing $A$, and let $M \subset P$ be its Levi factor containing $A$.  Let $\cP$ and $\cM$ be the scheme-theoretic closures of $P$ and $M$, respectively, in $\cG$.  Then we have a diagram
\[
\Conv_\cG \xleftarrow{\tilde\imath_\cP} \Conv_\cP \xrightarrow{\tilde q_\cP} \Conv_\cM
\]
similar to~\eqref{eqn:ct-diagram-0},
along with the companion diagram
\[
\HkConv_\cG \xleftarrow{\tilde h_{\cM,\cG}} [\Loop^+\cM \backslash \Conv_\cG]_{\et} \xleftarrow{\tilde\imath_\cP} [\Loop^+\cM \backslash \Conv_\cP]_{\et} \xrightarrow{\tilde q_\cP} \HkConv_\cM.
\]
As in~\S\ref{ss:semi-inf-orbits-Conv} (where we considered the case $P=B$)
the connected components of $\Conv_\cM$ (and hence also those of $\Conv_\cP$) are in bijection with $(\bbX_*(T)/Q^\vee_M)_I \times (\bbX_*(T)/Q^\vee_M)_I$. (This parametrization differs from that obtained via the identification with $\Gr_\cM \times \Gr_\cM$ from~\eqref{eqn:Conv-product} by the map $(\lambda,\mu) \mapsto (\lambda,\lambda+\mu)$.) Define a function
\[
\widetilde{\corr}_{M,G} \colon \pi_0(\Conv_\cM) \to \Z
\]
as follows: if $X \subset \Conv_\cM$ is the connected component corresponding to $(\lambda,\mu) \in (\bbX_*(T)/Q^\vee_M)_I \times (\bbX_*(T)/Q^\vee_M)_I$, then
\[
\widetilde{\corr}_{M,G}(X) = \langle \lambda+\mu, 2\rho-2\rho_M \rangle.
\]
Define the \emph{convolution constant term functor}
\[
\widetilde{\CT}_{\cP,\cG} \colon \Dbc(\HkConv_\cG, \Lambda) \to \Dbc(\HkConv_\cM, \Lambda)
\]
by a recipe similar to that used for $\CT_{\cP,\cG}$: specifically, we set
\[
\widetilde{\CT}_{\cP,\cG} = \tilde q_{\cP!}\tilde\imath_\cP^* \tilde h_{\cM,\cG}^*(\scF)[\widetilde{\corr}_{M,G}].
\]

Using the opposite parabolic $P^-$ and its scheme-theoretic closure $\cP^-$, we have the following counterpart of~\eqref{eqn:ct-diagram-2}:
\[
\HkConv_\cG \xleftarrow{\tilde h_{\cM,\cG}} [\Loop^+\cM\backslash \Conv_\cG]_{\et} \xleftarrow{\tilde\imath_{\cP^-}} [\Loop^+\cM\backslash \Conv_{\cP^-}]_{\et} \xrightarrow{\tilde q_{\cP^-}} \HkConv_\cM.
\]

The proofs of the next two statements are essentially identical to those of Proposition~\ref{prop:Braden} and Lemma~\ref{lem:transitivity-CT},
and we omit them. 

\begin{prop}
\label{prop:Braden-conv}
For any $\scF$ in $\Dbc(\HkConv_{\cG}, \Lambda)$ there is a natural isomorphism
\[
(\tilde q_{\cP^-})_* \tilde\imath_{\cP^-}^! \tilde h_{\cM,\cG}^*(\scF)[\widetilde{\corr}_{M,G}] \simto \widetilde{\CT}_{\cP,\cG}(\scF).
\]
\end{prop}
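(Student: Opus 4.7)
The plan is to imitate the proof of Proposition~\ref{prop:Braden} essentially verbatim, replacing $\Gr_\cG$ by $\Conv_\cG$ throughout and using Proposition~\ref{prop:attractors-fixed-pts-conv} in place of Proposition~\ref{prop:attractors-fixed-pts}. First I would choose a cocharacter $\lambda \in \bbX_*(A)$ with $P = P_\lambda$ (using Remark~\ref{rmk:parabolics-cocharacters}\eqref{it:parabolics-cocharacters}), and equip $\Conv_\cG$ with the $\mathbb{G}_{\mathrm{m},\F}$-action obtained by composing the homomorphism~\eqref{eqn:gm-loop-g} with the left action of $\Loop^+\cG$ on $\Conv_\cG$ discussed at the end of~\S\ref{ss:attractor-fixed-pts}. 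By Proposition~\ref{prop:attractors-fixed-pts-conv} applied both to $\lambda$ and to its opposite (cf.~Remark~\ref{rmk:parabolics-cocharacters}\eqref{it:opposite-parabolic}), the attractor and fixed-point ind-schemes identify canonically with $\Conv_{\cP}$ and $\Conv_{\cP^-}$ and $\Conv_{\cM}$, compatibly with the structure morphisms $\tilde\imath_\cP, \tilde\imath_{\cP^-}, \tilde q_\cP, \tilde q_{\cP^-}$.

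Next, the construction of~\cite[Construction~2.2]{richarz-Gm} furnishes a canonical natural transformation
\[
(\tilde q_{\cP^-})_* \circ \tilde\imath_{\cP^-}^! \to \tilde q_{\cP!} \circ \tilde\imath_\cP^*,
\]
and by~\cite[Theorem~2.6]{richarz-Gm} this morphism is an isomorphism when evaluated on any $\mathbb{G}_{\mathrm{m}}$-monodromic object of $\Dbc(\Conv_\cG,\Lambda)$ in the sense of~\cite[Definition~2.3]{richarz-Gm}. Since the pullback $\Dbc(\HkConv_\cM,\Lambda) \to \Dbc(\Conv_\cM,\Lambda)$ is conservative, it suffices to verify this isomorphism after forgetting $\Loop^+\cM$-equivariance. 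Any object in the image of $\Dbc(\HkConv_\cG,\Lambda) \to \Dbc(\Conv_\cG,\Lambda)$, or even of $\Dbc([\Loop^+\cM \backslash \Conv_\cG]_{\et},\Lambda) \to \Dbc(\Conv_\cG,\Lambda)$, is $\mathbb{G}_{\mathrm{m}}$-monodromic because the chosen $\mathbb{G}_{\mathrm{m},\F}$-action factors through $\Loop^+\cM$.

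Finally, the cohomological shift by $[\widetilde{\corr}_{M,G}]$ on each connected component of $\Conv_\cM$ matches the shift predicted by Braden's theorem: it records the difference in relative dimensions of the attractor over the fixed-point locus in the various components, in complete parallel with the appearance of $[\corr_{M,G}]$ in Proposition~\ref{prop:Braden}. (Concretely, the weights of the $\mathbb{G}_{\mathrm{m},\F}$-action on the normal bundle to $\Conv_\cM$ in $\Conv_\cP$ are exactly the positive weights coming from both factors of the twisted product, giving the indexing by $\lambda+\mu$ rather than by a single coweight.)

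The main obstacle, such as it is, would be to verify that the hypotheses of~\cite[Theorem~2.6]{richarz-Gm} (Zariski local linearizability of the action, representability of attractor and fixed-point functors) are satisfied on $\Conv_\cG$. This can be deduced from the corresponding facts for $\Gr_\cG$ established in \cite[Lemma~5.3 and Theorem~2.1]{HainesRicharz_TestFunctions}: via the twisted product description~\eqref{eqn:Conv-twisted}, $\Conv_\cG$ is \'etale-locally a product of two copies of $\Gr_\cG$-like factors, and Zariski local linearizability is inherited through the smooth quotient by $\Loop^+\cG$.
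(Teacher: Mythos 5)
Your proposal is correct and is exactly the argument the paper intends (the paper omits the proof precisely because it is the proof of Proposition~\ref{prop:Braden} transported to $\Conv_\cG$ via Proposition~\ref{prop:attractors-fixed-pts-conv}, which is what you do). Two small remarks: the hypotheses of~\cite[Theorem~2.6]{richarz-Gm} on $\Conv_\cG$ are most cleanly verified not through the \'etale-local twisted-product description~\eqref{eqn:Conv-twisted} but through the $\mathbb{G}_{\mathrm{m},\F}$-equivariant isomorphism~\eqref{eqn:Conv-product} identifying $\Conv_\cG$ with $\Gr_\cG \times \Gr_\cG$ carrying the diagonal action (this is how Proposition~\ref{prop:attractors-fixed-pts-conv} itself is obtained, and Zariski local linearizability passes to such a product directly); and no verification of the shift $[\widetilde{\corr}_{M,G}]$ is needed, since Braden's theorem gives the unshifted isomorphism $(\tilde q_{\cP^-})_* \tilde\imath_{\cP^-}^! \simto \tilde q_{\cP!} \tilde\imath_{\cP}^*$ on monodromic objects and the same shift appears on both sides of the stated isomorphism by definition of $\widetilde{\CT}_{\cP,\cG}$.
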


\begin{lem}
\label{lem:transitivity-CT-conv}
Let $P \subset P'$ be parabolic subgroups of $G$ containing $A$, and let $M \subset M'$ be their respective Levi factors containing $A$.  There exists a canonical isomorphism of functors
\[
\widetilde{\CT}_{\cP \cap \cM', \cM'} \circ \widetilde{\CT}_{\cP',\cG} \simto \widetilde{\CT}_{\cP,\cG} \colon \Dbc(\HkConv_\cG,\Lambda) \to \Dbc(\HkConv_\cM,\Lambda).
\]
\end{lem}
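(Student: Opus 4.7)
The plan is to mimic directly the proof of Lemma~\ref{lem:transitivity-CT}, working with convolution schemes rather than affine Grassmannians. The two ingredients are (a)~the cartesian square for convolution schemes established in Lemma~\ref{lem:cart-diag}\eqref{it:cart-diag-3}, which by passage to $[\Loop^+\cM\backslash(-)]_{\et}$ (a quotient which preserves cartesian squares, and which is compatible with the corresponding quotient for $\cM'$ via the forgetful morphism $[\Loop^+\cM\backslash(-)]_{\et}\to[\Loop^+\cM'\backslash(-)]_{\et}$) yields a cartesian square at the level of stacks; and (b)~the additivity of the correction function, namely the identity $2\rho - 2\rho_M = (2\rho - 2\rho_{M'}) + (2\rho_{M'} - 2\rho_M)$, which after unwinding definitions gives
\[
\widetilde{\corr}_{M,G}\bigl|_{\pi_0(\Conv_\cM)} = \widetilde{\corr}_{M,M'}\bigl|_{\pi_0(\Conv_\cM)} + \bigl(\widetilde{\corr}_{M',G}\circ\tilde q_{\cP\cap\cM'}\bigr)\bigl|_{\pi_0(\Conv_\cM)},
\]
where the second term makes sense because $\tilde q_{\cP\cap\cM'} \colon \Conv_{\cP\cap\cM'}\to\Conv_\cM$ factors the inclusion $\Conv_{\cP\cap\cM'}\hookrightarrow\Conv_{\cM'}$.

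Concretely, I would first unfold both sides into compositions of six functors (three $\tilde h^*$'s and $\tilde\imath^*$'s with the two pushforwards $\tilde q_!$), and observe that all the operations of the form $\tilde h^*$ are harmless since they amount to forgetting equivariance structures. The heart of the matter is then the identity
\[
(\tilde q_{\cP\cap\cM'})_!\,(\tilde\imath_{\cP\cap\cM'})^*\,(\tilde q_{\cP'})_! \cong (\tilde q_{\cP\cap\cM',*})_!\,(\tilde q_{\cP'\to\cM'})_!\,(\tilde\imath_\cP)^*
\]
up to a base change exchanging $\tilde\imath^*$ and $\tilde q_!$. This base change is precisely what Lemma~\ref{lem:cart-diag}\eqref{it:cart-diag-3} makes available: the square
\[
\begin{tikzcd}
\Conv_{\cP} \ar[r] \ar[d] & \Conv_{\cP\cap\cM'} \ar[d] \\
\Conv_{\cP'} \ar[r] & \Conv_{\cM'}
\end{tikzcd}
\]
is cartesian, so the proper base change theorem produces the required isomorphism. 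Combining with the shift computation above produces the desired isomorphism of functors.

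There is no real obstacle: the argument is formal once Lemma~\ref{lem:cart-diag}\eqref{it:cart-diag-3} is in hand and the correction shifts are matched. The only mildly subtle point is bookkeeping: one must check that the quotient stacks built from $[\Loop^+\cM\backslash(-)]_{\et}$ still fit into a cartesian square (which they do, since quotienting by a fixed group scheme preserves cartesian squares) and that the correction function $\widetilde{\corr}_{M,G}$ is genuinely the sum of the two intermediate ones on the connected components of $\Conv_\cM$, which follows directly from the additivity of $\rho$-type elements recalled above and from the compatibility of the projections $\pi_0(\Conv_\cM)\to\pi_0(\Conv_{\cM'})$ with the bijection $\pi_0(\Conv_\cH)\leftrightarrow(\bbX_*(T)/Q^\vee_H)_I\times(\bbX_*(T)/Q^\vee_H)_I$.
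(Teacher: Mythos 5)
Your argument is essentially the paper's own proof: the paper dismisses this lemma as "essentially identical" to Lemma~\ref{lem:transitivity-CT}, whose proof is exactly base change applied to the cartesian square of Lemma~\ref{lem:cart-diag}\eqref{it:cart-diag-3} (here its convolution-scheme half), together with the additivity of the correction shifts $2\rho-2\rho_M=(2\rho-2\rho_{M'})+(2\rho_{M'}-2\rho_M)$, which is what you do. Your displayed functor identity is notationally garbled, but the surrounding text makes the intended base-change exchange clear, so there is no substantive gap.
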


For the next statement
we need to restrict the domain of $\widetilde{\CT}_{\cP,\cG}$
as follows: define
\[
\Dbc(\Conv_\cG,\Lambda)_\sph \subset \Dbc(\Conv_\cG,\Lambda)
\]
to be the full subcategory consisting of objects that are constructible with respect to the stratification 
\begin{equation}\label{eqn:conv-spherical}
| \Conv_\cG | = \bigsqcup_{\lambda,\mu \in \bbX_*(T)_I^+} | \Conv_\cG^{(\lambda,\mu)} |.
\end{equation}
Similarly, define
\[
\Dbc(\HkConv_\cG,\Lambda)_\sph \subset \Dbc(\HkConv_\cG,\Lambda)
\]
to be the full subcategory consisting of objects whose image in $\Dbc(\Conv_\cG,\Lambda)$ lies in the subcategory $\Dbc(\Conv_\cG,\Lambda)_\sph$.  The triangulated categories $\Dbc(\Conv_\cG,\Lambda)_\sph$ and $\Dbc(\HkConv_\cG,\Lambda)_\sph$ both inherit a perverse t-structure; their hearts are denoted by
\[
\Perv(\Conv_\cG,\Lambda)_\sph
\qquad\text{and}\qquad
\Perv(\HkConv_\cG,\Lambda)_\sph,
\]
respectively.

\begin{prop}
\label{prop:exactness-CT-conv}
The functor
\[
\widetilde{\CT}_{\cB,\cG} \colon \Dbc(\HkConv_\cG, \Lambda)_\sph \to \Dbc(\HkConv_\cT,\Lambda)
\]
is t-exact and conservative.
\end{prop}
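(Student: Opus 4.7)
The plan is to imitate the proof of Proposition~\ref{prop:exactness-CT} essentially line by line, substituting Lemma~\ref{lem:dim-estimate-Conv} for Lemma~\ref{lem:dim-estimate} and Proposition~\ref{prop:Braden-conv} for Proposition~\ref{prop:Braden}. The spherical hypothesis on the source category guarantees that the supports of objects only involve strata of the form $\Conv_\cG^{(\lambda,\mu)}$ with $\lambda, \mu \in \bbX_*(T)_I^+$, so that Lemma~\ref{lem:dim-estimate-Conv} applies. Since only the extreme case $P=B$, $M=T$ is considered, Lemma~\ref{lem:transitivity-CT-conv} is not needed; in particular $\rho_T=0$ and $\widetilde{\corr}_{T,G}(\nu,\nu') = \langle \nu+\nu', 2\rho \rangle$.

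For right t-exactness over $\Lambda \in \{\bk, \bK\}$, I would use that $\pD^{\leq 0}(\HkConv_\cG, \Lambda)_\sph$ is generated under extensions by $!$-extensions of $\underline{\Lambda}[\langle \lambda+\mu, 2\rho\rangle]$ from the smooth strata $\Conv_\cG^{(\lambda,\mu)}$, which admit pavings by affine spaces as locally trivial fiber bundles over $\Gr_\cG^\lambda$ with fiber $\Gr_\cG^\mu$. Applying proper base change to the diagram defining $\widetilde{\CT}_{\cB,\cG}$, the stalk of the image of such a generator at the $\F$-point $t^\nu \wttimes t^{\nu'}$ of $\Conv_\cT$ computes to
\[
R\Gammac\bigl((\rmS_\nu \wttimes \rmS_{\nu'}) \cap \Conv_\cG^{(\lambda,\mu)}, \Lambda\bigr)[\langle \nu+\nu'+\lambda+\mu, 2\rho \rangle],
\]
which lies in nonpositive degrees by the dimension estimate of Lemma~\ref{lem:dim-estimate-Conv}. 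Left t-exactness over $\bk$ and $\bK$ then follows from Proposition~\ref{prop:Braden-conv} applied to the opposite Borel $\cB^-$ and Verdier duality, invoking the symmetric dimension estimate for the orbits $\rmT_\nu \wttimes \rmT_{\nu'}$. The case $\Lambda = \bO$ is handled exactly as in the final paragraph of the proof of Proposition~\ref{prop:exactness-CT}: right t-exactness follows from the same generator argument, while left t-exactness is deduced from the $\bk$-case using $*$-extension generators of $\pD^{\geq 0}$ together with the evident compatibility of $\widetilde{\CT}_{\cB,\cG}$ with the change-of-scalars functor $\bk \lotimes_\bO ({-})$.

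For conservativity, given a nonzero $\scF$ I would choose $(\lambda, \mu)$ such that $\Conv_\cG^{(\lambda,\mu)}$ is open in its support, and take $\nu$ (resp.~$\nu'$) to be the unique $W_\fa$-conjugate of $\lambda$ (resp.~$\mu$) lying in $-\bbX_*(T)^+_I$. The identities $\langle \nu+\lambda, \rho\rangle = \langle \nu' + \mu, \rho\rangle = 0$, combined with Lemma~\ref{lem:dim-estimate-Conv}, force the intersection $(\rmS_\nu \wttimes \rmS_{\nu'}) \cap \Conv_\cG^{(\lambda,\mu)}$ to be zero-dimensional; an adaptation of the argument from the proof of \cite[Lemma~5.3]{aglr}, or alternatively a direct reduction using the isomorphism~\eqref{eqn:Conv-product} together with the labeling convention of~\S\ref{ss:semi-inf-orbits-Conv} to the known single-point statement on $\Gr_\cG$, should show that this intersection is in fact a single point. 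It follows that $\widetilde{\CT}_{\cB,\cG}(\scF)$ is nonzero on the corresponding component of $\Conv_\cT$. The main, and essentially the only, technical point in the whole argument is verifying this single-point claim for the convolution strata; once this is in hand the rest of the proof is a mechanical translation of the one given for Proposition~\ref{prop:exactness-CT}.
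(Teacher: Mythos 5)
Your proposal is correct and follows essentially the same route as the paper, which proves this proposition by repeating the argument for Proposition~\ref{prop:exactness-CT} with Lemma~\ref{lem:dim-estimate-Conv} in place of Lemma~\ref{lem:dim-estimate} (and Proposition~\ref{prop:Braden-conv} in place of Proposition~\ref{prop:Braden}). The single-point claim you isolate for conservativity is indeed the only point needing a word, and your reduction via the isomorphism~\eqref{eqn:Conv-product} and the labeling of~\S\ref{ss:semi-inf-orbits-Conv} to the known statement $|\rmS_\nu \cap \Gr_\cG^{\lambda}| = \{t^\nu\}$ works as you indicate.
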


\begin{proof}
The proof is essentially identical to that of Proposition~\ref{prop:exactness-CT}, replacing the reference to Lemma~\ref{lem:dim-estimate} by a reference to Lemma~\ref{lem:dim-estimate-Conv}.
\end{proof}

In concrete terms, Proposition~\ref{prop:exactness-CT-conv} says that for $\scF \in \Perv(\HkConv_\cG,\Lambda)_\sph$, $i \in \Z$ and $\lambda,\mu \in \bbX_*(T)_I$ we have
\begin{equation}\label{eqn:vanishing-wt-conv}
\sH^i_{\mathrm{c}}(\rmS_\lambda \wttimes \rmS_\mu, \tilde h^*\scF) = 0
\qquad\text{unless $i = \la \lambda + \mu, 2\rho \ra$,}
\end{equation}
where $\tilde h \colon \Conv_\cG\to \HkConv_\cG$ is the quotient map, see~\S\ref{ss:quotient-stacks}.

\begin{rmk}\label{rmk:exactness-CT-conv}
It is likely that Proposition~\ref{prop:exactness-CT-conv} holds for any $\widetilde{\CT}_{\cP,\cG}$, not just the special case $P = B$.  In fact, the general case would follow from Lemma~\ref{lem:transitivity-CT-conv} if we also had the following claim:
\begin{quote}
\it For any $\scF \in \Dbc(\HkConv_\cG,\Lambda)_\sph$, the object $\widetilde{\CT}_{\cP,\cG}(\scF)$ lies in $\Dbc(\HkConv_\cM,\Lambda)_\sph$.
\end{quote}
This claim is probably true, and not difficult to prove, but as we will not need it in the sequel we do not pursue it here.
\end{rmk}

In the next statement, we denote by $\pr_{1,\cG}$ and $\pr_{1,\cT}$ the two versions of the ``first projection'' map
\[
\HkConv_\cG \to \Hk_\cG
\qquad\text{and}\qquad
\HkConv_\cT \to \Hk_\cT,
\]
see~\S\ref{ss:convolution-schemes}--\ref{ss:quotient-stacks}.

\begin{prop}
\label{prop:CT-compare-pr}
For $\scF \in \Perv(\HkConv_\cG, \Lambda)_\sph$, there is a natural isomorphism
\[
\sF_\cT(\CT_{\cB,\cG}(\pr_{1,\cG!} \scF)) \cong \sF_\cT(\pr_{1,\cT!} \widetilde{\CT}_{\cB,\cG}(\scF)).
\]
\end{prop}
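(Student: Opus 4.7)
The plan is to unwind both sides via base change and Leray, using the vanishing~\eqref{eqn:vanishing-wt-conv} coming from Proposition~\ref{prop:exactness-CT-conv}, so that both reduce, as ungraded $\Lambda$-modules, to $\bigoplus_{\lambda, \mu \in \bbX_*(T)_I} \sH^\bullet_\mathrm{c}(\rmS_\lambda \wttimes \rmS_\mu, \tilde h^*\scF)$. The two sides will produce cohomological shifts by $\corr_{T,G}$ and $\widetilde{\corr}_{T,G}$ that differ on each stratum, but these are invisible under $\sF_\cT$, which by definition (see~\S\ref{ss:total-cohomology}) records only the total ungraded cohomology.

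For the RHS, base change in the cartesian square~\eqref{eqn:Hk-quotient} for $\cT$-equivariance gives $h_\cT^* \pr_{1,\cT!} = \pr_{1,\cT!} \tilde h_\cT^*$; pulling $\widetilde{\CT}_{\cB,\cG}(\scF)$ through $\tilde h_\cT^*$ using analogous base changes for the $\Loop^+\cT$-quotient maps and for $\tilde q_\cB$, and then applying Leray (valid since $\Conv_\cT$ and $\Gr_\cT$ are discrete), yields
\[
\sF_\cT(\pr_{1,\cT!}\widetilde{\CT}_{\cB,\cG}(\scF)) = \sH^\bullet_\mathrm{c}(\Conv_\cB, \tilde\imath_\cB^*\tilde h^*\scF[\widetilde{\corr}_{T,G} \circ \tilde q_\cB]).
\]
Using the decomposition $\Conv_\cB = \bigsqcup_{\lambda, \mu} \rmS_\lambda \wttimes \rmS_\mu$ from~\S\ref{ss:semi-inf-orbits-Conv} and the vanishing~\eqref{eqn:vanishing-wt-conv} (so that each stratum contributes in exactly one degree), this identifies, as an ungraded module, with $\bigoplus_{\lambda, \mu} \sH^\bullet_\mathrm{c}(\rmS_\lambda \wttimes \rmS_\mu, \tilde h^*\scF)$.

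For the LHS, a parallel unwinding of $\CT_{\cB,\cG}(\pr_{1,\cG!}\scF) = q_{\cB!} \imath_\cB^* h_{\cT,\cG}^* \pr_{1,\cG!}\scF[\corr_{T,G}]$ reduces via~\eqref{eqn:Hk-quotient} (now for $\cG$-equivariance) and the base change $\imath_\cB^*\pr_{1!} = p_{1!}(\imath_\cB \times \id)^*$ (from the cartesian square $\Gr_\cB \times_{\Gr_\cG} \Conv_\cG \cong \Gr_\cB \times \Gr_\cG$, obtained via the identification~\eqref{eqn:Conv-product}) to
\[
\sF_\cT(\CT_{\cB,\cG}(\pr_{1,\cG!}\scF)) = \sH^\bullet_\mathrm{c}(\Gr_\cB \times \Gr_\cG, (\imath_\cB \times \id)^*\tilde h^*\scF[\corr_{T,G} \circ q_\cB \circ p_1]).
\]
The topological stratification of $\Gr_\cB \times \Gr_\cG$ by pieces $\rmS_\lambda \times \rmS_\mu$ agrees, via~\eqref{eqn:Conv-product} and the bijective reindexing $(\lambda,\mu) \mapsto (\lambda,\mu-\lambda)$, with the stratification of $\Conv_\cB$ by $\rmS_\lambda \wttimes \rmS_\nu$ as subschemes of $\Conv_\cG$, so applying the same vanishing and ignoring the shift yields the same ungraded answer.

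The main obstacle is the careful bookkeeping of the cartesian squares used for base change, most notably the one for the LHS ($\Gr_\cB \times_{\Gr_\cG} \Conv_\cG \cong \Gr_\cB \times \Gr_\cG$, which is \emph{not} the same as $\Conv_\cB \cong \Gr_\cB \times \Gr_\cB$ scheme-theoretically, but has matching stratifications by common subschemes of $\Conv_\cG$) and of the cohomological shifts, which differ between the two sides but become irrelevant after passing to total ungraded cohomology.
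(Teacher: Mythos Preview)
Your approach is essentially the same as the paper's: both sides are identified with $\bigoplus_{\lambda,\mu} \sH_{\mathrm{c}}^{\langle\lambda+\mu,2\rho\rangle}(\rmS_\lambda \wttimes \rmS_\mu, \tilde h^*\scF)$, the RHS directly via the decomposition of $\Conv_\cB$ into connected components, and the LHS via base change identifying $\pr_{1,\cG}^{-1}(\rmS_\lambda)$ with $\rmS_\lambda \wttimes \Gr_\cG$ and then decomposing its cohomology along the pieces $\rmS_\lambda \wttimes \rmS_\mu$.

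There is, however, a gap in your treatment of the LHS. On the RHS, $\Conv_\cB = \bigsqcup_{\lambda,\mu} \rmS_\lambda \wttimes \rmS_\mu$ is a decomposition into \emph{connected components}, so the direct sum is automatic and the vanishing~\eqref{eqn:vanishing-wt-conv} is not actually needed. On the LHS, by contrast, the pieces $\rmS_\lambda \wttimes \rmS_\mu \subset \rmS_\lambda \wttimes \Gr_\cG$ (for fixed $\lambda$) form a genuine stratification with nontrivial closure relations, and the fact that each stratum contributes in a single cohomological degree is \emph{not} by itself enough to split the associated filtration: a priori one could have a nonzero connecting map $\sH_{\mathrm{c}}^i(\text{closed piece}) \to \sH_{\mathrm{c}}^{i+1}(\text{open piece})$. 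What makes the splitting work is a parity argument: after reducing to $\scF$ supported on a single connected component of $\Conv_\cG$, the degrees $\langle \lambda+\mu, 2\rho\rangle$ occurring all have the same parity, so every connecting map in the filtration long exact sequences lands in a vanishing group. The paper carries this out explicitly by filtering $\rmS_\lambda \wttimes X'$ (for a suitable closed $X' \subset \Gr_\cG$ containing the support) by closed subschemes indexed by this degree, exactly mirroring the proof of Proposition~\ref{prop:CT-fiber}. You should insert this filtration/parity step; the phrase ``applying the same vanishing'' does not cover it.
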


\begin{proof}
The proof is similar in spirit to that of Proposition~\ref{prop:CT-fiber}. We may assume without loss of generality that $\scF$ is supported on a single connected component $X$ of $\Conv_\cG$. As in that proof, we have
\begin{multline*}
\sF_\cT(\CT_{\cB,\cG}(\pr_{1,\cG!} \scF))
= 
\bigoplus_{\lambda \in \bbX_*(T)^+_I} \sH_{\mathrm{c}}^\bullet(\rmS_\lambda, (h^*\pr_{1,\cG!}\scF)_{|\rmS_\lambda} ) \\
= \bigoplus_{\lambda \in \bbX_*(T)^+_I} \sH_{\mathrm{c}}^\bullet(\pr_{1,\cG}^{-1}(\rmS_\lambda),\scF'_{|\pr_{1,\cG}^{-1}(\rmS_\lambda)} )
= \bigoplus_{\lambda \in \bbX_*(T)^+_I} \sH_{\mathrm{c}}^\bullet(\rmS_\lambda \wttimes \Gr_\cG,\scF'_{|\rmS_\lambda \wttimes \Gr_\cG} )
\end{multline*}
where $\scF' := \tilde h^*\scF$.

We now fix $\lambda \in \bbX_*(T)^+_I$. The subset $\rmS_\lambda \wttimes \Gr_\cG \subset \Conv_\cG$ is a union of subsets of the form $\rmS_\lambda \wttimes \rmS_\mu$. Moreover, the parity of $\la \lambda + \mu, 2\rho\ra$ is constant among the subsets of this form that are contained in $X$; to fix notation we assume that these numbers are even.

Consider the restriction $\scF'_{|\rmS_\lambda \wttimes \Gr_\cG}$ of $\scF'$ to $\rmS_\lambda \wttimes \Gr_\cG$.  Because $\scF'$ lies in $\Perv(\Conv_\cG,\Lambda)_\sph$, this object is supported on a subscheme of the form $\rmS_\lambda \wttimes X'$, where $X' \subset \Gr_\cG$ is some $\Loop^+\cG$-stable projective $\F$-scheme contained in a single component of $\Gr_\cG$. We set $Y = \rmS_\lambda \wttimes X'$.
We also let $Z$ be the finite subset of $\bbX_*(T)^+_I$ consisting of the elements $\mu \in \bbX_*(T)_I$ such that that $\rmS_\mu \times_{\Gr_\cG} X' \ne \varnothing$. 

For $n \in \Z$ we set
\[
Y^n = \bigcup_{\substack{\mu \in Z \\ \langle \lambda + \mu, 2\rho \rangle \leq 2n}} \rmS_\lambda \wttimes (\overline{\rmS_\mu \times_{\Gr_{\cG}} X'}).
\]
Then for some integers $n_1 < n_2$ we have a finite filtration
\[
\varnothing = Y^{n_1} \subset Y^{n_2} \subset \cdots \subset Y^{n_2-1} \subset Y^{n_2} = Y
\]
by closed subschemes, as well as a decomposition
\[
Y^n \smallsetminus Y^{n-1} = \bigsqcup_{\substack{\mu \in Z \\ \langle \lambda +\mu, 2\rho \rangle = 2n}} \rmS_\lambda \wttimes (\rmS_\mu \times_{\Gr_{\cG}} X').
\]
Let $a_{\le n} \colon Y^n \to Y$ and $a_n \colon Y^n \smallsetminus Y^{n-1} \to Y$ be the immersions, so that for $n \in \{n_1+1, \dots, n_2\}$ we have a distinguished triangle
\[
(a_n)_! a_n^* \scF' \to (a_{\leq n})_! a_{\leq n}^* \scF' \to (a_{\leq n-1})_! a_{\leq n-1}^* \scF' \xrightarrow{[1]}.
\]
Using~\eqref{eqn:vanishing-wt-conv} and parity arguments one proves by induction on $n$ that we have canonical isomorphisms
\begin{equation*}
\sH^q(Y^n, \scF'_{|Y^n}) = \begin{cases}
\bigoplus_{\substack{\mu \in Z_\lambda \\ \langle \lambda + \mu, 2\rho \rangle = q}} \sH^q(\rmS_\lambda \wttimes \rmS_\mu, \scF'_{|\rmS_\lambda \wttimes \rmS_\mu}) & \text{if $q$ is even and $q \leq 2n$;} \\
0 & \text{otherwise.}
\end{cases}
\end{equation*}
Taking $n={n_2}$ we deduce an isomorphism
\[
\sH_{\mathrm{c}}^\bullet(\rmS_\lambda \wttimes \Gr_\cG,\scF'_{|\rmS_\lambda \wttimes \Gr_\cG} ) = \bigoplus_{\mu \in \bbX_*(T)^+_I} \sH_{\mathrm{c}}^{\la \lambda + \mu, 2\rho \ra}(\rmS_\lambda \wttimes \rmS_\mu,\scF'_{|\rmS_\lambda \wttimes \rmS_\mu}).
\]

Summing the previous isomorphisms over $\lambda$ we deduce 
a natural isomorphism
\[
\sF_\cT(\CT_{\cB,\cG}(\pr_{1,\cG!} \scF))
= 
\bigoplus_{\lambda, \mu \in \bbX_*(T)^+_I} \sH_{\mathrm{c}}^{\la \lambda + \mu, 2\rho \ra}(\rmS_\lambda \wttimes \rmS_\mu,\scF'_{|\rmS_\lambda \wttimes \rmS_\mu}).
\]
The right-hand side identifies with $\sF_\cT(\pr_{1,\cT!} \widetilde{\CT}_{\cB,\cG}(\scF))$, so we are done.
\end{proof}

\begin{cor}
\label{cor:CT-conv-split}
Let $\scF, \scG \in \Perv(\Hk_\cG, \Lambda)$, and let $Y \subset \Conv_\cG$ be a locally closed sub-ind-scheme that is a union of subsets of the form $\rmS_\lambda \wttimes \rmS_\mu$.  Then there is a canonical isomorphism
\[
\sH^\bullet_{\mathrm{c}} \bigl( Y, (\tilde h^*p^*(\scF \lboxtimes_\Lambda \scG))_{|Y} \bigr) \cong
\bigoplus_{\substack{\lambda, \mu \in \bbX_*(T)^+_I\\ \rmS_\lambda \wttimes \rmS_\mu \subset Y}} \sH^{\la \lambda + \mu, 2\rho \ra}_{\mathrm{c}} \bigl( \rmS_\lambda \wttimes \rmS_\mu, (\tilde h^*p^*(\scF \lboxtimes_\Lambda \scG))_{|\rmS_\lambda \wttimes \rmS_\mu} \bigr).
\]
\end{cor}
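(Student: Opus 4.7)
The corollary will follow by adapting the filtration-plus-parity argument used in the proofs of Propositions~\ref{prop:CT-fiber} and~\ref{prop:CT-compare-pr}. Set $\mathscr{K} := \tilde h^*p^*(\scF \lboxtimes_\Lambda \scG)$. The one nontrivial input is the parity vanishing
\[
\sH^i_{\mathrm{c}}\bigl(\rmS_\lambda\wttimes\rmS_\mu,\mathscr{K}_{|\rmS_\lambda\wttimes\rmS_\mu}\bigr)=0 \quad\text{unless}\quad i=\la\lambda+\mu,2\rho\ra.
\]
This is the content of~\eqref{eqn:vanishing-wt-conv} applied to $p^*(\scF \lboxtimes_\Lambda \scG)$, granted that the latter object lies in $\Perv(\HkConv_\cG,\Lambda)_\sph$ (up to an overall shift). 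Perversity follows from t-exactness of $p^*$---a consequence of the smoothness of $p$ in the cartesian square~\eqref{eqn:ppr-cartesian} combined with our normalization of the perverse t-structures in~\S\ref{ss:Perv-Gr-Hk}---together with perversity of $\scF \lboxtimes_\Lambda \scG$; sphericality is immediate from constructibility of $\scF$ and $\scG$ along Schubert strata of $\Hk_\cG$.

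Granting this vanishing, one reduces to the case where $Y$ lies in a single connected component of $\Conv_\cG$, since cohomology with compact supports is additive over disjoint unions. Within such a component the parity of $\la\lambda+\mu,2\rho\ra$ is constant across all pieces $\rmS_\lambda\wttimes\rmS_\mu\subset Y$, because $2\rho$ is $I$-invariant and pairs with every coroot to an even integer. Assume this parity is even for notational concreteness. For each integer $n$ set
\[
Y^n := \bigcup_{\substack{\rmS_\lambda\wttimes\rmS_\mu\subset Y \\ \la\lambda+\mu,2\rho\ra\le 2n}} \overline{\rmS_\lambda\wttimes\rmS_\mu}\cap Y.
\]
Closure relations for semi-infinite orbits (see~\eqref{eqn:closure-S}) and their $\wttimes$-analogues, transported along~\eqref{eqn:Conv-product}, imply that each $Y^n$ is a closed sub-ind-scheme of $Y$ and that $Y^n\setminus Y^{n-1}$ is the disjoint union of the pieces $\rmS_\lambda\wttimes\rmS_\mu\subset Y$ with $\la\lambda+\mu,2\rho\ra=2n$; here one uses that the function $\la\cdot+\cdot,2\rho\ra$ is non-increasing along the partial order defining those closures, as any simple coroot pairs positively with $2\rho$. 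For appropriate integers $n_1<n_2$ one therefore obtains a finite filtration $\varnothing = Y^{n_1}\subset\cdots\subset Y^{n_2}=Y$.

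The long exact sequences of compactly supported cohomology attached to the successive closed-open pairs $Y^{n-1}\hookrightarrow Y^n\hookleftarrow Y^n\setminus Y^{n-1}$ split into direct sums by the parity vanishing: the nontrivial contribution at step $n$ lives in the single degree $2n$, while by induction the cohomology $\sH^\bullet_{\mathrm{c}}(Y^{n-1},\mathscr{K}_{|Y^{n-1}})$ is concentrated in even degrees strictly smaller than $2n$. Taking $n=n_2$ yields the desired direct sum decomposition. The main technical point will be Step~1, namely verifying the parity vanishing for $\mathscr{K}$ with integral coefficients, where one must be mindful of possible Tor terms arising in $\scF \lboxtimes_\Lambda \scG$ on $\Hk_\cG\times \Hk_\cG$; this is ultimately innocuous since only a cohomological vanishing statement is required, and can be established either by a direct argument modeled on the proof of Proposition~\ref{prop:exactness-CT-conv} or by an appropriate Tor-vanishing argument. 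Step~2 (the filtration-and-induction) is then entirely standard.
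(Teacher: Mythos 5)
Your argument goes through when $\Lambda$ is a field, but it has a genuine gap over $\Lambda=\bO$, and the integral case is precisely what this corollary is needed for. Your Step~1 asserts the vanishing \eqref{eqn:vanishing-wt-conv} for $\tilde h^*p^*(\scF \lboxtimes_\Lambda \scG)$ on the grounds that $p^*(\scF \lboxtimes_\Lambda \scG)$ is perverse; but over $\bO$ the external derived product of two perverse sheaves is not perverse in general (it has a $\pH^{-1}$ built from $\Tor_1$ of stalks), so it does not lie in $\Perv(\HkConv_\cG,\Lambda)_\sph$ and \eqref{eqn:vanishing-wt-conv} is not available. Worse, the vanishing you need is actually false in that case: the K\"unneth argument of Lemma~\ref{lem:GrB-conv} identifies $R\Gammac \bigl( \rmS_\lambda \wttimes \rmS_\mu, \tilde h^*p^*(\scF \lboxtimes_\Lambda \scG) \bigr)$ with $R\Gammac(\rmS_\lambda, h^*\scF) \lotimes_\Lambda R\Gammac(\rmS_\mu, h^*\scG)$, each factor being concentrated in a single degree, so that for $\Lambda=\bO$ there is a $\Tor_1^\bO \bigl( \sF_{\cG,\lambda}(\scF), \sF_{\cG,\mu}(\scG) \bigr)$ sitting in degree $\la \lambda+\mu,2\rho\ra-1$ whenever both weight modules have torsion (e.g.\ when $\scF,\scG$ come from $\bk$-perverse sheaves by restriction of scalars). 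So the Tor terms are not ``innocuous'': once a stratum has cohomology in two adjacent degrees, the connecting maps in your closed--open long exact sequences (from the top-degree part of $Y^{n-1}$ into the odd-degree Tor part of $Y^n \smallsetminus Y^{n-1}$) need not vanish, and the parity induction collapses. Neither fallback you offer repairs this: Proposition~\ref{prop:exactness-CT-conv} is again a statement about perverse objects, and there is no Tor-vanishing without a flatness hypothesis (compare the flat-presentation reduction in Remark~\ref{rmk:analytic-pres}).

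This is exactly why the paper argues in the opposite direction, global-to-local rather than stratum-by-stratum. It first treats $Y=\Conv_\cG$: proper base change along \eqref{eqn:ppr-cartesian} rewrites $\pr_{1,\cG!}p^*(\scF \lboxtimes_\Lambda \scG)$ as $\scF \lotimes_\Lambda R\Gamma(\Gr_\cG, h^*\scG)$, and Corollary~\ref{cor:CT-fiber-sum} --- which was proved precisely to handle such non-perverse objects, with the $\Tor_1$ contributions tracked explicitly via the universal coefficient theorem --- combined with Proposition~\ref{prop:CT-compare-pr} yields the decomposition~\eqref{eqn:CT-compare-rephrase}. This decomposition is then reinterpreted as a canonical splitting of the support filtration $F_{\lambda,\mu}$, and the case of a general locally closed $Y$ is deduced by identifying $\sH^\bullet_{\mathrm{c}}(Y,\cdot)$ with a subquotient of that canonically split filtration; at no point is a parity-vanishing statement for $p^*(\scF \lboxtimes_\Lambda \scG)$ on individual strata invoked. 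Your filtration-and-parity induction is essentially the technique the paper uses inside Propositions~\ref{prop:CT-fiber} and~\ref{prop:CT-compare-pr}, where the input genuinely is perverse; applied directly here it only establishes the corollary for $\Lambda=\bK$ or $\bk$, and the case $\Lambda=\bO$ would have to be handled by a separate device such as the flat-presentation trick of Remark~\ref{rmk:analytic-pres} or by following the paper's global-first route.
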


\begin{proof}
For brevity, set $\scF' = h^* \scF$ and $\scG' = h^*\scG$.  It is sufficient to treat the case where $\scG'$ is supported on a single connected component of $\Gr_\cG$, and we assume this as well.  Let us first consider the special case where $Y = \Conv_\cG$.  By proper base change using the cartesian square~\eqref{eqn:ppr-cartesian}, we have
\[
\pr_{1,\cG!}p^*(\scF \lboxtimes_\Lambda \scG) \cong \scF \lotimes_\Lambda R\Gamma(\Gr_\cG, \scG').
\]
Since $\scG'$ is supported on a single connected component, as in the proof of Corollary~\ref{cor:CT-fiber-sum},  
the nonzero cohomology groups $\sH^i(\Gr_\cG,\scG')$ all have $i$ of the same parity.  Then, by  
Corollary~\ref{cor:CT-fiber-sum}, we obtain that
\begin{equation}\label{eqn:CT-conv-split1}
\sF_\cG \bigl( \pr_{1,\cG!}p^*(\scF \lboxtimes_\Lambda \scG) \bigr) \cong \sF_\cT \bigl( \CT_{\cB,\cG}(\pr_{1,\cG!}p^*(\scF \lboxtimes_\Lambda \scG)) \bigr).
\end{equation}
Combining this with
Proposition~\ref{prop:CT-compare-pr}, we obtain
\begin{multline}\label{eqn:CT-compare-rephrase}
\sH^\bullet_\mathrm{c}(\Conv_\cG, \tilde h^* p^*(\scF \lboxtimes_\Lambda \scG) ) 
= \sH^\bullet_\mathrm{c}(\Gr_\cG, h^*\pr_{1,\cG!}p^*(\scF \lboxtimes_\Lambda \scG)) \\
\cong \sF_\cT(\CT_{\cB,\cG}(\pr_{1,\cG!}p^*(\scF \lboxtimes_\Lambda \scG))) \\
\cong
\bigoplus_{\lambda, \mu \in \bbX_*(T)^+_I} \sH_{\mathrm{c}}^{\la \lambda + \mu, 2\rho \ra}(\rmS_\lambda \wttimes \rmS_\mu,\tilde h^*p^*(\scF \lboxtimes_\Lambda \scG)_{|\rmS_\lambda \wttimes \rmS_\mu}),
\end{multline}
which establishes the desired identification in this case.

Here is another interpretation of this equality.  For $\lambda, \mu \in \bbX_*(T)^+_I$, let $\rmS_{\not\ge (\lambda,\mu)} \subset \Conv_\cG$ be the closed subset defined by
\[
\rmS_{\not\ge(\lambda,\mu)} = \bigcup_{\substack{\lambda',\mu' \in \bbX_*(T)^+_I \\ \text{$\lambda' \not\ge \lambda$ or $\lambda'+\mu' \not\ge \lambda+\mu$}}} \rmS_{\lambda'} \wttimes \rmS_{\mu'}.
\]
This is a closed subset of $\Conv_\cG$.  There is a filtration $F_{\bullet,\bullet}$ of the $\Lambda$-module
\[
\sH^\bullet_{\mathrm{c}}(\Conv_\cG, \tilde h^* p^*(\scF \lboxtimes_\Lambda \scG) )
\]
indexed by $\bbX_*(T)^+_I \times \bbX_*(T)^+_I$, given by
\[
F_{\lambda,\mu} := \ker (\sH^\bullet_{\mathrm{c}}(\Conv_\cG, \tilde h^* p^*(\scF \lboxtimes_\Lambda \scG) ) \to \sH^\bullet_{\mathrm{c}}(\rmS_{\not\ge(\lambda,\mu)}, \tilde h^* p^*(\scF \lboxtimes_\Lambda \scG) _{|\rmS_{\not\ge(\lambda,\mu)}})).
\]
Then~\eqref{eqn:CT-compare-rephrase} says that this filtration admits a canonical splitting.

We now return to the setting of a general $Y$ as in the statement of the corollary.  Let $Z$ be the set of pairs $(\lambda, \mu) \in \bbX_*(T)^+_I \times \bbX_*(T)^+_I$ such that $\rmS_\lambda \wttimes \rmS_\mu \subset Y$, and let
\[
Z^- = \left\{ (\lambda, \mu) \in \bbX_*(T)^+_I \times \bbX_*(T)^+_I \,\Big|\,
\begin{array}{c}
\text{there exists $(\lambda',\mu') \in Z$ with} \\
\text{$\lambda \ge \lambda'$ and $\lambda+\mu \ge \lambda' + \mu'$}
\end{array} \right\}.
\]
The fact that $Y$ is locally closed implies that $Z^- \smallsetminus Z$ is an upper closed set with respect to the partial order on $\bbX_*(T)^+_I \times \bbX_*(T)^+_I$.  As a consequence, the module $\sH^\bullet_{\mathrm{c}}(Y, \tilde h^* p^*(\scF \lboxtimes_\Lambda \scG)_{|Y})$ can be identified with a subquotient of the filtration defined above: namely,
\[
\sH^\bullet_{\mathrm{c}}(Y, \tilde h^* p^*(\scF \lboxtimes_\Lambda \scG) _{|Y}) =
\sum_{(\lambda,\mu) \in Z^-} F_{\lambda,\mu} \Big/ \sum_{(\lambda,\mu) \in Z^- \smallsetminus Z} F_{\lambda,\mu}.
\]
Since the filtration is canonically split, the result follows.
\end{proof}

\begin{prop}\label{prop:CT-compare}
For $\scF,\scG \in \Perv(\Hk_\cG, \Lambda)$, there is a natural isomorphism
\[
\CT_{\cB,\cG}(\scF \star^0 \scG) \cong m_{\cT!} \widetilde{\CT}_{\cB,\cG}(p^*(\pH^0(\scF \lboxtimes_\Lambda \scG))).
\]
\end{prop}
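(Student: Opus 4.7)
\noindent\textit{Proof plan.} The plan is to reduce the statement to a natural isomorphism of stalks at each $t^\nu \in \Gr_\cT(\F) = \bbX_*(T)_I$. Since $\Gr_\cT$ is discrete, both sides are perverse sheaves on $\Hk_\cT$ determined by their stalks, and morphisms between such sheaves are likewise determined stalk-wise. Every natural transformation I will produce at each stalk is natural in $\scF$ and $\scG$, so that the resulting isomorphisms of stalks will assemble to the required natural isomorphism of perverse sheaves.

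I first unwind the right-hand side. The defining formula for $\widetilde{\CT}_{\cB,\cG}$ together with~\eqref{eqn:vanishing-wt-conv} concentrates the stalk of $\widetilde{\CT}_{\cB,\cG}(p^* \pH^0(\scF \lboxtimes_\Lambda \scG))$ at each $t^\lambda \wttimes t^\mu$ in a single degree, so summing over the fibre $m_\cT^{-1}(t^\nu) = \{t^\lambda \wttimes t^\mu : \lambda + \mu = \nu\}$ identifies the right-hand side stalk with $\bigoplus_{\lambda+\mu=\nu} \sH^{\la \nu, 2\rho \ra}_{\mathrm{c}}(\rmS_\lambda \wttimes \rmS_\mu, \tilde h^* p^* \pH^0(\scF \lboxtimes_\Lambda \scG))$. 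For the left-hand side, Remark~\ref{rmk:exactness-CT-vanishing} reduces the stalk to $\sH^{\la \nu, 2\rho \ra}_{\mathrm{c}}(\rmS_\nu, h^*(\scF \star^0 \scG))$; the truncation triangle $\tau^{\le -1}(\scF \star \scG) \to \scF \star \scG \to \scF \star^0 \scG$ (available because $\scF \star \scG$ lies in $\pD^{\le 0}$ by Corollary~\ref{cor:convolution-exact}) together with the t-exactness of $\CT_{\cB,\cG}$ (Proposition~\ref{prop:exactness-CT}) lets me replace $\scF \star^0 \scG$ by $\scF \star \scG$ in this single top-degree group, and proper base change applied to $\scF \star \scG = m_{\cG!} p^*(\scF \lboxtimes_\Lambda \scG)$ and to $\rmS_\nu \hookrightarrow \Gr_\cG$ then rewrites it as $\sH^{\la \nu, 2\rho \ra}_{\mathrm{c}}(m_\cG^{-1}(\rmS_\nu), \tilde h^* p^*(\scF \lboxtimes_\Lambda \scG))$.

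To bridge the two expressions, I will use the decomposition $m_\cG^{-1}(\rmS_\nu) = \bigsqcup_{\lambda + \mu = \nu} \rmS_\lambda \wttimes \rmS_\mu$ coming from $(\pr_1, m_\cG) \colon \Conv_\cG \cong \Gr_\cG \times \Gr_\cG$ (see~\eqref{eqn:Conv-product}): Corollary~\ref{cor:CT-conv-split} with $Y = m_\cG^{-1}(\rmS_\nu)$ then canonically splits the cohomology into the desired direct sum over pairs $(\lambda,\mu)$. The final step, replacing $\scF \lboxtimes_\Lambda \scG$ by $\pH^0(\scF \lboxtimes_\Lambda \scG)$ in each summand, is carried out via the perverse truncation triangle $\pH^{-1}(\scF \lboxtimes_\Lambda \scG)[1] \to \scF \lboxtimes_\Lambda \scG \to \pH^0(\scF \lboxtimes_\Lambda \scG)$ --- valid because $\Lambda$ has global dimension at most one, so that $\scF \lboxtimes_\Lambda \scG$ lives in perverse degrees $[-1, 0]$ --- combined with~\eqref{eqn:vanishing-wt-conv} applied to $\pH^{-1}(\scF \lboxtimes_\Lambda \scG)$, whose contribution to $\sH^{\bullet}_{\mathrm{c}}(\rmS_\lambda \wttimes \rmS_\mu, \tilde h^*p^*(-))$ concentrates in degree $\la \nu, 2\rho \ra - 1$ and therefore falls outside the top degree being computed.

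The main obstacle is the absence of any direct base-change isomorphism $\CT_{\cB,\cG} \circ m_{\cG!} \cong m_{\cT!} \circ \widetilde{\CT}_{\cB,\cG}$ at the level of derived categories: the would-be cartesian square relating the maps $\tilde q_\cB$, $m_\cT$, $q_\cB$ and $m_\cB$ is not actually cartesian, since an intertwining datum in $\Conv_\cG$ with prescribed $\cT$-reduction of the leftmost bundle does not determine a $\cB$-reduction thereof. It is precisely this failure that forces the argument through the stalk computation above and explains why success rests on the particular form $p^*(\pH^0(\scF \lboxtimes_\Lambda \scG))$ of the input, together with both sharp vanishing statements (Remark~\ref{rmk:exactness-CT-vanishing} and~\eqref{eqn:vanishing-wt-conv}) that allow one to reduce everything to the single top cohomological degree.
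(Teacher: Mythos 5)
Your proposal is correct and follows essentially the same route as the paper: both arguments reduce to stalks over the discrete $\Gr_\cT$, identify $m_\cG^{-1}(\rmS_\nu)$ with the union of the $\rmS_\lambda \wttimes \rmS_\mu$ with $\lambda+\mu=\nu$, and invoke the canonical splitting of Corollary~\ref{cor:CT-conv-split} as the key input, with the vanishing statements controlling the relevant degree. The only difference is bookkeeping: the paper first proves the isomorphism for $\scF \star \scG$ and $p^*(\scF \lboxtimes_\Lambda \scG)$ (without $\pH^0$) and then deduces the stated form via t-exactness and full faithfulness of $h^*$, whereas you dispose of the perverse truncations stalk-by-stalk through top-degree arguments.
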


\begin{proof}
The statement is a natural isomorphism in $\Perv(\Hk_\cT,\Lambda)$.  Below, we will prove a closely related statement: we will show that in $\Perv(\Gr_\cT,\Lambda)$, there is a natural isomorphism
\begin{equation}\label{eqn:CT-compare-mod}
h^*\CT_{\cB,\cG}(\scF \star \scG) \cong h^* m_{\cT!} \widetilde{\CT}_{\cB,\cG}(p^*(\scF \lboxtimes_\Lambda \scG)).
\end{equation}
Let us explain how to deduce the proposition from~\eqref{eqn:CT-compare-mod}.  By t-exactness of the various functors above (see Propositions~\ref{prop:semismall}, \ref{prop:exactness-CT}, and \ref{prop:exactness-CT-conv}, and also the discussion in~\S\ref{ss:semismall}), \eqref{eqn:CT-compare-mod} implies that
\[
h^*\CT_{\cB,\cG}(\scF \star^0 \scG) \cong h^* m_{\cT!} \widetilde{\CT}_{\cB,\cG}(p^*(\pH^0(\scF \lboxtimes_\Lambda \scG))),
\]
and this in turn implies the proposition because  
$h^* \colon \Perv(\Hk_\cT,\Lambda) \to \Perv(\Gr_\cT,\Lambda)$ is fully faithful. 

Let us prove~\eqref{eqn:CT-compare-mod}. Since $\Gr_\cT$ is discrete, we may further reduce the problem to that of comparing stalks of these complexes at each point of $\Gr_\cT$.  Let us fix an element $\nu \in \bbX_*(T)_I$, and focus on the stalks 
$(h^* \CT_{\cB,\cG}(\scF \star \scG))_{t^\nu}$ and $(h^* m_{\cT!} \widetilde{\CT}_{\cB,\cG}(p^*(\scF \lboxtimes_\Lambda \scG)))_{t^\nu}$.

Note that $m_{\cG}^{-1}(\rmS_\nu)$ is the union of all $\rmS_\lambda \wttimes \rmS_\mu$ where $\lambda + \mu = \nu$.  Therefore, using Corollary~\ref{cor:CT-conv-split}, we have
\begin{multline*}
(h^* \CT_{\cB,\cG}(\scF \star \scG))_{t^\nu} \cong \sH^{\la \nu, 2\rho \ra}_{\mathrm{c}}(\rmS_\nu, (m_{\cG!}p^*(\scF \lboxtimes_\Lambda \scG))_{|\rmS_\nu}) \\
\cong \sH^{\la \nu, 2\rho \ra}_{\mathrm{c}}(m_\cG^{-1}(\rmS_\nu), p^*(\scF \lboxtimes_\Lambda \scG)_{|m_\cG^{-1}(\rmS_\nu)}) \\\cong
\bigoplus_{\substack{\lambda, \mu \in \bbX_*(T)^+_I \\ \lambda + \mu = \nu}} \sH^{\la \nu, 2\rho \ra}_{\mathrm{c}}(\rmS_\lambda \wttimes \rmS_\mu, p^*(\scF \lboxtimes_\Lambda \scG)_{|\rmS_\lambda \wttimes \rmS_\mu}).
\end{multline*}

On the other hand, the stalk $(h^* m_{\cT!} \widetilde{\CT}_{\cB,\cG}(p^*(\scF \lboxtimes_\Lambda \scG)))_{t^\nu}$ 
is given by
\begin{multline*}
(h^* m_{\cT!} \widetilde{\CT}_{\cB,\cG}(p^*(\scF \lboxtimes_\Lambda \scG)))_{t^\nu} \cong \bigoplus_{\lambda+\mu = \nu} (h^* \widetilde{\CT}_{\cB,\cG}(p^*(\scF \lboxtimes_\Lambda \scG)))_{[\lambda,\mu]} \\
\cong \bigoplus_{\lambda+\mu = \nu} \sH^{\la \lambda+\mu, 2\rho \ra}_{\mathrm{c}}(\rmS_\lambda \wttimes \rmS_\mu, p^*(\scF \lboxtimes_\Lambda \scG)_{|\rmS_\lambda \wttimes \rmS_\mu})
\end{multline*}
where $[\lambda,\mu] \in \Conv_{\cT}$ is the point corresponding to $(\lambda,\lambda+\mu)$ under the identifications~\eqref{eqn:Conv-product} (for $\cT$) and~\eqref{eqn:GrT-X}.
The result follows.
\end{proof}

Combining Proposition~\ref{prop:CT-compare} with Proposition~\ref{prop:CT-fiber}, we obtain the following immediate consequence.

\begin{prop}
\label{prop:CT-fiber-conv}
For $\scF, \scG \in \Perv(\Hk_\cG,\Lambda)$, there is a natural isomorphism
\[
\sF_\cG(\scF \star^0 \scG) \cong \sF_\cT(m_{\cT!}\widetilde{\CT}_{\cB,\cG}(p^*(\pH^0(\scF \lboxtimes_\Lambda \scG)))).
\]
\end{prop}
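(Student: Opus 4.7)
The plan is essentially to chain together Proposition~\ref{prop:CT-fiber} and Proposition~\ref{prop:CT-compare}, both of which are already established in the excerpt. Since $\scF \star^0 \scG = \pH^0(\scF \star \scG)$ is an object of $\Perv(\Hk_\cG, \Lambda)$ by Corollary~\ref{cor:convolution-exact}, Proposition~\ref{prop:CT-fiber} applied with $P = B$ (so that $M = T$) yields a natural isomorphism
\[
\sF_\cG(\scF \star^0 \scG) \simto \sF_\cT \bigl( \CT_{\cB,\cG}(\scF \star^0 \scG) \bigr).
\]

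Next I would substitute Proposition~\ref{prop:CT-compare}, which provides a natural isomorphism
\[
\CT_{\cB,\cG}(\scF \star^0 \scG) \simto m_{\cT!} \widetilde{\CT}_{\cB,\cG} \bigl( p^*(\pH^0(\scF \lboxtimes_\Lambda \scG)) \bigr)
\]
inside $\Perv(\Hk_\cT, \Lambda)$. Applying the functor $\sF_\cT$ to both sides and composing with the isomorphism from the previous step delivers the desired natural identification.

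There is no real obstacle here; the statement is an immediate formal consequence of the two propositions just cited, and the naturality in $\scF$ and $\scG$ is inherited from the naturality of those two isomorphisms. The only point worth double-checking is that all constructions involved make sense on $\Perv(\Hk_\cG, \Lambda)$ (as opposed to $\Dbc$), which is guaranteed by Proposition~\ref{prop:exactness-CT}, Corollary~\ref{cor:convolution-exact}, and the fact that $p^* \pH^0(\scF \lboxtimes_\Lambda \scG)$ lies in $\Perv(\HkConv_\cG, \Lambda)_\sph$ by the smoothness of $p$ up to shift and the construction of the spherical subcategory.
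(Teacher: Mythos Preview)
Your proposal is correct and matches the paper's own argument exactly: the paper states that the result is an immediate consequence of combining Proposition~\ref{prop:CT-compare} with Proposition~\ref{prop:CT-fiber}, which is precisely the two-step composition you describe.
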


\begin{rmk}\label{rmk:analytic-pres}
The preceding proposition depends on Corollary~\ref{cor:CT-fiber-sum}, whose proof makes crucial use of the fact that $\Lambda$ has global dimension${}\le1$.  In the analytic setting mentioned in Remark~\ref{rmk:analytic}, one may wish to consider coefficient rings of global dimension${}>1$ (but still finite).  To handle this situation, one can modify the argument as follows:
\begin{itemize}
\item First, prove Corollary~\ref{cor:CT-fiber-sum} under the additional assumption that the cohomology modules $\sH^i(M)$ are flat over $\Lambda$.
\item Then, prove Corollary~\ref{cor:CT-conv-split}, Proposition~\ref{prop:CT-compare}, and Proposition~\ref{prop:CT-fiber-conv} under the additional assumption that $\sF_\cG(\scG)$ is flat over $\Lambda$.
\item The results in Section~\ref{sec:construction} imply that every perverse sheaf admits a presentation $\scP_1 \to \scP_2 \twoheadrightarrow \scG$ where $\sF_\cG(\scP_1)$ and $\sF_\cG(\scP_2)$ are flat over $\Lambda$.  Because the functors in the statement of Proposition~\ref{prop:CT-compare} are right exact, one can uniquely fill in the dotted arrow in the diagram below:
\[
\hbox{\tiny$
\begin{tikzcd}[column sep=tiny]
\CT_{\cB,\cG}(\scF \star^0 \scP_1) \ar[r] \ar[d, "\begin{array}{r}\text{flat case of}\\ \text{Prop.~\ref{prop:CT-compare}}\end{array}"'] &
  \CT_{\cB,\cG}(\scF \star^0 \scP_2) \ar[r, two heads] \ar[d, "\begin{array}{r}\text{flat case of}\\ \text{Prop.~\ref{prop:CT-compare}}\end{array}"'] &  
  \CT_{\cB,\cG}(\scF \star^0 \scG) \ar[r] \ar[d, dashed] & 0 \\
m_{\cT!}\widetilde{\CT}_{\cB,\cG}(p^*\pH^0(\scF \lboxtimes_\Lambda \scP_1)) \ar[r] &
  m_{\cT!}\widetilde{\CT}_{\cB,\cG}(p^*\pH^0(\scF \lboxtimes_\Lambda \scP_2)) \ar[r, two heads] &
  m_{\cT!}\widetilde{\CT}_{\cB,\cG}(p^*\pH^0(\scF \lboxtimes_\Lambda \scG)) \ar[r] &
  0
\end{tikzcd}$}
\]
In this way, one can deduce Propositions~\ref{prop:CT-compare} and~\ref{prop:CT-fiber-conv} in general.
\end{itemize}
\end{rmk}

\section{Monoidality}
\label{sec:monoidality}

\subsection{Statement and strategy}
\label{ss:monoidality-statement}

The goal of this section is to equip the total cohomology functor
\[
\sF_\cG \colon \Perv(\Hk_\cG,\Lambda) \to \modf_\Lambda
\]
(see~\S\ref{ss:total-cohomology})
with a monoidal structure, i.e., with a natural isomorphism
\begin{equation}\label{eqn:monoidality-defn}
\phi \colon \sF_\cG(\scF) \otimes_\Lambda \sF_\cG(\scG) \simto \sF_\cG(\scF \star^0 \scG)
\end{equation}
satisfying appropriate associativity and identity equations.
In the ``classical'' geometric Satake context (see~\cite{mv,br}) the monoidal structure on the fiber functor is constructed using the Be{\u\i}linson--Drinfeld Grassmannian and the interpretation of convolution as fusion, see~\cite[\S 6]{mv} or~\cite[\S 1.8]{br} for details. In our present setting we have no analogue of the Be{\u\i}linson--Drinfeld Grassmannian; we therefore have to argue differently.

As a warm-up, we treat an easy special case: that in which $\cG$ is replaced by $\cT$.

\begin{lem}
\label{lem:F-monoidal-torus}
For $\scF , \scG \in \Perv(\Hk_\cT,\Lambda)$ there is a natural isomorphism
\[
\sF_\cT(\scF \star^0 \scG) \cong \sF_\cT(\scF) \otimes_\Lambda \sF_\cT(\scG)
\]
making $\sF_\cT \colon \Perv(\Hk_\cT,\Lambda) \to \modf_\Lambda$ into a monoidal functor.
\end{lem}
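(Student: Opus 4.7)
The key observation is that, since $\cT$ is abelian, we may use~\eqref{eqn:Conv-ab}--\eqref{eqn:HkConv-ab} to identify $\Conv_\cT$ with $\Gr_\cT \times \Gr_\cT$ and $\HkConv_\cT$ with $\Hk_\cT \times \Gr_\cT$. The commutative diagram~\eqref{eqn:HkConv-ab-commute} tells us that, under these identifications, $p$ becomes $\id \times h$, $\tilde h$ becomes $h \times \id$, and the upper $m \colon \Conv_\cT \to \Gr_\cT$ becomes the group multiplication $\Gr_\cT \times \Gr_\cT \to \Gr_\cT$, i.e., the map induced by addition on $\bbX_*(T)_I$ via~\eqref{eqn:GrT-X}. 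In particular, $\Gr_\cT$ is a discrete abelian group ind-scheme.

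\textbf{Main steps.} First, the cartesian square relating $\tilde h$, $h$ and the two copies of $m$ (together with the factorisation $p\tilde h = h\times h$ extracted from~\eqref{eqn:HkConv-ab-commute}) yields, by proper base change,
\[
h^*(\scF \star \scG) \;\cong\; m_* \tilde h^* p^*(\scF \lboxtimes_\Lambda \scG) \;\cong\; m_*\bigl(h^*\scF \lboxtimes_\Lambda h^*\scG\bigr),
\]
where the outer $m$ is now the addition on $\Gr_\cT$. Second, because $\Gr_\cT$ is a discrete ind-scheme, the perverse t-structure on it coincides with the standard one, and every perverse sheaf decomposes as $\bigoplus_\lambda (i_\lambda)_* M_\lambda$ with $i_\lambda \colon \{t^\lambda\} \hookrightarrow \Gr_\cT$ the inclusion and $M_\lambda$ a finitely generated $\Lambda$-module. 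Writing $h^*\scF = \bigoplus_\lambda (i_\lambda)_* M_\lambda$ and $h^*\scG = \bigoplus_\mu (i_\mu)_* N_\mu$, the computation above becomes
\[
h^*(\scF \star \scG) \;\cong\; \bigoplus_\nu (i_\nu)_* \Bigl(\, \bigoplus_{\lambda+\mu=\nu} M_\lambda \otimes_\Lambda N_\mu \Bigr),
\]
which is manifestly perverse; hence $\scF \star \scG = \scF \star^0 \scG$ (recall from~\S\ref{ss:Perv-Gr-Hk} that $h^*$ detects perversity). Third, taking total cohomology just collapses the direct sum:
\[
\sF_\cT(\scF \star^0 \scG) \;=\; \bigoplus_{\lambda,\mu} M_\lambda \otimes_\Lambda N_\mu \;=\; \Bigl(\bigoplus_\lambda M_\lambda\Bigr) \otimes_\Lambda \Bigl(\bigoplus_\mu N_\mu\Bigr) \;=\; \sF_\cT(\scF) \otimes_\Lambda \sF_\cT(\scG),
\]
and the resulting isomorphism $\phi$ is natural in both variables.

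\textbf{Monoidal axioms and obstacles.} The associativity pentagon and the unit triangles are inherited directly from the commutative group law on $\bbX_*(T)_I$ and the (strict) associativity and unitality of $\otimes_\Lambda$; the unit of $(\Perv(\Hk_\cT,\Lambda),\star^0)$ is the skyscraper at $t^0 \in \Gr_\cT$, whose image under $\sF_\cT$ is $\Lambda$. There is no substantive obstacle: once~\eqref{eqn:Conv-ab}--\eqref{eqn:HkConv-ab-commute} reduce convolution on $\cT$ to pushforward of the external tensor along addition on a discrete group, the statement becomes a bookkeeping exercise. The only point requiring minor care is that the various perverse-shift conventions are invisible here because $\Gr_\cT$ is $0$-dimensional, so perverse sheaves are concentrated in cohomological degree~$0$ at each point.
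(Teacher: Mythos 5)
Your route is essentially the paper's: both arguments use the identifications \eqref{eqn:Conv-ab} and \eqref{eqn:HkConv-ab} together with the commutative diagram \eqref{eqn:HkConv-ab-commute} to rewrite convolution for $\cT$ as pushforward of an external product along the addition map of the discrete ind-scheme $\Gr_\cT$, and then collapse by a K\"unneth-type bookkeeping over $\bbX_*(T)_I$. One intermediate claim, however, is wrong as stated: the external product entering the definition of $\star$ is the \emph{derived} one, so the stalk of $h^*\scF \lboxtimes_\Lambda h^*\scG$ at $(t^\lambda,t^\mu)$ is $M_\lambda \lotimes_\Lambda N_\mu$, not $M_\lambda \otimes_\Lambda N_\mu$. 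When $\Lambda=\bO$ this complex can have a nonzero $\Tor_1^{\bO}(M_\lambda,N_\mu)$ in cohomological degree $-1$ (take for $\scF=\scG$ the skyscraper at $t^0$ with stalk $\bk$), so $m_*(h^*\scF \lboxtimes_\Lambda h^*\scG)$ lies only in perverse degrees $\leq 0$, and your assertion that it ``is manifestly perverse, hence $\scF\star\scG=\scF\star^0\scG$'' fails over $\bO$.

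This slip does not affect the statement, because the lemma is about $\star^0=\pH^0(\star)$: since $m$ is finite on the support of the complex, $m_*$ is t-exact and commutes with $\pH^0$, and applying $\pH^0$ pointwise replaces each $M_\lambda\lotimes_\Lambda N_\mu$ by the ordinary tensor product $M_\lambda\otimes_\Lambda N_\mu$. After this repair your computation gives
\[
\sF_\cT(\scF\star^0\scG)\cong\bigoplus_{\lambda,\mu} M_\lambda\otimes_\Lambda N_\mu\cong\sF_\cT(\scF)\otimes_\Lambda\sF_\cT(\scG),
\]
which is exactly the paper's proof (the paper inserts $\pH^0$ explicitly before taking cohomology). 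So the argument is correct once you write $\lotimes_\Lambda$ in the intermediate display and take $\pH^0$ there; your verification of the unit and associativity constraints via the group law on $\bbX_*(T)_I$ is fine and spells out what the paper leaves implicit.
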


\begin{proof}
Recall that the underlying topological spaces of $\Gr_\cT$ and $\Conv_\cT$ are discrete, associated with the sets $\bbX_*(T)_I$ and $\bbX_*(T)_I \times \bbX_*(T)_I$ respectively; see in particular~\eqref{eqn:GrT-X}. Using the identifications~\eqref{eqn:Conv-ab} and~\eqref{eqn:HkConv-ab}, along with the commutative diagram~\eqref{eqn:HkConv-ab-commute}, we see that
\begin{multline*}
\sF_\cT(\scF \star^0 \scG) \cong \coH^0(\Gr_\cT, \pH^0(h^* m_!p^*(\scF \lboxtimes_\Lambda \scG))) \\
\cong \coH^0(\Gr_\cT \times \Gr_\cT, \pH^0((h^* \scF) \lboxtimes_\Lambda (h^* \scG))).
\end{multline*}
Using again that $\Gr_\cT$ is discrete we
see that the last expression is isomorphic to
\[
\coH^0(\Gr_\cT, h^* \scF) \otimes_\Lambda \coH^0(\Gr_\cT, h^* \scG) = \sF_\cT(\scF) \otimes_\Lambda \sF_\cT(\scG),
\]
as desired.
\end{proof}

In view of Lemma~\ref{lem:F-monoidal-torus} and Proposition~\ref{prop:CT-fiber} (for $P=B$), to construct a monoidal structure on $\sF_{\cG}$ it suffices to construct a monoidal structure on the functor
\[
 \CT_{\cB,\cG} \colon \Perv(\Hk_\cG,\Lambda) \to \Perv(\Hk_\cT,\Lambda)
\]
with respect to the convolution product $\star^0$. This is exactly what we do in the rest of this section.

\subsection{A K\"unneth lemma}

\begin{lem}
\label{lem:GrB-conv}
Consider the diagram
\[
\begin{tikzcd}[row sep=tiny]
&  {[\Loop^+\cT\backslash\Gr_\cB]_{\et}} \times \Hk_\cB \\
{[\Loop^+\cT\backslash\Conv_\cB]_{\et}} \ar[ur, "p_{\cB}"] \ar[dr, "\tilde q_\cB"'] &&
{[\Loop^+\cT\backslash\Gr_\cB]_{\et}} \times \Gr_\cB \ar[ul, "\id \times h_{\cB}"'] \ar[dl, "q_\cB \times q_\cB"] \\
& {\hspace{-4ex} \HkConv_\cT \overset{\eqref{eqn:HkConv-ab}}{=} \Hk_\cT \times \Gr_\cT. \hspace{-4ex}}
\end{tikzcd}
\]
For $\scF \in \Dbc([\Loop^+\cT\backslash\Gr_\cB]_{\et},\Lambda)$ and $\scG \in \Dbc(\Hk_\cB,\Lambda)$, there is a natural isomorphism
\[
\tilde q_{\cB!} p_{\cB}^*(\scF \lboxtimes_\Lambda \scG)  \cong  (q_{\cB!}\scF) \lboxtimes_\Lambda (q_{\cB!}h_{\cB}^* \scG).
\]
\end{lem}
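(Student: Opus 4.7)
The plan is to reduce the lemma to a direct application of the Künneth formula after using the isomorphism $\phi := (\pr_{1,\cB}, m_\cB) \colon \Conv_\cB \simto \Gr_\cB \times \Gr_\cB$ from \eqref{eqn:Conv-product}. This morphism is $\Loop^+\cT$-equivariant for the diagonal left-multiplication action on the target, so it descends to an isomorphism
\[
[\Loop^+\cT\backslash\Conv_\cB]_{\et} \simto [\Loop^+\cT\backslash\Gr_\cB]_{\et} \times \Gr_\cB.
\]
Inspecting the right-hand cartesian square of \eqref{eqn:ppr-cartesian} applied to $\cH = \cB$, under $\phi$ the morphism $p_\cB$ coincides with $\id \times h_\cB$.

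Next, I identify $\tilde q_\cB$ under $\phi$ and the analogous presentation $\phi' := (\pr_{1,\cT}, m_\cT) \colon \Conv_\cT \simto \Gr_\cT \times \Gr_\cT$. The Levi projection $\cB \to \cT$ is functorial with respect to the formation of $(\pr_1, m)$, so under $\phi$ and $\phi'$ the induced map $\Conv_\cB \to \Conv_\cT$ becomes $q_\cB \times q_\cB \colon \Gr_\cB \times \Gr_\cB \to \Gr_\cT \times \Gr_\cT$. Taking $\Loop^+\cT$-quotients on the first factor, and comparing with the identification $\HkConv_\cT = \Hk_\cT \times \Gr_\cT$ of \eqref{eqn:HkConv-ab} via the commutative diagram \eqref{eqn:HkConv-ab-commute}, $\tilde q_\cB$ corresponds to the product morphism $q_\cB \times q_\cB$.

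Combining these two observations, the lemma follows from the Künneth formula:
\[
\tilde q_{\cB!}\, p_\cB^*(\scF \lboxtimes_\Lambda \scG) \simto (q_\cB \times q_\cB)_!\,(\scF \lboxtimes_\Lambda h_\cB^* \scG) \simto (q_{\cB!}\scF) \lboxtimes_\Lambda (q_{\cB!} h_\cB^* \scG),
\]
where the first isomorphism is the computation above and the second is Künneth (which applies in this setting of constructible sheaves on algebraic stacks via the formalism recalled in the appendix).

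The main obstacle is the careful bookkeeping of the two natural presentations of $\HkConv_\cT$ as $\Hk_\cT \times \Gr_\cT$—one from $(\pr_{1,\cT}, m_\cT)$ and the other from the twisted-product presentation \eqref{eqn:Conv-ab}—and verifying that they are compatible for the purposes of external products so that the Künneth step above is applied consistently with the identification of the codomain used in the statement. Once this compatibility is confirmed through the diagram \eqref{eqn:HkConv-ab-commute}, the rest of the proof is formal.
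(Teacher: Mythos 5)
There is a genuine gap, in fact two related ones, both located at the ``untwisting'' step. First, the isomorphism $(\pr_{1,\cB},m_\cB)\colon \Conv_\cB \simto \Gr_\cB\times\Gr_\cB$ of~\eqref{eqn:Conv-product} intertwines the left $\Loop^+\cT$-action on $\Conv_\cB$ with the \emph{diagonal} action on $\Gr_\cB\times\Gr_\cB$, so what it descends to is an isomorphism $[\Loop^+\cT\backslash\Conv_\cB]_{\et}\simto[\Loop^+\cT\backslash(\Gr_\cB\times\Gr_\cB)]_{\et}$ (diagonal quotient), \emph{not} onto $[\Loop^+\cT\backslash\Gr_\cB]_{\et}\times\Gr_\cB$. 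The diagonal quotient is the $\Gr_\cB$-bundle over $[\Loop^+\cT\backslash\Gr_\cB]_{\et}$ associated with the torsor $\Gr_\cB\to[\Loop^+\cT\backslash\Gr_\cB]_{\et}$, and there is no canonical trivialization: the untwisting you are implicitly using is exactly the one in~\eqref{eqn:Conv-ab}/\eqref{eqn:HkConv-ab}, which is available only because the left $\Loop^+\cH$-action on $\Gr_\cH$ is trivial when $\cH$ is abelian; for $\cB$ it fails. Second, the claim that under $(\pr_{1,\cB},m_\cB)$ the map $p_\cB$ becomes $\id\times h_\cB$ is false, and the cartesian square~\eqref{eqn:ppr-cartesian} does not assert it. On moduli points, $p$ sends $(\cE_1,\cE_2,\alpha,\beta)$ to $\bigl((\cE_1,\alpha),(\cE_1,\cE_2,\beta)\bigr)$, whereas $(\id\times h_\cB)\circ(\pr_{1,\cB},m_\cB)$ gives $\bigl((\cE_1,\alpha),(\cE_0,\cE_2,\beta\circ\alpha)\bigr)$; an isomorphism between the second components would force the generic trivialization $\alpha$ to extend over the full disc, which it does not. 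The role of~\eqref{eqn:ppr-cartesian} is precisely to encode this twist, not to eliminate it. With both identifications gone, the final ``Künneth'' display has no justification; indeed, if the lemma really were a Künneth formula for an honest product decomposition compatible with all four maps, the delicate point this section is designed to fix (see Remark~\ref{monoidality-remark}) would be vacuous.

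Your intuition that the statement is ``Künneth-like'' is morally right, but the twist has to be dealt with by an argument that uses the equivariance of $\scG$ (i.e.\ that it lives on $\Hk_\cB$), which is exactly what the paper does: it reduces to $\scF$, $\scG$ supported on single connected components $\rmS_\lambda$, $\rmS_\mu$ (so the target of $\tilde q_\cB$ becomes $[\Loop^+\cT\backslash\mathrm{pt}]_{\et}$ over the point $(t^\lambda,t^\mu)$), and then applies proper base change along~\eqref{eqn:ppr-cartesian} to get $\pr_{1,\cB!}\bigl(p_\cB^*(\scF\lboxtimes_\Lambda\scG)\bigr)\cong\scF\lotimes_\Lambda R\Gammac(\Gr_\cB,h_\cB^*\scG)$, from which the asserted isomorphism follows by pushing forward along $q_\cB$. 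If you want to salvage your approach, you must replace the claimed global untwisting by this (or an equivalent) base-change argument; as written, the proof does not go through.
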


\begin{proof}
 It is enough to prove this when $\scF$ and $\scG$ are each supported on a single connected component of $\Gr_\cB$, say $\rmS_\lambda$ and $\rmS_\mu$, respectively for some $\lambda, \mu \in \bbX_*(T)_I$.  
 Note that all maps in the diagram induce bijections on the sets of connected components.   
 Taking appropriate connected components, our diagram restricts to
\begin{equation*}
\begin{tikzcd}[row sep=tiny]
& {[\Loop^+\cT\backslash \rmS_\lambda]_\et} \times [\Loop^+\cB\backslash \rmS_\mu]_{\et} \\
{[\Loop^+\cT\backslash \rmS_\lambda \wttimes \rmS_\mu]_{\et}} \ar[ur, "p_{\cB}\;\;"] \ar[dr, "\tilde q_\cB"'] &&
{[\Loop^+\cT\backslash \rmS_\lambda]_{\et}} \times \rmS_\mu \ar[ul, "\;\;\;\id \times h_{\cB}"'] \ar[dl, "q_\cB \times q_\cB"] \\
& { {[\Loop^+\cT\backslash\mathrm{pt}]_{\et}}}
\end{tikzcd}
\end{equation*}
where $\mathrm{pt}$ is the point given by $(t^\lambda,t^\mu)$.
By proper base change using the cartesian square~\eqref{eqn:ppr-cartesian}, we have $\pr_{1,\cB!}(p_\cB^*(\scF \lboxtimes_\Lambda \scG)) \cong \scF \lotimes_\Lambda R\Gammac(\Gr_\cB, h_\cB^*\scG)$, and hence a natural isomorphism
\[
\tilde q_{\cB!} p_{\cB}^*(\scF \lboxtimes_\Lambda \scG)  \cong  (q_{\cB!}\scF) \lboxtimes_\Lambda (q_{\cB!}h_{\cB}^* \scG),
\]
as desired.
\end{proof}

\subsection{Constant term functors and external product}

From Lemma~\ref{lem:GrB-conv} we will deduce the following compatibility statement between the constant term functors and the external product.

\begin{cor}
\label{cor:CT-monoidal-pull}
For $\scF, \scG \in \Dbc(\Hk_\cG,\Lambda)$ there is a natural isomorphism
\begin{equation}\label{eqn:CT-monoidal1}
\widetilde{\CT}_{\cB,\cG} \bigl( p_\cG^*(\scF \lboxtimes_\Lambda \scG) \bigr) \cong p_\cT^* \bigl( \CT_{\cB,\cG}(\scF) \lboxtimes_\Lambda \CT_{\cB,\cG}(\scG) \bigr).
\end{equation}
For $\scF, \scG \in \Perv(\Hk_\cG, \Lambda)$, this induces a natural isomorphism
\begin{equation}\label{eqn:CT-monoidal2}
\widetilde{\CT}_{\cB,\cG} \bigl( p_\cG^*\pH^0(\scF \lboxtimes_\Lambda \scG) \bigr) \cong p_\cT^* \pH^0 \bigl( \CT_{\cB,\cG}(\scF) \lboxtimes_\Lambda \CT_{\cB,\cG}(\scG) \bigr).
\end{equation}
\end{cor}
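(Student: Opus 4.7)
The second isomorphism~\eqref{eqn:CT-monoidal2} will follow from the first~\eqref{eqn:CT-monoidal1} by applying $\pH^0$ and invoking the t-exactness of $\CT_{\cB,\cG}$ (Proposition~\ref{prop:exactness-CT}) together with that of $p_\cG^*$ and $p_\cT^*$ in the perverse conventions for stacks (cf.~the proof of Corollary~\ref{cor:convolution-exact}). Hence the main task is to establish~\eqref{eqn:CT-monoidal1}.

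The plan for~\eqref{eqn:CT-monoidal1} is to chain together several pullback and base-change identifications and then apply Lemma~\ref{lem:GrB-conv}. First, the composition $p_\cG \circ \tilde h_{\cT,\cG} \colon [\Loop^+\cT \backslash \Conv_\cG]_\et \to \Hk_\cG \times \Hk_\cG$ factors through the $\Conv$-level morphism $p_\cG \colon [\Loop^+\cT \backslash \Conv_\cG]_\et \to [\Loop^+\cT \backslash \Gr_\cG]_\et \times \Hk_\cG$ (coming from~\eqref{eqn:ppr-cartesian} after quotienting by $\Loop^+\cT$ on the first factor) followed by $h_{\cT,\cG} \times \id$. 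Hence $\tilde h_{\cT,\cG}^* p_\cG^*(\scF \lboxtimes_\Lambda \scG) \cong p_\cG^*(h_{\cT,\cG}^*\scF \lboxtimes_\Lambda \scG)$. Similarly, $p_\cG \circ \tilde\imath_\cB$ factors through the analogous morphism $p_\cB$ for the Borel followed by the product of $i_\cB$ and $\iota_\cB \colon \Hk_\cB \to \Hk_\cG$, yielding
$$\tilde\imath_\cB^* p_\cG^*(h_{\cT,\cG}^*\scF \lboxtimes_\Lambda \scG) \cong p_\cB^*(i_\cB^* h_{\cT,\cG}^*\scF \lboxtimes_\Lambda \iota_\cB^*\scG).$$

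Now Lemma~\ref{lem:GrB-conv}, applied with $i_\cB^* h_{\cT,\cG}^*\scF$ and $\iota_\cB^*\scG$ in the roles of $\scF$ and $\scG$ respectively, gives
$$\tilde q_{\cB!} p_\cB^*(i_\cB^* h_{\cT,\cG}^*\scF \lboxtimes_\Lambda \iota_\cB^*\scG) \cong (q_{\cB!} i_\cB^* h_{\cT,\cG}^*\scF) \lboxtimes_\Lambda (q_{\cB!} h_\cB^* \iota_\cB^*\scG).$$
The first factor equals $\CT_{\cB,\cG}(\scF)[-\corr_{T,G}]$ by definition. For the second, the compositions $\Gr_\cB \xrightarrow{h_\cB} \Hk_\cB \xrightarrow{\iota_\cB} \Hk_\cG$ and $\Gr_\cB \to [\Loop^+\cT \backslash \Gr_\cB]_\et \xrightarrow{i_\cB} [\Loop^+\cT \backslash \Gr_\cG]_\et \xrightarrow{h_{\cT,\cG}} \Hk_\cG$ agree (both equal the canonical morphism induced by $\cB \hookrightarrow \cG$ and the various quotient maps), so $h_\cB^* \iota_\cB^*\scG$ is canonically the pullback of $i_\cB^* h_{\cT,\cG}^*\scG$ to $\Gr_\cB$; base change along the cartesian square
\[
\begin{tikzcd}
\Gr_\cB \ar[r] \ar[d] & \Gr_\cT \ar[d, "h_\cT"] \\
{[\Loop^+\cT \backslash \Gr_\cB]_\et} \ar[r, "q_\cB"] & \Hk_\cT
\end{tikzcd}
\]
(whose vertical maps are both $\Loop^+\cT$-quotients) then gives $q_{\cB!} h_\cB^* \iota_\cB^*\scG \cong h_\cT^* \CT_{\cB,\cG}(\scG)[-\corr_{T,G}]$. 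Under the identification $\HkConv_\cT \cong \Hk_\cT \times \Gr_\cT$ from~\eqref{eqn:HkConv-ab-commute}, one has $p_\cT = (\id, h_\cT)$, so the right-hand side above equals $p_\cT^*(\CT_{\cB,\cG}(\scF) \lboxtimes_\Lambda \CT_{\cB,\cG}(\scG))[-2\corr_{T,G}]$. Finally, under the $\wttimes$-labeling of $\Conv_\cT$ (which matches the $\Hk_\cT \times \Gr_\cT$ labeling of $\HkConv_\cT$), the equality $\widetilde{\corr}_{T,G}(\lambda, \mu) = \langle \lambda+\mu, 2\rho \rangle = \corr_{T,G}(\lambda) + \corr_{T,G}(\mu)$ shows that the shift $[\widetilde{\corr}_{T,G}]$ built into $\widetilde{\CT}_{\cB,\cG}$ exactly cancels the $[-2\corr_{T,G}]$, producing~\eqref{eqn:CT-monoidal1}.

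The main technical obstacle will be verifying the cartesian square above and confirming the agreement of the two pullbacks of $\scG$ from $\Hk_\cG$ to $\Gr_\cB$; both reduce to diagram chases with the various quotient stacks by subgroups of $\Loop^+\cG$. The bookkeeping is intricate but conceptually unsurprising.
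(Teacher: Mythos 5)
Your proof follows the paper's own argument essentially verbatim: the same factorizations of $p_\cG \circ \tilde h_{\cT,\cG}$ and $p_\cG \circ \tilde\imath_\cB$ through the commutative diagram of quotient stacks, the same application of Lemma~\ref{lem:GrB-conv}, the identification $q_{\cB!}h_\cB^* w^*\scG \cong h_\cT^*\CT_{\cB,\cG}(\scG)$, the observation that $p_\cT$ corresponds to $\id \times h_\cT$ under~\eqref{eqn:HkConv-ab}, and the deduction of~\eqref{eqn:CT-monoidal2} by applying $\pH^0$ and t-exactness. The only difference is that you make explicit the shift bookkeeping (that $\widetilde{\corr}_{T,G}$ equals the sum of the two copies of $\corr_{T,G}$ on matching components) and the cartesian-square base change justifying $q_{\cB!}h_\cB^* w^*\scG \cong h_\cT^*\CT_{\cB,\cG}(\scG)$, both of which the paper leaves implicit and both of which check out.
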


\begin{proof}
Let $w \colon \Hk_\cB\to\Hk_\cG$ be the map induced by the inclusion $\cB\subset\cG$.
Our arguments will exploit the following diagram, in which the left and middle squares commute:
\[
\hbox{\tiny$\begin{tikzcd}
\Hk_\cG \times \Hk_\cG\; & \;{[\Loop^+\cT\backslash \Gr_\cG]_{\et}}\times\Hk_\cG \ar[l,"h_{\cT,\cG}\times\id"'] &  {[\Loop^+\cT\backslash \Gr_\cB]_{\et}}\times\Hk_\cB \ar[l,"i_\cB\times w"'] & {[\Loop^+\cT\backslash\Gr_\cB]_{\et}} \times \Gr_\cB \ar[l, "\id \times h_\cB"'] \ar[d, "q_\cB\times q_\cB"]\\
\HkConv_\cG \ar[u, "p_\cG"'] & {[\Loop^+\cT \backslash \Conv_\cG]}_{\et} \ar[l, "\tilde h_{\cT,\cG}"']\ar[u,"p_\cB"'] & {[\Loop^+\cT \backslash \Conv_\cB]}_{\et} \ar[l, "\tilde\imath_\cB"'] \ar[u,"p_\cB"']\ar[r, "\tilde q_\cB"] & \HkConv_\cT \overset{\eqref{eqn:HkConv-ab}}{=} \Hk_\cT \times \Gr_\cT. 
\end{tikzcd}$}
\]
Using the commutativity and Lemma~\ref{lem:GrB-conv} we obtain isomorphisms 
\begin{multline*}
\widetilde{\CT}_{\cB,\cG}(p_\cG^*(\scF \lboxtimes_\Lambda \scG))
:= \tilde q_{\cB!}\tilde\imath_\cB^* \tilde h_{\cT,\cG}^*p_\cG^*(\scF \lboxtimes_\Lambda \scG) \\
\cong \tilde q_{\cB!}p_\cB^* ((i_\cB^*h_{\cT,\cG}^*\scF) \lboxtimes_\Lambda w^*\scG)
\cong (q_{\cB!}i_\cB^*h_{\cT,\cG}^*\scF) \lboxtimes_\Lambda (q_{\cB!}h_\cB^*w^*\scG).
\end{multline*}
Next we observe that
\[
q_{\cB!}i_\cB^*h_{\cT,\cG}^*\scF =: \CT_{\cB,\cG}(\scF) \quad \text{and} \quad
q_{\cB!} h_{\cB}^* w^* \scG \cong h_{\cT}^* \CT_{\cB,\cG}(\scG).
\]
Finally, under the isomorphism $\HkConv_\cT \overset{\eqref{eqn:HkConv-ab}}{=} \Hk_\cT \times \Gr_\cT$ the map $p_\cT$ corresponds to $\id\times h_\cT$, which allows us to convert the isomorphisms above into~\eqref{eqn:CT-monoidal1} (and thereby to finish the proof).

In view of Proposition~\ref{prop:exactness-CT}, we obtain~\eqref{eqn:CT-monoidal2} by applying $\pH^0$ to~\eqref{eqn:CT-monoidal1}.
\end{proof}

\subsection{End of the construction}

We are now ready to exhibit a monoidal structure on $\CT_{\cB,\cG}$.  As explained in~\S\ref{ss:monoidality-statement}, from there one obtains a monoidal structure~\eqref{eqn:monoidality-defn} on the functor $\sF_\cG$.

\begin{prop}\label{prop:CT-monoidal}
For $\scF, \scG \in \Perv(\Hk_\cG,\Lambda)$, there is a natural isomorphism
\begin{equation}\label{eqn:CT-mon1}
\CT_{\cB,\cG}(\scF \star^0 \scG) \cong \CT_{\cB,\cG}(\scF) \star^0 \CT_{\cB,\cG}(\scG).
\end{equation}
that makes $\CT_{\cB,\cG}$ into a monoidal functor.  
\end{prop}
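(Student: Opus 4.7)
The plan is to obtain~\eqref{eqn:CT-mon1} by chaining Proposition~\ref{prop:CT-compare} with Corollary~\ref{cor:CT-monoidal-pull}, and then to identify the result with $\CT_{\cB,\cG}(\scF)\star^0\CT_{\cB,\cG}(\scG)$ using the essentially trivial nature of convolution on $\Hk_\cT$. Explicitly, for $\scF,\scG\in\Perv(\Hk_\cG,\Lambda)$ I would write
\begin{align*}
\CT_{\cB,\cG}(\scF \star^0 \scG)
 &\overset{\text{Prop.~\ref{prop:CT-compare}}}{\cong} m_{\cT!}\widetilde{\CT}_{\cB,\cG}\bigl(p_\cG^*\pH^0(\scF \lboxtimes_\Lambda \scG)\bigr) \\
 &\overset{\text{Cor.~\ref{cor:CT-monoidal-pull}}}{\cong} m_{\cT!} p_\cT^*\pH^0\bigl(\CT_{\cB,\cG}(\scF) \lboxtimes_\Lambda \CT_{\cB,\cG}(\scG)\bigr),
\end{align*}
and the right-hand side is $\pH^0(m_{\cT!}p_\cT^*(\CT_{\cB,\cG}(\scF)\lboxtimes_\Lambda\CT_{\cB,\cG}(\scG))) = \CT_{\cB,\cG}(\scF)\star^0\CT_{\cB,\cG}(\scG)$ once one knows that $m_{\cT!}p_\cT^*$ is t-exact on external products of perverse sheaves.

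The t-exactness step is straightforward: combining the identifications~\eqref{eqn:Conv-ab}--\eqref{eqn:HkConv-ab} with the diagram~\eqref{eqn:HkConv-ab-commute}, the bifunctor $m_{\cT!}p_\cT^*(-\lboxtimes_\Lambda-)$ on $\Perv(\Hk_\cT,\Lambda)\times\Perv(\Hk_\cT,\Lambda)$ is (up to an obvious reindexing by $\bbX_*(T)_I\times\bbX_*(T)_I$) nothing but the external tensor product transported across the $\Loop^+\cT$-bundle $h_\cT$; since $\Gr_\cT$ is discrete (see~\eqref{eqn:GrT-X}), this is manifestly exact. This is essentially the content of Lemma~\ref{lem:F-monoidal-torus}, now in stacky form.

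It remains to check the unit and associativity axioms. The unit is easy: $\CT_{\cB,\cG}$ sends the monoidal unit $\underline{\Lambda}_{[\Loop^+\cG\backslash\mathrm{pt}]_\et}$ of $\Perv(\Hk_\cG,\Lambda)$ to the unit of $\Perv(\Hk_\cT,\Lambda)$, and one verifies that the unitors on both sides are intertwined by tracing through the two displayed isomorphisms above. For associativity, one must check that the pentagon built out of~\eqref{eqn:CT-mon1} and the associators of $\star^0$ commutes. The cleanest way to do this is to observe that Proposition~\ref{prop:CT-compare} and Corollary~\ref{cor:CT-monoidal-pull} both admit evident three-factor generalizations, obtained by replacing $\HkConv_\cG$ with the threefold convolution stack parametrizing chains $\cE_0\dashrightarrow\cE_1\dashrightarrow\cE_2\dashrightarrow\cE_3$; both ways of associating $\scF\star^0\scG\star^0\scH$ then map, via iterated applications of our comparison, into $m_{\cT!}$ of the threefold analogue of $p_\cT^*\pH^0(\CT_{\cB,\cG}(\scF)\lboxtimes_\Lambda\CT_{\cB,\cG}(\scG)\lboxtimes_\Lambda\CT_{\cB,\cG}(\scH))$, and the pentagon follows from functoriality of base change in iterated cartesian squares like~\eqref{eqn:ppr-cartesian}.

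The main obstacle is precisely this last associativity verification. In the classical Mirković--Vilonen setting, associativity is essentially automatic from the commutativity of products of curves in the Beĭlinson--Drinfeld Grassmannian; here, lacking that tool, one has to unwind the construction by hand, which produces a fairly large but purely formal diagram of base change and proper pushforward compatibilities. I expect no conceptual difficulty, but a careful bookkeeping of the natural transformations introduced in the proofs of Proposition~\ref{prop:CT-compare} (via stalkwise computations and Corollary~\ref{cor:CT-conv-split}) and Corollary~\ref{cor:CT-monoidal-pull} (via Lemma~\ref{lem:GrB-conv}) is needed.
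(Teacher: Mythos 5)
Your proposal is correct and follows essentially the same route as the paper: the isomorphism~\eqref{eqn:CT-mon1} is obtained by chaining Proposition~\ref{prop:CT-compare} with Corollary~\ref{cor:CT-monoidal-pull} and then identifying $m_{\cT!}p_\cT^*\pH^0({-}\lboxtimes_\Lambda{-})$ with $\star^0$ on $\Hk_\cT$ via the discreteness of $\Gr_\cT$ (the content of Lemma~\ref{lem:F-monoidal-torus}), which is exactly how the paper assembles its commutative diagram. The paper leaves the unit and associativity constraints to the reader, so your sketch of that verification is a harmless elaboration rather than a divergence.
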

\begin{proof}
By Proposition~\ref{prop:CT-compare} and Corollary~\ref{cor:CT-monoidal-pull}, the following diagram commutes up to natural isomorphism:
\[
\begin{tikzcd}[column sep=4.2em]
\Perv(\Hk_\cG,\Lambda) \times \Perv(\Hk_\cG, \Lambda) \ar[r, "\CT_{\cB,\cG} \times \CT_{\cB,\cG}" ] \ar[d, "p_\cG^*\pH^0({-} \lboxtimes_\Lambda {-})"]  \ar[dd, bend right=78, "({-})\star^0 ({-})"'] \ar[dr, phantom, "\text{Cor.~\ref{cor:CT-monoidal-pull}}" description] &
  \Perv(\Hk_\cT,\Lambda) \times \Perv(\Hk_\cT, \Lambda) \ar[d, "p_\cT^*\pH^0({-} \lboxtimes_\Lambda {-})"'] \ar[dd, bend left=78, "({-})\star^0 ({-})"] \\
\Perv(\HkConv_\cG, \Lambda)_\sph \ar[r, "\widetilde{\CT}_{\cB,\cG}" description] \ar[dr, phantom, "\text{Prop.~\ref{prop:CT-compare}}" description] &
  \Perv(\HkConv_\cT, \Lambda)_\sph \ar[d, "m_{\cT!}" ] \\
\Perv(\Hk_\cG,\Lambda) \ar[r, "\CT_{\cB,\cG}"'] & \Perv(\Hk_\cT,\Lambda)
\end{tikzcd}
\]
(Note that Proposition~\ref{prop:CT-compare} asserts the commutativity of a pentagon in this diagram, and \emph{not} of the square that would be obtained by including the arrow $m_{\cG!}\colon \Perv(\HkConv_\cG,\Lambda)_\sph \to \Perv(\Hk_\cG,\Lambda)$.  For this reason, $m_{\cG!}$ is omitted from the picture.)   Considering the outer square of this diagram, we obtain the isomor\-phism~\eqref{eqn:CT-mon1}.  We leave it to the reader to check that~\eqref{eqn:CT-mon1} is compatible with the associativity and identity constraints.
\end{proof}

\begin{rmk}
\label{rmk:F-monoidal-summary}
By construction, the monoidal structure on $\sF_\cG$
is characterized by the fact that it makes the following diagram commute:
\[
\begin{tikzcd}[column sep=large]
\sF_\cT(m_{\cT!}\pH^0(\widetilde{\CT}_{\cB,\cG}(p_\cG^*(\scF \lboxtimes_\Lambda \scG)))) \ar[dd, "\text{Prop.~\ref{prop:CT-fiber-conv}}"', "\wr"] \ar[r, "\text{Cor.~\ref{cor:CT-monoidal-pull}}", "\sim"'] & \sF_\cT(\CT_{\cB,\cG}(\scF) \star^0 \CT_{\cB,\cG}(\scG)) \ar[d, "\text{Lem.~\ref{lem:F-monoidal-torus}}", "\wr"'] \\
& \sF_\cT(\CT_{\cB,\cG}(\scF)) \otimes_\Lambda \sF_\cT(\CT_{\cB,\cG}(\scG)) \ar[d, "\text{Prop.~\ref{prop:CT-fiber}}", "\wr"'] \\
\sF_\cG(\scF \star^0 \scG) 
\ar[r, "\sim"', "\eqref{eqn:monoidality-defn}"] &
\sF_\cG(\scF) \otimes_\Lambda \sF_\cG(\scG).
\end{tikzcd}
\]
\end{rmk}

\begin{rmk}\label{monoidality-remark}
There is another construction of a monoidal structure on the constant term functor which does not use the Be{\u\i}linson--Drinfeld Grassmannian in~\cite[Proof of Proposition~4.4]{yu-integral}. (The geometric setting considered in that reference is different from ours, but shares similar formal properties.)
Unfortunately, this proof is based on the false claim that the filtrations on the total cohomology arising from the semi-infinite orbits and their opposites are complementary to each other.\footnote{This problem was pointed out to the fourth named author by S.~Lysenko several years ago. It was also discussed during Scholze's geometrization lectures in the winter term 2020/21 where the third named author was present.}
This seems to be a common misconception in the literature and appears in several places including \cite[Proof of Theorem~3.6]{mv}, \cite[Proof of Theorem~5.3.9(3)]{zhu-notes} and \cite[Proof of Theorem~3.16]{HainesRicharz_TestFunctions}.
The proof of Proposition~\ref{prop:CT-monoidal} replaces the false argument with the direct computation in Corollary~\ref{cor:CT-conv-split}.
\end{rmk}

\section{A bialgebra governing perverse sheaves}
\label{sec:construction}

\subsection{Statement}

The main result of this section is
the following Theorem~\ref{thm:coalgebra}, whose proof will be finished in~\S\ref{ss:alg-structure}. Given a $\Lambda$-coalgebra $C$, we denote by $\comod_C$ its category of right comodules which are finitely generated over $\Lambda$. If $C$ is a bialgebra, then the tensor product $\otimes_\Lambda$ equips this category with a monoidal structure.

\begin{thm}
	\label{thm:coalgebra}
	For $\Lambda \in \{\bK,\bO,\bk\}$
	there exists a canonical $\Lambda$-bialgebra $\rmB_{\cG}(\Lambda)$ and an equivalence of monoidal categories
	\[
	\mathsf{S}_\cG \colon \bigl( \Perv(\Hk_{\cG},\Lambda), \star^0 \bigr) \simto \bigl( \comod_{\rmB_{\cG}(\Lambda)}, \otimes_\Lambda \bigr).
	\]
	Moreover, $\rmB_{\cG}(\bO)$ is flat over $\bO$ and 
	there exist canonical isomorphisms of $\bk$- and $\bK$-bialgebras
	\begin{equation}
		\label{eqn:BG-ext-scalars}
		\bk \otimes_\bO \rmB_{\cG}(\bO) \simto \rmB_{\cG}(\bk), \quad
		\bK \otimes_\bO \rmB_{\cG}(\bO) \simto \rmB_{\cG}(\bK)
	\end{equation}
	respectively, compatible with the change-of-scalars functors 
	\begin{gather*}
		\pH^0 (\bk \lotimes_\bO (-)) \colon \Perv(\Hk_{\cG},\bO) \to \Perv(\Hk_{\cG},\bk), \\
		\bK \otimes_{\bO} (-) \colon \Perv(\Hk_{\cG},\bO) \to \Perv(\Hk_{\cG},\bK), \\
		\Perv(\Hk_{\cG},\bk) \to \Perv(\Hk_{\cG},\bO)
	\end{gather*}
	in the natural way.
\end{thm}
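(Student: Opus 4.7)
The strategy is a Tannakian-style reconstruction applied to the fiber functor $\sF_\cG$. First I would confirm the properties of $\sF_\cG$: it is $\Lambda$-linear and exact (by Proposition~\ref{prop:CT-fiber} with $P = B$, since $\sF_\cT$ is exact on a discrete ind-scheme and $\CT_{\cB,\cG}$ is exact by Proposition~\ref{prop:exactness-CT}), conservative (by the same isomorphism combined with conservativity of $\CT_{\cB,\cG}$), and monoidal (Proposition~\ref{prop:CT-monoidal} together with Lemma~\ref{lem:F-monoidal-torus}, cf.~Remark~\ref{rmk:F-monoidal-summary}). The isomorphism of Proposition~\ref{prop:CT-fiber} also provides a canonical decomposition $\sF_\cG = \bigoplus_{\mu \in \bbX_*(T)_I} \sF_\cG^\mu$ into ``weight functors'' $\sF_\cG^\mu(\scF) = \sH^{\langle \mu, 2\rho\rangle}_{\mathrm{c}}(\rmS_\mu, (h^*\scF)|_{\rmS_\mu})$, each of which is exact by Remark~\ref{rmk:exactness-CT-vanishing}.

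Next I would write $\Perv(\Hk_\cG, \Lambda) = \colim_\nu \Perv^{\leq\nu}(\Hk_\cG, \Lambda)$ as the filtered union of full subcategories of objects supported on $\Gr_\cG^{\leq \nu}$, each a noetherian and artinian $\Lambda$-linear abelian category. On each truncated piece I would show that every weight functor $\sF_\cG^\mu|_{\leq \nu}$ is pro-representable by an object $P_{\leq\nu}^\mu$, so that it equals $\Hom(P_{\leq\nu}^\mu, -)$. When $\Lambda$ is a field this follows from the standard existence of a projective generator in a finite-length category together with Yoneda. For $\Lambda = \bO$ the same construction goes through, but with the further requirement that $\sF_\cG(P_{\leq\nu}^\mu)$ be $\bO$-free; this can be arranged by a careful iterative construction of projective covers, exploiting the affine paving of the intersections $\rmS_\mu \cap \Gr_\cG^{\leq \nu}$ (Lemma~\ref{lem:dim-estimate}) and the parity vanishing of Remark~\ref{rmk:exactness-CT-vanishing}. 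Setting
\[
\rmB_\cG(\Lambda) := \bigoplus_{\mu \in \bbX_*(T)_I} \colim_\nu \sF_\cG(P_{\leq\nu}^\mu),
\]
a standard dualization (coendomorphism coalgebra) gives $\rmB_\cG(\Lambda)$ the structure of a $\Lambda$-coalgebra corepresenting $\sF_\cG$ and enhances $\sF_\cG$ to the desired equivalence $\mathsf{S}_\cG \colon \Perv(\Hk_\cG, \Lambda) \simto \comod_{\rmB_\cG(\Lambda)}$ of abelian categories. The monoidal structure on $\sF_\cG$ produced in Section~\ref{sec:monoidality} induces a compatible multiplication, turning $\rmB_\cG(\Lambda)$ into a bialgebra and $\mathsf{S}_\cG$ into a monoidal equivalence.

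For flatness and the base-change isomorphisms~\eqref{eqn:BG-ext-scalars}, I would note that $\rmB_\cG(\bO)$ is by construction a filtered colimit of finite free $\bO$-modules, and hence flat. The canonical isomorphism $\bk \otimes_\bO \sF_\cG(\scF) \simto \sF_\cG(\pH^0(\bk \lotimes_\bO \scF))$ (valid since the analogous statement for $\sF_\cT$ is trivial, and transported via Proposition~\ref{prop:CT-fiber}), together with its $\bK$-counterpart, shows that the functors~\eqref{eqn:functors-change-scalars-1} and~\eqref{eqn:functors-change-scalars-2} send a pro-representing system for $\sF_\cG^\mu$ over $\bO$ to such a system for $\sF_\cG^\mu$ over $\bk$ and $\bK$. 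By universality this induces canonical bialgebra maps $\bk \otimes_\bO \rmB_\cG(\bO) \to \rmB_\cG(\bk)$ and $\bK \otimes_\bO \rmB_\cG(\bO) \to \rmB_\cG(\bK)$, which are isomorphisms since the pro-representing systems on both sides are identified.

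The main obstacle is the pro-representability of $\sF_\cG^\mu|_{\leq\nu}$ over $\Lambda = \bO$ by objects with $\bO$-flat total cohomology, since an arbitrary perverse $\bO$-sheaf may have torsion in its cohomology. Overcoming this requires a careful geometric construction of projective/generating objects (analogous to the Mirković--Vilonen cycles or standard filtrations in the split case~\cite{mv}), leveraging the parity and affine-paving properties of the semi-infinite intersections established in Section~\ref{sec:semi-infinite}. Once this is in place, everything else follows by formal Tannakian manipulations.
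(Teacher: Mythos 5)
Your overall strategy---decompose $\sF_\cG$ into weight functors via $\CT_{\cB,\cG}$, represent them by projective objects on subcategories of perverse sheaves with bounded support, dualize to obtain a coalgebra, pass to the colimit, and let the monoidal structure of Section~\ref{sec:monoidality} induce the multiplication---is exactly the route the paper takes, following \cite{mv} and \cite{br}. The genuine gap is at the step you yourself flag as the main obstacle, and your proposed fixes do not close it. Over a field, the existence of a projective generator of $\Perv([\Loop^+\cG\backslash Z]_\et,\Lambda)$ is \emph{not} a ``standard'' consequence of the category being of finite length: finite-length, Hom-finite abelian categories may have no nonzero projective objects at all, so representability of the weight functors has to be proved, not quoted (alternatively, over a field one could bypass projectives via the coend reconstruction for an exact faithful functor, but that is not what you do, and it gives no control over $\bO$). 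Over $\bO$ the situation is worse: the truncated categories are noetherian but not artinian (your claim that they are artinian already fails for $\modf_\bO$), so ``an iterative construction of projective covers'' is not available, and arranging $\bO$-freeness of $\sF_\cG$ on the resulting objects is precisely the point at issue rather than something one can impose afterwards. Note also that your formula $\rmB_\cG(\Lambda)=\bigoplus_\mu\colim_\nu\sF_\cG(P^\mu_{\leq\nu})$ is off by a duality: the coalgebra is the $\Lambda$-linear dual of $\sF_\cG$ of the projective generator (equivalently of its endomorphism algebra), with the colimit taken along the duals of the restriction surjections.

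The paper resolves the crucial step by an explicit geometric construction: for $Z$ a finite union of $\Loop^+\cG$-orbits and $\nu$ with $\rmT_\nu\cap Z\neq\varnothing$, it sets
\[
\scP_Z(\nu,\Lambda) := \pH^0\bigl((a_{Z,\nu})_!\,(p_{Z,\nu})^!\,(i_{Z,\nu})_!\,\underline{\Lambda}_{\rmT_\nu\cap Z}[-\langle\nu,2\rho\rangle]\bigr),
\]
where $a_{Z,\nu}$ and $p_{Z,\nu}$ are the action and projection maps of $\Loop^+_n\cG\times Z\to Z$, and checks (as in \cite[Prop.~1.12.1]{br}) that this is a projective object representing $\sF_{\cG,\nu}$ on the truncated category, uniformly for $\Lambda=\bK,\bO,\bk$. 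The $\bO$-freeness of $\sF_\cG(\scP_Z(\bO))$ and the compatibilities $\scP_Z(\bK)\cong\bK\otimes_\bO\scP_Z(\bO)$, $\scP_Z(\bk)\cong\bk\lotimes_\bO\scP_Z(\bO)$ are then deduced from the filtration of $\scP_Z(\Lambda)$ by the standard objects $\scJ_!(\mu,\Lambda)$ together with Lemma~\ref{lem:properties-J!*}, which gives free weight spaces with bases indexed by irreducible components of $\Gr_\cG^\mu\cap\rmS_\nu$. These facts are also what make your change-of-scalars argument for~\eqref{eqn:BG-ext-scalars} non-circular: you need to know that $\bk\lotimes_\bO\scP$ is perverse before you can say it represents the weight functor over $\bk$. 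Without this construction, or a substitute of comparable precision, the representability-with-flat-fiber step---and hence the theorem over $\bO$ and the base-change isomorphisms---remains unproved.
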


The proof of Theorem~\ref{thm:coalgebra} is very similar to that of the corresponding claim in the context of the ``ordinary'' geometric Satake equivalence; see~\cite[Section~11]{mv} and~\cite[\S 1.13.1]{br}. We will not repeat the proofs that can be copied from these references.

\subsection{Weight functors}

Recall the functor
\[
\sF_\cG \colon \Perv(\Hk_\cG,\Lambda) \to \modf_\Lambda
\]
considered in~\S\ref{ss:total-cohomology}. We also have a similar functor
\[
\sF_\cT \colon \Perv(\Hk_\cT,\Lambda) \to \modf_\Lambda
\]
for the group $\cT$. Since $\Gr_\cT$ is discrete with underlying set $\bbX_*(T)_I$ (see~\S\ref{ss:semi-inf-orbits}), we have a canonical identification
\begin{equation}
	\label{eqn:Perv-Hk-T}
	\Perv(\Hk_\cT,\Lambda) = \modf_\Lambda^{\bbX_*(T)_I}
\end{equation}
where the right-hand side denotes the category of finitely generated $\bbX_*(T)_I$-graded $\Lambda$-modules. Via this identification, the functor $\sF_\cT$ sends an $\bbX_*(T)_I$-graded $\Lambda$-modules to the underlying $\Lambda$-module.

Using~\eqref{eqn:Perv-Hk-T}, the functor $\CT_{\cB,\cG}$ (see~\S\ref{ss:CT-functors}) can be seen as a functor
\[
\Perv(\Hk_\cG,\Lambda) \to \modf_\Lambda^{\bbX_*(T)_I}.
\]
If we denote, for any $\lambda \in \bbX_*(T)_I$, by $\sF_{\cG,\lambda}$ the composition of this functor with the functor $\modf_\Lambda^{\bbX_*(T)_I} \to \modf_\Lambda$ sending an $\bbX_*(T)_I$-graded $\Lambda$-module to its $\lambda$-component, then by Proposition~\ref{prop:CT-fiber} (for $\cP = \cB$) we have a canonical isomorphism of functors
\begin{equation}
	\label{eqn:FG-weight-decomposition}
	\sF_\cG \cong \bigoplus_{\lambda \in \bbX_*(T)_I} \sF_{\cG,\lambda}.
\end{equation}
The functor $\sF_{\cG,\lambda}$ is called the \emph{weight functor} associated with $\lambda$. Explicitly, for any $\lambda \in \bbX_*(T)_I$ and $\scF \in \Perv(\Hk_\cG,\Lambda)$ we have
\[
\sF_{\cG,\lambda}(\scF) = \mathsf{H}_c^{\langle \lambda, 2\rho \rangle}(\rmS_\lambda, (h^* \scF)_{|\rmS_\lambda}) \cong \mathsf{H}^{\langle \lambda, 2\rho \rangle}_{\rmT_\lambda}(h^* \scF),
\]
where 
the isomorphism is provided by Proposition~\ref{prop:Braden} (again, for $\cP = \cB$).

\subsection{Preliminaries on standard and costandard perverse sheaves}
\label{ss:preliminaries-standard-costandard}

Recall,
for $\mu \in \bbX_*(T)_I^+$, the objects
$\scJ_!(\mu,\Lambda)$ and
$\scJ_*(\mu,\Lambda)$
considered in~\S\ref{ss:Perv-Gr-Hk}.

\begin{lem}
	\label{lem:semisimplicity}
	In case $\Lambda=\bK$, 
	the category $\Perv(\Hk_\cG,\Lambda)$ is semisimple. In particular, the natural morphism $\scJ_!(\mu,\Lambda) \to \scJ_*(\mu,\Lambda)$ is an isomorphism.
\end{lem}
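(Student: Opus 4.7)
The plan is to establish semisimplicity of $\Perv(\Hk_\cG,\bK)$; once this is in hand, the isomorphism $\scJ_!(\mu,\bK) \to \scJ_*(\mu,\bK)$ follows formally. Since each Schubert variety $\Gr_\cG^{\leq \mu}$ is finite-dimensional, every object of $\Perv(\Hk_\cG,\bK)$ has finite length with simple constituents among $\{\scJ_{!*}(\nu,\bK) : \nu \in \bbX_*(T)_I^+\}$. Therefore semisimplicity reduces to showing
\[
\Ext^1_{\Perv(\Hk_\cG,\bK)}(\scJ_{!*}(\lambda,\bK), \scJ_{!*}(\mu,\bK)) = 0
\]
for all $\lambda, \mu \in \bbX_*(T)_I^+$.

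The approach is parity (equivalently pointwise-purity) vanishing. Recall from~\S\ref{ss:Iwahori-Weyl-Schubert} that $\Gr_\cG^{\leq \mu}$ admits a paving by affine cells of dimensions $\langle \nu, 2\rho\rangle$, whose parities are constant on each connected component of $\Gr_\cG$. For $\mu \in \bbX_*(T)_I^+$ I would invoke a Demazure-type resolution $\pi \colon \tilde X_\mu \to \Gr_\cG^{\leq \mu}$, in the style of~\cite[Proposition~8.7]{pr} (the reference already used in~\S\ref{ss:Iwahori-Weyl-Schubert} for the paving claim): $\tilde X_\mu$ is smooth projective, paved by affines of the correct parity, and $\pi$ is proper, $\Loop^+\cG$-equivariant, and birational onto its image. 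Applying the BBD decomposition theorem (valid for $\bK$-coefficients after the standard descent to a model over a finite subfield of $\F$) to $\pi_* \underline{\bK}_{\tilde X_\mu}[\dim \tilde X_\mu]$ decomposes it as a direct sum of shifts of IC sheaves, and more importantly implies that the stalks and costalks of each $\scJ_{!*}(\mu,\bK)$ on the Schubert strata are pointwise pure, and thus concentrated in degrees of a single fixed parity.

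From pointwise purity, the desired $\Ext^1$-vanishing follows by the standard spectral sequence in recollement along the Schubert stratification: any candidate Ext$^1$-class produces an odd-degree section of a $\mathscr{H}\!om$-sheaf on a stratum, which must vanish by the parity constraint. This yields semisimplicity.

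Finally, given semisimplicity, $\scJ_!(\mu,\bK)$ decomposes as a direct sum of simple objects $\scJ_{!*}(\nu,\bK)$. Restricting along $j^\mu$ gives $\underline{\bK}_{\Gr_\cG^\mu}[\langle \mu,2\rho\rangle]$, while $(j^\mu)^* \scJ_{!*}(\nu,\bK) = 0$ for $\nu \neq \mu$; hence the decomposition contains exactly one copy of $\scJ_{!*}(\mu,\bK)$ and no other summands, so $\scJ_!(\mu,\bK) = \scJ_{!*}(\mu,\bK)$. Symmetrically $\scJ_*(\mu,\bK) = \scJ_{!*}(\mu,\bK)$, and the canonical morphism between them becomes the identity. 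The main obstacle is the parity-vanishing step, which requires constructing a Demazure resolution carefully in the ramified setting and running the BBD descent argument with due attention to the non-split parahoric group scheme $\cG$.
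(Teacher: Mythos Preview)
Your proposal is correct and follows essentially the same approach as the paper. Both arguments rest on two ingredients: the parity of $\dim \Gr_\cG^\lambda = \langle \lambda, 2\rho\rangle$ is constant on each connected component of $\Gr_\cG$, and the simple perverse sheaves $\scJ_{!*}(\mu,\bK)$ are parity complexes in the sense of Juteau--Mautner--Williamson (equivalently, their stalks and costalks are concentrated in degrees of a fixed parity). The paper simply asserts the latter and cites~\cite{jmw} together with the analogous unramified argument in~\cite[\S 1.4]{br}, whereas you spell out how to obtain it via Demazure-type resolutions and the decomposition theorem; these are exactly the standard inputs behind the JMW parity statement, so the routes coincide.

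One minor point: your final paragraph deducing $\scJ_!(\mu,\bK)\cong\scJ_{!*}(\mu,\bK)$ is slightly loose as stated, since $(j^\mu)^*\scJ_{!*}(\nu,\bK)$ need not vanish for all $\nu\neq\mu$---only for those $\nu$ with $\Gr_\cG^\mu\not\subset\overline{\Gr_\cG^\nu}$. The argument is easily repaired by first noting that $\scJ_!(\mu,\bK)$ is supported on $\Gr_\cG^{\leq\mu}$, so any simple summand is $\scJ_{!*}(\nu,\bK)$ with $\nu\leq\mu$, and for such $\nu\neq\mu$ the restriction to $\Gr_\cG^\mu$ does vanish.
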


\begin{proof}
	Like in the setting of the ordinary geometric Satake equivalence (see~\cite[\S 1.4]{br}),
	the claim follows from the fact that the parity of the dimension of Schubert varieties is constant on each connected component of $\Gr_\cG$, see~\eqref{eqn:conn-comp-Gr} and~\eqref{eqn:dim-orbits}, and that the simple objects in $\Perv(\Hk_\cG,\Lambda)$ are parity complexes in the sense of~\cite{jmw}.
\end{proof}

\begin{lem}
	\phantomsection
	\label{lem:properties-J!*}
	\begin{enumerate}
		\item
		\label{it:properties-J!*-1}
		For $\mu \in \bbX_*(T)_I^+$ and $\nu \in \bbX_*(T)_I$, the $\Lambda$-module
		\[
		\sF_{\cG,\nu}(\scJ_!(\mu,\Lambda)), \quad \text{resp.} \quad \sF_{\cG,\nu}(\scJ_*(\mu,\Lambda)),
		\]
		is free with a canonical basis parametrized by the irreducible components of $\Gr_{\cG}^\mu \cap \rmS_\nu$, resp.~$\Gr_\cG^\mu \cap \rmT_\nu$.
		\item
		\label{it:properties-J!*-2}
		For any $\mu \in \bbX_*(T)_I^+$ 
		there exist canonical isomorphisms
		\begin{gather*}
			\scJ_!(\mu,\bK) \cong \bK \lotimes_{\bO} \scJ_!(\mu,\bO), \quad \scJ_*(\mu,\bK) \cong \bK \lotimes_{\bO} \scJ_*(\mu,\bO), \\
			\scJ_!(\mu,\bk) \cong \bk \lotimes_{\bO} \scJ_!(\mu,\bO), \quad \scJ_*(\mu,\bk) \cong \bk \lotimes_{\bO} \scJ_*(\mu,\bO).
		\end{gather*}
		\item
		\label{it:properties-J!*-3}
		In case $\Lambda=\bO$, for any $\mu \in \bbX_*(T)_I^+$ the canonical morphism
		\[
		\scJ_!(\mu,\Lambda) \to \scJ_*(\mu,\Lambda)
		\]
		is injective.
	\end{enumerate}
\end{lem}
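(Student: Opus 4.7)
For part~\eqref{it:properties-J!*-1}, I will exploit the t-exactness and conservativity of the weight functor $\sF_{\cG,\nu}$ on $\Perv(\Hk_\cG, \Lambda)$ (Proposition~\ref{prop:exactness-CT}) combined with the concentration statement of Remark~\ref{rmk:exactness-CT-vanishing}. Applied to the perverse truncation triangle with apex $\scJ_!(\mu,\Lambda) = \pH^0(j^\mu_!\underline{\Lambda}[\langle\mu,2\rho\rangle])$, these tools yield
\[
\sF_{\cG,\nu}(\scJ_!(\mu,\Lambda)) \cong \sH^{\langle \nu, 2\rho\rangle}_{\mathrm{c}}\bigl(\rmS_\nu, h^*(j^\mu_!\underline{\Lambda}[\langle\mu,2\rho\rangle])\bigr) \cong \sH^{\langle\mu+\nu, 2\rho\rangle}_{\mathrm{c}}(\Gr_\cG^\mu \cap \rmS_\nu, \Lambda),
\]
where the first isomorphism uses that the fiber of the truncation lies in $\pD^{\leq -1}$ and that $\sH^j_{\mathrm{c}}(\rmS_\nu, h^*({-}))$ vanishes on $\pD^{\leq -1}$ for $j \in \{\langle \nu, 2\rho \rangle, \langle\nu,2\rho\rangle+1\}$ (derived from Remark~\ref{rmk:exactness-CT-vanishing} applied to perverse cohomologies). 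The final group is the top compactly supported cohomology of $\Gr_\cG^\mu \cap \rmS_\nu$, which is affine and equidimensional of dimension $\langle\mu+\nu,\rho\rangle$ by the methods of~\cite{aglr} underpinning Lemma~\ref{lem:dim-estimate}; it is therefore free over $\Lambda$ with canonical basis indexed by the irreducible components. The costandard case is proved analogously by replacing $\rmS_\nu$ with $\rmT_\nu$ via Proposition~\ref{prop:Braden}.

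For part~\eqref{it:properties-J!*-2}, the case of $\bK \otimes_\bO ({-})$ follows from the exactness of that functor, which commutes with $j^\mu_!$, $j^\mu_*$ and with $\pH^0$. The case of $\bk \lotimes_\bO ({-})$ is more delicate since that functor is only right t-exact. The key preliminary step is to show that $\scJ_!(\mu,\bO)$ and $\scJ_*(\mu,\bO)$ have no $\pi$-torsion: a nontrivial $\pi$-torsion subobject would, by exactness of $\sF_{\cG,\nu}$, produce a torsion submodule of the free module $\sF_{\cG,\nu}({-})$ from part~\eqref{it:properties-J!*-1}, contradicting conservativity of $\sF_\cG = \bigoplus_\nu \sF_{\cG,\nu}$. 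Torsion-freeness forces $\pH^{-1}(\bk \lotimes_\bO \scJ_!(\mu,\bO)) = 0$, so the complex $\bk \lotimes_\bO \scJ_!(\mu,\bO)$ is perverse; applying $\pH^0$ to the composition $j^\mu_!\underline{\bk}[\langle\mu,2\rho\rangle] \cong \bk \lotimes_\bO j^\mu_!\underline{\bO}[\langle\mu,2\rho\rangle] \to \bk \lotimes_\bO \scJ_!(\mu,\bO)$ produces a canonical morphism $\scJ_!(\mu,\bk) \to \bk \lotimes_\bO \scJ_!(\mu,\bO)$. Its bijectivity can be checked after each $\sF_{\cG,\nu}$, where concentration of the cohomology in a single degree makes the universal coefficient Tor term vanish, reducing the claim to the identification of both sides with $\sH^{\langle\mu+\nu,2\rho\rangle}_{\mathrm{c}}(\Gr_\cG^\mu \cap \rmS_\nu, \bk)$ provided by part~\eqref{it:properties-J!*-1}. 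Conservativity of $\sF_\cG$ then concludes; the costandard case is parallel.

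For part~\eqref{it:properties-J!*-3}, let $K = \ker(\scJ_!(\mu,\bO) \to \scJ_*(\mu,\bO))$ in $\Perv(\Hk_\cG,\bO)$. By exactness of $\sF_{\cG,\nu}$, $\sF_{\cG,\nu}(K)$ is the kernel of $\sF_{\cG,\nu}(\scJ_!(\mu,\bO)) \to \sF_{\cG,\nu}(\scJ_*(\mu,\bO))$, a map between free $\bO$-modules by~\eqref{it:properties-J!*-1}. By the $\bK$-case of~\eqref{it:properties-J!*-2} and Lemma~\ref{lem:semisimplicity} this map becomes an isomorphism after $\bK \otimes_\bO ({-})$, so it is an injection of free $\bO$-modules of equal rank, whence $\sF_{\cG,\nu}(K) = 0$ for every $\nu$. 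Conservativity of $\sF_\cG$ then yields $K = 0$.

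The main obstacle will be the careful handling of the $\bk$-case of part~\eqref{it:properties-J!*-2}: both the torsion-freeness of $\scJ_!(\mu,\bO)$ and $\scJ_*(\mu,\bO)$ and the identification $\scJ_!(\mu,\bk) \cong \bk \lotimes_\bO \scJ_!(\mu,\bO)$ must be argued without the benefit of t-exactness of $\bk \lotimes_\bO({-})$. A secondary technical input is the equidimensionality of $\Gr_\cG^\mu \cap \rmS_\nu$ used in part~\eqref{it:properties-J!*-1}, which is implicit in the proof of Lemma~\ref{lem:dim-estimate} but may need to be extracted explicitly for the open-orbit intersection.
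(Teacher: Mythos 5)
Your proposal is correct and takes essentially the same route as the paper, whose proof simply defers to the classical arguments of~\cite{br} (Propositions~1.11.1, 1.11.3 and Lemma~1.11.5): you reconstruct exactly those weight-functor arguments in the ramified setting, using the exactness and conservativity of Proposition~\ref{prop:exactness-CT} together with Remark~\ref{rmk:exactness-CT-vanishing}, the dimension estimate of Lemma~\ref{lem:dim-estimate} (whose equidimensionality does pass automatically to the open intersection $\Gr_\cG^\mu \cap \rmS_\nu$, an open subscheme of an equidimensional finite-type scheme being equidimensional), Proposition~\ref{prop:Braden} for the costandard case, and Lemma~\ref{lem:semisimplicity} over $\bK$ for part~\eqref{it:properties-J!*-3}. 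The only cosmetic difference is that you verify torsion-freeness, the mod-$\pi$ identifications and the injectivity in part~\eqref{it:properties-J!*-3} by applying the weight functors and invoking their exactness and conservativity, which is an equivalent reformulation of the torsion-freeness argument in~\cite{br} and introduces no gap.
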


\begin{proof}
	\eqref{it:properties-J!*-1}
	The proof is the same as for~\cite[Proposition~1.11.1]{br}.
	
	\eqref{it:properties-J!*-2}
	The proof is the same as for~\cite[Proposition~1.11.3]{br}.
	
	\eqref{it:properties-J!*-3}
	By the same considerations as in~\cite[Lemma~1.11.5]{br}, the claim follows from Lemma~\ref{lem:semisimplicity}.
\end{proof}

\subsection{Representability}
\label{ss:representability}

Consider a closed subscheme $Z \subset \Gr_\cG$ whose underlying topological subspace is a union of finitely many $\Loop^+ \cG$-orbits. We can then consider the quotient stack $[\Loop^+ \cG \backslash Z]_{\et}$, and the corresponding full subcategory
\[
\Perv([\Loop^+ \cG \backslash Z]_\et,\Lambda) \subset \Perv(\Hk_\cG,\Lambda).
\]
In fact, the action of $\Loop^+\cG$ on $Z$ factors through an action of $\Loop^+_n \cG$ for some $n \geq 0$, and we then have
\[
\Perv([\Loop^+ \cG \backslash Z]_{\et},\Lambda) = \Perv([\Loop^+_n \cG \backslash Z]_{\et},\Lambda).
\]

Fix some $\nu \in \bbX_*(T)_I$ such that $\rmT_\nu \cap Z \neq \varnothing$, and consider the immersion
\[
i_{Z,\nu} \colon \rmT_\nu \cap Z \to Z
\]
and the action and projection morphisms
\[
a_{Z,\nu}, p_{Z,\nu} \colon \Loop^+_n \cG \times Z \to Z.
\]
One checks as in~\cite[Proposition~1.12.1]{br} that the complex
\begin{equation}
	\label{eqn:complex-Pnu}
	(a_{Z,\nu})_! (p_{Z,\nu})^! (i_{Z,\nu})_! \underline{\Lambda}_{\rmT_\nu \cap Z}[-\langle \nu, 2\rho \rangle]
\end{equation}
is concentrated in nonpositive perverse degrees, and that
if we set
\[
\scP_Z(\nu,\Lambda) := \pH^0((a_{Z,\nu})_! (p_{Z,\nu})^! (i_{Z,\nu})_! \underline{\Lambda}_{\rmT_\nu \cap Z}[-\langle \nu, 2\rho \rangle]),
\]
the perverse sheaf $\scP_Z(\nu,\Lambda)$ is a projective object in $\Perv([\Loop^+ \cG \backslash Z]_\et,\Lambda)$ which represents the restriction of $\sF_{\cG,\nu}$ to this subcategory. In particular, this object does not depend on the choice of $n$.

Set
\[
\bbX_Z = \{ \nu \in \bbX_*(T)_I \mid Z \cap \rmT_\nu \neq \varnothing \} = 
\bigcup_{\substack{\lambda \in \bbX_*(T)^+_I \\ |\Gr^\lambda_\cG| \subset |Z|}} W_0 \cdot \lambda,
\]
where the equality follows from Lemma~\ref{lem:dim-estimate}. (This set is clearly finite.)
In view of~\eqref{eqn:FG-weight-decomposition}, setting
\[
\scP_Z(\Lambda) := \bigoplus_{\nu \in \bbX_Z} \scP_Z(\nu,\Lambda)
\]
one obtains a projective object in $\Perv([\Loop^+ \cG \backslash Z]_\et,\Lambda)$ which represents the restriction of $\sF_{\cG}$ to this subcategory. In fact, as in~\cite[\S 1.12.1]{br}, this object is a projective generator of the category $\Perv([\Loop^+ \cG \backslash Z]_\et,\Lambda)$.

We finish this subsection with the discussion of a property which will be used in Appendix~\ref{app:equiv-const}. Consider a \emph{locally closed} subscheme $X \subset \Gr_\cG$ whose underlying topological subspace is a union of finitely many $\Loop^+ \cG$-orbits. Choose closed subschemes $Y \subset Z \subset \Gr_\cG$ whose underlying topological subspace is a union of finitely many $\Loop^+ \cG$-orbits and such that $X = Z \smallsetminus Y$, and denote by $j \colon X \to Z$ the open embedding. If $\nu \in \bbX_Z \smallsetminus \bbX_Y$ (or, in other words, if $t^\nu \in |X|$), the same considerations as in~\cite[Remark~1.5.8(2)]{br} show that for any $\scG \in \Perv([\Loop^+ \cG \backslash X]_\et, \Lambda)$ we have 
\begin{equation}
	\label{eqn:vanishing-local-cohom}
	\mathsf{H}^{k}_{X \cap \rmT_\nu}(X, \scG) = 0 \quad \text{unless $k=\langle \nu, 2\rho \rangle$,}
\end{equation}
so that the functor
\[
\sF_{\cG,\nu}^X := \mathsf{H}^{\langle \nu, 2\rho \rangle}_{X \cap \rmT_\nu}(X, -) \colon \Perv([\Loop^+ \cG \backslash X]_\et, \Lambda) \to \modf_\Lambda
\]
is exact (by consideration of an appropriate long exact sequence).

\begin{lem}
	\label{lem:representability-F-X}
	The functor $\sF_{\cG,\nu}^X$ is represented by
	the perverse sheaf $j^* \scP_Z(\nu, \Lambda)$.
\end{lem}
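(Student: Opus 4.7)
The strategy will be to translate the problem back to the category $\Perv([\Loop^+\cG\backslash Z]_\et, \Lambda)$, where representability of $\sF_{\cG,\nu}$ by $\scP_Z(\nu, \Lambda)$ has already been established in~\S\ref{ss:representability}, and then to exploit the crucial geometric fact that $\rmT_\nu$ avoids $Y$ entirely once $\nu \in \bbX_Z \smallsetminus \bbX_Y$. Observe first that $j^*\scP_Z(\nu, \Lambda)$ is indeed a $\Loop^+\cG$-equivariant perverse sheaf on $X$, since pullback along an open immersion is t-exact and preserves equivariance.

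For any $\scG \in \Perv([\Loop^+\cG\backslash X]_\et, \Lambda)$, the derived adjunction $(j^*, Rj_*)$ provides a natural isomorphism $\Hom(j^*\scP_Z(\nu, \Lambda), \scG) \cong \Hom(\scP_Z(\nu, \Lambda), Rj_*\scG)$. Since $j$ is an open immersion, $Rj_*$ preserves $\pD^{\geq 0}$; combined with the fact that $\scP_Z(\nu, \Lambda)$ is perverse (hence lies in $\pD^{\leq 0}$), this reduces the right-hand side to $\Hom(\scP_Z(\nu, \Lambda), \pH^0 Rj_*\scG)$. By the representability recalled in~\S\ref{ss:representability}, this last group equals $\sF_{\cG,\nu}(\pH^0 Rj_*\scG) = \mathsf{H}^{\langle \nu, 2\rho\rangle}_{Z \cap \rmT_\nu}(Z, \pH^0 Rj_*\scG)$.

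Finally, I must identify this with $\mathsf{H}^{\langle \nu, 2\rho\rangle}_{X \cap \rmT_\nu}(X, \scG)$. Here the hypothesis $\nu \notin \bbX_Y$ enters decisively: it means $\rmT_\nu \cap Y = \varnothing$, so $Z \cap \rmT_\nu = X \cap \rmT_\nu$ is already contained in the open subscheme $X$. Using $j^! = j^*$ for an open immersion, local cohomology with support in such a subset can be computed after restricting to $X$, giving a canonical isomorphism $\mathsf{H}^\bullet_{Z \cap \rmT_\nu}(Z, \cF) \cong \mathsf{H}^\bullet_{X \cap \rmT_\nu}(X, j^*\cF)$ for any complex $\cF$ on $Z$. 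Applied to $\cF = \pH^0 Rj_*\scG$, and using t-exactness of $j^*$ together with $j^* Rj_* = \id$, one finds $j^*\pH^0 Rj_*\scG = \pH^0\scG = \scG$, which completes the identification. The only mildly delicate point is the derived-categorical manipulation in the second paragraph; the geometric heart of the argument is simply the observation that $\rmT_\nu \subset X$ when $\nu \notin \bbX_Y$, which is immediate from the definitions.
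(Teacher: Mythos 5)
Your proof is correct, and its skeleton is the same as the paper's: reduce via the adjunction between $j^*$ and (perverse-truncated) pushforward to a Hom-group on $Z$, then invoke the representability of $\sF_{\cG,\nu}$ by $\scP_Z(\nu,\Lambda)$ established in~\S\ref{ss:representability}. The difference lies in the final identification. The paper keeps the object $\pH^0(j_*\scF)$ on $\Gr_\cG$, first shows $\mathsf{H}^{\langle\nu,2\rho\rangle}_{\rmT_\nu}(\Gr_\cG,\pH^0(j_*\scF)) \cong \mathsf{H}^{\langle\nu,2\rho\rangle}_{\rmT_\nu}(\Gr_\cG,j_*\scF)$ using the vanishing of Remark~\ref{rmk:exactness-CT-vanishing} together with perverse truncation triangles, and only then applies base change for $i^! j_*$. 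You instead exploit, already at this stage, the geometric input that $\nu\notin\bbX_Y$ forces $Z\cap\rmT_\nu=X\cap\rmT_\nu$ to lie in the open subscheme $X$, so that the $!$-restriction to the support locus factors through $j^!=j^*$; excision then lands you on $X$, where $j^*\pH^0(Rj_*\scG)\cong\pH^0(j^*Rj_*\scG)\cong\scG$ by t-exactness of $j^*$. This is a legitimate and slightly cleaner route, since it bypasses the appeal to the exactness/vanishing statement and the truncation-triangle bookkeeping; what it uses instead is the hypothesis on $\nu$, which the paper's own argument does not need at that particular step (it is built into the definition of $\sF^X_{\cG,\nu}$ anyway). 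One small imprecision: in your closing remark, ``$\rmT_\nu\subset X$'' should read ``$\rmT_\nu\cap Z\subset X$'' (the semi-infinite orbit itself is of course not contained in $X$); the correct form is what you actually used in the body of the argument, so nothing breaks.
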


\begin{proof}
	For $\scF \in \Perv([\Loop^+ \cG \backslash X]_\et, \Lambda)$, by adjunction we have
	\[
	\Hom(j^* \scP_Z(\nu, \Lambda), \scF) \cong \Hom(\scP_Z(\nu,\Lambda), \pH^0(j_* \scF)).
	\]
	Since $\scP_Z(\nu,\Lambda)$ represents $\sF_{\cG,\nu}$ on $\Perv([\Loop^+ \cG \backslash Z]_\et, \Lambda)$, the right-hand side identifies with
	\[
	\sF_{\cG,\nu}(\pH^0(j_* \scF)) = \mathsf{H}^{\langle \nu, 2\rho \rangle}_{\rmT_\nu}(\Gr_\cG, \pH^0(j_* \scF)).
	\]
	Now, by Remark~\ref{rmk:exactness-CT-vanishing} and standard considerations involving perverse truncation triangles (as in~\cite[Lemma~1.10.7]{br}), one sees that
	\[
	\mathsf{H}^{\langle \nu, 2\rho \rangle}_{\rmT_\nu}(\Gr_\cG, \pH^0(j_* \scF)) = \mathsf{H}^{\langle \nu, 2\rho \rangle}_{\rmT_\nu}(\Gr_\cG, j_* \scF).
	\]
	By base change the right-hand side identifies with
	$\mathsf{H}^{\langle \nu, 2\rho \rangle}_{X \cap \rmT_\nu}(X, \scF)$, which finishes the proof.
\end{proof}

It follows from Lemma~\ref{lem:representability-F-X} and the comments preceding it that the perverse sheaf
\[
\scP_X(\nu, \Lambda) = j^* \scP_Z(\nu, \Lambda)
\]
does not depend on the choice of $Y$ and $Z$, and is projective.

Consider now the projective object
\[
\scP_X(\Lambda) = \bigoplus_{\substack{\nu \in \bbX \\ t^\nu \in |X|}} \scP_X(\nu,\Lambda) \quad \in \Perv([\Loop^+ \cG \backslash X]_\et, \Lambda).
\]
The same arguments as in the proof of Proposition~\ref{prop:exactness-CT} show that the functor $\Hom(\scP_X(\Lambda), -)$ does not kill any nonzero object; it follows that $\scP$ is a projective generator of the category $\Perv([\Loop^+ \cG \backslash X]_\et, \Lambda)$.

\subsection{Structure of projective objects}
\label{ss:structure-projective}

Let us now consider two $\Loop^+ \cG$-stable closed subschemes $Y,Z \subset \Gr_\cG$ whose underlying topological subspace is a union of finitely many $\Loop^+ \cG$-orbits and such that $Y \subset Z$. If we denote by $i \colon Y \to Z$ the closed immersion, then as in~\cite[Proposition~1.12.2]{br} one checks that there exists a canonical isomorphism
\[
\scP_Y(\Lambda) \cong \pH^0(i^* \scP_Z(\Lambda)),
\]
and that the morphism
\[
\scP_Z(\Lambda) \to \pH^0(i_* i^* \scP_Z(\Lambda)) = i_* \scP_Y(\Lambda)
\]
induced by adjunction is surjective.

Next, using Lemma~\ref{lem:properties-J!*} and the same arguments as for~\cite[Proposition~1.12.3]{br}, one checks that, for any closed subscheme $Z \subset \Gr_\cG$ whose underlying topological subspace is a union of finitely many $\Loop^+ \cG$-orbits:
\begin{enumerate}
	\item
	\label{it:properties-P-1}
	the object $\scP_Z(\Lambda)$ admits a finite filtration with associated graded
	\[
	\bigoplus_{\substack{\mu \in \bbX_*(T)_I^+ \\ | \Gr_\cG^\mu | \subset | Z |}} \sF_\cG(\scJ_*(\mu,\Lambda)) \otimes_\Lambda \scJ_!(\mu,\Lambda);
	\]
	\item
	\label{it:properties-P-2}
	there exist canonical isomorphisms
	\[
	\scP_Z(\bK) \cong \bK \otimes_\bO \scP_Z(\bO), \quad
	\scP_Z(\bk) \cong \bk \lotimes_\bO \scP_Z(\bO);
	\]
	\item
	\label{it:properties-P-3}
	the $\bO$-module $\sF_\cG(\scP_Z(\bO))$ is free of finite rank, and 
	there exist canonical isomorphisms
	\[
	\sF_\cG(\scP_Z(\bK)) \cong \bK \otimes_\bO \sF_\cG(\scP_Z(\bO)), \quad
	\sF_\cG(\scP_Z(\bk)) \cong \bk \otimes_\bO \sF_\cG(\scP_Z(\bO)).
	\]
\end{enumerate}

Let us note for later use the following consequence of~\eqref{it:properties-P-2}.

\begin{lem}
	\label{lem:change-scalars-P}
	For any $\mu \in \bbX_*(T)^+_I$ such that $|\Gr_\cG^\mu| \subset |Z|$ 
	there exist canonical isomorphisms
	\[
	\scP_Z(\mu,\bK) \cong \bK \otimes_\bO \scP_Z(\mu,\bO), \quad
	\scP_Z(\mu,\bk) \cong \bk \lotimes_\bO \scP_Z(\mu,\bO).
	\]
\end{lem}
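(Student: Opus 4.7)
The plan is to deduce Lemma~\ref{lem:change-scalars-P} from property~\eqref{it:properties-P-2} of~\S\ref{ss:structure-projective} by checking that the isomorphisms given there respect the direct sum decomposition
\[
\scP_Z(\Lambda) = \bigoplus_{\nu \in \bbX_Z} \scP_Z(\nu,\Lambda).
\]
In fact, the cleanest route is to verify directly that $\bK \otimes_\bO \scP_Z(\mu,\bO)$ represents $\sF_{\cG,\mu}$ on $\Perv([\Loop^+\cG\backslash Z]_\et,\bK)$, and similarly for $\bk \lotimes_\bO \scP_Z(\mu,\bO)$ on $\Perv([\Loop^+\cG\backslash Z]_\et,\bk)$.

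First I would treat the $\bK$-case. For any $\scF \in \Perv([\Loop^+\cG\backslash Z]_\et,\bK)$, the $(\bK \otimes_\bO (-), \res)$-adjunction on the relevant derived categories gives
\[
\Hom_\bK(\bK \otimes_\bO \scP_Z(\mu,\bO), \scF) \cong \Hom_\bO(\scP_Z(\mu,\bO), \res(\scF)),
\]
where $\res$ denotes restriction of scalars (which is exact and preserves perverse sheaves, so that $\res(\scF)$ lies in $\Perv([\Loop^+\cG\backslash Z]_\et,\bO)$). By the defining property of $\scP_Z(\mu,\bO)$ and the obvious compatibility of the weight functor with restriction of scalars, the right-hand side equals $\sF_{\cG,\mu}(\res(\scF)) = \sF_{\cG,\mu}(\scF)$ naturally. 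Thus $\bK \otimes_\bO \scP_Z(\mu,\bO)$ corepresents $\sF_{\cG,\mu}$ on $\Perv([\Loop^+\cG\backslash Z]_\et,\bK)$, which yields a canonical isomorphism $\bK \otimes_\bO \scP_Z(\mu,\bO) \simto \scP_Z(\mu,\bK)$.

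For the $\bk$-case, the additional input is that $\bk \lotimes_\bO \scP_Z(\mu,\bO)$ is concentrated in perverse degree $0$. This follows because $\scP_Z(\mu,\bO)$ is a direct summand of $\scP_Z(\bO)$, and $\bk \lotimes_\bO \scP_Z(\bO) \cong \scP_Z(\bk)$ lies in degree $0$ by property~\eqref{it:properties-P-2}; alternatively, it can be deduced from the filtration in~\eqref{it:properties-P-1} together with Lemma~\ref{lem:properties-J!*}\eqref{it:properties-J!*-2}. With this in hand, the derived adjunction $(\bk \lotimes_\bO (-), \res)$ produces, for $\scF \in \Perv([\Loop^+\cG\backslash Z]_\et,\bk)$,
\[
\Hom_\bk(\bk \lotimes_\bO \scP_Z(\mu,\bO), \scF) \cong \Hom_{D(\bO)}(\scP_Z(\mu,\bO), \res(\scF)),
\]
and the right-hand side equals $\Hom_{\Perv(\bO)}(\scP_Z(\mu,\bO), \res(\scF)) = \sF_{\cG,\mu}(\scF)$, since both $\scP_Z(\mu,\bO)$ and $\res(\scF)$ lie in the perverse heart. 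The resulting isomorphism $\bk \lotimes_\bO \scP_Z(\mu,\bO) \simto \scP_Z(\mu,\bk)$ is canonical by the usual uniqueness of representing objects.

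The main point to be careful about is the perverse degree bound that guarantees the adjunction argument in the $\bk$-case makes sense directly on the abelian categories; everything else is formal once the representability of $\sF_{\cG,\mu}$ by $\scP_Z(\mu,\Lambda)$ from~\S\ref{ss:representability} and the change-of-scalars isomorphism for $\scP_Z(\Lambda)$ from~\eqref{it:properties-P-2} are taken as given.
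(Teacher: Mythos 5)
Your treatment of the modular case is correct, and it takes a genuinely different route from the paper: you first establish (as the paper also does, via property~\eqref{it:properties-P-2}) that $\bk \lotimes_\bO \scP_Z(\mu,\bO)$ is perverse, and then use the adjunction between $\bk \lotimes_\bO(-)$ and the restriction-of-scalars functor $\Dbc(-,\bk) \to \Dbc(-,\bO)$ (which does exist, see~\S\ref{ss:Perv-Gr-Hk} and~\S\ref{ss:const-scalars}) to show that this perverse sheaf corepresents $\sF_{\cG,\mu}$ on $\Perv([\Loop^+\cG\backslash Z]_\et,\bk)$, whence the identification with $\scP_Z(\mu,\bk)$ by Yoneda. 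The paper argues instead by applying the right t-exact functor $\bk \lotimes_\bO(-)$ to the defining complex~\eqref{eqn:complex-Pnu} and identifying $\scP_Z(\mu,\bk)$ with $\pH^0(\bk \lotimes_\bO \scP_Z(\mu,\bO))$; your representability argument is a perfectly good substitute, and arguably more conceptual.

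Your argument for the first isomorphism, however, has a genuine gap: it rests on a restriction-of-scalars functor $\res \colon \Dbc(-,\bK) \to \Dbc(-,\bO)$ right adjoint to $\bK\otimes_\bO(-)$, together with the claim that $\res(\scF)$ is again a constructible perverse $\bO$-sheaf. No such functor exists in the formalism used here: $\Dbc(X,\bK)$ is defined as a Verdier quotient of $\Dbc(X,\bO)$ by torsion complexes (see~\S\ref{sss:coeff-K}), restriction of scalars along $\bO \to \bK$ destroys constructibility (the stalks of a $\bK$-perverse sheaf are finite-dimensional $\bK$-vector spaces, hence not finitely generated $\bO$-modules), and accordingly the paper only defines restriction of scalars for $\bk \to \bO$, never for $\bK \to \bO$; all that is available is the Hom-formula $\bK \otimes_\bO \Hom_{\Dbc(X,\bO)}(\scF,\scG) \cong \Hom_{\Dbc(X,\bK)}(\bK\otimes_\bO\scF, \bK\otimes_\bO\scG)$, which only controls morphisms between objects induced from $\bO$. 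Fortunately the $\bK$-case is the easy one and needs no adjunction at all: since $\bK\otimes_\bO(-)$ is t-exact and commutes with all the sheaf operations entering~\eqref{eqn:complex-Pnu}, applying it to $\scP_Z(\mu,\bO) = \pH^0$ of that complex over $\bO$ directly yields $\pH^0$ of the same complex over $\bK$, i.e.\ $\scP_Z(\mu,\bK)$; this is what the paper means by ``clear from the construction and t-exactness.'' With that repair, your overall argument goes through.
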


\begin{proof}
	The first isomorphism is clear from the construction of the objects $\scP_Z(\mu,\bK)$ and $\scP_Z(\mu,\bO)$ and t-exactness of the functor $\bK \otimes_{\bO} (-)$. For the second one, property~\eqref{it:properties-P-2} above implies that $\bk \lotimes_\bO \scP_Z(\mu,\bO)$ is a perverse sheaf. On the other hand, since the complex~\eqref{eqn:complex-Pnu} is concentrated in nonpositive perverse degrees, and since its version over $\bk$ is obtained from the version over $\bO$ by application of the functor $\bk \lotimes_\bO (-)$, we have $\scP_Z(\mu,\bk) \cong \pH^0 (\bk \lotimes_\bO \scP_Z(\mu,\bO))$. The desired claim follows.
\end{proof}

\begin{rmk}
	\label{rmk:hw-category}
	In case $\Lambda$ is a field (i.e.~if $\Lambda=\bK$ or $\Lambda=\bk$), as in~\cite[Proposition~1.12.4]{br} one can check that for any
	$\Loop^+ \cG$-stable closed subscheme $Z \subset \Gr_\cG$ whose underlying topological subspace is a union of finitely many $\Loop^+ \cG$-orbits the category $\Perv([\Loop^+\cG \backslash Z]_\et, \Lambda)$ is a highest weight category with weight poset
	\[
	\{\lambda \in \bbX_*(T)^+_I \mid |\Gr_\cG^\lambda| \subset |Z|\}
	\]
	and standard, resp.~costandard, objects the objects $\scJ_!(\lambda,\Lambda)$, resp.~$\scJ_*(\lambda,\Lambda)$. (For generalities on highest weight categories, see~\cite[\S 7]{riche-hab}.)
	
	More generally, for any $\Loop^+ \cG$-stable \emph{locally closed} subscheme $X \subset \Gr_\cG$ whose underlying topological subspace is a union of finitely many $\Loop^+ \cG$-orbits the category $\Perv([\Loop^+\cG \backslash X]_\et, \Lambda)$ is a highest weight category with weight poset
	\[
	\{\lambda \in \bbX_*(T)^+_I \mid |\Gr_\cG^\lambda| \subset |X|\}
	\]
	and standard, resp.~costandard, objects the objects $\scJ_!(\lambda,\Lambda)_{|X}$, resp.~$\scJ_*(\lambda,\Lambda)_{|X}$. In fact, writing $X=Z \smallsetminus Y$ with $Y \subset Z \subset \Gr_{\cG}$ as above, $\Perv([\Loop^+\cG \backslash X]_\et, \Lambda)$ is the Serre quotient of the category $\Perv([\Loop^+\cG \backslash Z]_\et, \Lambda)$ by the Serre subcategory $\Perv([\Loop^+\cG \backslash Y]_\et, \Lambda)$, so that the claim follows from~\cite[Lemma~7.8]{riche-hab}.
\end{rmk}

\subsection{Construction of \texorpdfstring{$\rmB_\cG(\Lambda)$}{BG}}
\label{ss:construction-BG}

If $Z$ is as in~\S\ref{ss:representability}, we set
\[
\rmA_Z(\Lambda) := \End_{\Perv([\Loop^+ \cG \backslash Z]_\et, \Lambda)}(\scP_Z(\Lambda))^{\mathrm{op}}.
\]
Then, since $\scP_Z(\Lambda)$ represents the restriction of $\sF_\cG$, we have
\[
\rmA_Z(\Lambda) \cong \sF_\cG(\scP_Z(\Lambda))
\]
as $\Lambda$-modules, hence $\rmA_Z(\Lambda)$ is free of finite rank over $\Lambda$ by~\eqref{it:properties-P-3} in~\S\ref{ss:structure-projective}. Since $\scP_Z(\Lambda)$ is a projective generator of $\Perv([\Loop^+ \cG \backslash Z]_\et, \Lambda)$, by a variant of the Gabriel--Popescu theorem (see e.g.~\cite[Proposition~1.13.1]{br}) one sees that there exists an equivalence of abelian categories
\[
\mathsf{S}'_Z \colon \Perv([\Loop^+ \cG \backslash Z]_\et, \Lambda) \simto \modf_{\rmA_Z(\Lambda)}
\]
(where the right-hand side is the category of finitely generated $\rmA_Z(\Lambda)$-modules) whose composition with the forgetful functor $\modf_{\rmA_Z(\Lambda)} \to \modf_\Lambda$ is the restriction of $\sF_\cG$. If we set
\[
\rmB_Z(\Lambda) := \Hom_{\Lambda}(\rmA_Z(\Lambda),\Lambda),
\]
we therefore obtain a $\Lambda$-coalgebra such that there exists a canonical equivalence of abelian categories
\[
\mathsf{S}_Z \colon \Perv([\Loop^+ \cG \backslash Z]_\et, \Lambda) \simto \comod_{\rmB_Z(\Lambda)}
\]
whose composition with the forgetful functor $\comod_{\rmB_Z(\Lambda)} \to \modf_\Lambda$ is the restriction of $\sF_\cG$.

In the setting of~\S\ref{ss:structure-projective}, if $Y \subset Z$ the functor $\pH^0 \circ i^*$ induces a morphism $f'_{Y,Z} \colon \rmA_Z(\Lambda) \to \rmA_Y(\Lambda)$ such that the diagram
\[
\begin{tikzcd}
	\Perv([\Loop^+ \cG \backslash Y]_{\et}, \Lambda) \ar[r, "\mathsf{S}_Y"] \ar[d, "i_*"'] & \modf_{\rmA_Y(\Lambda)} \ar[d] \\
	\Perv([\Loop^+ \cG \backslash Z]_{\et}, \Lambda) \ar[r, "\mathsf{S}_Z"] & \modf_{\rmA_Z(\Lambda)}
\end{tikzcd}
\]
commutes, where the right-hand vertical arrow is the restriction-of-scalars functor associated with $f'_{Y,Z}$. Passing to duals we deduce a morphism of coalgebras $f_{Y,Z} \colon \rmB_Y(\Lambda) \to \rmB_Z(\Lambda)$ such that the diagram
\[
\begin{tikzcd}
	\Perv([\Loop^+ \cG \backslash Y]_{\et}, \Lambda) \ar[r, "\mathsf{S}_Y"] \ar[d, "i_*"'] & \comod_{\rmB_Y(\Lambda)} \ar[d] \\
	\Perv([\Loop^+ \cG \backslash Z]_{\et}, \Lambda) \ar[r, "\mathsf{S}_Z"] & \comod_{\rmB_Z(\Lambda)}
\end{tikzcd}
\]
commutes, where the right-hand vertical arrow is the functor induced by $f_{Y,Z}$.

Finally we set
\[
\rmB_\cG(\Lambda) = \mathrm{colim}_Z \, \rmB_Z(\Lambda)
\]
where $Z$ runs over the closed subschemes of $\Gr_\cG$ whose underlying topological subspace is a finite union of $\Loop^+ \cG$-orbits. Then $\rmB_\cG(\Lambda)$ is a $\Lambda$-coalgebra, and since
\[
\Perv(\Hk_\cG,\Lambda) = \mathrm{colim}_Z \, \Perv([\Loop^+\cG \backslash Z]_{\et}, \Lambda)
\]
we obtain an equivalence of categories
\[
\mathsf{S}_\cG \colon \Perv(\Hk_\cG,\Lambda) \simto \comod_{\rmB_\cG(\Lambda)}
\]
whose composition with the forgetful functor $\comod_{\rmB_\cG(\Lambda)} \to \modf_\Lambda$ is $\sF_\cG$. Note that in case $\Lambda=\bO$, $\rmB_\cG(\bO)$ is flat over $\bO$, as a colimit of flat $\bO$-modules.

Property~\eqref{it:properties-P-3} in~\S\ref{ss:structure-projective} ensures that there exist canonical isomorphisms
\[
\bK \otimes_\bO \rmB_\cG(\bO) \simto \rmB_\cG(\bK), \quad \bk \otimes_\bO \rmB_\cG(\bO) \simto \rmB_\cG(\bk)
\]
such that the obvious diagrams involving the change-of-scalars functors
commute.

\subsection{Algebra structure}
\label{ss:alg-structure}

To conclude the proof of Theorem~\ref{thm:coalgebra}
we need to endow $\rmB_\cG(\Lambda)$ with an algebra structure such that $\mathsf{S}_\cG$ is monoidal. Fix three closed subschemes $X,Y,Z \subset \Gr_\cG$ whose underlying topological subspaces are unions of finitely many $\Loop^+\cG$-orbits, and such that the restriction of the morphism $m$ from~\S\ref{ss:convolution-schemes} to $X \wttimes Y$ factors through a morphism $X \wttimes Y \to Z$. The tensor product of identity morphisms provides a canonical element in
\begin{multline*}
	A_X(\Lambda) \otimes_\Lambda A_Y(\Lambda) \cong \sF_\cG(\scP_X(\Lambda)) \otimes_\Lambda \sF_\cG(\scP_Y(\Lambda)) \\
	\cong \sF_\cG(\scP_X(\Lambda) \star^0 \scP_Y(\Lambda)) \cong \Hom_{\Perv(\Hk_\cG,\Lambda)}(\scP_Z(\Lambda), \scP_X(\Lambda) \star^0 \scP_Y(\Lambda))
\end{multline*}
(where the second isomorphism is provided by the monoidal structure on $\sF_\cG$ constructed in Section~\ref{sec:monoidality}).
Applying the functor $\sF_\cG$ we deduce a morphism $A_Z(\Lambda) \to A_X(\Lambda) \otimes_\Lambda A_Y(\Lambda)$ and then, dualizing, a morphism
\[
\rmB_X(\Lambda) \otimes_\Lambda \rmB_Y(\Lambda) \to \rmB_Z(\Lambda).
\]
Passing to colimits we finally deduce a morphism
\[
\rmB_\cG(\Lambda) \otimes_\Lambda \rmB_\cG(\Lambda) \to \rmB_\cG(\Lambda).
\]
It is not difficult to check that this map defines an associative multiplication morphism, with unit element the image of the unit element in $\rmB_{\Gr_\cG^0}(\Lambda)=\Lambda$. Moreover, combined with the comultiplication considered above, this multiplication morphism endows $\rmB_\cG(\Lambda)$ with a bialgebra structure, such that the equivalence of categories $\mathsf{S}_\cG$ admits a canonical monoidal structure.

This finishes the proof of Theorem~\ref{thm:coalgebra}.

\subsection{Morphisms related to constant term functors}

We finish this section with the construction of morphisms of coalgebras induced by the constant term functors.

Let us consider a parabolic subgroup $P \subset G$ containing $A$, and its Levi factor $M$ containing $A$. 
We can then consider the $\Lambda$-coalgebra $\rmB_\cG(\Lambda)$ associated with $\cG$, in other words with $G$ and its special facet $\fa$, but also the $\Lambda$-coalgebra $\rmB_{\cM}(\Lambda)$ associated with $\cM$, i.e.~with the reductive group $M$ and its special facet $\fa_M$ (see~\S\ref{ss:group-theory}). Here, $\cM$ is the scheme-theoretic closure of $M$ in $\cG$, see~\eqref{eqn:parabolics-cocharacters-2}.

\begin{prop}
	\label{prop:morphism-coalg-Levi}
	For any parabolic subgroup $P$ containing $A$, there exists a canonical morphism of coalgebras
	\[
	\mathrm{res}_{\cP,\cG} \colon \rmB_{\cG}(\Lambda) \to \rmB_{\cM}(\Lambda)
	\]
	such that the diagram
	\[
	\begin{tikzcd}
		\Perv(\Hk_{\cG},\Lambda) \ar[r, "\mathsf{S}_\cG"] \ar[d, "\CT_{\cP,\cG}"'] & \comod_{\rmB_{\cG}(\Lambda)} \ar[d] \\
		\Perv(\Hk_{\cM},\Lambda) \ar[r, "\mathsf{S}_\cM"] & \comod_{\rmB_{\cM}(\Lambda)}
	\end{tikzcd}
	\]
	commutes, where the right vertical arrow is the functor induced by $\mathrm{res}_{\cP,\cG}$. Moreover these morphisms are compatible with change of scalars in the obvious way, and with parabolic restriction in the sense that given parabolic subgroups $P \subset P' \subset G$ containing $A$, with Levi factors $M \subset M'$ containing $A$, we have
	\begin{equation}
		\label{eqn:restriction-morphisms}
		\mathrm{res}_{\cP \cap \cM',\cM'} \circ \mathrm{res}_{\cP',\cG} = \mathrm{res}_{\cP,\cG},
	\end{equation}
	where $\cM'$ is the scheme-theoretic closure of $M'$ in $\cG$.
\end{prop}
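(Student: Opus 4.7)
The plan is to build $\mathrm{res}_{\cP,\cG}$ as the colimit over $Z$ of local coalgebra morphisms $\mathrm{res}^Z\colon\rmB_Z(\Lambda)\to\rmB_{Z_M}(\Lambda)$, each obtained by dualizing a $\Lambda$-algebra morphism $\rmA_{Z_M}(\Lambda)\to\rmA_Z(\Lambda)$ extracted from $\CT_{\cP,\cG}$ by Morita-theoretic means.

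Fix a $\Loop^+\cG$-stable closed subscheme $Z\subset\Gr_\cG$ whose underlying space is a finite union of Schubert orbits. Since $\CT_{\cP,\cG}$ is defined through the proper, finitely presented maps in the diagram~\eqref{eqn:ct-diagram}, one can choose a $\Loop^+\cM$-stable closed subscheme $Z_M\subset\Gr_\cM$ (also a finite union of orbits) such that $\CT_{\cP,\cG}$ restricts to an exact functor
\[
\CT^Z_{\cP,\cG}\colon\Perv([\Loop^+\cG\backslash Z]_\et,\Lambda)\to\Perv([\Loop^+\cM\backslash Z_M]_\et,\Lambda),
\]
and by Proposition~\ref{prop:CT-fiber} we have $\sF_\cM|_{Z_M}\circ\CT^Z_{\cP,\cG}\cong\sF_\cG|_Z$. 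Applying $\mathsf{S}_{Z_M}\circ\CT^Z_{\cP,\cG}$ to the projective generator $\scP_Z(\Lambda)$ then yields a right $\rmB_{Z_M}(\Lambda)$-comodule whose underlying $\Lambda$-module is $\sF_\cG(\scP_Z(\Lambda))=\rmA_Z(\Lambda)$; since $\rmA_Z(\Lambda)$ is free of finite rank over $\Lambda$ (property~\eqref{it:properties-P-3} of~\S\ref{ss:structure-projective}), $\Lambda$-duality translates this coaction into a left $\rmA_{Z_M}(\Lambda)$-action on $\rmA_Z(\Lambda)$. By naturality of $\CT^Z_{\cP,\cG}$ with respect to endomorphisms of $\scP_Z(\Lambda)$, this left action commutes with the regular right action of $\rmA_Z(\Lambda)$ on itself, and hence factors uniquely through the canonical identification $\End_{\rmA_Z(\Lambda)^{\mathrm{op}}}(\rmA_Z(\Lambda))=\rmA_Z(\Lambda)$, producing a unital $\Lambda$-algebra morphism $\rmA_{Z_M}(\Lambda)\to\rmA_Z(\Lambda)$ whose $\Lambda$-dual is the desired $\mathrm{res}^Z$. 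Because the functor induced on comodule categories by $\mathrm{res}^Z$ agrees with $\mathsf{S}_{Z_M}\circ\CT^Z_{\cP,\cG}\circ\mathsf{S}_Z^{-1}$ on the projective generator and is $\Lambda$-linearly exact, the two functors coincide.

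For $Z\subset Z'$ with matching $Z_M\subset Z'_M$, the surjections $\scP_{Z'}(\Lambda)\twoheadrightarrow\scP_Z(\Lambda)$ from~\S\ref{ss:structure-projective} make the $\mathrm{res}^Z$ compatible, so passing to the colimit in $Z$ yields $\mathrm{res}_{\cP,\cG}$. Compatibility with change of scalars follows from property~\eqref{it:properties-P-2} of~\S\ref{ss:structure-projective} together with the fact, noted after Proposition~\ref{prop:exactness-CT}, that $\CT_{\cP,\cG}$ commutes with the change-of-scalars functors. The transitivity~\eqref{eqn:restriction-morphisms} follows from Lemma~\ref{lem:transitivity-CT}, combined with the uniqueness of a coalgebra morphism inducing a prescribed functor on comodule categories: indeed, if two coalgebra morphisms $\phi_1,\phi_2\colon\rmB_\cG(\Lambda)\to\rmB_\cM(\Lambda)$ induce the same functor, then applying them to each finitely generated subcoalgebra $\rmB_Z(\Lambda)$ (as a comodule over itself) and using the counit axiom forces $\phi_1=\phi_2$. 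The most delicate point in the construction is the commutation between the two actions of $\rmA_{Z_M}(\Lambda)$ and $\rmA_Z(\Lambda)$ on $\rmA_Z(\Lambda)$, which is what makes the Morita recipe produce a genuine unital ring homomorphism rather than merely a $\Lambda$-linear map; once this is in place, everything else is formal bookkeeping.
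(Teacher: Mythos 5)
Your construction is correct, and it rests on the same two inputs as the paper's proof: the compatibility of $\CT_{\cP,\cG}$ with the fiber functors (Proposition~\ref{prop:CT-fiber}) and Lemma~\ref{lem:transitivity-CT} for transitivity. The difference is purely in how the formal comodule-theoretic step is handled: the paper simply invokes the standard fact that an exact functor between categories of comodules finitely generated over $\Lambda$, compatible with the forgetful functors, is induced by a morphism of coalgebras (citing \cite[Proposition~1.2.6(2)]{br}, applied directly to the colimit coalgebras $\rmB_\cG(\Lambda)$ and $\rmB_\cM(\Lambda)$), whereas you re-prove this fact by hand at the level of the finite pieces: restricting $\CT_{\cP,\cG}$ to sheaves supported on $Z$, reading off an $\rmA_{Z_M}(\Lambda)$-action on $\sF_\cG(\scP_Z(\Lambda))=\rmA_Z(\Lambda)$, checking that it commutes with the regular action of $\rmA_Z(\Lambda)$ coming from functoriality in $\End(\scP_Z(\Lambda))$, and dualizing. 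This buys a self-contained, Morita-theoretic argument (and makes explicit why the map is unital and multiplicative, and why uniqueness via the counit forces the transitivity equality rather than a mere isomorphism), but it incurs extra bookkeeping that the paper's route avoids: you must verify that the morphisms $\mathrm{res}^Z$ are compatible with the transition maps $f_{Y,Z}$ before passing to the colimit (your appeal to the surjections $\scP_{Z'}(\Lambda)\twoheadrightarrow i_*\scP_Z(\Lambda)$ and functoriality of $\mathsf{S}'$ does settle this, but it deserves the one-line naturality check), and in the transitivity step one should note that ``induce the same functor'' must be taken in the sense of isomorphisms lifting the identity on underlying $\Lambda$-modules, which is exactly what Proposition~\ref{prop:CT-fiber} provides. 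Also beware the usual opposite-algebra bookkeeping in the identification $\End$ of the regular module with the algebra itself; as stated your identification is off by an ``op'', though this is conventions rather than substance.
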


\begin{proof}
	The existence of $\mathrm{res}_{\cP,\cG}$ follows from Proposition~\ref{prop:CT-fiber}, using the
	standard fact that any exact functor between categories of comodules (finitely generated over $\Lambda$) over some $\Lambda$-coalgebras compatible with the forgetful functor to $\modf_\Lambda$ is induced by a morphism of coalgebras; see e.g.~\cite[Proposition~1.2.6(2)]{br} (where the assumption that the coalgebras are defined over a field is not necessary). The compatibility with change of scalars is obvious. The equality~\eqref{eqn:restriction-morphisms} follows from Lemma~\ref{lem:transitivity-CT}.
\end{proof}

\begin{rmk}
	We emphasize that at this stage $\mathrm{res}_{\cP,\cG}$ is only a morphism of \emph{coalgebras}. We will later prove that it is also compatible with products, hence a morphism of bialgebras (see Proposition~\ref{prop:res-bialg}), but this fact is not clear for now.
\end{rmk}

\section{The absolute case}
\label{sec:monoidality-unramified}

\subsection{Absolute variants}
\label{ss:absolute-var}

Recall from~\S\ref{ss:loop-gps} that this paper treats two parallel geometric settings: the ``ramified'' case (including ``unramified groups'') involving the group $\cG$ (where geometric objects live over $\F$), and the ``absolute'' case involving the group $F^s \pot{z} \otimes_F G$ (where geometric objects live over the field $F^s$).  
So far in this paper, we have considered only the ramified case.
In fact, up to replacing $F^s$ by its algebraic closure,\footnote{Note that, if $F^{\mathrm{alg}}$ is an algebraic closure of $F^s$ the extension $F^s \to F^{\mathrm{alg}}$ is purely inseparable, hence the morphism $\Spec(F^{\mathrm{alg}}) \to \Spec(F^s)$ is a universal homeomorphism, see~\cite[\href{https://stacks.math.columbia.edu/tag/0BR5}{Tag 0BR5}]{stacks-project}; it therefore induces equivalences between the appropriate categories of sheaves.} the absolute case is a special case of the ramified one.

In this section, we consider the absolute setting.  
We point out that all the constructions and results of Sections \ref{sec:aff-Grass}--\ref{sec:construction} are applicable in the absolute case. 
We adapt notation from those sections by replacing the subscript ``$\cG$'' by ``$G$'' throughout.  
For instance, we write $\Gr_G$ instead of $\Gr_\cG$, $F_G$ instead of $F_\cG$, $\Loop^+ G$ instead of $\Loop^+ \cG$, and so on. (To be completely formal, these objects should be be denoted $\Gr_{F^s \pot{z} \otimes_F G}$, $\sF_{F^s \pot{z} \otimes_F G}$, $\Loop^+(F^s \pot{z} \otimes_F G)$, etc.)

The absolute counterparts of most statements from Sections \ref{sec:aff-Grass}--\ref{sec:construction} have appeared in the literature on the ``usual'' geometric Satake equivalence, in particular in~\cite{mv,br}.  
However, there is one significant point of departure: the monoidal structure on $\sF_G$ from Section~\ref{sec:monoidality} is constructed very differently from the one described in~\cite{mv}, and it is a priori not clear whether they coincide.

The main result of this section (Theorem~\ref{thm:monoidal-agree}) asserts that when $\Lambda = \bK$, they do coincide.  The most important consequence is that in this special case, the monoidal structure from Section~\ref{sec:monoidality} admits a \emph{commutativity constraint}.  We will see later in the paper (Corollary~\ref{cor:morph-coalg-Psi} and Proposition~\ref{prop:commutativity}) how to transfer this commutativity to the ramified case and to general $\Lambda$.

\subsection{The fusion monoidal structure via equivariant cohomology}
\label{ss:FG-monoidal-2}

Let us denote by 
\begin{equation}\label{eqn:monfus-defn}
	\phi_{\mathrm{fus}} \colon \sF_G(\scF) \otimes_\Lambda \sF_G(\scG) \simto \sF_G(\scF \star^0 \scG)
\end{equation}
the natural isomorphism constructed in~\cite[Proposition~6.4]{mv} or~\cite[Proposition~1.10.11]{br}.  The subscript ``fus'' refers to the ``fusion product,'' which is used in the construction of this isomorphism.  We will not review the details of the construction in general.

However, in the special case where $\Lambda = \bK$, $\phi_{\mathrm{fus}}$ has an alternative description in terms of equivariant cohomology, as explained in~\cite[\S3.3.4]{ar-book}.  In this subsection, we recall this alternative description.\footnote{In fact, the equivariant cohomology description is available for any $\Lambda$ in which the torsion primes of $G$ are invertible.  As the case $\Lambda = \bK$ is sufficient for our proof of Theorem~\ref{thm:monoidal-agree}, we will not consider more general rings here.}  

For the remainder of this subsection, we assume that $\Lambda = \bK$.  Let $R_{G,\bK}$ be the $\Loop^+G$-equivariant cohomology of a point, i.e., the graded $\bK$-algebra given by
\[
R_{G,\bK} := \bigoplus_{n \in \Z} \coH^n_{\Loop^+G}(\Spec(F^s), \bK) = \bigoplus_{n \in \Z} \Hom_{\Dbc([\Spec(F^s)/\Loop^+G]_{\et}, \bK)}(\underline{\bK}, \underline{\bK}[n]).
\]
The ring $\bK$, considered as a graded object concentrated in degree $0$, has a natural structure of graded $R_{G,\bK}$-module.

More generally, for any Levi subgroup $M \subset G$, we can define the ring $R_{M,\bK}$ in the same way, and there is an obvious map of equivariant cohomology rings
\begin{equation}\label{eqn:eqcohom-restrict}
	R_{G,\bK} \to R_{M,\bK}.
\end{equation}
It is well known that in the case of a torus, we have that $R_{T,\bK}$ is the graded symmetric algebra on 
\[
\coH^2_{\Loop^+T}(\Spec(F^s), \bK) \cong \bK \otimes_{\Z} \bbX^*(T).
\]
The (absolute) Weyl group $W_\abs$ of $(G_{F^s}, T_{F^s})$ acts on this ring, and the map~\eqref{eqn:eqcohom-restrict} (in the special case $M=T$) induces an isomorphism
\[
R_{G,\bK} \simto R_{T,\bK}^{W_\abs}.
\]

For $\scF \in \Dbc(\Hk_G,\bK)$ and $n \in \Z$ we set 
\[
\sH^n_{\Loop^+G}(\Gr_{G}, \scF) := \Hom_{\Dbc(\Hk_G, \bK)} (\underline{\bK}_X, \scF[n]),
\]
where $\underline{\bK}_X$ is (the object whose image in $\Dbc(\Gr_G,\bK)$ is) the constant sheaf on some $\Loop^+G$-stable closed subscheme $X$ that contains the support of $\scF$. As above, the same definition applies when $G$ is replaced by a Levi subgroup $M$, e.g.~by $T$.

Let $\gMod_{R_{G,\bK}}$ be the category of graded $R_{G,\bK}$-modules.  Define the functor
\[
\sFeq_G \colon \Dbc(\Hk_G, \bK) \to \gMod_{R_{G,\bK}}
\]
by
\[
\sFeq_G(\scF) := \bigoplus_{n \in \Z} \sH^n_{\Loop^+G}(\Gr_{G}, \scF).
\]

Let $M \subset G$ be a Levi subgroup, and recall the natural morphism
\[
h_{M,G} \colon [\Loop^+ M \backslash \Gr_G]_{\et} \to \Hk_G.
\]
Then we can also consider $\Loop^+ M$-equivariant cohomology of complexes on $\Hk_G$: for $\scF$ in $\Dbc(\Hk_G, \bK)$ we set
\[
\sF^{\mathrm{eq},M}_G(\scF) := \bigoplus_{n \in \Z} \sH^n_{\Loop^+M}(\Gr_{G}, h_{M,G}^* \scF).
\]

The next statements are well known; see e.g.~\cite[\S5.2]{zhu-notes} (see also~\cite[\S 3.3.4]{ar-book} for analogues in an ``analytic'' setting).  

\begin{lem}
	\label{lem:Feq-properties}
	For any $\scF \in \Perv(\Hk_G,\bK)$, $\sFeq_G(\scF)$ is a finitely generated projective $R_{G,\bK}$-module. Moreover, there is a natural isomorphism
	\begin{equation}
		\label{eqn:cohom-equiv-cohom}
		\sF_G(\scF) \cong \bK \otimes_{R_{G,\bK}} \sFeq_G(\scF)
	\end{equation}
	and, for any Levi subgroup $M \subset G$, a natural isomorphism
	\begin{equation}
		\label{eqn:equiv-cohom-restriction}
		R_{M,\bK} \otimes_{R_{G,\bK}} \sFeq_G(\scF) \cong \sF^{\mathrm{eq},M}_G(h_{M,G}^* \scF).
	\end{equation}
\end{lem}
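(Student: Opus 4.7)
The approach is to establish parity vanishing for ordinary cohomology and then exploit the resulting degeneration of the Leray spectral sequence for the structural projection $\pi \colon \Hk_G \to B\Loop^+G$. By additivity we may assume $\scF$ is supported on a single connected component of $\Gr_G$. On such a component, the absolute analogues of~\eqref{eqn:conn-comp-Gr} and~\eqref{eqn:dim-orbits} imply that all integers $\langle \lambda, 2\rho \rangle$ share a common parity $\epsilon \in \{0,1\}$. Combining the absolute version of Lemma~\ref{lem:semisimplicity} (semisimplicity of $\Perv(\Hk_G, \bK)$) with the standard paving of Schubert varieties $\Gr_G^{\leq \mu}$ by affine spaces whose dimensions share this parity, one deduces that $\sH^n(\Gr_G, \scF) = 0$ unless $n \equiv \epsilon \pmod 2$.

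Now consider the Leray spectral sequence
\[
E_2^{p,q} = \sH^p(B\Loop^+G, R^q \pi_* \scF) \Rightarrow \sH^{p+q}_{\Loop^+G}(\Gr_G, \scF).
\]
Since $\Loop^+G$ is pro-connected, it acts trivially on $\sH^q(\Gr_G, \scF)$, so $R^q \pi_* \scF$ is constant with value $\sH^q(\Gr_G, \scF)$ and $E_2^{p,q} \cong R_{G,\bK}^p \otimes_\bK \sH^q(\Gr_G, \scF)$. The ring $R_{G,\bK} \cong R_{T,\bK}^{W_\abs}$ is concentrated in even degrees since $R_{T,\bK}$ is a symmetric algebra on its degree-two part. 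Together with the parity vanishing above, this forces each $E_2^{p,q}$ to lie in total degree $\equiv \epsilon \pmod 2$; consequently all differentials vanish for parity reasons, the spectral sequence degenerates at $E_2$, and we obtain a (non-canonical) isomorphism $\sFeq_G(\scF) \cong R_{G,\bK} \otimes_\bK \sF_G(\scF)$ as graded $R_{G,\bK}$-modules. This proves that $\sFeq_G(\scF)$ is free, hence projective; reducing modulo the augmentation ideal of $R_{G,\bK}$ then shows that the canonical edge map $\sFeq_G(\scF) \to \sF_G(\scF)$ induces the natural isomorphism~\eqref{eqn:cohom-equiv-cohom}.

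For~\eqref{eqn:equiv-cohom-restriction}, the same argument applied to the projection $[\Loop^+M \backslash \Gr_G]_{\et} \to B\Loop^+M$ yields a degenerate Leray spectral sequence, hence a natural identification $\sF_G^{\mathrm{eq},M}(h_{M,G}^*\scF) \cong R_{M,\bK} \otimes_\bK \sF_G(\scF)$. Combining this with~\eqref{eqn:cohom-equiv-cohom} gives the desired isomorphism, and its naturality follows from compatibility of the edge maps under restriction of equivariance along $\Loop^+M \hookrightarrow \Loop^+G$.

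The main obstacle is the parity vanishing step: one needs simple perverse sheaves on $\Gr_G$ to behave as parity sheaves for the Schubert stratification. This rests on two classical inputs — semisimplicity over $\bK$ and the affine paving of Schubert varieties — and is the only place where the characteristic-zero hypothesis $\Lambda = \bK$ is essential. Everything afterwards is a formal consequence of parity considerations for the $E_2$-page of the equivariant spectral sequence.
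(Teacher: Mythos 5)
Your proof is correct and follows essentially the same route as the paper's intended argument: the paper does not prove Lemma~\ref{lem:Feq-properties} itself but refers to~\cite[\S 5.2]{zhu-notes} and~\cite[\S 3.3.4]{ar-book}, where exactly your strategy is used — parity vanishing of $\sH^\bullet(\Gr_G,h^*\scF)$ on each connected component (available either via semisimplicity and the parity property of the $\IC$ sheaves, or, as in the proof of Proposition~\ref{prop:CT-fiber}, via the weight decomposition) together with the evenness of $R_{G,\bK}\cong R_{T,\bK}^{W_\abs}$ forces degeneration of the equivariant-to-ordinary spectral sequence, giving freeness of $\sFeq_G(\scF)$ and the identification~\eqref{eqn:cohom-equiv-cohom} via the edge map. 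The only step to tighten is~\eqref{eqn:equiv-cohom-restriction}: since the degeneration only yields a non-canonical splitting, you should observe that the \emph{canonical} map $R_{M,\bK}\otimes_{R_{G,\bK}}\sFeq_G(\scF)\to \sF^{\mathrm{eq},M}_G(h_{M,G}^*\scF)$ is a morphism of finitely generated free graded $R_{M,\bK}$-modules which becomes an isomorphism after reduction modulo the augmentation ideal (by compatibility of the edge maps with~\eqref{eqn:cohom-equiv-cohom}), and conclude by graded Nakayama.
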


Let $\gproj_{R_{G,\bK}}$ be the category of finitely generated projective $R_{G,\bK}$-modules. In view of Lemma~\ref{lem:Feq-properties},
one can regard $\sFeq_G$ as a functor
\[
\sFeq_G \colon \Perv(\Hk_G, \bK) \to \gproj_{R_{G,\bK}}.
\]

\begin{prop}
	\label{prop:sfeq-monoidal}
	For $\scF, \scG \in \Perv(\Hk_G,\bK)$, there is a natural isomorphism
	\[
	\sFeq_G(\scF \star \scG) \cong \sFeq_G(\scF) \otimes_{R_{G,\bK}} \sFeq_G(\scG),
	\]
	making $\sFeq_G \colon \Dbc(\Hk_G, \bK) \to \gproj_{R_{G,\bK}}$ into a monoidal functor.
\end{prop}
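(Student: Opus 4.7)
The plan is to adapt the standard argument for equivariant cohomology of convolutions (see e.g.~\cite[\S 5.2]{zhu-notes} or~\cite[\S 3.3.4]{ar-book}) by working entirely over the base stack $B\Loop^+G := [\mathrm{pt}/\Loop^+G]_\et$, whose cohomology ring is exactly $R_{G,\bK}$. The structure map $q \colon \Hk_G \to B\Loop^+G$ induces the natural pushforward $\sFeq_G(\scF) = R\Gamma(\Hk_G,\scF)$ viewed as a complex of $R_{G,\bK}$-modules.

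First I would use the ind-properness of the multiplication map $m$ to identify
\[
\sFeq_G(\scF \star \scG) \;=\; R\Gamma(\HkConv_G,\, p^*(\scF \lboxtimes_\bK \scG))
\]
as an $R_{G,\bK}$-module, where the $R_{G,\bK}$-structure comes from any of the natural maps $\HkConv_G \to B\Loop^+G$ (e.g.~$q\circ m$ or $q\circ \pr_1$). Next, the first cartesian square in~\eqref{eqn:ppr-cartesian} realizes $\HkConv_G$ as a fiber product of $\Hk_G$ and $\Hk_G \times \Hk_G$ over $\Hk_G \times B\Loop^+G$. Combining this with base change and the Künneth isomorphism over $B\Loop^+G$ (that is, the fact that the derived category of sheaves on $B\Loop^+G$ is monoidal with tensor product computing $\otimes^L_{R_{G,\bK}}$ on global sections), one obtains a natural isomorphism
\[
R\Gamma(\HkConv_G,\, p^*(\scF \lboxtimes_\bK \scG)) \;\cong\; \sFeq_G(\scF) \otimes^L_{R_{G,\bK}} \sFeq_G(\scG).
\]
By Lemma~\ref{lem:Feq-properties}, $\sFeq_G(\scF)$ and $\sFeq_G(\scG)$ are projective over $R_{G,\bK}$ whenever $\scF,\scG$ are perverse, so the derived tensor product reduces to the ordinary one, producing the desired isomorphism.

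The associativity constraint would then follow formally by considering the triple convolution stack and applying the same base change / Künneth machinery to the iterated diagram, and the unit constraint is obtained by specializing one factor to the skyscraper at the base point (whose equivariant cohomology is the free module $R_{G,\bK}$). Finally, the existence of the underlying monoidal structure on $\sFeq_G$ transferred along~\eqref{eqn:cohom-equiv-cohom} is then a routine check.

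The main obstacle I anticipate is technical rather than conceptual: $\HkConv_G$ is only a colimit of algebraic stacks of finite type, and the various structure maps (notably $q \colon \Hk_G \to B\Loop^+G$ itself) are not proper, so one must be careful about which form of base change applies at which step. To handle this, I would reduce to $\Loop^+G$-stable closed subschemes $X_i \subset \Gr_G$ (and the corresponding quotients by $\Loop^+_n G$ for $n$ sufficiently large) as in~\S\ref{ss:quotient-stacks}, where everything is of finite type and the maps $m$ and $\pr_1$ restrict to proper maps, apply the base change / Künneth isomorphisms at that level, and then pass to the colimit using the standard independence statements for $\Dbc$ and the fact that all the functors involved commute with filtered colimits along the transition maps.
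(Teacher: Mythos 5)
You should know at the outset that the paper does not actually prove Proposition~\ref{prop:sfeq-monoidal}: it is quoted as a well-known statement with references to~\cite[\S 5.2]{zhu-notes} and~\cite[\S 3.3.4]{ar-book}, and your overall plan (a K\"unneth-type computation over $[\mathrm{pt}/\Loop^+G]_{\et}$ combined with the projectivity statement of Lemma~\ref{lem:Feq-properties}) is indeed the strategy of those references. However, as written your central step has a genuine gap. First, the square~\eqref{eqn:ppr-cartesian} is the wrong tool: its lower-left corner is $\Hk_G$, obtained by pulling back the atlas $\mathrm{pt} \to [\Loop^+G\backslash \mathrm{pt}]_{\et}$ in the second factor, so base change along it precisely \emph{de-equivariantizes} the second argument. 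Indeed this is exactly how the paper uses that square (Lemma~\ref{lem:GrB-conv}, Corollary~\ref{cor:CT-conv-split}): it yields $\pr_{1!}\,p^*(\scF \lboxtimes_\bK \scG) \cong \scF \lotimes_\bK R\Gamma(\Gr_G, h^*\scG)$, and hence, after taking global sections, $\sFeq_G(\scF) \otimes_\bK \sH^\bullet(\Gr_G, h^*\scG)$, in which the $R_{G,\bK}$-module structure coming from the second factor has been killed. This is not, naturally, $\sFeq_G(\scF) \otimes_{R_{G,\bK}} \sFeq_G(\scG)$, and no natural comparison map is produced by this route. The geometric input you actually need is the identification of $\HkConv_G$ with the fiber product $\Hk_G \times_{[\Loop^+G\backslash\mathrm{pt}]_{\et}} \Hk_G$ over the classifying stack itself, taken with respect to the two structure maps remembering the ``middle'' torsor; together with the projection formula and (ind-)proper base change along these maps, this reduces the claim to an isomorphism $R\Gamma \bigl( [\Loop^+G\backslash\mathrm{pt}]_{\et}, \mathcal{A} \otimes^L \mathcal{B} \bigr) \cong R\Gamma(\mathcal{A}) \otimes^L_{R_{G,\bK}} R\Gamma(\mathcal{B})$ for the relevant pushforwards $\mathcal{A}$, $\mathcal{B}$.

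Second, that last statement is not the formal assertion you invoke (``global sections on $B\Loop^+G$ is monoidal over $R_{G,\bK}$''); for general complexes on the classifying stack it is simply false. It is an Eilenberg--Moore-type comparison, and its validity in the present situation is exactly where the projectivity (equivalently, the parity of $\sH^\bullet(\Gr_G, h^*\scF)$ on each connected component) enters: one needs $\sFeq_G(\scF)$ or $\sFeq_G(\scG)$ to be projective over $R_{G,\bK}$ to make the relevant spectral sequence degenerate and to identify its abutment with the derived tensor product. In other words, Lemma~\ref{lem:Feq-properties} is needed to obtain the K\"unneth isomorphism at all, not merely (as in your write-up) to replace $\otimes^L_{R_{G,\bK}}$ by $\otimes_{R_{G,\bK}}$ afterwards. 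Once the argument is restructured along these lines (correct fiber-product presentation of $\HkConv_G$, projection formula and base change along the ind-proper structure maps, then the degeneration argument using projectivity), your treatment of associativity, the unit, and the reduction to finite-type quotients is fine.
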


The preceding two results give rise to a monoidal structure on
\[
\sF_G \colon (\Perv(\Hk_G,\bK), \star) \to (\modf_\bK, \otimes_\bK)
\]
as follows:  Lemma~\ref{lem:Feq-properties} expresses $\sF_G$ as the composition of the functor
\[
\bK \otimes_{R_{G,\bK}} ({-}) \colon \gproj_{R_{G,\bK}} \to \modf_\bK
\]
with $\sFeq_G \colon \Perv(\Hk_G, \bK) \to \gproj_{R_{G,\bK}}$.  The former is obviously monoidal, and the latter is monoidal by Proposition~\ref{prop:sfeq-monoidal}, so $\sF_G$ is monoidal as well.

\subsection{Comparison of the monoidal structures}
\label{ss:comparison-monoidal-structures}

In this subsection we compare the monoidal structures on $\sF_G$ constructed in~\eqref{eqn:monoidality-defn} and~\eqref{eqn:monfus-defn}.

\begin{prop}
	\label{prop:CT-fibereq}
	For $\scF \in \Perv(\Hk_G,\bK)$, there is a natural isomorphism
	\begin{equation}\label{eqn:CT-fibereq}
		R_{T,\bK} \otimes_{R_{G,\bK}} \sFeq_G(\scF) \cong \sFeq_T(\CT_{B,G}(\scF)).
	\end{equation}
	Moreover, this isomorphism has the property that the following diagram commutes, where the upper vertical arrows are induced by the augmentation map $R_{T,\bK} \to \bK$:
	\[
	\begin{tikzcd}
		R_{T,\bK} \otimes_{R_{G,\bK}} \sFeq_G(\scF) \ar[d] \ar[r, "\sim"', "\eqref{eqn:CT-fibereq}"] &
		\sFeq_T(\CT_{B,G}(\scF)) \ar[d] \\
		\bK \otimes_{R_{G,\bK}} \sFeq_G(\scF) \ar[d, "\wr", "\eqref{eqn:cohom-equiv-cohom}"'] &
		\bK \otimes_{R_{T,\bK}} \sFeq_T(\CT_{B,G}(\scF)) \ar[d, "\wr"', "\eqref{eqn:cohom-equiv-cohom}"] \\
		\sF_G(\scF) \ar[r, "\sim"', "\text{\normalfont Prop.~\ref{prop:CT-fiber}}"] &
		\sF_T(\CT_{B,G}(\scF)).
	\end{tikzcd}
	\]
\end{prop}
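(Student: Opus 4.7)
The plan is to establish this as an equivariant refinement of Proposition~\ref{prop:CT-fiber}, which by Lemma~\ref{lem:transitivity-CT} reduces at once to the case $P = B$. First, the isomorphism~\eqref{eqn:equiv-cohom-restriction} applied with $M = T$ canonically identifies the left-hand side with $\bigoplus_n \sH^n_{\Loop^+T}(\Gr_G, h_{T,G}^*\scF)$, so the task becomes producing a natural isomorphism between this object and $\sFeq_T(\CT_{B,G}(\scF))$, and subsequently checking its compatibility with the augmentation.

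The key technical ingredient I expect to need is an equivariant parity vanishing refining~\eqref{eqn:vanishing-weight-functors} and Remark~\ref{rmk:exactness-CT-vanishing}: for any $\scF \in \Perv(\Hk_G,\bK)$ and any $\mu \in \bbX_*(T)$, the graded $R_{T,\bK}$-module
\[
\sH^\bullet_{c,\Loop^+T}(\rmS_\mu, (h_{T,G}^*\scF)|_{\rmS_\mu})
\]
should be free over $R_{T,\bK}$ and concentrated in degrees of parity equal to that of $\langle \mu, 2\rho\rangle$. I plan to derive this from the contraction principle: the semi-infinite orbit $\rmS_\mu$ is $\Gm$-equivariantly contractible to the $\Loop^+T$-fixed point $t^\mu$ via a suitable cocharacter of $T$ making $B$ the attractor, so an equivariant form of Braden's theorem should express the above cohomology as $R_{T,\bK}$ tensored over $\bK$ with the non-equivariant stalk, which is concentrated in the expected degree by Remark~\ref{rmk:exactness-CT-vanishing}.

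Granting this, the main construction runs the equivariant analogue of the filtration argument from the proof of Proposition~\ref{prop:CT-fiber}. Using a presentation $\Gr_G = \colim_i X_i$ and, for each $X_i$, the filtration $X_i^n$ whose successive strata are disjoint unions $\bigsqcup_\mu (\rmS_\mu \cap X_i)$ with $\langle \mu, 2\rho\rangle = 2n$, the $\Loop^+T$-equivariant cohomology long exact sequences attached to the usual open-closed distinguished triangles split degree by degree by parity, yielding by induction on $n$ a canonical decomposition
\[
\bigoplus_n \sH^n_{\Loop^+T}(\Gr_G, h_{T,G}^*\scF) \cong \bigoplus_{\mu \in \bbX_*(T)} \sH^\bullet_{c,\Loop^+T}(\rmS_\mu, (h_{T,G}^*\scF)|_{\rmS_\mu}).
\]
Since $\Gr_T$ is discrete, the right-hand side identifies by definition with $\sFeq_T(\CT_{B,G}(\scF))$. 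The compatibility with~\eqref{eqn:cohom-equiv-cohom} and Proposition~\ref{prop:CT-fiber} comes for free, since the equivariant and non-equivariant filtration arguments have identical structure and applying $\bK \otimes_{R_{T,\bK}} (-)$ to the equivariant decomposition recovers~\eqref{eqn:cohomology-wt-spaces}. The main obstacle I anticipate is a clean proof of the equivariant parity vanishing, in particular the freeness over $R_{T,\bK}$ for sheaves that are merely $\Loop^+T$-equivariant rather than strictly $\Gm$-equivariant; this should follow from monodromicity as in the proof of Proposition~\ref{prop:Braden}.
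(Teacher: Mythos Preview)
Your proposal is correct and follows essentially the same route as the paper: reduce via~\eqref{eqn:equiv-cohom-restriction} to identifying $\sF_G^{\mathrm{eq},T}(\scF)$ with $\sFeq_T(\CT_{B,G}(\scF))$, then rerun the filtration argument of Proposition~\ref{prop:CT-fiber} with $\Loop^+T$-equivariant cohomology in place of ordinary cohomology. The paper is terser, simply saying ``copy the proof of Proposition~\ref{prop:CT-fiber}'' and pointing to Yun--Zhu for the equivariant details you spell out.

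One small comment on your justification of the equivariant parity vanishing: invoking an ``equivariant Braden theorem'' is slightly circuitous. The direct route is that $q_{B!}i_B^*h_{T,G}^*\scF$ is a complex on the discrete scheme $\Gr_T$, so its $\Loop^+T$-equivariant cohomology at each point $t^\mu$ is simply $R_{T,\bK} \otimes_\bK (q_{B!}i_B^*h_{T,G}^*\scF)_{t^\mu}$ (equivariant cohomology of a point with $\bK$-coefficients being formal); freeness and parity concentration then follow immediately from the non-equivariant vanishing in Remark~\ref{rmk:exactness-CT-vanishing}. This removes the worry you flag about monodromicity.
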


\begin{proof}
	Recall the functor $\sF_G^{\mathrm{eq},T}$ considered above.
	In view of~\eqref{eqn:equiv-cohom-restriction},
	to prove the proposition it suffices to establish a version of the commutative diagram above in which the upper left-hand corner is replaced by $\sF_G^{\mathrm{eq},T}(\scF)$.  The desired isomorphism 
	\[
	\sF_G^{\mathrm{eq},T}(\scF) \cong \sFeq_T(\CT_{B,G}(\scF))
	\]
	can be obtained by copying the proof of Proposition~\ref{prop:CT-fiber} and replacing all occurrences of ordinary cohomology 
	by $\Loop^+T$-equivariant cohomology.
	(See~\cite[Lemma~2.2]{yun-zhu} for similar considerations.) The commutativity of the diagram is then immediate from the construction.
\end{proof}

\begin{prop}
	\label{prop:CT-fibereq-conv}
	For $\scF, \scG \in \Perv(\Hk_G,\bK)$, there is a natural isomorphism
	\begin{equation}\label{eqn:CT-fibereq-conv}
		R_{T,\bK} \otimes_{R_{G,\bK}} \sFeq_G(\scF \star \scG) \cong\sFeq_T(m_{T!}\widetilde{\CT}_{B,G}(p^*(\scF \lboxtimes_\bK \scG))).
	\end{equation}
	Moreover, this isomorphism has the property that the following diagram commutes:
	\[
	\begin{tikzcd}
		R_{T,\bK} \otimes_{R_{G,\bK}} \sFeq_G(\scF \star \scG) \ar[d] \ar[r, "\sim"', "\eqref{eqn:CT-fibereq-conv}"] &
		\sFeq_T(m_{T!}\widetilde{\CT}_{B,G}(p^*(\scF \lboxtimes_\bK \scG))) \ar[d] \\
		\bK \otimes_{R_{G,\bK}} \sFeq_G(\scF \star \scG) \ar[d, "\wr", "\eqref{eqn:cohom-equiv-cohom}"'] &
		\bK \otimes_{R_{T,\bK}} \sFeq_T(m_{T!}\widetilde{\CT}_{B,G}(p^*(\scF \lboxtimes_\bK \scG))) \ar[d, "\wr"', "\eqref{eqn:cohom-equiv-cohom}"] \\
		\sF_G(\scF \star \scG) \ar[r, "\sim"', "\text{\normalfont Prop.~\ref{prop:CT-fiber-conv}}"] &
		\sF_T(m_{T!}\widetilde{\CT}_{B,G}(p^*(\scF \lboxtimes_\bK \scG))).
	\end{tikzcd}
	\]
\end{prop}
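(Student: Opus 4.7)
The plan is to combine Proposition~\ref{prop:CT-compare} with the equivariant constant-term identification of Proposition~\ref{prop:CT-fibereq}, exploiting several t-exactness simplifications that are available over $\bK$.

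First I would record the simplifications specific to the $\bK$-setting. Lemma~\ref{lem:semisimplicity} shows that $\Perv(\Hk_G,\bK)$ is semisimple, and Proposition~\ref{prop:semismall} shows that the convolution morphism $m$ is semismall; it follows that $\star$ is t-exact on perverse sheaves, so $\scF \star \scG = \scF \star^0 \scG$. Moreover, since $\bK$ is a field, the external tensor product $\scF \lboxtimes_\bK \scG$ is itself perverse, so $\pH^0(\scF \lboxtimes_\bK \scG) = \scF \lboxtimes_\bK \scG$. These observations let us translate freely between the statements of Propositions~\ref{prop:CT-compare}, \ref{prop:CT-fiber-conv} and the present one.

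Next I would apply Proposition~\ref{prop:CT-compare} to obtain a natural isomorphism
\[
\CT_{B,G}(\scF \star \scG) \cong m_{T!}\widetilde{\CT}_{B,G}(p^*(\scF \lboxtimes_\bK \scG))
\]
in $\Perv(\Hk_T,\bK)$, then apply the functor $\sFeq_T$ and use the isomorphism~\eqref{eqn:CT-fibereq} of Proposition~\ref{prop:CT-fibereq} to rewrite the left-hand side. The resulting composition
\[
R_{T,\bK} \otimes_{R_{G,\bK}} \sFeq_G(\scF \star \scG) \cong \sFeq_T(\CT_{B,G}(\scF \star \scG)) \cong \sFeq_T(m_{T!}\widetilde{\CT}_{B,G}(p^*(\scF \lboxtimes_\bK \scG)))
\]
is the desired isomorphism~\eqref{eqn:CT-fibereq-conv}.

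For the commutativity of the diagram, the identity is formal: applying $\bK \otimes_{R_{T,\bK}} ({-})$ to the isomorphism just constructed and invoking~\eqref{eqn:cohom-equiv-cohom} on each factor reduces the outer square to the commutative diagram of Proposition~\ref{prop:CT-fibereq} applied to the perverse sheaf $\scF \star \scG$, together with the non-equivariant comparison of Proposition~\ref{prop:CT-fiber-conv} (which was itself built from Propositions~\ref{prop:CT-compare} and~\ref{prop:CT-fiber}). I do not foresee a substantive obstacle here: the heavy lifting has already been done in Propositions~\ref{prop:CT-compare} and~\ref{prop:CT-fibereq}, and the t-exactness simplifications over $\bK$ avoid any delicate issues with perverse truncations. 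The only care required is to keep the identifications $\scF \star \scG = \scF \star^0 \scG$ and $\scF \lboxtimes_\bK \scG = \pH^0(\scF \lboxtimes_\bK \scG)$ straight so that the statements being combined line up on the nose.
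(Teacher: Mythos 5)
Your proof is correct, but it takes a more formal route than the one the paper gestures at. The paper's (one-line) proof asks you to rerun the argument of Proposition~\ref{prop:CT-fibereq} with the geometric input of Proposition~\ref{prop:CT-fiber} replaced by that of Proposition~\ref{prop:CT-fiber-conv}, i.e.\ to redo the convolution weight-splitting chain (Proposition~\ref{prop:CT-compare-pr}, Corollary~\ref{cor:CT-conv-split}, Proposition~\ref{prop:CT-compare}) with $\Loop^+T$-equivariant cohomology in place of ordinary cohomology, after reducing via~\eqref{eqn:equiv-cohom-restriction} to a statement about $\sF_G^{\mathrm{eq},T}$. You instead avoid any new geometric work: since Proposition~\ref{prop:CT-compare} is an isomorphism at the level of perverse sheaves on $\Hk_T$, you may apply $\sFeq_T$ to it and compose with Proposition~\ref{prop:CT-fibereq} evaluated at the perverse sheaf $\scF \star \scG$; this is legitimate precisely because over $\bK$ one has $\scF \star \scG = \scF \star^0 \scG$ and $\pH^0(\scF \lboxtimes_\bK \scG) = \scF \lboxtimes_\bK \scG$ (your appeal to semisimplicity is not needed here -- semismallness of $m$ plus field coefficients for $\lboxtimes_\bK$ suffice, but this is harmless). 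The compatibility square then splits, as you say, into the commutative diagram of Proposition~\ref{prop:CT-fibereq} for $\scF \star \scG$ and a square that commutes by naturality of the augmentation and of~\eqref{eqn:cohom-equiv-cohom}, given that the isomorphism of Proposition~\ref{prop:CT-fiber-conv} is by definition the composite of Proposition~\ref{prop:CT-fiber} with $\sF_T$ applied to Proposition~\ref{prop:CT-compare}. What your approach buys is economy (no equivariant rerun of the filtration argument); what the paper's buys is an isomorphism constructed by the same mechanism as all its neighbours, which is what makes the subsequent Lemma~\ref{lem:monoidal-compare} ``clear from constructions'' -- with your definition one should note that this downstream check is still immediate, since your isomorphism is the formal composite of exactly the named ingredients appearing in that lemma.
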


\begin{proof}
	This is very similar to the proof of Proposition~\ref{prop:CT-fibereq}, replacing the details from the proof of Proposition~\ref{prop:CT-fiber} by those from Proposition~\ref{prop:CT-fiber-conv}.
\end{proof}

The following lemma is clear from constructions.

\begin{lem}
	\label{lem:monoidal-compare}
	For $\scF, \scG \in \Perv(\Hk_G,\bK)$, the following diagram commutes:
	\[
	\begin{tikzcd}
		\sFeq_T(m_{T!}\widetilde{\CT}_{B,G}(p_G^*(\scF \lboxtimes_{\bK} \scG))) \ar[dd, "\text{\normalfont Prop.~\ref{prop:CT-fibereq-conv}}"', "\wr"] \ar[r, "\text{\normalfont Cor.~\ref{cor:CT-monoidal-pull}}", "\sim"'] & \sFeq_T(\CT_{B,G}(\scF) \star^0 \CT_{B,G}(\scG)) \ar[d, "\text{\normalfont Prop.~\ref{prop:sfeq-monoidal}}", "\wr"'] \\
		& \sFeq_T(\CT_{B,G}(\scF)) \otimes_{R_{T,\bK}} \sFeq_T(\CT_{B,G}(\scG)) \ar[d, "\text{\normalfont Prop.~\ref{prop:CT-fibereq}}", "\wr"'] \\
		R_{T,\bK} \otimes_{R_{G,\bK}} \sFeq_G(\scF \star \scG) \ar[r, "\text{\normalfont Prop.~\ref{prop:sfeq-monoidal}}", "\sim"'] &
		R_{T,\bK} \otimes_{R_{G,\bK}} \sFeq_G(\scF) \otimes_{R_{G,\bK}} \sFeq_G(\scG).
	\end{tikzcd}
	\]
\end{lem}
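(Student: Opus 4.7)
The proof I would give is essentially a bookkeeping exercise, as the paper itself hints by saying the lemma ``is clear from constructions''. My plan is to exhibit a single, ambient commutative diagram of morphisms of stacks from which all four arrows of the lemma descend as elementary compositions of (proper) pushforwards, smooth pullbacks and base change isomorphisms, and then match the two routes through this ambient diagram.

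First I would observe that the monoidal isomorphism of Proposition~\ref{prop:sfeq-monoidal}, when evaluated on the torus $T$, is nothing more than a Künneth isomorphism: via the identifications~\eqref{eqn:Conv-ab} and~\eqref{eqn:HkConv-ab} together with the discreteness of $\Gr_T$, the convolution on $\Perv(\Hk_T,\bK)$ becomes an external tensor product, and the isomorphism $\sFeq_T(\scF \star \scG) \cong \sFeq_T(\scF) \otimes_{R_{T,\bK}} \sFeq_T(\scG)$ reduces to the Künneth formula for $\Loop^+ T$-equivariant cohomology of a product. Next, I would note that the isomorphisms of Propositions~\ref{prop:CT-fibereq} and~\ref{prop:CT-fibereq-conv} are assembled from precisely the same cartesian squares and closed/open stratifications used in Propositions~\ref{prop:CT-fiber} and~\ref{prop:CT-fiber-conv}, only applied in the $\Loop^+T$-equivariant derived category. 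Finally, Corollary~\ref{cor:CT-monoidal-pull} is built from the cartesian squares~\eqref{eqn:ppr-cartesian} (for $\cG$ and for $\cT$) combined with the Künneth-type computation of Lemma~\ref{lem:GrB-conv}.

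Having catalogued the ingredients, I would display the ambient commutative diagram of stacks from the proof of Corollary~\ref{cor:CT-monoidal-pull} together with the Künneth square of Lemma~\ref{lem:GrB-conv}, enhanced with $[\Loop^+T\backslash -]_{\et}$-quotients where needed, and trace a pair of $\bK$-linear complexes obtained from $p_G^*(\scF \lboxtimes_\bK \scG)$ along the two compositions. Each composition is then a string of proper base change and Künneth isomorphisms along the \emph{same} commutative diagram of stacks, so the two strings agree by the compatibilities between proper base change and Künneth (and the fact that both are natural in the input). This suffices, since the passage from equivariant cohomology to the diagram of the lemma is purely formal: every object in the diagram is of the form $\sFeq_T(-)$ or $R_{T,\bK}\otimes_{R_{G,\bK}} \sFeq_G(-)$ applied to a complex arising by such operations.

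The main point requiring care will be ensuring that the $R_{T,\bK}$-module structures match on both sides, since one of the objects in the diagram is defined as an extension of scalars $R_{T,\bK}\otimes_{R_{G,\bK}}\sFeq_G(-)$ while another carries its module structure through the equivariance of $\Loop^+T$-pullback. This is the only genuine obstacle, but it dissolves once one sets up the isomorphisms of Propositions~\ref{prop:CT-fibereq} and~\ref{prop:CT-fibereq-conv} directly as $R_{T,\bK}$-linear base-change morphisms (rather than after the fact), so that all $R_{T,\bK}$-actions in the diagram arise from a single underlying $\Loop^+T$-equivariance. With that convention in place, no additional geometric input is needed, and the diagram commutes by naturality.
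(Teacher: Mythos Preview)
Your proposal is correct and takes essentially the same approach as the paper, which offers no argument beyond the single sentence ``clear from constructions.'' Your expansion---tracing all four arrows back to the same ambient diagram of stacks and recognizing each as a composite of proper base change and K\"unneth isomorphisms in the $\Loop^+T$-equivariant derived category---is exactly the unpacking the paper leaves implicit, and your attention to the $R_{T,\bK}$-module structures is the one point where a reader might otherwise worry.
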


For the next statement, we come back to the setting of a general ring of coefficients $\Lambda$ as in~\S\ref{ss:Perv-Gr-Hk}.

\begin{thm}\label{thm:monoidal-agree}
	The monoidal structures on $\sF_G \colon \Perv(\Hk_G,\Lambda) \to \modf_\Lambda$ considered in~\eqref{eqn:monoidality-defn} and~\eqref{eqn:monfus-defn} agree.
\end{thm}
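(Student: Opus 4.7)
The plan is to reduce to the case $\Lambda = \bK$, where both monoidal structures admit a description in terms of $\Loop^+G$-equivariant cohomology, and then to carry out the comparison in the $\bK$-setting using the equivariant picture developed in~\S\ref{ss:FG-monoidal-2}.

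For the reduction, note that both the monoidal structure of Section~\ref{sec:monoidality} and the fusion structure $\phi_{\mathrm{fus}}$ are compatible with the change-of-scalars functors on $\Perv(\Hk_G,-)$ (for the Section~\ref{sec:monoidality} structure this follows from its construction; for the fusion structure it follows from the construction in~\cite{mv}, which is compatible with change of scalars on the Be\u\i linson--Drinfeld deformation). Via the Tannakian formalism of Theorem~\ref{thm:coalgebra} (in its absolute version), each of these monoidal structures corresponds to an algebra structure on the coalgebra $\rmB_G(\Lambda)$ that intertwines~\eqref{eqn:BG-ext-scalars}. Since $\rmB_G(\bO)$ is $\bO$-flat, the natural map $\rmB_G(\bO) \hookrightarrow \bK \otimes_\bO \rmB_G(\bO) \cong \rmB_G(\bK)$ is injective, and an algebra structure on $\rmB_G(\bO)$ extending the coalgebra structure is determined by its image in $\rmB_G(\bK)$. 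This reduces the $\bO$-case to the $\bK$-case, and the $\bk$-case then follows from the isomorphism $\bk \otimes_\bO \rmB_G(\bO) \simto \rmB_G(\bk)$.

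For the case $\Lambda = \bK$, recall from Lemma~\ref{lem:Feq-properties} and Proposition~\ref{prop:sfeq-monoidal} that $\sFeq_G$ is a monoidal functor with values in $\gproj_{R_{G,\bK}}$, and that $\phi_{\mathrm{fus}}$ is (by the construction recalled in~\cite[\S 3.3.4]{ar-book}) the monoidal structure obtained by applying the monoidal functor $\bK \otimes_{R_{G,\bK}} ({-})$ to this. On the other hand, by Remark~\ref{rmk:F-monoidal-summary} the monoidal structure $\phi$ of Section~\ref{sec:monoidality} is characterized by a commutative square threading $\CT_{B,G}$, $\widetilde{\CT}_{B,G}$ and the obvious monoidal structure on $\sF_T$. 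Lemma~\ref{lem:monoidal-compare} exhibits a commutative diagram expressing the corresponding equivariant statement after application of $R_{T,\bK} \otimes_{R_{G,\bK}} ({-})$ to $\sFeq_G$, and Propositions~\ref{prop:CT-fibereq} and~\ref{prop:CT-fibereq-conv} assert that the two vertical composites in that diagram recover the pathways of Remark~\ref{rmk:F-monoidal-summary} upon further base change $\bK \otimes_{R_{T,\bK}} ({-})$. Combining these three facts identifies $\phi$ with the monoidal structure inherited from $\sFeq_G$, hence with $\phi_{\mathrm{fus}}$.

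The main obstacle is not a single hard step but the bookkeeping needed to verify that each individual square in the resulting ladder of base-change diagrams commutes coherently, especially the verification that the isomorphism of Proposition~\ref{prop:CT-fibereq-conv} really computes what $\widetilde{\CT}_{B,G}$ sees of the convolution on the equivariant side. A secondary subtlety in the reduction step is to check that $\phi_{\mathrm{fus}}$ over $\bO$ does give rise to an algebra structure on $\rmB_G(\bO)$ compatible with that on $\rmB_G(\bK)$; this should be formal from the functoriality of the fusion construction in the coefficient ring, but must be spelled out to justify the injectivity argument.
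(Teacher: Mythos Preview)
Your treatment of the case $\Lambda=\bK$ is essentially the paper's: apply $\bK \otimes_{R_{T,\bK}}(-)$ to the diagram of Lemma~\ref{lem:monoidal-compare}, invoke Propositions~\ref{prop:CT-fibereq} and~\ref{prop:CT-fibereq-conv} to identify the result with the diagram of Remark~\ref{rmk:F-monoidal-summary}, and conclude. (The paper also isolates the case $G=T$ first, but this is a matter of presentation.)

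The reduction from $\bO$ and $\bk$ to $\bK$ has a gap. You correctly observe that $\phi$ and $\phi_{\mathrm{fus}}$ each induce an algebra structure on $\rmB_G(\Lambda)$, and that by flatness of $\rmB_G(\bO)$ these algebra structures agree over $\bO$ once they agree over $\bK$. But equality of the induced bialgebra structures does not immediately yield $\phi=\phi_{\mathrm{fus}}$ as natural isomorphisms: a priori two distinct monoidal structures on the fiber functor could induce the same multiplication on $\rmB_G$. The ``secondary subtlety'' you flag is a different (and easier) issue; the real missing step is the implication from the bialgebra back to $\phi$.

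The paper avoids this by arguing directly on perverse sheaves. Over $\bO$, if $\sF_G(\scF)$ and $\sF_G(\scG)$ are free, then $\sF_G(\scF)\otimes_\bO\sF_G(\scG)$ is torsion-free and embeds into its $\bK$-version, so $\phi_{\scF,\scG}$ is determined by its rationalization; this applies in particular to the projective generators $\scP_Z(\bO)$ by property~\eqref{it:properties-P-3} in~\S\ref{ss:structure-projective}. One then passes to arbitrary objects using that every object is a quotient of a sum of $\scP_Z$'s and that the relevant functors are right exact. The $\bk$ case is handled via $\scP_Z(\bk)\cong\bk\lotimes_\bO\scP_Z(\bO)$ and the same quotient argument. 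Your bialgebra route can be repaired along exactly these lines (the algebra structure on $\rmB_G$ records precisely $\phi$ on the $\scP_Z$'s, and these determine $\phi$ everywhere by right exactness), but as written the step from ``same algebra'' to ``same $\phi$'' is not justified.
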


\begin{proof}
	We must show that for $\scF, \scG \in \Perv(\Hk_G,\Lambda)$, the two isomorphisms
	\[
	\phi, \phi_{\mathrm{fus}} \colon \sF_G(\scF) \otimes_\Lambda \sF_G(\scG) \simto \sF_G(\scF \star^0 \scG)
	\]
	are equal.  Before proving this in general, we consider a number of special cases.
	
	\textit{Case 1. $\Lambda = \bK$ and $G = T$.} In this case, 
	the monoidal structure in~\eqref{eqn:monfus-defn} is provided by Lemma~\ref{lem:F-monoidal-torus}, and the agreement with the structure provided by Proposition~\ref{prop:sfeq-monoidal} is clear.
	
	\textit{Case 2. $\Lambda = \bK$, and $G$ is arbitrary.}
	Apply the functor $\bK \otimes_{R_{T,\bK}} ({-})$ to the commutative diagram in Lemma~\ref{lem:monoidal-compare}.  Using Lemma~\ref{lem:Feq-properties}, the commutative diagrams in Propositions~\ref{prop:CT-fibereq} and~\ref{prop:CT-fibereq-conv}, and Case~1 above, the resulting diagram can be written as
	\[
	\begin{tikzcd}
		\sF_T(m_{T!}\widetilde{\CT}_{B,G}(p_G^*(\scF \lboxtimes_\bK \scG))) \ar[dd, "\text{Prop.~\ref{prop:CT-fiber-conv}}"', "\wr"] \ar[r, "\text{Cor.~\ref{cor:CT-monoidal-pull}}", "\sim"'] & \sF_T(\CT_{B,G}(\scF) \star^0 \CT_{B,G}(\scG)) \ar[d, "\text{Lem.~\ref{lem:F-monoidal-torus}}", "\wr"'] \\
		& \sF_T(\CT_{B,G}(\scF)) \otimes_\bK \sF_T(\CT_{B,G}(\scG)) \ar[d, "\text{Prop.~\ref{prop:CT-fiber}}", "\wr"'] \\
		\sF_G(\scF \star \scG) \ar[r, "\text{\eqref{eqn:monfus-defn}}", "\sim"'] &
		\sF_G(\scF) \otimes_\bK \sF_G(\scG).
	\end{tikzcd}
	\]
	The result follows by comparison with the commutative diagram in Remark~\ref{rmk:F-monoidal-summary}.  
	
	\textit{Case 3. $\Lambda = \bO$, and $\sF_G(\scF)$ and $\sF_G(\scG)$ are free over $\bO$.}  This case follows from Case~2 by compatibility of our constructions with appropriate change-of-scalars functors.
	
	\textit{Case 4. $\Lambda = \bO$, and $\scF$ and $\scG$ are both direct sums of objects of the form $\scP_Z(\bO)$.}  By Property~\eqref{it:properties-P-3} in~\S\ref{ss:structure-projective}, this is a special case of Case~3.
	
	\textit{Case 5. $\Lambda = \bk$, and $\scF$ and $\scG$ are both direct sums of objects of the form $\scP_Z(\bk)$.}  This case follows from Case~4 by Property~\eqref{it:properties-P-2} in~\S\ref{ss:structure-projective} and compatibility with change of scalars.
	
	\textit{Case 6. $\Lambda = \bO$ or $\bk$, and $\scF$ and $\scG$ are arbitrary.}  For $\Lambda = \bO$ or $\bk$, any object in $\Perv(\Hk_G,\Lambda)$ is a quotient of a direct sum of objects of the form $\scP_Z(\Lambda)$, so the result in this case follows from Cases~4 and~5.
\end{proof}

\subsection{``Absolute'' geometric Satake equivalence}
\label{ss:absolute-Satake}

As mentioned already in~\S\ref{ss:absolute-var}
the structures we have considered on the category $\Dbc(\Hk_{\cG},\Lambda)$ above have well known (and older) counterparts for the group $F^s \pot{z} \otimes_{F^s} G_{F^s}$.
In particular we have the stack $\Hk_G$ over $\Spec(F^s)$, 
the convolution product $\star$ on the category $\Dbc(\Hk_G, \Lambda)$ defining a monoidal structure, the induced monoidal structure on the subcategory $\Perv(\Hk_G, \Lambda)$ of perverse sheaves obtained by setting
\[
\scF \star^0 \scG := \pH^0(\scF \star \scG),
\]
and the total cohomology functor $\sF_G$.
Given a parabolic subgroup $Q \subset G_{F^s}$ containing the maximal torus $T_{F^s}$, with Levi factor containing $T_{F^s}$ denoted $L$, we also have
the constant term functor
\[
\CT_{Q,G} \colon \Dbc(\Hk_G,\Lambda) \to \Dbc(\Hk_L,\Lambda)
\]
which is t-exact and admits a canonical monoidal structure.

On the other hand, denote by $G^\vee_{\mathbb{Z}}$ the unique pinned reductive group scheme over $\mathbb{Z}$ whose root datum is the dual of the 
root datum of $G_{F^s}$. 
Then, we set 
\[
G^\vee_\Lambda := \Lambda \otimes_{\mathbb{Z}} G^\vee_{\mathbb{Z}}.
\]
We will denote by $\Rep(G^\vee_\Lambda)$ the category of (algebraic) representations of this group scheme on finitely generated $\Lambda$-modules.

\begin{thm}
	\label{thm:Satake-abs}
	Fix a compatible system of $\ell^n$-th roots of unity in $\mathbb F$ for all $n\geq 1$.
	Then, there exists a canonical equivalence of monoidal categories
	\[
	\bigl( \Perv(\Hk_G, \Lambda), \star^0 \bigr) \cong \bigl( \Rep(G^\vee_\Lambda), \otimes_\Lambda \bigr)
	\]
	under which the functor $\sF_G$ corresponds to the obvious forgetful functor $\Rep(G^\vee_\Lambda) \to \modf_\Lambda$. Moreover, under these equivalences the ``change of scalars'' functors analogous so those considered in~\S\ref{ss:Perv-Gr-Hk} correspond to the obvious functors on categories of representations.
\end{thm}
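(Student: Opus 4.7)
The plan is to import the classical Mirković--Vilonen geometric Satake equivalence~\cite{mv} (see also~\cite{br} for an exposition including integral coefficients), which applies directly to the absolute setting at hand: the group $F^s\pot{z}\otimes_F G$ is split over the separably closed field $F^s$, and all constructions of Sections~\ref{sec:aff-Grass}--\ref{sec:construction} carry over. By Theorem~\ref{thm:coalgebra} (applied in the absolute case) we already have, for each $\Lambda \in \{\bK, \bO, \bk\}$, a $\Lambda$-bialgebra $\rmB_G(\Lambda)$ together with a monoidal equivalence $\mathsf{S}_G \colon \Perv(\Hk_G,\Lambda) \simto \comod_{\rmB_G(\Lambda)}$, compatible with the fiber functor $\sF_G$ and with the three change-of-scalars functors. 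It therefore suffices to exhibit canonical isomorphisms of $\Lambda$-bialgebras $\rmB_G(\Lambda) \cong \scO(G^\vee_\Lambda)$ compatible with base change.

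First I would treat the case $\Lambda = \bK$. The Beilinson--Drinfeld fusion construction of~\cite{mv} endows the monoidal category $(\Perv(\Hk_G,\bK), \star^0)$ with a commutativity constraint, and by Theorem~\ref{thm:monoidal-agree} the underlying monoidal structure coincides with the one used to produce $\rmB_G(\bK)$. Hence $\rmB_G(\bK)$ is commutative, and combined with semisimplicity of $\Perv(\Hk_G,\bK)$ (absolute analogue of Lemma~\ref{lem:semisimplicity}), the triple $(\Perv(\Hk_G,\bK), \star^0, \sF_G)$ becomes a neutral Tannakian category over $\bK$. Tannakian reconstruction, together with the explicit computation of weight spaces via MV cycles (Lemma~\ref{lem:properties-J!*}\eqref{it:properties-J!*-1}) and the identification of the root datum (as in~\cite[\S\S 7--14]{mv} or~\cite[\S 1.14]{br}), yields a canonical isomorphism $\Spec(\rmB_G(\bK)) \cong G^\vee_\bK$; the chosen compatible system of $\ell^n$-th roots of unity is used to rigidify the half-Tate twist appearing in this identification.

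For $\Lambda = \bO$, I would combine the change-of-scalars isomorphism $\bK \otimes_\bO \rmB_G(\bO) \simto \rmB_G(\bK) = \scO(G^\vee_\bK)$ of Theorem~\ref{thm:coalgebra} with the flatness of $\rmB_G(\bO)$ over $\bO$ to view $\rmB_G(\bO)$ as an $\bO$-sub-bialgebra of $\scO(G^\vee_\bK)$. Computing both lattices weight-by-weight via Proposition~\ref{prop:CT-fiber} and Lemma~\ref{lem:properties-J!*}\eqref{it:properties-J!*-1} identifies this sublattice with $\scO(G^\vee_\bO) = \bO \otimes_\Z \scO(G^\vee_\Z)$, following the integral MV arguments of~\cite[\S\S 8--12]{mv} or~\cite{br}. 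The case $\Lambda = \bk$ then follows from $\bk \otimes_\bO \rmB_G(\bO) \simto \rmB_G(\bk)$ together with a similar weight-by-weight comparison.

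The main subtlety --- essentially the only nontrivial content beyond what is in the literature --- lies in Step~1: verifying that our monoidal structure on $\sF_G$, constructed in Section~\ref{sec:monoidality} via constant terms rather than via fusion, really produces the same commutative bialgebra structure on $\rmB_G(\bK)$ as the classical fusion construction. This is precisely the content of Theorem~\ref{thm:monoidal-agree}, so in the end the proof reduces to applying MV mutatis mutandis.
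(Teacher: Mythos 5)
Your proposal is correct and follows essentially the same route as the paper's own proof sketch: apply the absolute case of Theorem~\ref{thm:coalgebra}, use Theorem~\ref{thm:monoidal-agree} so that the Mirkovi\'c--Vilonen fusion arguments apply to the bialgebra built from the Section~\ref{sec:monoidality} monoidal structure (making $\rmB_G(\Lambda)$ a commutative Hopf algebra), and identify $\Spec(\rmB_G(\bO))$ with the split reductive group with dual root datum by citing the integral MV analysis, with the flatness/base-change isomorphisms handling $\bK$ and $\bk$. The only point you compress is the canonicity of the identification: the paper makes it precise by constructing a pinning on the dual group (canonical Borel from the cohomological filtration, simple root vectors via rank-one Levi reductions, with the chosen compatible system of $\ell^n$-th roots of unity trivializing the Tate twist on $\mathsf{H}^2(\mathbb{P}^1_{F^s})$), which is exactly the content behind your remark about rigidifying the half-Tate twist.
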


\begin{proof}[Proof sketch]
	This statement is essentially the usual version of the geometric Satake equivalence from~\cite{mv}.
	Let us indicate the main steps of the proof.
	
	First, for any connected reductive group $H$ over $F^s$ with Borel subgroup $B_H \subset H$ and maximal torus $T_H \subset B_H$, the absolute version of Theorem~\ref{thm:coalgebra} gives us a bialgebra $\rmB_H(\Lambda)$ and an equivalence of monoidal categories
	\[
	\bigl( \Perv(\Hk_{F^s\pot{z} \otimes_{F^s} H},\Lambda), \star^0 \bigr) \simto \bigl( \comod_{\rmB_H(\Lambda)}, \otimes_\Lambda \bigr).
	\]
	It is shown by Mirkovi\'c--Vilonen~\cite[\S11]{mv} (see also~\cite[\S1.13.2]{br}) that $\rmB_H(\Lambda)$ is in fact a commutative Hopf algebra, so that
	\[
	\widehat{H}_\Lambda := \Spec(\rmB_H(\Lambda))
	\]
	is a flat affine group scheme over $\Lambda$.  Thus, Theorem~\ref{thm:coalgebra} can be restated as 
	an equivalence of monoidal categories
	\[
	\bigl( \Perv(\Hk_{F^s \pot{z} \otimes_{F^s} H}, \Lambda), \star^0 \bigr) \cong \bigl( \Rep(\widehat{H}_\Lambda), \otimes_\Lambda \bigr).
	\]
	This construction is compatible with change of scalars in the sense that 
	there are canonical identifications
	\[
	\widehat{H}_{\bK} \cong \bK \otimes_{\bO} \widehat{H}_\bO, \quad
	\widehat{H}_{\bk} \cong \bk \otimes_{\bO} \widehat{H}_\bO.
	\]
	
	Next, in~\cite[\S12]{mv} (see also~\cite[\S1.14]{br}), Mirkovi\'c--Vilonen construct 
	(using the constant term functor $\CT_{B_H,H}$) a canonical subgroup $\widehat{T}_{H,\Lambda}$ of $\widehat{H}_\Lambda$ canonically isomorphic to the diagonalizable group $\mathrm{D}_\Lambda(\bbX_*(T_H))$ over $\Lambda$ associated with the lattice $\bbX_*(T_H)$ of cocharacters of $T_H$; this construction is also compatible with extension of scalars in the same sense as above.
	They then check that $\widehat{H}_{\bO}$ is a split reductive group scheme over $\bO$ with maximal torus $\widehat{T}_{H,\bO}$, and that the root datum of $(\widehat{H}_{\bO},\widehat{T}_{H,\bO})$ is dual to that of $(H, T_H)$. The theorem will follow from this construction (applied to the group $H=G_{F^s}$, its Borel subgroup $B_{F^s}$ and the maximal torus $T_{F^s}$) once we explain how to construct a canonical pinning on $\widehat{H}_{\bO}$. 
	
	This can be done as follows, following e.g.~\cite[\S VI.11]{fs}.\footnote{This construction was known for a long time, but~\cite{fs} provides a convenient and explicit construction.} First, 
	one has a canonical Borel subgroup $\widehat{B}_{H,\bO}$ (i.e.~a choice of a system of positive roots) obtained as the subgroup stabilizing the filtration of the fiber functor $\sF_H$ on the category $\Perv(\Hk_{F^s \pot{z} \otimes_{F^s} H}, \bO)$ given by
	\[
	(\sF_H)^{\geq m}(\scF) = \bigoplus_{i \geq m} \sH^i(\Gr_{F^s \pot{z} \otimes_{F^s} H}, \scF).
	\]
	The only remaining structure we have to construct is a basis of each weight space in the Lie algebra of $\widehat{H}_{\bO}$ corresponding to a simple root. Using an appropriate constant term functor one reduces the construction to the case where $H$ is of semisimple rank $1$. Then $H':=H/Z(H)$ is isomorphic to $\mathrm{PGL}_{2,F^s}$, hence its affine Grassmannian $\Gr_{F^s \pot{z} \otimes_{F^s} H'}$ has a unique $1$-dimensional Schubert variety isomorphic to $\mathbb P^1_{F^s}$, and $\widehat{H'}_{\bO}$ identifies with the special linear group of the total cohomology of this variety, see~\cite[Comments after Lemma~VI.11.2]{fs}.
	The zeroth cohomology of this orbit has a canonical basis, and using the system of $\ell^n$-th roots of unity in order to trivialize Tate twists, so does the second cohomology. 
	We get an identification $\widehat{H'}_{\bO} = \mathrm{SL}_{2,\bO}$, and in particular a canonical pinning on $\widehat{H'}_{\bO}$. 
	Finally, we have
	\[
	(\Gr_{F^s \pot{z} \otimes_{F^s} H})_{\mathrm{red}} \cong \pi_1(H) \times_{\pi_1(H')} (\Gr_{F^s \pot{z} \otimes_{F^s} H'})_{\mathrm{red}}
	\]
	(where $\pi_1(H)$ is the algebraic fundamental group of $H$, and similarly for $H'$), which 
	provides an isomorphism 
	\[
	\widehat{H}_{\bO} = \mathrm{D}_{\bO}(\pi_1(H)) \times^{\mu_{2,\bO}} \widehat{H'}_{\bO}
	\]
	compatible in the natural way with the canonical maximal torus on each side. (See the proof of Proposition~\ref{prop:commutativity} below for a detailed version of similar considerations.) We deduce the desired pinning of $\widehat{H}_{\bO}$, which finishes the proof.
\end{proof}

\subsection{Galois action}
\label{ss:Galois}

In~\S\ref{ss:absolute-Satake} we have considered the ``usual'' affine Grassmannian $\Gr_G$ of the $F^s$-group $G_{F^s}$, an ind-scheme over $F^s$. In fact one can also consider the affine Grassmannian $\Gr_{F\pot{z} \otimes_F G}$ associated with the (reductive, but possibly nonsplit) $F$-group $G$ (an ind-scheme over $F$), and we have a canonical identification
\[
F^s \otimes_F \Gr_{F\pot{z} \otimes_F G} \simto \Gr_G,
\]
see~\eqref{eqn:Gr-base-change}.
The Galois group $I=\mathrm{Gal}(F^s / F)$ acts on the left-hand side via its action on $\Spec(F^s)$, which provides an action on $\Gr_G$. (Here $I$ acts by automorphisms as $F$-ind-scheme, but not as $F^s$-ind-scheme.) 

From the action of the group $I$ on $\Gr_G$
we deduce an action on the categories $\Dbc(\Hk_G, \Lambda)$ and $\Perv(\Hk_G, \Lambda)$, which is easily seen to be compatible with the convolution products $\star$ and $\star^0$ respectively. The functor $\sF_G$ is also invariant under these actions. By Tannakian formalism, and in view of Theorem~\ref{thm:Satake-abs}, this means that there exists an action of $I$ on $G^\vee_\Lambda$ (by automorphisms of group scheme over $\Lambda$) which induces the corresponding action on $\Perv(\Hk_G, \Lambda)$.

\begin{lem}
	The action of $I$ on $G^\vee_\Lambda$ preserves the canonical pinning constructed in the course of the proof of Theorem~\ref{thm:Satake-abs}, and factors through an action of a finite quotient.
\end{lem}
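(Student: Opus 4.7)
The plan is to trace through the three ingredients of the canonical pinning on $G^\vee_\bO$ exhibited in the proof of Theorem~\ref{thm:Satake-abs} and verify that each is preserved by the $I$-action obtained from Tannakian formalism; finiteness will then be immediate from the fact that pinning-preserving automorphisms form a finite group.

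First, the Borel $\widehat{B}_{G,\bO}$ is defined as the stabilizer of the cohomological filtration $(\sF_G)^{\geq m}(\scF) = \bigoplus_{i \geq m} \sH^i(\Gr_G, \scF)$. Since $I$ acts on $\Gr_G$ by automorphisms as $F$-ind-scheme, the induced action on perverse sheaves commutes with each $\sH^i(\Gr_G,-)$ separately, hence preserves the filtration, and thus fixes $\widehat{B}_{G,\bO}$. Next, the torus $\widehat{T}_{G,\bO}$ is obtained from the constant term functor $\CT_{B,G}$ attached to the $F$-rational Borel $B \subset G$ from~\S\ref{ss:tori-weights}. Since $B$ and $T$ are defined over $F$, all the morphisms in the diagram~\eqref{eqn:ct-diagram} are $I$-equivariant, so $\CT_{B,G}$ is compatible with $I$ and the identification $\widehat{T}_{G,\bO} \cong \mathrm{D}_\bO(\bbX_*(T))$ intertwines the $I$-action on $\widehat{T}_{G,\bO}$ with the tautological one on $\bbX_*(T)$.

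The main difficulty lies in the third ingredient, namely the distinguished generator of each simple root subspace of $\mathrm{Lie}(G^\vee_\bO)$. Following~\cite[\S VI.11]{fs}, these generators are produced, after applying a further constant term functor to reduce to a semisimple rank-$1$ Levi, from canonical classes in $\sH^\bullet(\mathbb{P}^1_{F^s}, \bK)$, the Tate twist in degree $2$ being trivialized by the chosen system of $\ell^n$-th roots of unity in $\mathbb{F}$. Although individual rank-$1$ Levi subgroups of $G$ need not be defined over $F$, each $I$-orbit on the simple absolute roots $\Phi_\abs^{\mathrm{s}}$ (see~\eqref{eqn:surjection-Phis}) corresponds to an $F$-rational Levi of $G$ whose rank-$1$ factors over $F^s$ are permuted by $I$ exactly as $I$ permutes the orbit. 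Within each rank-$1$ factor the Tate trivialization is $I$-invariant, because the residue field of $F^s$ equals the already separably closed field $\mathbb{F}$, so $I$ acts trivially on $\ell$-power roots of unity there. Combining these points, $I$ sends simple root vectors to simple root vectors, permuting them according to its action on $\Phi_\abs^{\mathrm{s}}$.

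Having established that the entire pinning is $I$-stable, the map $I \to \mathrm{Aut}(G^\vee_\bO)$ takes values in the subgroup of pinning-preserving automorphisms, which is isomorphic to the (finite) automorphism group of the Dynkin diagram of $G_{F^s}$. This yields the asserted factorization through a finite quotient and completes the proof.
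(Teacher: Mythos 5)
Your verification that the pinning is $I$-stable follows essentially the same route as the paper: the Borel is stable because the cohomological filtration is $I$-stable, the torus is stable with $I$ acting through the natural action on $\bbX_*(T)$, and the simple root vectors are permuted because $I$ permutes the semisimple-rank-$1$ Levi subgroups of $G_{F^s}$ according to its action on $\Phi_\abs^{\mathrm{s}}$. Your additional check that the Tate-twist trivialization is $I$-invariant (since the $\ell$-power roots of unity already lie in $\F \subset F$, the cyclotomic character of $I$ is trivial) is a legitimate point that the paper leaves implicit, and your use of equivariance of $\CT_{B,G}$ for the $F$-rational Borel in place of the paper's ``centralizer of the cohomological cocharacter'' argument for the torus is an acceptable variant.

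The finiteness step, however, contains a genuine gap. The group of automorphisms of $G^\vee_\bO$ preserving the pinning is the automorphism group of the \emph{based root datum}, not of the Dynkin diagram, and this group is infinite in general: whenever $G$ has a central torus of rank at least $2$ it contains a copy of $\mathrm{GL}_2(\Z)$, and for $G$ itself a torus it is all of $\mathrm{GL}(\bbX_*(T))$. The lemma must cover such $G$ (an arbitrary quasi-split reductive group, e.g.\ a nonsplit torus or a unitary group), and in the torus case the assertion to be proved is exactly that the $I$-action on $\bbX_*(T)$ factors through a finite quotient --- a fact your argument never invokes and which cannot follow from any finiteness of pinned automorphisms. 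The correct argument, as in the paper, is: since $T$ splits over a finite separable extension of $F$, the $I$-action on $\bbX_*(T)$ factors through a finite quotient; if $I' \subset I$ denotes the finite-index kernel, then $I'$ acts trivially on the based root datum and fixes each simple root vector, and since a pinning-preserving automorphism is determined by its effect on the root datum, $I'$ acts trivially on $G^\vee_\Lambda$, so the action factors through $I/I'$. You already established the ingredient needed for this (the $I$-action on the dual torus is the tautological action on $\bbX_*(T)$), so the repair is short, but as written your final paragraph rests on a false claim.
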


\begin{proof}
	The fact that the canonical maximal torus of $G^\vee_\Lambda$ is preserved by the action of $I$ follows from the fact that this torus identifies with the centralizer of the cocharacter provided by the cohomological grading. (Alternatively, one may prove this independence by noting that the maximal torus does not depend on the choices involved in its construction, see~\cite[\S 1.5.5]{br}.) The induced action on the canonical maximal torus $\mathrm{D}_{\Lambda}(\bbX_*(T))$ is via the natural $I$-action on $\bbX_*(T)$. Since $T$ splits over a finite extension of $F$, this implies in particular that there exists a normal subgroup $I' \subset I$ of finite index which acts trivially on this torus.
	
	The canonical Borel subgroup of $G^\vee_\Lambda$ is also manifestly stable under the $I$-action. The fact that the canonical simple root vectors are permuted is clear from their construction, since $I$ permutes the Levi subgroups of $G_{F^s}$ of semisimple rank $1$ according to its permutation of absolute simple roots. Finally, since a pinning-preserving automorphism is determined by the induced action on the root datum, the subgroup $I'$ considered above acts trivially on $G^\vee_\Lambda$, showing that the action of $I$ factors through a finite quotient.
\end{proof}

It is clear that the action of $I$ is compatible with extension-of-scalars, in the sense that the actions on $G^\vee_\bK$ and $G^\vee_\bk$ are induced by the action on $G^\vee_\bO$ via the canonical identifications
\[
G^\vee_\bK = \bK \otimes_{\bO} G^\vee_\bO, \quad G^\vee_\bk = \bk \otimes_{\bO} G^\vee_\bO.
\]

\section{Nearby cycles}
\label{sec:nc}

\subsection{Be\texorpdfstring{{\u\i}}{i}linson--Drinfeld Grassmannians}
\label{ss:BD-Gr}

We now consider a ``Be{\u\i}linson--Drin\-feld Grassmannian'' relating the ind-schemes $\Gr_\cG$ and $\Gr_G$.
We follow the purely local definition from \cite[\S 0.3]{richarz-erratum}.
For any $O_F$-algebra $R$ we equip $R\pot{z}$ with the $O_F$-algebra structure given by the unique $\F$-algebra map $O_F=\F\pot{t}\to R\pot{z}$ such that $t\mapsto z+t$.

Set $S:=\Spec(O_F)$. 
The {\it Be{\u\i}linson--Drinfeld Grassmannian} $\Gr_{\cG,S}$ associated with $\cG$ is the functor from the category of $O_F$-algebras to sets sending $R$ to the set of isomorphism classes of pairs $(\cE, \beta)$ where:
\begin{itemize}
	\item
	$\cE$ is a $\cG \times_S \Spec(R\pot{z})$-torsor on $\Spec(R\pot{z})$;
	\item
	$\beta$ is a trivialization of $\cE$ on $\Spec(R\rpot{z})$.
\end{itemize}
It is proven in \cite[Theorem~2.19, using Lemma~3.1]{richarz} that this functor is represented by an ind-projective ind-scheme over $S$.
Furthermore, it admits a loop uniformization, i.e. it can be written as the étale quotient of the full loop group $\Loop_S\cG$ by the positive loop group $\Loop_S^+\cG$.
Here $\Loop_S\cG$ and $\Loop_S^+\cG$ are the group-valued functors on the category of $O_F$-algebras $R$ given by $\cG(R\rpot{z})$ and $\cG(R\pot{z})$, respectively. 

\begin{rmk}
	\label{rmk:spreading}
	As explained in~\cite[\S 0.3]{richarz-erratum}, the ind-scheme $\Gr_{\cG, S}$ agrees with the base change of the Be{\u\i}linson--Drinfeld Grassmannian constructed using a spreading of $\cG$ over some curve as follows. 
	By \cite[Lemma~3.1]{richarz} (or \cite[\S 5.1.1]{HainesRicharz_TestFunctions}) there exists a smooth affine connected $\F$-curve $X$ with a point $x_0\in X(\F)$, an identification $\hat{\mathcal O}_{X,x_0}=O_F$ on completed local rings, and a smooth affine $X$-group scheme $\ucG$ together with an identification of $O_F$-group schemes $\ucG\times_XS= \cG$. 
	One then has the Be{\u\i}linson--Drinfeld Grassmannian $\Gr_{\ucG,X}$ associated with $\ucG$ and $X$. 
	The base change $\Gr_{\ucG,X}\times_XS$ agrees with $\Gr_{\cG,S}$ as defined above.
\end{rmk}

\subsection{A nearby cycles setting}
\label{ss:nc-setting}

Let us describe the fibers of $\Gr_{\cG,S}$ in more detail, following~\cite[\S 2.2]{richarz}. 
Let $s=\Spec(\F)$, resp.~$\eta=\Spec(F)$, be the special, resp.~generic, point of $S$.
As explained in~\cite[Corollary~2.14]{richarz}, we have a canonical identification
\begin{equation}
	\label{eqn:fibers-Gr}
	s \times_{S} \Gr_{\cG, S} \cong \Gr_{\cG}.
\end{equation}

On the other hand, 
$F\pot{z} \otimes_{O_F} \cG$ is a reductive group scheme over $F\pot{z}$, and by~\cite[Lemma~0.2]{richarz-erratum} there exists an isomorphism
\[
F\pot{z} \otimes_{O_F} \cG \cong F\pot{z} \otimes_{F} G
\]
where in the right-hand side we consider the obvious $F$-algebra morphism $F \to F\pot{z}$;
we fix once and for all such an isomorphism that maps
\[
F\pot{z} \otimes_{O_F} \cA, \quad \text{resp.} \quad F\pot{z} \otimes_{O_F} \cT, \quad \text{resp.} \quad F\pot{z} \otimes_{O_F} \cB
\]
into
\[
F\pot{z} \otimes_{F} A, \quad \text{resp.} \quad F\pot{z} \otimes_{F} T, \quad \text{resp.} \quad F\pot{z} \otimes_{F} B.
\]
Similarly to~\eqref{eqn:fibers-Gr}, this provides an identification
\[
\eta \times_S \Gr_{\cG,S} \cong \Gr_{F\pot{z} \otimes_F G}.
\]

Let also $\overline{S}$ be the spectrum of the normalization $\overline{O_F}$ of $O_F$ in $F^s$, and set
\[
\Gr_{\cG, \overline{S}} := \Gr_{\cG, S} \times_S \overline{S}.
\]
Then $\overline{O_F}$ is a valuation ring with fraction field $F^s$ and residue field $\F$. Let $\overline{\eta}=\Spec(F^s)$ and $\overline{s}=\Spec(\F)$ be the generic and special points of $\overline{S}$, respectively. We have
\[
\overline{s} \times_{\overline{S}} \Gr_{\cG, \overline{S}} = s \times_S \Gr_{\cG, S} = \Gr_\cG
\]
and
\[
\overline{\eta} \times_{\overline{S}} \Gr_{\cG,\overline{S}} = \overline{\eta} \times_\eta (\eta \times_S \Gr_{\cG,S}) = F^s \otimes_F \Gr_{F\pot{z} \otimes_F G} \overset{\eqref{eqn:Gr-base-change}}{\cong} \Gr_{F^s\pot{z} \otimes_{F^s} G_{F^s}}.
\]
\subsection{Actions via cocharacters}
\label{ss:BD-Gr-cocharacters}

Let $\lambda \colon \mathbb{G}_{\mathrm{m},S} \to \cA$ be a cocharacter over $S$. 
This cocharacter defines an action of $\mathbb{G}_{\mathrm{m},S}$ on $\cG$ over $S$, hence we can consider the associated fixed points and attractor schemes $\cM_\lambda:=\cG^0$ and $\cP_\lambda:=\cG^+$ as in \S\ref{ss:group-theory}.
The same considerations as in~\S\ref{ss:BD-Gr} allow to define the $S$-ind-schemes $\Gr_{\cM_\lambda, S}$ and $\Gr_{\cP_\lambda, S}$, and the obvious analogues of the statements in~\S\ref{ss:nc-setting} hold in this case also.

The cocharacter $\lambda$ induces a $\mathbb{G}_{\mathrm{m},S}$-action on
$\Gr_{\cG,S}$,  
and we can consider the associated fixed points and attractor functors $(\Gr_{\cG,{S}})^0$ and $(\Gr_{\cG,{S}})^+$. 
It follows from~\cite[Lemma~5.3]{HainesRicharz_TestFunctions}
that these functors are ind-schemes, and by naturality we obtain again morphisms 
\begin{equation}
	\label{eqn:fixed-pts-attractor-map-BD0}
	\Gr_{\cM_\lambda, {S}} \to (\Gr_{\cG, {S}})^0 \quad \text{and} \quad
	\Gr_{\cP_\lambda, {S}}\to (\Gr_{\cG,{S}})^+.
\end{equation}

\begin{prop}
	\label{prop:att-fix-Gr-oS}
	The maps \eqref{eqn:fixed-pts-attractor-map-BD0} are isomorphisms.
\end{prop}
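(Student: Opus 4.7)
The plan is to mimic the strategy of Proposition~\ref{prop:attractors-fixed-pts} in the family setting over $S$. First I would check that both maps in~\eqref{eqn:fixed-pts-attractor-map-BD0} are closed immersions: this should follow from a relative version of \cite[Proposition~4.7(i)]{HainesRicharz_TestFunctions}, since the $\mathbb{G}_{\mathrm{m},S}$-action on $\Gr_{\cG,S}$ is Zariski locally linearizable by~\cite[Lemma~5.3]{HainesRicharz_TestFunctions}, and because the relative affine Grassmannian is ind-separated. Once this is known, by a relative analogue of \cite[Lemma~8.6 and Remark~8.7]{HLR}, to conclude it is enough to check that the maps are bijective on $R$-valued points for every local artinian $O_F$-algebra $R$ whose residue field is algebraically closed.

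I would then proceed by induction on $\mathrm{n}(R)$. For the base case $R$ is an algebraically closed field; since $R$ is an $O_F$-algebra, the image of $t$ is either zero or invertible. In the first case $R$ is an $\F$-algebra, the $R$-points of $\Gr_{\cG,S}$ and $(\Gr_{\cG,S})^+$ reduce via~\eqref{eqn:fibers-Gr} to those of $\Gr_\cG$ and $(\Gr_\cG)^+$, so the claim follows from Proposition~\ref{prop:attractors-fixed-pts}. In the second case $R$ is an $F^s$-algebra, and the $R$-points of $\Gr_{\cG,S}$ over $\overline{\eta}$ identify with those of $\Gr_{F^s\pot{z}\otimes_{F^s} G}$, so the claim reduces to the analogous fact for the split group over $F^s$, which is the usual Mirković--Vilonen statement (a special case of Proposition~\ref{prop:attractors-fixed-pts} applied over $F^s$ instead of $\F$). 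For the inductive step, I choose an ideal $J\subset R$ with $J^2=0$ and $\mathrm{n}(R/J)<\mathrm{n}(R)$. Using~\eqref{eqn:points-Fl} (generalized to $\overline{O_F}$-points using Lemma~\ref{lem:quotient-etale}) together with formal smoothness of $\Loop_S\cP_\lambda$, one can translate $x_R$ so that $x_{R/J}$ lands at the base point, and the problem becomes one about infinitesimal deformations controlled by tangent spaces.

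The main technical obstacle will therefore be proving an analogue of Lemma~\ref{lem:tangent-spaces} in the family setting, namely that~\eqref{eqn:attractor-map} (and its fixed-point counterpart) induces an isomorphism on tangent spaces at the base point viewed inside $\Gr_{\cG,S}$. The argument should parallel Lemma~\ref{lem:tangent-spaces}: using Lemma~\ref{lem:cart-diag} (or a direct spreading of the big-cell open immersion over $O_F$), one decomposes $\mathrm{Lie}(\Loop_S G)$ and $\mathrm{Lie}(\Loop_S^+\cG)$ into $\cT$-weight pieces indexed by the non-divisible relative roots, together with a Cartan piece; the $\mathbb{G}_{\mathrm{m},S}$-weight on the $\alpha$-part is $\langle\alpha,\lambda\rangle$ (or twice that for divided roots), so passing to the attractor keeps exactly the non-negative weight pieces, which is precisely what one gets from $\cP_\lambda$ via the open immersion of~\eqref{eqn:big-cell-cG} transported to $O_F$ using~\cite[Corollary~2.3]{HainesRicharz_TestFunctions}. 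Modulo checking that these decompositions are compatible with the tangent space of $\Gr_{\cG,S}$ at $e$ relative to $S$ (which follows from the loop-uniformization recalled in~\S\ref{ss:BD-Gr} combined with a relative version of Lemma~\ref{lem:tangent-space-Fl}), the induction closes and the proposition follows.
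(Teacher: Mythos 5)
Your proposal follows essentially the same route as the paper's proof: closed immersions (the paper invokes \cite[Theorem~5.6]{HainesRicharz_TestFunctions} directly), reduction via \cite[Lemma~8.6 and Remark~8.7]{HLR} to bijectivity on points of strictly henselian local artinian $O_F$-algebras, induction on $\mathrm{n}(R)$ with the base case settled fiberwise over $s$ and $\eta$, and an inductive step resting on a relative analogue of Lemma~\ref{lem:tangent-spaces} proved with the big cell and the loop uniformization of $\Gr_{\cG,S}$. The only difference is bookkeeping: the paper makes your ``tangent space relative to $S$'' precise as the module $T_{e_S,J}(X)=\colim_i \Hom_{\Mod_R}(\omega_{X_i,e}\otimes_{O_F}R,J)$, which is exactly the object your sketch gestures at.
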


\begin{proof}
	The proof is similar to the fiberwise case (see Proposition~\ref{prop:attractors-fixed-pts}). 
	Namely, we already know that both maps are closed immersions
	by~\cite[Theorem 5.6]{HainesRicharz_TestFunctions}. 
	By compatibility of fixed points and attractors with base change, Proposition~\ref{prop:attractors-fixed-pts} and its absolute analogue imply that the maps under consideration are isomorphisms over $\eta$ (see also \cite[Theorem 5.6]{HainesRicharz_TestFunctions}) and over $s$. 
	In particular, they 
	induce isomorphisms on the underlying reduced sub-ind-schemes after any base change. 
	To show that they are in fact isomorphisms, we use~\cite[Lemma 8.6 and Remark~8.7]{HLR} and repeat the proof of Proposition~\ref{prop:attractors-fixed-pts} with slight modifications adapted to the more general situation.
	Namely, to conclude the proof it suffices to show that for any $O_F$-algebra $R$ which is a strictly henselian local artinian ring, the induced maps $\Gr_{\cM_\lambda, S}(R) \to (\Gr_{\cG, S})^0(R)$ and $\Gr_{\cP_\lambda, S}(R) \to (\Gr_{\cG,S})^+(R)$ are bijective.
	We treat the second case, leaving the first one to the reader.
	
	The proof proceeds by induction on $\mathrm{n}(R)$, where as in the proof of Proposition~\ref{prop:attractors-fixed-pts} $\mathrm{n}(R)$ is the minimal positive integer $n$ such that ${\rm rad}(R)^n=0$. 
	If $\mathrm{n}(R)=1$, then $R$ is an algebraically closed field. 
	The map $O_F \to R$ factors either through a map $\F \to R$ or $F \to R$ (depending on whether $t$ maps to zero or not). 
	As the maps in~\eqref{eqn:fixed-pts-attractor-map-BD0} are isomorphisms over $\eta$ 
	and over $s$, this finishes the case $\mathrm{n}(R)=1$.
	
	Assume now that $\mathrm{n}(R)>1$, and let $J \subset R$ be an ideal of square $0$ such that $\mathrm{n}(R/J) < \mathrm{n}(R)$. 
	By induction we can assume that the map $\Gr_{\cP_\lambda, S}(R/J) \to (\Gr_{\cG, S})^+(R/J)$ is bijective. 
	Using this and formal smoothness, we are reduced to showing that any $x \in (\Gr_{\cG, S})^+(R)$ whose image in $(\Gr_{\cG, S})^+(R/J)$ is the image of the ``base point'' section $e_S \in \Gr_{\cG, S}(O_F)$ is in the image of $\Gr_{\cP_\lambda, S}(R)$, i.e., $x$ corresponds to an element of $T_{e_S,J}((\Gr_{\cG, S})^+)$.
	Here for an $S$-ind-scheme $X=\colim_{i\in I}X_i$ together with an $S$-point $e\in X(S)$ we consider the $R$-module 
	\begin{equation}
		\label{eqn:att-fix-Gr-oS0}
		T_{e,J}(X):= \colim_{i\in I}\Hom_{\Mod_{R}}(\omega_{X_i,e}\otimes_{O_F}R,J),
	\end{equation}
	where $\omega_{X_i,e}=\Gamma(S,e^*\Omega_{X_i/S})$ is the global sections of the conormal sheaf associated with the immersion $e$, see also the proof of Proposition~\ref{prop:attractors-fixed-pts}, and $\Mod_{R}$ is the category of $R$-modules.
	The $R$-module $T_{e,J}(X)$ does not depend on the chosen presentation of $X$ as an ind-scheme over $S$, and its underlying set agrees with $p^{-1}(e_{R/J})$ where $p\colon X(R)\to X(R/J)$ is the obvious map and $e_{R/J}$ denotes the image of $e$ under $X(S)\to X(R/J)$. 
	To finish the proof, it suffices to show that the map 
	\begin{equation}
		\label{eqn:att-fix-Gr-oS1}
		T_{e_S,J}(\Gr_{\cP_\lambda, S}) \to T_{e_S,J}((\Gr_{\cG, S})^+)
	\end{equation}
	is an isomorphism. 
	The proof is completely analogous to the proof of Lemma~\ref{lem:tangent-spaces} but replacing every occurrence of the tangent space or the Lie algebra by the $R$-module \eqref{eqn:att-fix-Gr-oS0}, and the loop functors $\Loop$ and $\Loop^+$ by their relative versions $\Loop_S$ and $\Loop_S^+$ respectively. 
	We recall some key steps.
	Since $R$ is strictly henselian, one has $\Gr_{\cG,S}(R)=\Loop_S\cG(R)/\Loop_S^+\cG(R)$ which implies the equality
	\[
	T_{e_S,J}(\Gr_{\cG,S})=T_{e_S,J}(\Loop_S\cG)/T_{e_S,J}(\Loop_S^+\cG),
	\]
	and similarly for $\cG$ replaced by $\cP_\lambda$.
	Next, the $\mathbb G_{m,S}$-action induces a $\Z$-grading on $e^*\Omega_{X/S}$ for each $\mathbb G_{m,S}$-invariant closed subscheme $X$ in $\Gr_{\cG,S}$ and $\Loop_S\cG$ respectively, hence a $\Z$-grading on $T_{e_S,J}(\Gr_{\cG,S})$, $T_{e_S,J}(\Loop_S\cG)$ and $T_{e_S,J}(\Loop_S^+\cG)$ respectively. 
	We get isomorphisms
	\[
	T_{e_S,J}((\Gr_{\cG,S})^+)=T_{e_S,J}(\Gr_{\cG,S})^+=T_{e_S,J}(\Loop_S\cG)^+/T_{e_S,J}(\Loop_S^+\cG)^+.
	\] 
	So, the claim \eqref{eqn:att-fix-Gr-oS1} is equivalent to proving the equality
	\[
	T_{e_S,J}(\Loop_S\cG)^+/T_{e_S,J}(\Loop_S^+\cG)^+=T_{e_S,J}(\Loop_S\cP_\lambda)/T_{e_S,J}(\Loop_S^+\cP_\lambda).
	\]
	As in \eqref{eqn:big-cell-cG} we use the big cell to prove the equalities $T_{e_S,J}(\Loop_S\cG)^+=T_{e_S,J}(\Loop_S\cP_\lambda)$ and $T_{e_S,J}(\Loop_S^+\cG)^+=T_{e_S,J}(\Loop_S^+\cP_\lambda)$, which finishes the proof.
\end{proof}

\begin{cor}
	\label{cor:fixed-pts-attractor-map-BD}
	The maps 
	\begin{equation}
		\label{eqn:fixed-pts-attractor-map-BD}
		\Gr_{\cM_\lambda, \overline{S}} \to (\Gr_{\cG, \overline{S}})^0 \quad \text{and} \quad
		\Gr_{\cP_\lambda, \overline{S}}\to (\Gr_{\cG,\overline{S}})^+.
	\end{equation}
	are isomorphisms.
\end{cor}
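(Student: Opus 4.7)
The strategy is to deduce the corollary from Proposition~\ref{prop:att-fix-Gr-oS} by base change along the morphism $\overline{S} \to S$. By convention, the left-hand sides of~\eqref{eqn:fixed-pts-attractor-map-BD} are defined as the base changes
\[
\Gr_{\cM_\lambda, \overline{S}} = \Gr_{\cM_\lambda, S} \times_S \overline{S}, \qquad \Gr_{\cP_\lambda, \overline{S}} = \Gr_{\cP_\lambda, S} \times_S \overline{S},
\]
so what remains is to identify the right-hand sides with the corresponding base changes.

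The key observation is that for any $\mathbb{G}_{\mathrm{m}}$-action on an ind-scheme, the formation of the fixed-point and attractor functors commutes with arbitrary base change of the ground scheme. This is immediate from the functorial definitions (as in~\cite{richarz-Gm}), since these functors are defined in terms of $\mathbb{G}_{\mathrm{m}}$-equivariant morphisms and fiber products, both of which commute with base change; in particular, there is no noetherian hypothesis to worry about, so the fact that $\overline{O_F}$ is non-noetherian in general is harmless. Pulling back the cocharacter $\lambda$ along $\overline{S} \to S$ gives the $\mathbb{G}_{\mathrm{m}}$-action on $\Gr_{\cG, \overline{S}}$, and we therefore obtain canonical isomorphisms
\[
(\Gr_{\cG, \overline{S}})^0 \cong (\Gr_{\cG, S})^0 \times_S \overline{S}, \qquad (\Gr_{\cG, \overline{S}})^+ \cong (\Gr_{\cG, S})^+ \times_S \overline{S}.
\]

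Combining these identifications with the isomorphisms from Proposition~\ref{prop:att-fix-Gr-oS} and base-changing along $\overline{S} \to S$ yields the desired isomorphisms~\eqref{eqn:fixed-pts-attractor-map-BD}. Since the hard work has already been carried out in the proof of Proposition~\ref{prop:att-fix-Gr-oS}, there is no substantive obstacle here; the corollary is purely a matter of unwinding definitions and invoking the base-change compatibility of attractors and fixed points.
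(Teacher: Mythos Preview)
Your proposal is correct and follows exactly the same approach as the paper's proof, which simply says that the corollary follows from Proposition~\ref{prop:att-fix-Gr-oS} together with the compatibility of fixed points and attractors with base change along $\overline{S} \to S$. Your version is more detailed but the argument is identical.
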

\begin{proof}
	This follows from Proposition \ref{prop:att-fix-Gr-oS} and compatibility of fixed points and attractors with base change along $\overline S\to S$.
\end{proof}

\subsection{Definition of the functor}
\label{ss:definition-nc}

We continue with the geometric setting of~\S\ref{ss:nc-setting}; in particular we have an ind-scheme $\Gr_{\cG,S} \to S$ where $S=\Spec(O_F)$, whose fiber over the special point $s=\Spec(\F)$, resp.~over the geometric generic point $\overline{\eta}=\Spec(F^s)$, identifies with $\Gr_{\cG}$, resp.~with the ``traditional'' affine Grassmannian
\[
\Gr_{G} := \Gr_{F^s\pot{z} \otimes_{F^s} G_{F^s}}
\]
considered in Section~\ref{sec:monoidality-unramified}.
Consider the natural embeddings
\[
\Gr_{G} \xrightarrow{j} \Gr_{\cG, \overline{S}} \xleftarrow{i} \Gr_{\cG}.
\]
The main player in this section will be the t-exact ``nearby cycles'' functor
\[
\Psi_\cG := i^* j_* \colon \Dbc(\Gr_{G}, \Lambda) \to \Dbc(\Gr_{\cG}, \Lambda)
\]
constructed as in~\S\ref{sss:const-nc}.

\begin{rmk}
	\label{rmk:nc}
	The ``true'' nearby cycles functor associated with the data above is rather the composition of $\Psi_\cG$ with the pullback functor under the natural morphism $\Gr_{G} \to (\Gr_{\cG,S})_\eta$ where $(\Gr_{\cG,S})_\eta$ is the fiber of $\Gr_{\cG,S}$ over $\eta$.
	The results mentioned in~\S\ref{sss:const-nc}
	of course have antecedents in the literature; see e.g.~\cite[Appendix]{bbd}. However these statements are usually given for the ``true'' nearby cycles functor 
	rather than for the version we consider, which justifies our references to~\cite{hs}.
\end{rmk}

As for the affine Grassmannian $\Gr_\cG$ in~\S\ref{ss:Iwahori-Weyl-Schubert},
the ind-scheme $\Gr_{G}$ has a stratification given by the Cartan decomposition, with strata parametrized by $\bbX_*(T)^+$. Namely, each $\lambda \in \bbX_*(T)^+$ determines an $F^s$-point in $\Gr_G$, and the $\Loop^+G$-orbit of this point is a quasi-projective subscheme $\Gr_G^{\lambda}$ of $\Gr_G$. Moreover, we have
\[
|\Gr_{G}| = \bigsqcup_{\lambda \in \bbX_*(T)^+} |\Gr_G^{\lambda}|.
\]
The intersection cohomology complex associated with the constant local system on $\Gr_G^{\lambda}$ will be denoted $\scJ_{!*}^{\abs}(\lambda, \Lambda)$. (Here ``$\abs$'' stands for ``absolute.'')

\begin{lem}
	\label{lem:Psi-IC}
	Let $\lambda \in \bbX_*(T)^+$, and denote by $\overline{\lambda}$ its image in $\bbX_*(T)^+_I$ (see Lem\-ma~\ref{lem:dominant-coweights}\eqref{it:dominant-coweights-lifting}). Then we have
	\[
	(j^{\overline{\lambda}})^* \Psi_{\cG}(\scJ_{!*}^{\abs}(\lambda, \Lambda)) \cong \underline{\Lambda}_{\Gr_{\cG}^{\overline{\lambda}}}[\langle \lambda, 2\rho \rangle].
	\]
\end{lem}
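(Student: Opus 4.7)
The strategy is to reduce the computation to a smooth family over $\overline{S}$, on which nearby cycles preserves the shifted constant sheaf.

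The key geometric input is the construction of an open sub-ind-scheme $U \subset \Gr_{\cG,\overline{S}}$ serving as a ``global Schubert cell'' for $\lambda$: concretely, after possibly passing to a finite \'etale cover of $\overline{S}$ over which $\cT$ becomes split, the cocharacter $\lambda \in \bbX_*(T)$ lifts to an integral cocharacter $\lambda_{\overline{S}} \colon \Gm \to \cT_{\overline{S}}$, yielding an $\overline{S}$-point $t^{\lambda_{\overline{S}}}$ of $\Gr_{\cG,\overline{S}}$ whose generic fiber is $t^\lambda \in \Gr_G$ and whose special fiber is $t^{\overline\lambda} \in \Gr_\cG$ (via~\eqref{eqn:Gr-torus}). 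I take $U$ to be the $\Loop^+_{\overline{S}}\cG$-orbit of this point. Using the theory of global Schubert varieties developed in \cite[\S 3]{richarz} (in particular the identification of the special fiber of the closure of $\Gr_G^{\leq \lambda}$ inside $\Gr_{\cG,\overline{S}}$ with $\Gr_\cG^{\leq\overline\lambda}$), one verifies that $U$ is open in this closure, has generic fiber $\Gr_G^\lambda$ and special fiber $\Gr_\cG^{\overline\lambda}$, and is smooth over $\overline{S}$ of relative dimension $\langle \lambda, 2\rho\rangle$. Note that the two fibers have matching dimensions $\langle \lambda, 2\rho \rangle = \langle \overline\lambda, 2\rho\rangle$ by~\eqref{eqn:dim-orbits} and the $I$-invariance of $\rho$, a necessary condition for flatness/smoothness.

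Granted this input, the rest is formal. Open restriction commutes with $j_*$, hence with $\Psi_\cG$, so denoting by $\Psi_U$ the analogous nearby cycles functor for $U$, we have
\[
\bigl(\Psi_\cG(\scJ_{!*}^{\abs}(\lambda,\Lambda))\bigr)_{|U_s} \cong \Psi_U \bigl( \scJ_{!*}^{\abs}(\lambda,\Lambda)_{|U_{\overline\eta}} \bigr).
\]
Since $\scJ_{!*}^{\abs}(\lambda,\Lambda)$ is the intermediate extension of the shifted constant sheaf on the smooth open stratum $\Gr_G^\lambda$, and since $U_{\overline\eta} = \Gr_G^\lambda$, we obtain $\scJ_{!*}^{\abs}(\lambda,\Lambda)_{|U_{\overline\eta}} \cong \underline{\Lambda}_{U_{\overline\eta}}[\langle \lambda,2\rho\rangle]$. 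I then invoke the standard fact that on a smooth $\overline{S}$-scheme $U$, nearby cycles of the constant sheaf is the constant sheaf, i.e.\ $\Psi_U(\underline{\Lambda}_{U_{\overline\eta}}) \cong \underline{\Lambda}_{U_s}$ (a consequence of smooth base change). Combining with $U_s = \Gr_\cG^{\overline\lambda}$ gives the claim.

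The main obstacle is the construction of $U$ and the verification of its smoothness over $\overline{S}$: this is a genuine piece of Bruhat--Tits geometry for ramified groups rather than a formality, but it is essentially contained in the results on global Schubert varieties in~\cite{richarz}. Once that input is granted, the remainder is a routine manipulation with nearby cycles.
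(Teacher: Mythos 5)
Your proposal is correct and follows essentially the same route as the paper: your $U$ is precisely the smooth open locus $\overset{\circ}{M}_\lambda$ of the global Schubert variety $M_\lambda$ that the paper takes from~\cite[Definition~3.5, Corollary~3.14]{richarz}, and your concluding step (open restriction commutes with $\Psi_\cG$, and nearby cycles of the shifted constant sheaf on a scheme smooth over $\overline{S}$ is again the shifted constant sheaf) is exactly the ``compatibility of nearby cycles with smooth pullback'' invoked there. The only difference is presentational: you sketch the construction of $\overset{\circ}{M}_\lambda$ as the $\Loop^+_{\overline{S}}\cG$-orbit of a section attached to $\lambda$ and spell out the formal nearby-cycles manipulation, while the paper simply cites~\cite{richarz} (and~\cite[Lemma~2.6]{zhu}) for both.
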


\begin{proof}
	The proof is the same as that of~\cite[Lemma~2.6]{zhu} (see also~\cite[p.~3755]{richarz}). Namely, consider the ``global Schubert variety'' $M_\lambda$ of~\cite[Definition~3.5]{richarz}. By~\cite[Corollary~3.14]{richarz} we have an open subscheme $\overset{\circ}{M}_\lambda \subset M_\lambda$ which is smooth over $\overline{S}$ and contains $\Gr_G^{\lambda}$, resp.~$\Gr_\cG^{\overline{\lambda}}$, in its generic, resp.~special, fiber. The desired claim follows, by compatibility of nearby cycles with smooth pullback.
\end{proof}

Note that
in case $\Lambda$ is a field, Lemma~\ref{lem:Psi-IC} implies that $\scJ_{!*}(\overline{\lambda}, \Lambda)$ is a composition factor of the perverse sheaf $\Psi_\cG(\scJ_{!*}^{\abs}(\lambda, \Lambda))$.

\subsection{Compatibility with convolution}

The functor $\Psi_\cG$ of~\S\ref{ss:definition-nc} admits an ``equivariant version;'' more specifically there exists a canonical functor
\begin{equation}
	\label{eqn:Psi-equiv}
	\Psi_\cG \colon \Dbc(\Hk_G, \Lambda) \to \Dbc(\Hk_{\cG}, \Lambda)
\end{equation}
which is related to the functor of~\S\ref{ss:definition-nc} by the obvious commutative diagram involving pullback functors to sheaves on the respective affine Grassmannians. 
Moreover this functor is t-exact, hence restricts to an exact functor
\begin{equation}
	\label{eqn:Psi-equiv-Perv}
	\Perv(\Hk_G, \Lambda) \to \Perv(\Hk_{\cG}, \Lambda).
\end{equation}

\begin{prop}
	\label{prop:Psi-monoidal}
	The functor~\eqref{eqn:Psi-equiv} admits a canonical monoidal structure with respect to the convolution products~$\star$. As a consequence, the functor~\eqref{eqn:Psi-equiv-Perv} admits a canonical monoidal structure with respect to the convolution products $\star^0$.
\end{prop}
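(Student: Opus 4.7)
The strategy is to introduce a Beilinson--Drinfeld deformation of the entire convolution diagram over $S$, and then assemble the monoidal structure from standard compatibilities of nearby cycles with proper pushforward, smooth pullback, and exterior tensor products. Concretely, I would first define the $S$-ind-scheme $\Conv_{\cG, S}$ representing tuples $(\cE_1, \cE_2, \alpha, \beta)$ as in~\S\ref{ss:convolution-schemes}, but with $\cE_1, \cE_2$ being $\cG \times_S \Spec(R\pot{z})$-torsors on $\Spec(R\pot{z})$ for $O_F$-algebras $R$. The Hecke convolution stack $\HkConv_{\cG, S}$ and the morphisms $m_S$, $p_S$, $\tilde h_S$ are constructed analogously. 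One verifies directly from the moduli descriptions that the special fiber of $\Conv_{\cG, S}$ is $\Conv_\cG$ and that the geometric generic fiber identifies with the absolute convolution scheme $\Conv_G$ from Section~\ref{sec:monoidality-unramified}, and similarly for $\HkConv_{\cG, S}$. The product decomposition $(\pr_{1,S}, m_S) \colon \Conv_{\cG, S} \simto \Gr_{\cG, S} \times_S \Gr_{\cG, S}$ generalizing~\eqref{eqn:Conv-product} will be a central tool.

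Next, I would define an equivariant nearby cycles functor
\[
\Psi_\cG^{\mathrm{Conv}} \colon \Dbc(\HkConv_G, \Lambda) \to \Dbc(\HkConv_\cG, \Lambda)
\]
attached to $\HkConv_{\cG, S}$ in the manner of~\eqref{eqn:Psi-equiv}, as well as a similar functor $\Psi_\cG^{(2)}$ for $\Hk_{\cG, S} \times_S \Hk_{\cG, S}$. The required monoidal structure then follows from three compatibilities: (a) $(m_S)_*$ commutes with nearby cycles by proper base change, since $m_S$ is ind-proper; (b) $(p_S)^*$ commutes with nearby cycles, using that $p_S$ is a smooth morphism sitting in cartesian squares generalizing~\eqref{eqn:ppr-cartesian}; and (c) a K\"unneth-type statement asserting $\Psi_\cG(\scF) \lboxtimes_\Lambda \Psi_\cG(\scG) \cong \Psi_\cG^{(2)}(\scF \lboxtimes_\Lambda \scG)$. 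Combining these we obtain
\[
\Psi_\cG(\scF) \star \Psi_\cG(\scG) = (m_\cG)_* p_\cG^*\bigl(\Psi_\cG(\scF) \lboxtimes_\Lambda \Psi_\cG(\scG)\bigr) \simto \Psi_\cG\bigl((m_G)_* p_G^*(\scF \lboxtimes_\Lambda \scG)\bigr) = \Psi_\cG(\scF \star \scG),
\]
and the associativity and unit constraints follow by a standard diagram chase, using the iterated $S$-ind-scheme $\Conv_{\cG, S} \times_{\Gr_{\cG, S}} \Conv_{\cG, S}$ for the former, and the fact that $\Psi_\cG$ sends the unit (skyscraper at the base point on the generic fiber) to the unit (skyscraper at the base point on the special fiber) for the latter.

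The main obstacle is step~(c), the K\"unneth formula. The statement one needs is that for ind-schemes $X, Y$ over $S$ of ind-finite type, the nearby cycles on $X \times_S Y$ intertwine the external tensor product with the external tensor product of individual nearby cycles; for constructible \'etale sheaves on schemes over a strictly henselian trait with torsion coefficients prime to the residue characteristic this is classical (see \cite[Appendix]{bbd}, and the more general treatments referenced in~\S\ref{ss:const}), but one must extend it both to the ind-scheme context (by restriction to $\Loop_S^+\cG$-stable sub-ind-schemes of finite type built from the presentation recalled in~\S\ref{ss:quotient-stacks}) and to the equivariant context of stacks (by the compatibilities listed in Appendix~\ref{app:sheaves}). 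Once this K\"unneth isomorphism is in hand, the remaining base change properties (a) and (b) are formal consequences of the standard behaviour of nearby cycles under ind-proper pushforward and smooth pullback, and the verification of the coherence axioms for the monoidal structure is a routine diagram chase.
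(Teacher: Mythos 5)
Your proposal is correct and follows essentially the same route as the paper: the paper likewise builds a family version of the Hecke convolution stack interpolating between $\HkConv_G$ and $\HkConv_\cG$ (via the spreading $\ucG$ over a curve from Remark~\ref{rmk:spreading}, restricted to $S$), takes nearby cycles $\widetilde{\Psi}_\cG$ on it, and obtains the monoidal structure from exactly the three compatibilities you list (external tensor product, smooth pullback along $p$, proper pushforward along $m$), with the coherence constraints left as a routine check. The only difference is cosmetic: you define the convolution deformation purely locally over $S$ in the style of~\S\ref{ss:BD-Gr}, whereas the paper restricts a global construction over the curve, and these agree by Remark~\ref{rmk:spreading}.
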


\begin{proof}
	The proof is similar to that given in~\cite[Theorem-Definition~3.1]{zhu} or~\cite[\S 3.4]{ar-book} (which, itself, essentially goes back to~\cite{gaitsgory}). The idea is to consider a deformation of $\HkConv_G$ to $\HkConv_{\cG}$; more specifically, given $X$ and $\ucG$ as in Remark~\ref{rmk:spreading}, one considers (the restriction to $S$ of) the stack $\HkConv_{\cG,X}$ over $X$ defined as follows. Given an $\bF$-algebra $R$ and $x \in X(R)$, we denote by $\widehat{\Gamma}_x$ the spectrum of the completion of the ring $R \otimes_\F \scO(X)$ with respect to the ideal defining the graph $\Gamma_x \subset X \otimes_\F R$ of $x$. Then $\HkConv_{\cG,X}(R)$ is defined as the category of tuples $(x,\cE_1,\cE_2,\cE_3,\alpha,\beta)$ where $x \in X(R)$, $\cE_1,\cE_2, \cE_3$ are $\ucG$-bundles on $\widehat{\Gamma}_x$, and $\alpha$, resp.~$\beta$, is an isomorphism between the restrictions of $\cE_1$ and $\cE_2$, resp.~$\cE_2$ and $\cE_3$, to $\widehat{\Gamma}_x \smallsetminus \Gamma_x$. As in~\S\ref{ss:definition-nc}, we have
	natural embeddings
	\[
	\HkConv_{G} \xrightarrow{\tilde\jmath} \HkConv_{\cG, \overline{S}} \xleftarrow{\tilde\imath} \HkConv_{\cG}
	\]
	and a nearby cycles functor
	\[
	\widetilde{\Psi}_\cG :=\tilde\imath^* \tilde \jmath_* \colon \Dbc(\HkConv_{G}, \Lambda) \to \Dbc(\HkConv_{\cG}, \Lambda).
	\]
	By compatibility of nearby cycles with external tensor product, smooth pullback, and proper push-forward, the following diagram commutes up to natural isomorphism:
	\[
	\begin{tikzcd}[column sep=4.2em]
		\Dbc(\Hk_G,\Lambda) \times \Dbc(\Hk_G, \Lambda) \ar[r, "\Psi_\cG \times \Psi_\cG" ] \ar[d, "p_G^*({-} \lboxtimes_\Lambda {-})"']  \ar[dd, bend right=78, "\star"']  &
		\Dbc(\Hk_\cG,\Lambda) \times \Dbc(\Hk_\cG, \Lambda) \ar[d, "p_\cG^*({-} \lboxtimes_\Lambda {-})"] \ar[dd, bend left=78, "\star"] \\
		\Dbc(\HkConv_G, \Lambda) \ar[r, "\widetilde{\Psi}_{\cG}" description] \ar[d, "m_{G!}"' ]  &
		\Dbc(\HkConv_\cG, \Lambda) \ar[d, "m_{\cG!}" ] \\
		\Dbc(\Hk_G,\Lambda) \ar[r, "\Psi_{\cG}"'] & \Dbc(\Hk_\cG,\Lambda),
	\end{tikzcd}
	\]
	and this yields the desired monoidal structure on $\Psi_\cG$.
\end{proof}

\subsection{Compatibility with constant term functors}
\label{ss:compatibility-Psi-CT}

Recall the 
constant term functors from~\S\ref{ss:CT-functors}, and their ``absolute'' analogues considered in~\S\ref{ss:absolute-Satake}.
Fix $\lambda \in \bbX_*(A)$. This cocharacter determines a parabolic subgroup $P_\lambda \subset G$ (containing $A$) and its Levi factor $M_\lambda$, see~\S\ref{ss:group-theory}. By extension of scalars, it also determines a cocharacter $\lambda_\abs \in \bbX_*(T)$, hence a parabolic subgroup $P_\lambda^\abs \subset G_{F^s}$ (containing $T_{F^s}$) and its Levi factor $M_\lambda^\abs$. 

By compatibility of attractors and fixed points with base change, we have
\[
P_\lambda^\abs = P_\lambda \otimes_F F^s \quad \text{and} \quad M_\lambda^\abs = M_\lambda \otimes_F F^s.
\]
More specifically, assume that $P_\lambda$ is standard with respect to $B$, i.e.~that $\lambda$ is dominant. Then $P_\lambda$ is the standard parabolic subgroup of $G$ determined by the subset
\[
\Phi_\lambda^{\mathrm{s}} := \{\alpha \in \Phi^{\mathrm{s}} \mid \langle \lambda, \alpha \rangle = 0\} \subset \Phi^{\mathrm{s}}
\]
(see~\cite[\S 5.12]{bot}), and $P_\lambda^\abs$ is the standard parabolic subgroup of $G_{F^s}$ determined by the subset
\[
\{\alpha \in \Phi_\abs^{\mathrm{s}} \mid \langle \lambda, \alpha \rangle = 0\} \subset \Phi_\abs^{\mathrm{s}},
\]
i.e.~by the inverse image of $\Phi_\lambda^{\mathrm{s}}$ under~\eqref{eqn:surjection-Phis}.

Let $\cP_\lambda$ and $\cM_\lambda$ be as in~\S\ref{ss:group-theory}, and consider the action of $\mathbb{G}_{\mathrm{m},\overline{S}}$ on the ind-scheme $\Gr_{\cG, \overline{S}}$ via the cocharacter of $\cA$ naturally attached to $\lambda$ (see~\S\ref{ss:BD-Gr-cocharacters}). 
By 
Corollary~\ref{cor:fixed-pts-attractor-map-BD}, 
the fiber over $\overline{s}$, resp.~$\overline{\eta}$, of $(\Gr_{\cG, \overline{S}})^+$ identifies with $\Gr_{\cP_\lambda}$, resp.~$\Gr_{F^s\pot{z} \otimes_{F^s} P_\lambda^\abs}$, and 
the fiber over $\overline{s}$, resp.~$\overline{\eta}$, of $(\Gr_{\cG,\overline{S}})^0$ identifies with $\Gr_{\cM_\lambda}$, resp.~$\Gr_{F^s\pot{z} \otimes_{F^s} M_\lambda^\abs}$. 

\begin{prop}
	\label{prop:Psi-CT}
	In the setting above, there exists a canonical isomorphism
	\begin{equation}\label{eqn:Psi-CT}
		\Psi_{\cM_\lambda} \circ \CT_{P_\lambda^\abs, G} \cong \CT_{\cP_\lambda,\cG} \circ \Psi_\cG
	\end{equation}
	of functors from $\Dbc(\Hk_G,\Lambda)$ to $\Dbc(\Hk_{\cM_\lambda},\Lambda)$.
	In case $P_\lambda=B$, the restriction of this isomorphism to $\Perv(\Hk_G,\Lambda)$ is compatible with the monoidal structures on $\CT_{\cB,\cG}$ and $\CT_{B,G}$ from Proposition~\ref{prop:CT-monoidal}, and the monoidal structures on $\Psi_\cG$ and $\Psi_\cT$ from Proposition~\ref{prop:Psi-monoidal}.
\end{prop}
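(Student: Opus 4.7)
The plan is to prove the first isomorphism by running the entire construction of the constant term functor in a family over $\overline{S}$, applying a hyperbolic-localization-versus-nearby-cycles compatibility, and then comparing the cohomological shifts fiberwise. To start, observe that by Corollary~\ref{cor:fixed-pts-attractor-map-BD} the $\mathbb{G}_{m,\overline{S}}$-action on $\Gr_{\cG,\overline{S}}$ coming from the extension of $\lambda$ to a cocharacter of $\cA$ over $\overline{S}$ (which exists by the universal property of the Néron model, as in~\S\ref{ss:group-theory}) has attractor $\Gr_{\cP_\lambda,\overline{S}}$ and fixed-point ind-scheme $\Gr_{\cM_\lambda,\overline{S}}$. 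Taking $\Loop^+_{\overline{S}}\cM_\lambda$-quotients (the action of $\Loop^+\cM_\lambda$ extends to $\overline{S}$ in the obvious way) we obtain a global version over $\overline{S}$ of the diagram~\eqref{eqn:ct-diagram} defining $\CT_{\cP_\lambda,\cG}$, whose restriction to $\overline{s}$ is~\eqref{eqn:ct-diagram} itself and whose restriction to $\overline{\eta}$ is the absolute analogue involving $P_\lambda^\abs \subset G_{F^s}$.

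The central technical ingredient is that hyperbolic localization for a $\mathbb{G}_m$-action over $\overline{S}$ commutes with nearby cycles. Concretely, for $\scF \in \Dbc(\Hk_G,\Lambda)$, extend $\scF$ by $\tilde\jmath_*$ to the global Hecke stack over $\overline{S}$, apply the relative versions of $q_{\cP_\lambda,\overline{S}!}$ and $i_{\cP_\lambda,\overline{S}}^*$, and restrict back along $\tilde\imath$ to the special fiber. This is the same as first applying $\Psi_\cG$ and then $q_{\cP_\lambda!}i_{\cP_\lambda}^*$. This commutation follows from the general theorem of the third named author on hyperbolic localization over general bases (see~\cite{richarz-Gm}, together with~\cite{HainesRicharz_TestFunctions} for the applicability to affine Grassmannians in our setting); alternatively, one can prove it directly by combining Proposition~\ref{prop:Braden} (applied both fiberwise and in families) with the compatibility of nearby cycles with ind-proper pushforward along $q_{\cP_\lambda^-,\overline{S}}$ and with $!$-pullback along the closed immersion $i_{\cP_\lambda^-,\overline{S}}$ (since $\Gr_{\cP_\lambda^-,\overline{S}} \hookrightarrow \Gr_{\cG,\overline{S}}$ is a closed immersion onto its support by Corollary~\ref{cor:fixed-pts-attractor-map-BD} and the description of the repeller). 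The cohomological shift $[\corr_{M,G}]$ in the definition of $\CT$ depends only on the connected component of $\Gr_{\cM_\lambda}$, and the specialization map identifies $\pi_0(\Gr_{F^s\pot{z}\otimes M_\lambda^\abs})$ with $\pi_0(\Gr_{\cM_\lambda,\overline{S}})$ compatibly with the natural quotient $\bbX_*(T)/Q^\vee_{M_\lambda} \twoheadrightarrow (\bbX_*(T)/Q^\vee_{M_\lambda})_I$ on which $\langle {-}, 2\rho - 2\rho_M\rangle$ factors; hence the shifts match, yielding~\eqref{eqn:Psi-CT}.

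For the monoidal compatibility when $P_\lambda = B$, the strategy is to run the proof of Corollary~\ref{cor:CT-monoidal-pull} entirely in families over $\overline{S}$ using the convolution Beĭlinson--Drinfeld Grassmannian $\HkConv_{\cG,\overline{S}}$ from the proof of Proposition~\ref{prop:Psi-monoidal}. The $\mathbb{G}_{m,\overline{S}}$-action extends to this convolution stack, and its attractor and fixed-point stacks over $\overline{S}$ give global analogues of $\HkConv_\cB$ and $\HkConv_\cT$; the key Künneth-type isomorphism of Lemma~\ref{lem:GrB-conv} then admits a global version, and specialization recovers the isomorphism both in the generic fiber (giving the monoidal structure on $\CT_{B,G}$ via Corollary~\ref{cor:CT-monoidal-pull}) and in the special fiber (giving that on $\CT_{\cB,\cG}$). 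Combined with the monoidal structure on $\Psi_\cG$, which is already built from nearby cycles of $\widetilde{\Psi}_\cG$ on $\HkConv_{\cG,\overline{S}}$ in Proposition~\ref{prop:Psi-monoidal}, this yields the required compatibility. The main obstacle I expect is the first part: carefully verifying the hyperbolic-localization-versus-nearby-cycles commutation in the ind-scheme setting of affine Grassmannians of parahoric groups and checking the Zariski local linearizability hypotheses over $\overline{S}$ (rather than only fiberwise); once this is in place, the remaining arguments — matching shifts and passing to the convolution diagram for the monoidal statement — are essentially bookkeeping modeled on the fiberwise proofs.
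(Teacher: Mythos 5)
For the first isomorphism your route is essentially the paper's: the proof there consists of invoking the compatibility of hyperbolic localization with nearby cycles (\cite[Theorem~3.3]{richarz-Gm}), with the identification of attractors and fixed points over $\overline{S}$ supplied by Corollary~\ref{cor:fixed-pts-attractor-map-BD}, exactly as you set it up; your matching of the shifts $[\corr_{M,G}]$ via $\pi_0$ is a detail the paper leaves implicit. One caveat on your ``alternative direct proof'': the repeller $\Gr_{\cP_\lambda^-,\overline{S}} \to \Gr_{\cG,\overline{S}}$ is not a closed immersion (only its connected components are locally closed immersions), so that variant does not bypass the hyperbolic-localization theorem; the citation of~\cite{richarz-Gm} is the load-bearing step.

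For the monoidal compatibility when $P_\lambda=B$ your organization differs from the paper's. The paper does not globalize the construction of the monoidal structure on the constant term functor; instead it assembles a commutative cube whose front and rear faces are the diagrams from the proof of Proposition~\ref{prop:CT-monoidal} (for $G$ and for $\cG$ respectively) and whose left and right faces are $\pH^0$ of the nearby-cycles-versus-convolution diagram from the proof of Proposition~\ref{prop:Psi-monoidal}, and then reads off the compatibility. Your plan — run Lemma~\ref{lem:GrB-conv} and Corollary~\ref{cor:CT-monoidal-pull} in families over $\overline{S}$ on the convolution Be{\u\i}linson--Drinfeld stack and specialize — should work, since the needed attractor/fixed-point identifications on $\Conv_{\cG,\overline{S}}$ follow from the product isomorphism~\eqref{eqn:Conv-product} together with Corollary~\ref{cor:fixed-pts-attractor-map-BD}, and the relevant hyperbolic-localization-versus-nearby-cycles statement on the convolution stack comes from the same theorem of~\cite{richarz-Gm}. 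But be aware that Corollary~\ref{cor:CT-monoidal-pull} alone does not ``give the monoidal structure'' on $\CT_{\cB,\cG}$: by the proof of Proposition~\ref{prop:CT-monoidal} that structure is the composite of Corollary~\ref{cor:CT-monoidal-pull} with the isomorphism of Proposition~\ref{prop:CT-compare} involving $m_{\cT!}\widetilde{\CT}_{\cB,\cG}p^*\pH^0({-}\lboxtimes_\Lambda{-})$. So in addition to globalizing the K\"unneth step you must record the compatibility of nearby cycles with the convolution-level constant term functor $\widetilde{\CT}$ and with $m_{\cT!}$ and $p^*({-}\lboxtimes_\Lambda{-})$ — i.e. a convolution analogue of the first part plus the diagram of Proposition~\ref{prop:Psi-monoidal}, which is exactly what the paper's left/right cube faces encode. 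Your final sentence about combining with the monoidal structure on $\Psi_\cG$ gestures at this, but it should be stated explicitly, since it is an essential half of the argument rather than bookkeeping. Your approach buys a more uniform ``everything over $\overline{S}$'' picture; the paper's buys brevity by reusing the fiberwise statements already proved.
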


\begin{proof}
	The isomorphism~\eqref{eqn:Psi-CT} follows from the compatibility of hyperbolic localization with nearby cycles, see~\cite[Theorem~3.3]{richarz-Gm}.
	
	When $P_\lambda = B$, we have the following commutative diagram, in which the front and rear faces come from the proof of Proposition~\ref{prop:CT-monoidal}, and the left and right faces are obtained by applying $\pH^0$ to the diagram in the proof Proposition~\ref{prop:Psi-monoidal}.  (We omit most subscripts to avoid cumbersome notation, and write $\mathsf{P}$ for $\Perv$ to save space.)
	\[
	\hbox{\scriptsize$\begin{tikzcd}[column sep=-5pt, row sep=small]
			\mathsf{P}(\Hk_G,\Lambda) \times \mathsf{P}(\Hk_G, \Lambda) \ar[rr, "\CT \times \CT" ] \ar[dd, "({-}) \star^0 ({-})"'] \ar[dr, "\Psi \times \Psi"]  &&
			\mathsf{P}(\Hk_T,\Lambda) \times \mathsf{P}(\Hk_T, \Lambda) \ar[dd, "({-}) \star^0 ({-})" near start] \ar[dr, "\Psi \times \Psi"]  \\ 
			& \mathsf{P}(\Hk_\cG,\Lambda) \times \mathsf{P}(\Hk_\cG, \Lambda) \ar[rr, "\CT \times \CT"  near start, crossing over ]  &&
			\mathsf{P}(\Hk_\cT,\Lambda) \times \mathsf{P}(\Hk_\cT, \Lambda) \ar[dd, "({-}) \star^0 ({-})"]  \\ 
			\mathsf{P}(\Hk_G, \Lambda) \ar[dr, "\Psi"] \ar[rr, "\CT" near start] &&
			\mathsf{P}(\Hk_T, \Lambda) \ar[dr, "\Psi"]  \\ 
			& \mathsf{P}(\Hk_\cG, \Lambda) \ar[uu, leftarrow, crossing over, "({-}) \star^0 ({-})"' near end] \ar[rr, "\CT" near start, crossing over] &&
			\mathsf{P}(\Hk_\cT, \Lambda)  \\
			\end{tikzcd}$}
	\]
		The commutativity of this diagram implies that for $\scF, \scG \in \Perv(\Hk_G, \Lambda)$ the following diagram commutes, where we write $B^\abs$ for $B_{F^s}$:
		\[
		\begin{tikzcd}
			\Psi_{\cT}(\CT_{B^\abs, G}(\scF \star^0 \scG)) \ar[r, "\text{Prop.~\ref{prop:CT-monoidal}}"] \ar[d,  "\eqref{eqn:Psi-CT}"'] &
			\Psi_\cT(\CT_{B^\abs, G}(\scF) \star^0 \CT_{B^\abs,G}(\scG)) \ar[d, "\text{Prop.~\ref{prop:Psi-monoidal}}"] \\
			\CT_{\cB,\cG}(\Psi_\cG(\scF \star^0 \scG)) \ar[d, "\text{Prop.~\ref{prop:Psi-monoidal}}"']  
			& \Psi_\cT(\CT_{B^\abs, G}(\scF)) \star^0 \Psi_\cT(\CT_{B^\abs, G}(\scG)) \ar[d, "\eqref{eqn:Psi-CT}"] \\
			\CT_{\cB,\cG}(\Psi_\cG(\scF) \star^0 \Psi_\cG(\scG)) \ar[r, "\text{Prop.~\ref{prop:CT-monoidal}}"']
			& \CT_{\cB,\cG}(\Psi_\cG(\scF)) \star^0 \CT_{\cB,\cG}(\Psi_\cG(\scG)).
		\end{tikzcd}
		\]
		Thus, in this case,~\eqref{eqn:Psi-CT} is an isomorphism of monoidal functors.
	\end{proof}
	
	\subsection{Compatibility with fiber functors}
	
	Recall the functor $\sF_{\cG}$ of~\S\ref{ss:total-cohomology}, and its analogue $\sF_G$ considered in Section~\ref{sec:monoidality-unramified}.
	
	\begin{lem}
		\label{lem:F-Psi}
		There exists a canonical isomorphism
		\[
		\sF_{\cG} \circ \Psi_{\cG} \cong \sF_G
		\]
		of monoidal functors from $\Perv(\Hk_G, \Lambda)$ to $\modf_\Lambda$. 
	\end{lem}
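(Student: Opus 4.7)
I would approach this in two stages: first establish an isomorphism of ordinary functors $\sF_\cG \circ \Psi_\cG \cong \sF_G$, then upgrade it to an isomorphism of monoidal functors.

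For the underlying isomorphism, the natural strategy is to exploit the fact that the Beilinson--Drinfeld Grassmannian $\Gr_{\cG, \overline{S}}$ from \S\ref{ss:BD-Gr} is ind-proper over $\overline{S}$ (since $\cG$ is parahoric), with special fiber $\Gr_\cG$ and geometric generic fiber $\Gr_G$. Working at the level of the affine Grassmannian rather than the Hecke stack (this reduction is harmless since $h_\cG^* \circ \Psi_\cG \cong \Psi_\cG \circ h_G^*$ by smooth base change), I would denote by $a$, $a_\cG$, $a_G$ the structure maps to $\overline{S}$, $\overline{s}$, $\overline{\eta}$ respectively. Then for $\scF$ in $\Dbc(\Gr_G, \Lambda)$, proper base change applied to the cartesian squares
\[
\begin{tikzcd}
\Gr_G \ar[r, hook, "j"] \ar[d, "a_G"'] & \Gr_{\cG,\overline{S}} \ar[d, "a"] & \Gr_\cG \ar[l, hook', "i"'] \ar[d, "a_\cG"] \\
\overline{\eta} \ar[r, hook] & \overline{S} & \overline{s} \ar[l, hook']
\end{tikzcd}
\]
gives $a_* i^* \cong i_{\overline{s}}^* a_*$ and (by adjunction from $j^* a^* \cong a_G^* j_{\overline{\eta}}^*$) also $a_* j_* \cong j_{\overline{\eta}, *} (a_G)_*$. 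Combining these yields
\[
(a_\cG)_* \Psi_\cG(\scF) = (a_\cG)_* i^* j_* \scF \cong i_{\overline{s}}^* j_{\overline{\eta},*} (a_G)_* \scF.
\]
Since $\overline{S}$ is strictly henselian with separably closed residue field, the functor $i_{\overline{s}}^* j_{\overline{\eta},*}$ between $\Dbc(\overline{\eta}, \Lambda) \cong \Dbc(\overline{s}, \Lambda) \cong \Dbc(\mathrm{pt},\Lambda)$ is the identity on underlying complexes. Taking total cohomology gives the sought-for natural isomorphism $\sF_\cG(\Psi_\cG(\scF)) \cong \sF_G(\scF)$.

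For the monoidal compatibility, I would reduce to the torus case by factoring through constant term functors. Concretely, one has the chain of isomorphisms of monoidal functors
\[
\sF_\cG \circ \Psi_\cG \overset{\text{Prop.~\ref{prop:CT-fiber}}}{\cong} \sF_\cT \circ \CT_{\cB,\cG} \circ \Psi_\cG \overset{\text{Prop.~\ref{prop:Psi-CT}}}{\cong} \sF_\cT \circ \Psi_\cT \circ \CT_{B,G}
\]
and the analogous factorization $\sF_G \cong \sF_T \circ \CT_{B,G}$ in the absolute setting. All monoidal structures involved (those on $\sF_\cG$, $\sF_G$, $\CT_{\cB,\cG}$, $\CT_{B,G}$, $\Psi_\cG$, $\Psi_\cT$) are compatible along the arrows, so the claim reduces to producing a canonical monoidal isomorphism $\sF_\cT \circ \Psi_\cT \cong \sF_T$.

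The torus case is concrete and essentially elementary: $\Gr_T$ has underlying set $\bbX_*(T)$ while $\Gr_\cT$ has underlying set $\bbX_*(T)_I$, and for a complex on $\Gr_T$ the nearby cycles functor $\Psi_\cT$ simply sends the stalk at $\lambda \in \bbX_*(T)$ to the stalk at its image $\overline{\lambda}$ in $\bbX_*(T)_I$ (by the first step applied to $\cT$). Summing over fibers of $\bbX_*(T) \twoheadrightarrow \bbX_*(T)_I$ produces the isomorphism $\sF_\cT \Psi_\cT \cong \sF_T$, and since both target monoidal structures on the torus side (Lemma~\ref{lem:F-monoidal-torus} and its absolute analogue) are tensor product of the underlying modules, monoidality is immediate. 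The main technical point to watch is the adjoint base change $a_* j_* \cong j_{\overline{\eta},*} (a_G)_*$ in the ind-proper setting (since $a$ is ind-proper rather than proper), which one justifies by writing $\Gr_{\cG,\overline{S}}$ as a filtered colimit of proper closed $\overline{S}$-subschemes; beyond this, the argument is a bookkeeping exercise in stringing together the compatibilities already established in Propositions~\ref{prop:CT-fiber}, \ref{prop:Psi-monoidal}, and~\ref{prop:Psi-CT}.
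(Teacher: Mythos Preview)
Your proposal is correct and follows essentially the same two-step strategy as the paper: the underlying isomorphism comes from compatibility of nearby cycles with proper pushforward along $\Gr_{\cG,\overline{S}} \to \overline{S}$, and monoidality is reduced to the torus case via Proposition~\ref{prop:CT-fiber} and Proposition~\ref{prop:Psi-CT}. The paper's proof is simply a terse version of what you wrote; your explicit unwinding of the base-change squares and of the torus case adds detail but no new ideas. (One small slip: in your display you wrote $a_* i^*$ where you meant $(a_\cG)_* i^*$, but the intent is clear.)
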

	
	\begin{proof}
		The isomorphism follows from compatibility of nearby cycles with proper pushforward, applied to the proper morphism $\Gr_{\cG,\overline{S}} \to \overline{S}$. By construction of the monoidal structures on $\sF_\cG$ and $\sF_G$ (see also the discussion in~\S\ref{ss:comparison-monoidal-structures}), and in view of Proposition~\ref{prop:Psi-CT}, to prove the compatibility with monoidal structures in general it suffices to prove it in case $\cG=\cT$, where it is obvious.
	\end{proof}
	
	\subsection{Application to coalgebras}
	\label{ss:application-coalg}
	
	Recall from Theorem~\ref{thm:coalgebra}, resp.~Theorem~\ref{thm:Satake-abs}, that we have a canonical equivalence of categories
	\[
	\Perv(\Hk_{\cG},\Lambda) \simto \comod_{\rmB_\cG(\Lambda)}, \quad \text{resp.} \quad \Perv(\Hk_G,\Lambda) \simto \Rep(G^\vee_\Lambda)=\comod_{\scO(G^\vee_\Lambda)}
	\]
	under which the functor $\sF_{\cG}$, resp.~$\sF_G$, corresponds to the natural forgetful functor to $\modf_\Lambda$. 
	
	\begin{cor}
		\label{cor:morph-coalg-Psi}
		There exists a canonical morphism of $\Lambda$-bialgebras
		\[
		f_{\cG,\Lambda} \colon \scO(G^\vee_\Lambda) \to \rmB_\cG(\Lambda)
		\]
		such that the diagram
		\[
		\begin{tikzcd}[column sep=large]
		\Perv(\Hk_G,\Lambda) \ar[r, "\text{Thm.~\ref{thm:Satake-abs}}", "\sim"'] \ar[d, "\Psi_\cG"'] & \Rep(G^\vee_\Lambda) \ar[d] \\
		\Perv(\Hk_\cG,\Lambda) \ar[r, "\text{Thm.~\ref{thm:coalgebra}}", "\sim"'] & \comod_{\rmB_\cG(\Lambda)}
		\end{tikzcd}
		\]
		commutes, where the right vertical arrow is the functor induced by $f_{\cG,\Lambda}$. Moreover, this morphism factors through a morphism of $\Lambda$-bialgebras
		\[
		\tilde{f}_{\cG,\Lambda} \colon \scO((G^\vee_\Lambda)^I) \to \rmB_\cG(\Lambda).
		\]
	\end{cor}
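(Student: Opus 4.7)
The plan is to obtain $f_{\cG,\Lambda}$ via Tannakian formalism applied to the nearby cycles functor $\Psi_\cG$, and then to leverage the $I$-invariance of this functor to factor through the fixed-point subgroup. First, I would combine the t-exactness of $\Psi_\cG$ (which makes~\eqref{eqn:Psi-equiv-Perv} exact), Proposition~\ref{prop:Psi-monoidal} (monoidality), and Lemma~\ref{lem:F-Psi} (compatibility with fiber functors). Translating via Theorem~\ref{thm:Satake-abs} and Theorem~\ref{thm:coalgebra}, these ingredients produce an exact monoidal functor $\Rep(G^\vee_\Lambda) \to \comod_{\rmB_\cG(\Lambda)}$ which commutes with the forgetful functors to $\modf_\Lambda$. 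The standard reconstruction principle for coalgebras (as in~\cite[Proposition~1.2.6(2)]{br}, valid over a general ring $\Lambda$) then produces a canonical morphism of coalgebras $\scO(G^\vee_\Lambda) \to \rmB_\cG(\Lambda)$, and the monoidality of $\Psi_\cG$ promotes it to a morphism of $\Lambda$-bialgebras $f_{\cG,\Lambda}$. The commutativity of the diagram in the statement is built into this construction.

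For the factorization, the key input is the $I$-invariance of the whole setup. The pair of embeddings $\Gr_G \xrightarrow{j} \Gr_{\cG,\overline{S}} \xleftarrow{i} \Gr_\cG$ arises by base change along $\overline{S} \to S$ from the $S$-ind-scheme $\Gr_{\cG,S}$: so $i$ is $I$-equivariant with trivial target-side action on $\Gr_\cG$, and $j$ intertwines the Galois action on $\Gr_G$ from~\S\ref{ss:Galois} with the natural $I$-action on $\Gr_{\cG,\overline{S}}$. The analogous observation for the convolution deformation used in Proposition~\ref{prop:Psi-monoidal} then yields, for every $\sigma \in I$, a canonical isomorphism of monoidal functors $\Psi_\cG \circ \sigma^* \simeq \Psi_\cG$ which is coherent with composition in $I$. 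Under the Tannakian dictionary, this translates into the identity $f_{\cG,\Lambda} \circ \sigma^* = f_{\cG,\Lambda}$ for every $\sigma \in I$, so the associated morphism of $\Lambda$-schemes $\Spec(\rmB_\cG(\Lambda)) \to G^\vee_\Lambda$ factors through the subfunctor of $I$-fixed points. Since this subfunctor is represented by the closed subgroup scheme $(G^\vee_\Lambda)^I$ studied in~\cite{alrr}, we obtain a morphism of $\Lambda$-schemes $\Spec(\rmB_\cG(\Lambda)) \to (G^\vee_\Lambda)^I$, equivalently a morphism of $\Lambda$-bialgebras $\tilde{f}_{\cG,\Lambda} \colon \scO((G^\vee_\Lambda)^I) \to \rmB_\cG(\Lambda)$ as desired.

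The main obstacle I anticipate is making the $I$-equivariance of $\Psi_\cG$ precise as an isomorphism of \emph{monoidal} functors, rather than merely of underlying functors. This demands careful bookkeeping of the Galois action on the convolution deformation stack used in Proposition~\ref{prop:Psi-monoidal}: one must check that the construction of that stack (a priori over $\overline{S}$) descends to an object over $S$, so that its $I$-equivariance is automatic by base change, and that the monoidal structure on $\Psi_\cG$ constructed from it is compatible with the Galois action in a way that ensures coherence. Once this descent is in place, the deduction of the identity $f_{\cG,\Lambda} \circ \sigma^* = f_{\cG,\Lambda}$ and the resulting factorization through $\scO((G^\vee_\Lambda)^I)$ are formal; the resulting map is automatically a morphism of bialgebras, since $\scO((G^\vee_\Lambda)^I)$ is a Hopf quotient of $\scO(G^\vee_\Lambda)$.
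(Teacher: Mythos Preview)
Your proposal is correct and follows essentially the same route as the paper: build $f_{\cG,\Lambda}$ via the reconstruction principle (exactly as in Proposition~\ref{prop:morphism-coalg-Levi}) using exactness, monoidality, and fiber-functor compatibility of $\Psi_\cG$, then obtain the factorization from the fact that $\Gr_{\cG,\overline{S}} = \Gr_{\cG,S} \times_S \overline{S}$ carries an $I$-action that is trivial on the special fiber. Your anticipated obstacle (upgrading $\Psi_\cG \circ \sigma^* \cong \Psi_\cG$ to an isomorphism of \emph{monoidal} functors) is not actually needed, and the paper does not address it either: the coalgebra morphism $f_{\cG,\Lambda}$ is already uniquely determined by the underlying exact functor compatible with fiber functors, so the plain isomorphism forces $f_{\cG,\Lambda} \circ \sigma^* = f_{\cG,\Lambda}$ as $\Lambda$-linear maps, and the induced map through the Hopf quotient $\scO((G^\vee_\Lambda)^I)$ is then automatically a bialgebra morphism, as you yourself observe at the end.
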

	
	\begin{proof}
		The existence of $f_{\cG,\Lambda}$ follows from the same considerations as for Proposition~\ref{prop:morphism-coalg-Levi}. (In this case the compatibility with products uses the monoidal structure on the functor $\Psi_{\cG}$.) To prove that this morphism factors through the invariants $\scO((G^\vee_\Lambda)^I)$, as e.g.~in~\cite[Lemma~4.5]{zhu} one has to check that for any $\gamma \in I$ and any $\scF \in \Perv(\Hk_G,\Lambda)$ we have a canonical isomorphism
		\[
		\Psi_\cG(\gamma \cdot \scF) \cong \Psi_\cG(\scF).
		\]
		Now we have $\Gr_{\cG,\overline{S}} = \Gr_{\cG,S} \times_S \overline{S}$, and the $I$-action on $F^s$ stabilizes $\overline{O_F}$ and induces the trivial action on $\F$. Hence the action of $I$ on $\overline{\eta} \times_{\overline{S}} \Gr_{\cG,\overline{S}} = \Gr_G$ extends to an action on $\Gr_{\cG,\overline{S}}$ which restricts to the trivial action on $\overline{s} \times_{\overline{S}} \Gr_{\cG,\overline{S}}$. The desired property follows.
	\end{proof}
		
	Let us now consider the setting of~\S\ref{ss:compatibility-Psi-CT}.
	We can consider the morphism of coalgebras $f_{\cG,\Lambda}$ from Corollary~\ref{cor:morph-coalg-Psi}, and also the analogous morphism $f_{\cM_\lambda,\Lambda}$ for the reductive group $M_\lambda$ and its parahoric group scheme $\cM_\lambda$. On the other hand we have the morphism
	\[
	\res_{\cP_\lambda,\cG} \colon \rmB_\cG(\Lambda) \to \rmB_{\cM_\lambda}(\Lambda)
	\]
	of Proposition~\ref{prop:morphism-coalg-Levi}. 
	The same considerations provide a morphism of Hopf algebras
	\[
	\res_{P_\lambda,G} \colon \scO(G^\vee_\Lambda) \to \scO((M_\lambda)^\vee_\Lambda)
	\]
	which identifies $(M_\lambda)^\vee_\Lambda$ with the Levi subgroup of $G^\vee_\Lambda$ which is Langlands dual to the Levi subgroup $F^s \otimes_F M_\lambda \subset G_{F^s}$; see e.g.~\cite[\S 1.15.2]{br}. 
	
	The following claim is a direct consequence of Proposition~\ref{prop:Psi-CT}.
	
	\begin{cor}
		\label{cor:compatibility-f-r}
		In the setting of Proposition~\ref{prop:Psi-CT}, we have
		\[
		f_{\cM_\lambda,\Lambda} \circ \res_{P_\lambda,G} = \res_{\cP_\lambda,\cG} \circ f_{\cG,\Lambda}.
		\]
	\end{cor}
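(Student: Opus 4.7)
The strategy is to translate both sides of the claimed equality into functors on perverse sheaf categories via the Tannakian equivalences of Theorems~\ref{thm:Satake-abs} and~\ref{thm:coalgebra}, and then appeal to a standard uniqueness principle: two $\Lambda$-coalgebra morphisms $\phi_1, \phi_2 \colon C \to D$ whose associated restriction functors $\comod_C \to \comod_D$ are connected by a natural isomorphism that is the identity on underlying $\Lambda$-modules must coincide (tested on the $C$-comodule $C$ itself via $\Delta_C$, the compatibility forces $(\id \otimes \phi_1) \circ \Delta_C = (\id \otimes \phi_2) \circ \Delta_C$, and applying $\varepsilon_C \otimes \id$ yields the equality).

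First, I would unwind the defining properties of the morphisms in play. By Corollary~\ref{cor:morph-coalg-Psi}, the restriction functor on comodule categories attached to $f_{\cG,\Lambda}$ corresponds under the Tannakian equivalences to the nearby cycles functor $\Psi_\cG \colon \Perv(\Hk_G,\Lambda) \to \Perv(\Hk_\cG, \Lambda)$, and similarly for $f_{\cM_\lambda, \Lambda}$. Likewise, by Proposition~\ref{prop:morphism-coalg-Levi}, the restriction functor associated with $\res_{\cP_\lambda, \cG}$ corresponds to $\CT_{\cP_\lambda,\cG}$, and analogously for $\res_{P_\lambda,G}$. Hence the composition $\res_{\cP_\lambda,\cG} \circ f_{\cG,\Lambda}$ induces (through the equivalences) the functor $\CT_{\cP_\lambda,\cG} \circ \Psi_\cG$, while $f_{\cM_\lambda,\Lambda} \circ \res_{P_\lambda,G}$ induces $\Psi_{\cM_\lambda} \circ \CT_{P_\lambda^\abs, G}$. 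Proposition~\ref{prop:Psi-CT} then supplies a canonical natural isomorphism $\eta$ between these two functors.

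To apply the uniqueness principle it remains to verify that $\eta$ is compatible with the fiber functors, i.e.~that after composition with $\sF_{\cM_\lambda}$ it induces the identity automorphism of $\sF_G$ through the two canonical identifications $\sF_{\cM_\lambda} \circ \Psi_{\cM_\lambda} \circ \CT_{P_\lambda^\abs, G} \simeq \sF_G$ and $\sF_{\cM_\lambda} \circ \CT_{\cP_\lambda,\cG} \circ \Psi_\cG \simeq \sF_G$ obtained by applying Lemma~\ref{lem:F-Psi} and Proposition~\ref{prop:CT-fiber} in the two possible orders. This is the principal technical point. It amounts to a diagram chase: the isomorphism of Proposition~\ref{prop:Psi-CT} is constructed from the compatibility of hyperbolic localization with nearby cycles (\cite[Theorem~3.3]{richarz-Gm}), while the isomorphism of Lemma~\ref{lem:F-Psi} comes from the compatibility of nearby cycles with proper pushforward along $\Gr_{\cG,\overline{S}} \to \overline{S}$, and Proposition~\ref{prop:CT-fiber} is in turn derived from Proposition~\ref{prop:Braden} (Braden's theorem). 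All of these ultimately originate from the same formalism of six functors, and the required compatibility should follow by carefully matching the base change and proper pushforward isomorphisms against the hyperbolic localization isomorphism.

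The main obstacle will be carrying out this fiber functor compatibility cleanly. If a direct verification proves cumbersome, a workable fallback is to first treat the case $P_\lambda = B$, where Proposition~\ref{prop:Psi-CT} itself already asserts compatibility with the monoidal structures (and hence, via Remark~\ref{rmk:F-monoidal-summary}, with the fiber functors), and then deduce the general case by factoring $\res_{P_\lambda,G}$ through the analogous compatibility for the Borel and exploiting the transitivity formula~\eqref{eqn:restriction-morphisms} (together with its obvious analogue on the absolute side). This reduction is legitimate because both sides of the claimed equality are well behaved under parabolic transitivity by Lemma~\ref{lem:transitivity-CT} and the construction of $f_{\cG,\Lambda}$ in Corollary~\ref{cor:morph-coalg-Psi}.
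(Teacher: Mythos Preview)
Your main approach is correct and is precisely what the paper intends: the paper's proof is the single sentence ``direct consequence of Proposition~\ref{prop:Psi-CT},'' leaving implicit exactly the coalgebra uniqueness principle you spell out. You are right to flag the fiber-functor compatibility as the point requiring care; it does follow because the isomorphisms in Proposition~\ref{prop:Psi-CT}, Lemma~\ref{lem:F-Psi}, and Proposition~\ref{prop:CT-fiber} are all built from the same canonical base-change and proper-pushforward identifications in the six-functor formalism, so the relevant square of identifications of $\sF_G$ commutes.

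Your fallback argument, however, has two gaps. First, the implication ``monoidal compatibility in the case $P_\lambda=B$ $\Rightarrow$ fiber-functor compatibility via Remark~\ref{rmk:F-monoidal-summary}'' is not justified: Remark~\ref{rmk:F-monoidal-summary} characterizes the monoidal structure on $\sF_\cG$, but compatibility of~\eqref{eqn:Psi-CT} with that structure is a different coherence condition from compatibility with the identifications $\sF_{\cM_\lambda}\circ\Psi_{\cM_\lambda}\cong\sF_{M_\lambda^\abs}$ and $\sF_\cG\circ\Psi_\cG\cong\sF_G$. Second, even granting the Borel case, the reduction to general $P_\lambda$ via transitivity does not go through: post-composing both sides with $\res_{\cB\cap\cM_\lambda,\cM_\lambda}$ and using~\eqref{eqn:restriction-morphisms} only yields an equality \emph{after} this post-composition, and you cannot cancel because $\res_{\cB\cap\cM_\lambda,\cM_\lambda}\colon\rmB_{\cM_\lambda}(\Lambda)\to\rmB_\cT(\Lambda)$ is not injective (dually, the maximal torus is not schematically dense in $\cM_\lambda^\vee$). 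Stick with the direct verification.
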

	
	In this setting we have actions of $I$ both on $G^\vee_\Lambda$ and on $(M_\lambda)^\vee_\Lambda$ (see~\S\ref{ss:Galois}), and it is easily seen that the morphism $\res_{P_\lambda,G}$ is $I$-equivariant. It therefore induces a morphism
	\[
	\widetilde{\res}_{P_\lambda,G} \colon \scO((G^\vee_\Lambda)^I) \to \scO(((M_\lambda)^\vee_\Lambda)^I)
	\]
	which satisfies
	\[
	\widetilde{f}_{\cM_\lambda, \Lambda} \circ \widetilde{\res}_{P_\lambda,G} = \res_{\cP_\lambda,\cG} \circ \widetilde{f}_{\cG,\Lambda}.
	\]
		
	\section{The ramified geometric Satake equivalence}
	\label{sec:equivalence}
	
	\subsection{Commutativity}
	
	We will now prove that $\rmB_{\cG}(\Lambda)$ is 
	the coordinate ring of a group scheme over $\Lambda$.
	
	\begin{prop}
		\label{prop:commutativity}
		The $\Lambda$-bialgebra $\rmB_{\cG}(\Lambda)$ is a commutative Hopf algebra. In particular it makes sense to consider the spectrum
		\[
		\cG^\vee_\Lambda := \Spec(\rmB_\cG(\Lambda)),
		\]
		and this affine scheme has a canonical structure of flat group scheme over $\Lambda$.
	\end{prop}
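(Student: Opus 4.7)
The plan is to follow the strategy outlined in \S\ref{ss:intro-proof}, transferring information between the three coefficient rings $\bK$, $\bO$, and $\bk$.

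\textit{Step 1 (commutativity over $\bK$).} The key input is surjectivity of the morphism
\[
\tilde{f}_{\cG,\bK} \colon \scO((G^\vee_\bK)^I) \to \rmB_\cG(\bK)
\]
from Corollary~\ref{cor:morph-coalg-Psi}. Granting this, commutativity of $\rmB_\cG(\bK)$ is immediate since the source is commutative. To prove surjectivity one reformulates it via Tannakian duality as the statement that every simple object of $\Perv(\Hk_\cG,\bK)$ appears as a subquotient of $\Psi_\cG(\scF)$ for some $\scF \in \Perv(\Hk_G,\bK)$; by semisimplicity (Lemma~\ref{lem:semisimplicity}), ``subquotient'' may be replaced by ``direct summand''. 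Lemma~\ref{lem:Psi-IC} then exhibits $\scJ_{!*}(\overline{\lambda},\bK)$ as a direct summand of $\Psi_\cG(\scJ_{!*}^{\abs}(\lambda,\bK))$ whenever $\lambda \in \bbX_*(T)^+$ lifts $\overline{\lambda} \in \bbX_*(T)_I^+$. To reach the remaining simples, corresponding to elements of $\bbX_*(T)_I^+$ not in the image of $\bbX_*(T)^+$ (cf.\ Remark~\ref{rmk:counterexample-surjectivity}), one reduces to the adjoint quotient of $G$, for which Lemma~\ref{lem:dominant-coweights}\eqref{it:dominant-coweights-lifting} supplies the required surjectivity, and uses the monoidality of $\Psi_\cG$ (Proposition~\ref{prop:Psi-monoidal}) together with the functoriality of the construction of $\rmB_\cG(\Lambda)$ in the group.

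\textit{Step 2 (commutativity over $\bO$ and $\bk$).} Since $\rmB_\cG(\bO)$ is flat over $\bO$ by Theorem~\ref{thm:coalgebra}, the canonical morphism
\[
\rmB_\cG(\bO) \hookrightarrow \bK \otimes_\bO \rmB_\cG(\bO) \cong \rmB_\cG(\bK)
\]
is injective, and commutativity of $\rmB_\cG(\bO)$ descends from Step~1 using~\eqref{eqn:BG-ext-scalars}. Commutativity of $\rmB_\cG(\bk) \cong \bk \otimes_\bO \rmB_\cG(\bO)$ then follows by base change.

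\textit{Step 3 (Hopf algebra structure).} It remains to produce an antipode on the now commutative bialgebra $\rmB_\cG(\Lambda)$. This follows from the existence of duals in $\Perv(\Hk_\cG,\Lambda)$: Verdier duality $\D$ is an exact contravariant involution satisfying $\D(\scF \star^0 \scG) \cong \D\scG \star^0 \D\scF$, which equips the Tannakian category with duals; by Tannakian formalism this upgrades $\rmB_\cG(\Lambda)$ to a commutative Hopf algebra, so $\cG^\vee_\Lambda := \Spec(\rmB_\cG(\Lambda))$ is a flat affine group scheme over $\Lambda$. Flatness was already recorded in Step~2 (or directly from Theorem~\ref{thm:coalgebra}).

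The main obstacle is the surjectivity statement in Step~1: the direct argument via Lemma~\ref{lem:Psi-IC} only reaches the simples indexed by coweights in the image of $\bbX_*(T)^+ \to \bbX_*(T)_I^+$, so covering the rest requires a genuine reduction to the adjoint case together with a careful use of the monoidality of the absolute Satake equivalence and of the nearby cycles functor.
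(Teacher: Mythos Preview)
Your overall strategy matches the paper's: prove commutativity over $\bK$ via surjectivity of the comparison map with $\scO(G^\vee_\bK)$, then descend to $\bO$ by flatness and to $\bk$ by base change. However, two steps need repair.

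\textbf{The adjoint reduction.} You correctly identify that Lemma~\ref{lem:Psi-IC} alone only reaches simples indexed by the image of $\bbX_*(T)^+ \to \bbX_*(T)_I^+$, and you appeal to ``functoriality in the group'' and monoidality of $\Psi_\cG$ to handle the rest. This is not enough as stated: one needs a precise relationship between $\Perv(\Hk_\cG,\Lambda)$ and $\Perv(\Hk_{\cG_{\mathrm{ad}}},\Lambda)$. The paper supplies this via Corollary~\ref{cor:passing-to-adjoint} (itself resting on the nontrivial Proposition~\ref{prop:Perv-equiv-const}), which shows that each connected component of $\Hk_\cG$ has the same perverse sheaves as the corresponding component of $\Hk_{\cG_{\mathrm{ad}}}$. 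From this the paper obtains an explicit identification
\[
\rmB_\cG(\Lambda) \;\cong\; \scO\bigl(\Diag_\Lambda(\pi_1(G)_I) \times^{\Diag_\Lambda(\pi_1(G_{\mathrm{ad}})_I)} (\cG_{\mathrm{ad}})^\vee_\Lambda\bigr),
\]
which immediately transfers both commutativity and the Hopf structure from the adjoint case. In particular the paper does \emph{not} establish surjectivity of $f_{\cG,\bK}$ for general $G$; it proves it only for $G$ adjoint (where Lemma~\ref{lem:dominant-coweights}\eqref{it:dominant-coweights-lifting} applies) and then uses the displayed formula.

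\textbf{The antipode.} Your Step~3 invokes Verdier duality as an exact involution on $\Perv(\Hk_\cG,\Lambda)$. Over $\Lambda=\bO$ this fails: Verdier duality exchanges the perverse t-structure with the $p^+$-t-structure, so $\D$ does not preserve $\Perv(\Hk_\cG,\bO)$ and cannot be used to exhibit duals there. Even over a field, an anti-monoidal involution is not the same as rigidity; one still has to produce evaluation and coevaluation maps. The paper instead checks the antipode for adjoint $G$ as in~\cite[Proposition~1.13.4]{br}, and for general $G$ the Hopf structure is automatic from the explicit group-scheme description above.
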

	
	\begin{proof}
		Assume for a moment that $G$ is semisimple of adjoint type (i.e.~has trivial center). By Lemma~\ref{lem:semisimplicity}, 
		the category $\Perv(\Hk_{\cG}, \bK)$ is semisimple; its simple objects are the intersection cohomology complexes $\scJ_{!*}(\lambda,\bK)$ for $\lambda \in \bbX_*(T)^+_I$.
		Lemma~\ref{lem:dominant-coweights}\eqref{it:dominant-coweights-lifting} and Lemma~\ref{lem:Psi-IC} imply that any of these simple objects is a subquotient of an object in the image of $\Psi_{\cG}$; using~\cite[Lemma~2.2.13]{schauenburg} we deduce that the morphism $f_\cG$ from Corollary~\ref{cor:morph-coalg-Psi} is surjective.
		Since $\scO(G^\vee_\bK)$ is commutative, this implies that $\rmB_\cG(\bK)$ is commutative. By flatness of $\rmB_\cG(\bO)$ and the isomorphisms~\eqref{eqn:BG-ext-scalars}, we deduce that $\rmB_\cG(\Lambda)$ is commutative for any $\Lambda$. In particular, we can therefore consider the semigroup scheme
		\[
		\cG^\vee_\Lambda := \Spec(\rmB_\cG(\Lambda))
		\]
		over $\Lambda$. The fact that this semigroup scheme is a group scheme, i.e.~that $\rmB_\cG(\Lambda)$ admits an antipode, can be checked as in~\cite[Proposition~1.13.4]{br}. Since $\rmB_\cG(\Lambda)$ is flat over $\Lambda$, this group scheme is flat.
		
		Now we drop the assumption that $G$ is semisimple of adjoint type, and consider the quotient morphism $G \to G_{\ad}$ as in the proof of Lemma~\ref{lem:dominant-coweights}\eqref{it:dominant-coweights-orbits}.
		We have an induced equivariant map of buildings $\mathscr{B}(G,F)\to \mathscr{B}(G_{\ad},F)$ that induces a bijection between facets. 
		Let $\fa_\ad \subset \mathscr{B}(G_{\mathrm{ad}},F)$ be the facet corresponding to $\fa$; then we can consider the parahoric group scheme $\cG_{\mathrm{ad}}$ for $G_{\mathrm{ad}}$ corresponding to $\fa_\ad$.
		Since parahoric group schemes are smooth, \cite[Proposition 1.7.6]{BT84} yields a morphism $\cG \to \cG_{\mathrm{ad}}$ of group schemes over $O_F$, which induces a morphism of ind-schemes
		\begin{equation}
			\label{eqn:morph-Gr-Gad}
			\Gr_{\cG} \to \Gr_{\cG_{\mathrm{ad}}}.
		\end{equation}
		On the other hand, the Kottwitz morphism provides a map
		\[
		\Gr_{\cG} \to \underline{\pi_1(G)_I}
		\]
		where $\pi_1(G)_I$ is as in~\S\ref{ss:tori-weights} and $\underline{\pi_1(G)_I}$ is the associated ind-scheme over $\F$. We also have the similar ind-scheme $\underline{\pi_1(G_{\mathrm{ad}})_I}$ (which in this case is a scheme since $\pi_1(G_{\mathrm{ad}})_I$ is finite), and morphisms
		\[
		\underline{\pi_1(G)_I} \to \underline{\pi_1(G_{\mathrm{ad}})_I} \quad \text{and} \quad \Gr_{\cG_{\mathrm{ad}}} \to \underline{\pi_1(G_{\mathrm{ad}})_I}.
		\]
		Together, these morphisms induce a map
		\begin{equation*}
			\label{eqn:isom-Gr-Gad}
			\Gr_{\cG} \to \Gr_{\cG_{\mathrm{ad}}} \times_{\underline{\pi_1(G_{\mathrm{ad}})_I}} \underline{\pi_1(G)_I}.
		\end{equation*}
		
		The fibers of the morphism $\Gr_{\cG_{\mathrm{ad}}} \to \underline{\pi_1(G_{\mathrm{ad}})_I}$ are the connected components of $\Gr_{\cG_{\mathrm{ad}}}$; hence any object of $\Perv(\Hk_{\cG_{\mathrm{ad}}},\Lambda)$ admits a canonical grading by $\pi_1(G_{\mathrm{ad}})_I$. By the same considerations as in the proof of Proposition~\ref{prop:morphism-coalg-Levi}, we deduce a morphism of $\Lambda$-group schemes
		\[
		\Diag_\Lambda(\pi_1(G_{\mathrm{ad}})_I) \to (\cG_{\mathrm{ad}})_\Lambda^\vee.
		\]
		Corollary~\ref{cor:passing-to-adjoint} implies that the datum of an object in $\Perv(\Hk_{\cG},\Lambda)$ is equivalent to the datum of an object in $\Perv(\Hk_{\cG_{\mathrm{ad}}},\Lambda)$ together with a ``lift'' of the associated $\pi_1(G_{\mathrm{ad}})_I$-grading to a $\pi_1(G)_I$-grading, or in other words of a representation of the group scheme $(\cG_{\mathrm{ad}})_\Lambda^\vee$ together with an ``extension'' of the action of the diagonalizable group scheme $\Diag_\Lambda(\pi_1(G_{\mathrm{ad}})_I)$ to an action of the diagonalizable group scheme $\Diag_\Lambda(\pi_1(G)_I)$. It follows that $\rmB_\cG(\Lambda)$ is the structure algebra of the flat $\Lambda$-group scheme
		\[
		\Diag_\Lambda(\pi_1(G)_I) \times_{\Spec(\Lambda)}^{\Diag_\Lambda(\pi_1(G_{\mathrm{ad}})_I)} (\cG_{\mathrm{ad}})^\vee_\Lambda,
		\]
		which finishes the proof.
	\end{proof}
	
	Now we consider a parabolic subgroup $P \subset G$ containing $A$, and the associated morphism $\res_{\cP,\cG}$ from Proposition~\ref{prop:morphism-coalg-Levi}.
	
	\begin{prop}
		\label{prop:res-bialg}
		For any parabolic subgroup $P \subset G$ containing $A$, $\res_{\cP,\cG}$ is a morphism of Hopf algebras.
	\end{prop}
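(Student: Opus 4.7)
We already know from Proposition~\ref{prop:morphism-coalg-Levi} that $\res_{\cP,\cG}$ is a morphism of coalgebras, so the task is to show compatibility with the multiplication and unit. The plan is to transfer the corresponding statement from the absolute case, where $\res_{P_\lambda,G}$ is identified via Theorem~\ref{thm:Satake-abs} with restriction of representations along the embedding of the Langlands dual Levi $M_\lambda^\vee_\Lambda\hookrightarrow G^\vee_\Lambda$ (and hence is manifestly a morphism of Hopf algebras), using the square of Corollary~\ref{cor:compatibility-f-r} as the bridge. The essential input one needs from the ramified side is surjectivity of the morphism $f_{\cG,\Lambda}$ of Corollary~\ref{cor:morph-coalg-Psi}.

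First I would treat the case $\Lambda=\bK$ and $G$ of adjoint type, where the proof of Proposition~\ref{prop:commutativity} has already established surjectivity of $f_{\cG,\bK}$ (combining semisimplicity of $\Perv(\Hk_\cG,\bK)$, Lemma~\ref{lem:Psi-IC}, Lemma~\ref{lem:dominant-coweights}\eqref{it:dominant-coweights-lifting}, and \cite[Lem\-ma~2.2.13]{schauenburg}). For $a,b\in\rmB_\cG(\bK)$ I would pick lifts $a',b'\in\scO(G^\vee_\bK)$ under $f_{\cG,\bK}$ and compute
\[
\res_{\cP,\cG}(ab)=f_{\cM_\lambda,\bK}\!\bigl(\res_{P_\lambda,G}(a'b')\bigr)=f_{\cM_\lambda,\bK}\!\bigl(\res_{P_\lambda,G}(a')\res_{P_\lambda,G}(b')\bigr)=\res_{\cP,\cG}(a)\,\res_{\cP,\cG}(b),
\]
using Corollary~\ref{cor:compatibility-f-r} and the Hopf compatibility of $\res_{P_\lambda,G}$; the analogous argument handles units.

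Next, still with $\Lambda=\bK$, I would lift the adjoint case to a general reductive $G$ by the same device used at the end of the proof of Proposition~\ref{prop:commutativity}. The morphism $\cG\to\cG_{\mathrm{ad}}$ and the Kottwitz map identify $\cG^\vee_\bK$ with the pushout
\[
\Diag_\bK(\pi_1(G)_I)\times^{\Diag_\bK(\pi_1(G_{\mathrm{ad}})_I)}(\cG_{\mathrm{ad}})^\vee_\bK,
\]
and similarly for $\cM^\vee_\bK$; because parabolic restriction commutes with central characters, Corollary~\ref{cor:passing-to-adjoint} shows that the induced morphism $\cM^\vee_\bK\to\cG^\vee_\bK$ is the fibered product of the morphism $\Diag_\bK(\pi_1(M_\lambda)_I)\to\Diag_\bK(\pi_1(G)_I)$ induced by functoriality of $\pi_1$ (which is automatically a group scheme morphism) with the adjoint-group morphism $(\cM_{\lambda,\mathrm{ad}})^\vee_\bK\to(\cG_{\mathrm{ad}})^\vee_\bK$ proved above. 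Hence $\res_{\cP,\cG}$ is a morphism of Hopf algebras over~$\bK$.

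Finally, I would transfer the result to $\bO$ and $\bk$ using the change-of-scalars isomorphisms~\eqref{eqn:BG-ext-scalars}: since $\rmB_\cG(\bO)$ is $\bO$-flat and $\bK\otimes_\bO\rmB_\cG(\bO)\simeq\rmB_\cG(\bK)$, two $\bO$-linear maps agree as soon as they agree after extending scalars to $\bK$, which establishes the Hopf property over $\bO$; surjectivity of $\rmB_\cG(\bO)\twoheadrightarrow\bk\otimes_\bO\rmB_\cG(\bO)\simeq\rmB_\cG(\bk)$ then passes the result down to~$\bk$. The main obstacle will be the non-adjoint step: verifying carefully that the $\pi_1$-grading on $\Perv(\Hk_\cM,\Lambda)$ produced by $\CT_{\cP,\cG}$ matches the one obtained from $\cM\to\cM/Z(G)$ and the map $\pi_1(M_\lambda)_I\to\pi_1(G)_I$, so that the fiber product description of Proposition~\ref{prop:commutativity} is truly functorial in $(\cG,\cP)$; once this compatibility is in hand the rest of the argument is formal.
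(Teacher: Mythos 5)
Your plan follows the paper's own proof essentially verbatim: the paper likewise deduces the adjoint case over $\bK$ from the well-known Hopf-algebra property of $\res_{P_\lambda,G}$ in the absolute case, transported through Corollary~\ref{cor:compatibility-f-r} using surjectivity of the morphism of Corollary~\ref{cor:morph-coalg-Psi} (available since $G$ is adjoint), then reduces general $G$ to the adjoint case via the $\pi_1$-pushout description from the proof of Proposition~\ref{prop:commutativity} and transfers to $\bO$ and $\bk$ by flatness and~\eqref{eqn:BG-ext-scalars}. The small points you raise are harmless: your displayed computation shows one only needs surjectivity of $f_{\cG,\bK}$ (the Levi morphism $f_{\cM_\lambda,\bK}$ need only be an algebra map, and flatness of the target $\rmB_{\cM}(\bO)$ is what gives the $\bO$-step), and the grading compatibility you flag in the non-adjoint step is exactly the detail the paper leaves implicit in ``the general case can then be reduced to this one.''
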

	
	\begin{proof}
		The proof is similar to that of Proposition~\ref{prop:commutativity}. Namely, in case $G$ is semisimple of adjoint type the claim follows from the similar claim for $\mathrm{res}_{P,G}$ in case $\Lambda=\bK$, which is well known, see~\S\ref{ss:application-coalg}. (Note that in this case any Levi subgroup of $G$ satisfies the conditions in Lemma~\ref{lem:dominant-coweights}\eqref{it:dominant-coweights-lifting}, so that the associated morphism of Corollary~\ref{cor:morph-coalg-Psi} is surjective.) The general case can then be reduced to this one.
	\end{proof}
		
	\subsection{Identification of the dual group}
	\label{ss:identification-dual-gp}
	
	Passing to spectra, the morphism $\widetilde{f}_\cG$ of Corollary~\ref{cor:morph-coalg-Psi} provides a canonical morphism of $\Lambda$-group schemes
	\begin{equation*}
		\varphi_{\cG,\Lambda} \colon \cG^\vee_\Lambda \to (G^\vee_\Lambda)^I.
	\end{equation*}
	By~\eqref{eqn:BG-ext-scalars}, 
	there exist canonical isomorphisms
	\begin{equation}
		\label{eqn:base-change-Tann-gp}
		\cG^\vee_{\bK} \simto \bK \otimes_\bO \cG^\vee_\bO, \quad
		\cG^\vee_{\bk} \simto \bk \otimes_\bO \cG^\vee_\bO.
	\end{equation}
	On the other hand, by compatibility of fixed points with base change (see~\cite[(2.1)]{alrr}),
	we also have canonical isomorphisms
	\begin{equation}
		\label{eqn:base-change-Tann-gp-2}
		(G^\vee_{\bK})^I \simto \bK \otimes_\bO (G^\vee_\bO)^I, \quad
		(G^\vee_{\bk})^I \simto \bk \otimes_\bO (G^\vee_\bO)^I.
	\end{equation}
	These identifications are compatible with the corresponding morphisms $\varphi_{\cG,\Lambda}$ in the obvious way.
		
	The following theorem is the main result of this paper. Its proof will occupy the rest of the section.
	
	\begin{thm}
		\label{thm_dual_group_fixpts}
		For any $\Lambda$, the morphism $\varphi_{\cG,\Lambda}$ is an isomorphism.
	\end{thm}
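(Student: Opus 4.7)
The plan is to prove Theorem~\ref{thm_dual_group_fixpts} by reducing to the case where $G$ has relative semisimple rank at most one, and then in this base case treating the three coefficient rings $\bK$, $\bk$, $\bO$ in sequence. As a warm-up, the torus case is automatic: $\rmB_\cT(\Lambda) = \Lambda[\bbX_*(T)_I]$ by the identification $\Perv(\Hk_\cT,\Lambda) \cong \modf_\Lambda^{\bbX_*(T)_I}$, and this also equals $\scO((T^\vee_\Lambda)^I) = \scO(\Diag_\Lambda(\bbX_*(T)_I))$, so $\varphi_{\cT,\Lambda}$ is the identity.

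First I would carry out the reduction to relative rank one, by combining the commutative diagrams of Corollary~\ref{cor:compatibility-f-r} with the structural results of~\cite{alrr}. Both $\cG^\vee_\Lambda$ and $(G^\vee_\Lambda)^I$ are flat affine group schemes over $\Lambda$ sharing the common subgroup $(T^\vee_\Lambda)^I$, and each is generated in a suitable sense by the images of its Levi subgroups of relative rank at most one; thus a morphism between them whose restriction to every such Levi is an isomorphism is itself an isomorphism.

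Next I would handle the rank-one cases for $\Lambda = \bK$ and $\Lambda = \bk$ separately. For $\Lambda = \bK$: if $G$ is split, this is Theorem~\ref{thm:Satake-abs} since $I$ acts trivially; otherwise $G$ is a nonsplit quasi-split form such as ramified or unramified $\mathrm{SU}_n$, and reducing to the adjoint quotient as in Proposition~\ref{prop:commutativity} yields surjectivity of $\widetilde f_{\cG,\bK}$, after which injectivity follows from a comparison of dimensions of morphism spaces using semisimplicity of $\Perv(\Hk_\cG,\bK)$ and the explicit representation theory of $(G^\vee_\bK)^I$ from~\cite{alrr}. For $\Lambda = \bk$: semisimplicity is lost, but one can instead compare standard/costandard objects on both sides, using Lemma~\ref{lem:change-scalars-P} and the highest weight structure of Remark~\ref{rmk:hw-category}. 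Once these two cases are known, $\Lambda = \bO$ follows from flatness of both Hopf algebras, the identifications~\eqref{eqn:base-change-Tann-gp}--\eqref{eqn:base-change-Tann-gp-2}, and a standard Nakayama argument.

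The main obstacle will be the exceptional case highlighted in~\S\ref{ss:intro-fixed-pts}: when $G$ is the outer form of $\mathrm{SL}_{2n+1}$ associated with a ramified quadratic extension of $\Q_2$, $(G^\vee_\bO)^I$ is only quasi-reductive and in particular fails to be smooth over $\bO$. In this situation the irreducible representations of $(G^\vee_\bk)^I$ do not simply interpolate those of $(G^\vee_\bK)^I$, so the standard parity-and-dimension argument comparing simples in $\bK$ and $\bk$ breaks down. I expect this case must be settled by a direct case-by-case computation matching weight spaces of perverse sheaves on $\Gr_\cG$ against the known weight structure of the quasi-reductive group $(G^\vee_\bO)^I$ described in~\cite{alrr}, together with a careful analysis of the restriction of standard/costandard objects through the nearby cycles functor $\Psi_\cG$.
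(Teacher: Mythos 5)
Your plan stumbles at the very first reduction. The principle you invoke --- ``a morphism between them whose restriction to every such Levi is an isomorphism is itself an isomorphism'' --- is not valid: generation of the target by the images of the rank-one pieces gives at most \emph{schematic dominance} (injectivity of the Hopf-algebra map), never injectivity of the group morphism itself. (Already the addition map $\Ga \times \Ga \to \Ga$, whose source is generated by two copies of $\Ga$ each mapping isomorphically onto the target, is surjective but not injective.) Moreover you would also need to know that the Tannakian group $\cG^\vee_\Lambda$ is generated by the images of the $\cM_\gamma^\vee$, which is neither formal nor proved anywhere in your outline; establishing any control of this kind on $\cG^\vee_\Lambda$ is exactly the hard Tannakian input. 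The paper's actual route is different: after reducing to $G$ adjoint (Lemma~\ref{lem:reduction-adjoint}, resting on \cite[Proposition~6.8]{alrr}), the ``upper bound'' on $\cG^\vee_\Lambda$ comes from the surjectivity of $f_{\cG,\bK} \colon \scO(G^\vee_\bK) \to \rmB_\cG(\bK)$ (Proposition~\ref{prop:commutativity}, which needs Lemma~\ref{lem:dominant-coweights}\eqref{it:dominant-coweights-lifting} and Lemma~\ref{lem:Psi-IC}, i.e.\ every simple perverse sheaf is a subquotient of a nearby cycle --- valid only after passing to adjoint type), so that $\varphi_{\cG,\bK}$ is a closed immersion; the rank-one case and the constant-term maps are used \emph{only} to show that the scheme-theoretic image of $\varphi_{\cG,\Lambda}$ contains every $((M_\gamma)^\vee_\Lambda)^I$ for $\Lambda=\bK,\bk$, and \cite[Corollary~6.6]{alrr} then forces this image to be all of $(G^\vee_\Lambda)^I$; finally the case $\Lambda=\bO$ is closed by the flatness criterion of Lemma~\ref{lem:fs} (using flatness of both $\rmB_\cG(\bO)$ and $\scO((G^\vee_\bO)^I)$), and the cases $\bK,\bk$ follow from $\bO$ by base change. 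Note in particular that over $\bk$ one only ever needs \emph{injectivity} of $\tilde f_{\cG,\bk}$, not a full isomorphism, which is what makes the argument feasible.

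The second genuine gap is the one you flag yourself: the quasi-reductive case ($G_{F^s}$ a product of $\mathrm{PGL}_3$'s with outer action, $\ell=2$), where $(G^\vee_\bk)^I$ is not smooth. ``A direct case-by-case computation matching weight spaces'' is not an argument, and the standard/costandard comparison you propose for $\Lambda=\bk$ breaks precisely there (and is not really carried out even in the smooth case). The paper handles this by purely group-theoretic input: it shows the image $H \subset (G^\vee_\bk)^I$ is connected, contains the maximal torus, and contains $(G^\vee_\bk)^I_{\mathrm{red}}$ (Lemma~\ref{lem:subgroups-SL2} plus an argument excluding invertible simple objects via Frobenius kernels); in the smooth cases this already gives $H=(G^\vee_\bk)^I$, while in the $\ell=2$ case one invokes \cite[Proposition~6.9]{alrr} (over $\bO=\Z_2$, then general $\bO$ by change of scalars) to conclude that any such closed subgroup scheme containing the reduced subgroup is everything. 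Without a substitute for the reduction principle above and for this last group-theoretic step, your outline does not yield the theorem.
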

	
	We start with a reduction of this statement to semisimple groups of adjoint type. 
	Recall from the proof of Proposition~\ref{prop:commutativity} that we have a canonical integral model $\cG_{\mathrm{ad}}$ of the adjoint quotient $G_{\mathrm{ad}}$ of $G$ induced by the parahoric model $\cG$. Of course, the choice of $B$ determines a Borel subgroup in $G_{\mathrm{ad}}$, and as noted in the proof of Lemma~\ref{lem:dominant-coweights}\eqref{it:dominant-coweights-orbits} the choice of $A$ determines a maximal split torus in $G_{\mathrm{ad}}$. 
	Hence we have all the ingredients to run the above constructions for the group $\cG_{\mathrm{ad}}$.
	
	\begin{lem}
		\label{lem:reduction-adjoint}
		If Theorem~\ref{thm_dual_group_fixpts} holds for the group $\cG_{\mathrm{ad}}$, then it holds for $\cG$.
	\end{lem}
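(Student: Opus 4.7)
The plan is to realize both $\cG^\vee_\Lambda$ and $(G^\vee_\Lambda)^I$ as the same type of pushout of flat $\Lambda$-group schemes---built from the corresponding objects for $\cG_{\mathrm{ad}}$ amalgamated with a common diagonalizable piece---and then check that $\varphi_{\cG,\Lambda}$ is induced from $\varphi_{\cG_{\mathrm{ad}},\Lambda}$ under these descriptions. First, I would recall the pushout description of $\cG^\vee_\Lambda$ established in the proof of Proposition~\ref{prop:commutativity}:
\[
\cG^\vee_\Lambda \;\cong\; \Diag_\Lambda(\pi_1(G)_I) \times^{\Diag_\Lambda(\pi_1(G_{\mathrm{ad}})_I)} (\cG_{\mathrm{ad}})^\vee_\Lambda,
\]
where the amalgamation is taken with respect to the natural maps from $\Diag_\Lambda(\pi_1(G_{\mathrm{ad}})_I)$: into $\Diag_\Lambda(\pi_1(G)_I)$ via the Cartier dual of the surjection $\pi_1(G)_I \twoheadrightarrow \pi_1(G_{\mathrm{ad}})_I$, and into $(\cG_{\mathrm{ad}})^\vee_\Lambda$ through its center.

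Second, I would prove the analogous description for the Galois-fixed side:
\[
(G^\vee_\Lambda)^I \;\cong\; \Diag_\Lambda(\pi_1(G)_I) \times^{\Diag_\Lambda(\pi_1(G_{\mathrm{ad}})_I)} ((G_{\mathrm{ad}})^\vee_\Lambda)^I. \qquad(\star)
\]
For this I would combine (i) the Cartier-dual identification $Z(H^\vee_\Lambda) = \Diag_\Lambda(\pi_1(H))$ for a connected reductive $H$; (ii) the compatibility $\Diag_\Lambda(M)^I = \Diag_\Lambda(M_I)$ for a $\Z[I]$-module $M$; and (iii) the central isogeny $(G_{\mathrm{ad}})^\vee_\Lambda \to G^\vee_\Lambda$ together with its extension by the connected component of the center of $G^\vee_\Lambda$, all of which are $I$-equivariant since $I$ preserves a pinning of $G^\vee_\Lambda$ (see~\S\ref{ss:Galois}). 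Such a pushout decomposition for fixed-point subgroups of pinning-preserving actions is exactly the type of structural result worked out in the companion paper~\cite{alrr}, on which I would rely.

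Third, I would observe that the naturality of the construction of $\varphi_{(-),\Lambda}$ with respect to the morphism $\cG \to \cG_{\mathrm{ad}}$ yields a commutative square relating $\varphi_{\cG,\Lambda}$ and $\varphi_{\cG_{\mathrm{ad}},\Lambda}$, with vertical arrows the canonical projections to the ``adjoint factor'' of each of the two pushout descriptions. Unwinding the diagram, $\varphi_{\cG,\Lambda}$ is identified with the morphism of pushouts induced by the identity on $\Diag_\Lambda(\pi_1(G)_I)$ and by $\varphi_{\cG_{\mathrm{ad}},\Lambda}$ on the other factor. In particular, if $\varphi_{\cG_{\mathrm{ad}},\Lambda}$ is an isomorphism, then so is $\varphi_{\cG,\Lambda}$.

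The main obstacle is the verification of $(\star)$. While it is the natural dual analogue of the pushout description of $\cG^\vee_\Lambda$ recalled above, making it precise requires the careful analysis of $I$-fixed subgroup schemes of pinning-preserving actions on reductive group schemes over a general $\Lambda$ carried out in~\cite{alrr}; in particular, one must cope with the fact that $(G^\vee_\Lambda)^I$ need not be smooth over $\Lambda$ (cf.~\S\ref{ss:intro-fixed-pts}).
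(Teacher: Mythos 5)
Your proposal is correct and follows essentially the same route as the paper: the paper likewise combines the pushout description $\cG^\vee_\Lambda \cong \Diag_\Lambda(\pi_1(G)_I) \times^{\Diag_\Lambda(\pi_1(G_{\mathrm{ad}})_I)} (\cG_{\mathrm{ad}})^\vee_\Lambda$ from the proof of Proposition~\ref{prop:commutativity} with the identification of your $(\star)$, which is supplied directly by the companion paper (the identification $(\Cent(G^\vee_\Lambda))^I = \Diag_\Lambda(\pi_1(G)_I)$ from~\cite[Lemma~2.2]{alrr} together with~\cite[Proposition~6.8]{alrr}), and then compares the two descriptions via the naturality of $\varphi_{\cG,\Lambda}$. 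The only difference is that the paper cites these fixed-point statements outright rather than sketching their proof, so no gap remains on that side.
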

	
	\begin{proof}
		As noted in the course of the proof of Proposition~\ref{prop:commutativity}, we have
		\begin{equation}
			\label{eqn:dual-group-G-Gad}
			\cG^\vee_\Lambda = \Diag_\Lambda(\pi_1(G)_I) \times_{\Spec(\Lambda)}^{\Diag_\Lambda(\pi_1(G_{\mathrm{ad}})_I)} (\cG_{\mathrm{ad}})^\vee_\Lambda.
		\end{equation}
		On the other hand, if $\Cent(G^\vee_\Lambda)$ is the scheme-theoretic center of $G^\vee_\Lambda$, then there is a natural identification
		$\Cent(G^\vee_\Lambda) = \Diag_\Lambda(\pi_1(G))$, which provides an isomorphism
		\[
		(\Cent(G^\vee_\Lambda))^I = \Diag_\Lambda(\pi_1(G)_I),
		\]
		see~\cite[Lemma~2.2]{alrr}. Similarly, for $G_{\mathrm{ad}}$ we have
		\[
		(\Cent((G_{\mathrm{ad}})^\vee_\Lambda))^I = \Diag_\Lambda(\pi_1(G_{\mathrm{ad}})_I),
		\]
		and by~\cite[Proposition~6.8]{alrr} the natural morphism
		\begin{equation}
			\label{eqn:fixed-points-G-Gad}
			(\Cent(G^\vee_\Lambda))^I \times^{(\Cent((G_{\mathrm{ad}})^\vee_\Lambda))^I} ((G_{\mathrm{ad}})^\vee_\Lambda)^I \to (G^\vee_\Lambda)^I
		\end{equation}
		is an isomorphism, which finishes the proof. Comparing~\eqref{eqn:dual-group-G-Gad} and~\eqref{eqn:fixed-points-G-Gad} together with the description of fixed points in centers we deduce the claim.
	\end{proof}
	
	Since the adjoint group of a torus is the trivial group, for which the theorem obviously holds, we deduce in particular from Lemma~\ref{lem:reduction-adjoint} that Theorem~\ref{thm_dual_group_fixpts} holds when $G$ is a torus. Once this is known, for general $G$ and $\cG$, the morphism $\res_{\cB,\cG}$ of Proposition~\ref{prop:morphism-coalg-Levi} defines a morphism of group schemes $(T^\vee_\Lambda)^I \to \cG^\vee_\Lambda$ whose composition with $\varphi_{\cG,\Lambda}$ is the natural closed immersion $(T^\vee_\Lambda)^I \to (G^\vee_\Lambda)^I$. The former morphism is therefore also a closed immersion.
		
	Now, let us outline the strategy of the proof of Theorem~\ref{thm_dual_group_fixpts}.
	By Lemma~\ref{lem:reduction-adjoint} we can assume that $G$ is semisimple of adjoint type. 
	The cases $\Lambda=\bK,\bk$ follow from the case $\Lambda=\bO$ by base change using~\eqref{eqn:base-change-Tann-gp} and~\eqref{eqn:base-change-Tann-gp-2}.
	If $\Lambda=\bO$, then we aim to apply the following statement to the map $\tilde{f}_{\cG,\bO} \colon \scO((G^\vee_\bO)^I)\to \scO(\cG_\bO^\vee)$.
	
	\begin{lem}[{\cite[Lemma~VI.11.3]{fs}}]
		\label{lem:fs}
		Let $M,N$ be flat $\bO$-modules, and let 
		\[
		f \colon M \to N
		\]
		be a morphism of $\bO$-modules such that $\bk \otimes_\bO f$ is injective and $\bK \otimes_\bO f$ is an isomorphism. Then $f$ is an isomorphism.
	\end{lem}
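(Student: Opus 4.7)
The plan is to prove $f$ is an isomorphism by establishing injectivity and surjectivity separately, using standard commutative algebra over the discrete valuation ring $\bO$ with uniformizer $\pi$. Throughout I would exploit the fact that flatness over $\bO$ coincides with being torsion-free, and that the two hypotheses on $f$ provide complementary information about the kernel and cokernel.

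For injectivity, let $K = \ker f$ and apply the exact functor $({-}) \otimes_\bO \bK$ to the left-exact sequence $0 \to K \to M \xrightarrow{f} N$. Since $\bK \otimes_\bO f$ is injective (being an isomorphism), this yields $K \otimes_\bO \bK = 0$. On the other hand, $K$ is a submodule of the torsion-free $\bO$-module $M$, hence itself torsion-free; a torsion-free $\bO$-module that vanishes after inverting $\pi$ must be zero, so $K = 0$.

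For surjectivity, let $C = \operatorname{coker} f$, so by the injectivity just proved we have a short exact sequence $0 \to M \to N \to C \to 0$. Tensoring with $\bK$ and using that $\bK \otimes_\bO f$ is surjective gives $C \otimes_\bO \bK = 0$, so $C$ is $\pi$-power torsion. On the other hand, the long exact sequence in $\operatorname{Tor}^\bO({-},\bk)$, combined with the flatness of $M$ and $N$ (which kills their Tor groups), yields an exact sequence
\[
0 \to \operatorname{Tor}_1^\bO(C,\bk) \to M \otimes_\bO \bk \xrightarrow{\bk \otimes_\bO f} N \otimes_\bO \bk.
\]
The hypothesis that $\bk \otimes_\bO f$ is injective then forces $\operatorname{Tor}_1^\bO(C,\bk) = 0$. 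Using the free resolution $0 \to \bO \xrightarrow{\pi} \bO \to \bk \to 0$ to compute this Tor identifies $\operatorname{Tor}_1^\bO(C,\bk)$ with the $\pi$-torsion submodule $C[\pi]$. A $\pi$-power torsion module with trivial $\pi$-torsion must vanish (induct on the minimal $n$ with $\pi^n c = 0$), so $C = 0$.

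I do not anticipate any genuine obstacle: this is a short commutative-algebra exercise. The only conceptual point worth underlining is the complementary roles of the two hypotheses — the isomorphism after inverting $\pi$ bounds $C$ from being ``large at the generic fiber,'' while the injectivity of the mod-$\pi$ reduction bounds $C$ from having any $\pi$-torsion, and together these force $C = 0$.
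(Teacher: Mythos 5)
Your proof is correct: the kernel argument (torsion-free plus vanishing after inverting $\pi$), and the cokernel argument ($C$ is $\pi$-power torsion with $C[\pi]=\operatorname{Tor}_1^\bO(C,\bk)=0$ forced by injectivity of $\bk\otimes_\bO f$) are both complete and accurate. The paper itself gives no proof, simply citing \cite{fs}, and your argument is the standard one used there, so there is nothing to add.
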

	
	Both $\bO$-modules $\scO((G^\vee_\bO)^I)$ and $\scO(\cG_\bO^\vee)=B_\cG(\bO)$ are flat by \cite[Theorem~5.1(1)]{alrr} and Theorem~\ref{thm:coalgebra} respectively.
	By the proof of Proposition~\ref{prop:commutativity}, we know that $\varphi_{\cG,\bO}\otimes_\bO\bK=\varphi_{\cG,\bK}$ is a closed immersion.
	Thus, to conclude it suffices to check that $\tilde{f}_{\cG,\Lambda}$ is injective for both $\Lambda=\bK,\bk$, which by~\cite[\href{https://stacks.math.columbia.edu/tag/056A}{Tag 056A}]{stacks-project} amounts to showing that the scheme-theoretic image of $\varphi_{\cG,\bk}$ is $(G^\vee_\Lambda)^I$ for these choices of coefficients.
	This is checked in Subsection~\ref{sec:ss-rank-1} when $G$ has semisimple $F$-rank $1$. 
	The general case is proven in Subsection~\ref{sec:general-case} using constant term functors to construct enough ``semisimple-rank-$1$ Levi subgroups'' in $\cG_\Lambda^\vee$ to ensure the schematical dominance of $\varphi_{\cG,\Lambda}$ for both $\Lambda=\bK,\bk$.
	Here the required group theory is supplied by \cite{alrr}, where the case $\Lambda=\bk$ is particularly interesting in characteristic $2$.
	
	\subsection{Groups of semisimple \texorpdfstring{$F$}{F}-rank 1}\label{sec:ss-rank-1}
	
	In this subsection we prove Theorem~\ref{thm_dual_group_fixpts} in case $G$ has semisimple $F$-rank $1$, i.e.~$\# \Phi^{\mathrm{s}}=1$. 
	
	The next statement is probably well known, although we could not find a proof in the literature. (It is somehow implicit in~\cite[Lemma~VI.11.2]{fs}.)
	Here, for any field $k$, we denote by:
	\begin{itemize}
		\item 
		$\mathrm{T}_{2,k}$, resp.~$\mathrm{B}^+_{2,k}$, resp.~$\mathrm{B}^-_{2,k}$, the subgroup of $\mathrm{SL}_{2,k}$ consisting of diagonal matrices, resp.~upper triangular matrices, resp.~lower triangular matrices;
		\item
		$\underline{\mathrm{T}}_{2,k}$, resp.~$\underline{\mathrm{B}}^+_{2,k}$, resp.~$\underline{\mathrm{B}}^-_{2,k}$, the subgroup of $\mathrm{PGL}_{2,k}$ consisting of (images of) diagonal matrices, resp.~upper triangular matrices, resp.~lower triangular matrices
	\end{itemize}
	
	\begin{lem}
		\label{lem:subgroups-SL2}
		Let $k$ be a field.
		\begin{enumerate}
			\item 
			If $K$ is a smooth connected closed subgroup of $\mathrm{SL}_{2,k}$ containing $\mathrm{T}_{2,k}$, then $K$ is one of $\mathrm{T}_{2,k}$, $\mathrm{B}^\pm_{2,k}$, or $\mathrm{SL}_{2,k}$;
			\item
			If $K$ is a smooth connected closed subgroup of $\mathrm{PGL}_{2,k}$ containing $\underline{\mathrm{T}}_{2,k}$, then $K$ is one of $\underline{\mathrm{T}}_{2,k}$, $\underline{\mathrm{B}}^\pm_{2,k}$, or $\mathrm{PGL}_{2,k}$.
		\end{enumerate}
	\end{lem}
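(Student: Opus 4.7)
My plan is to classify $K$ by its dimension. Since $K$ is a smooth connected closed subgroup of $\mathrm{SL}_{2,k}$ (which is $3$-dimensional) containing the $1$-dimensional torus $\mathrm{T}_{2,k}$, we have $\dim K \in \{1,2,3\}$. The extremal cases are immediate: if $\dim K = 1$, the inclusion $\mathrm{T}_{2,k} \hookrightarrow K$ of smooth connected $k$-groups of the same dimension forces $K = \mathrm{T}_{2,k}$; if $\dim K = 3$, then similarly the closed immersion $K \hookrightarrow \mathrm{SL}_{2,k}$ of smooth connected $k$-groups of the same dimension gives $K = \mathrm{SL}_{2,k}$.

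The interesting case is $\dim K = 2$. The key step is to show that any $2$-dimensional smooth connected algebraic group $H$ over $k$ is solvable. For this I would invoke the standard structure theorem: $H$ is an extension of a reductive group $H/R_u(H)$ by its unipotent radical $R_u(H)$. The reductive quotient has dimension at most $2$; but any semisimple algebraic group of positive dimension has dimension at least $3$ (the minimum being realized by $\mathrm{SL}_2$ and $\mathrm{PGL}_2$), so the semisimple part of $H/R_u(H)$ is trivial. Hence $H/R_u(H)$ is a torus and $H$ is solvable.

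Applying this to our $K$ and passing to an algebraic closure $\bar k$, we obtain a $2$-dimensional smooth connected solvable subgroup $K_{\bar k}$ of $\mathrm{SL}_{2,\bar k}$ containing $\mathrm{T}_{2,\bar k}$. By Borel's theorem, $K_{\bar k}$ is contained in some Borel subgroup of $\mathrm{SL}_{2,\bar k}$; that Borel contains $\mathrm{T}_{2,\bar k}$ and hence equals $\mathrm{B}^+_{2,\bar k}$ or $\mathrm{B}^-_{2,\bar k}$. Comparing dimensions, $K_{\bar k}$ equals the chosen Borel, and by faithfully flat descent (comparing the ideal sheaves defining $K$ and $\mathrm{B}^\pm_{2,k}$ inside $\mathrm{SL}_{2,k}$), we conclude $K = \mathrm{B}^+_{2,k}$ or $K = \mathrm{B}^-_{2,k}$.

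For the second assertion on $\mathrm{PGL}_{2,k}$, the same argument applies verbatim, since $\mathrm{PGL}_{2,k}$ is also $3$-dimensional with $\underline{\mathrm{T}}_{2,k}$ as a $1$-dimensional torus, the $2$-dimensional subgroups containing $\underline{\mathrm{T}}_{2,k}$ are again solvable and hence lie in a Borel containing the torus, which must be $\underline{\mathrm{B}}^+_{2,k}$ or $\underline{\mathrm{B}}^-_{2,k}$. Alternatively, one can deduce the $\mathrm{PGL}_2$-case from the $\mathrm{SL}_2$-case by pulling back $K$ along the isogeny $\mathrm{SL}_{2,k} \to \mathrm{PGL}_{2,k}$ (noting that the preimage of a smooth subgroup is smooth since it is a quotient-by-center statement away from the singular locus). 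There is no real obstacle in the argument; the only mild subtlety is the solvability of $2$-dimensional smooth connected groups, which however is purely a dimension count against the minimal dimensions of simple algebraic groups and works uniformly in all characteristics.
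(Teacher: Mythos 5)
Your proof is correct, but it follows a genuinely different route from the paper. The paper argues directly over $k$ via the big cell: it intersects $K$ with the open cell $\mathrm{C}_{2,k}$, invokes \cite[Proposition~2.1.8(3)]{CGP10} to obtain an isomorphism $(\mathrm{U}^-_{2,k}\cap K)\times \mathrm{T}_{2,k}\times(\mathrm{U}^+_{2,k}\cap K)\simto \mathrm{C}_{2,k}\cap K$ together with smoothness of $\mathrm{U}^\pm_{2,k}\cap K$, and then uses $\mathbb{G}_{\mathrm{m},k}$-stability inside $\mathrm{U}^\pm_{2,k}\cong\mathbb{G}_{\mathrm{a},k}$ to force each factor to be trivial or everything; the four combinations give the four possible subgroups. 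You instead classify by $\dim K$: the cases $\dim K=1,3$ follow from irreducibility, and for $\dim K=2$ you observe that any $2$-dimensional smooth connected group is solvable (its reductive quotient over $\bar k$ has dimension $\le 2$, hence is a torus), apply Borel's theorem over $\bar k$ to land in one of the two Borel subgroups containing the diagonal torus, and descend the resulting equality of closed subschemes along $k\to\bar k$. Both arguments are valid; the paper's buys a statement proved uniformly over $k$ with no base change or descent, with the delicate points (smoothness of the unipotent intersections, behaviour over imperfect fields) outsourced to \cite{CGP10}, whereas yours is more classical and self-contained, at the cost of the auxiliary solvability observation and the passage to $\bar k$ plus descent.

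One small caveat: in your alternative remark for the $\mathrm{PGL}_2$ case, the parenthetical claim that the preimage of a smooth subgroup under $\mathrm{SL}_{2,k}\to\mathrm{PGL}_{2,k}$ is smooth is not justified as stated --- in characteristic $2$ this isogeny has infinitesimal kernel $\mu_2$ and preimages of smooth subgroups need not be smooth in general, so that route would require extra care. Since your primary argument for $\mathrm{PGL}_2$ (running the same dimension/solvability argument verbatim) is sound, this does not affect the correctness of the proof.
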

	
	\begin{proof}
		We explain the case of $\mathrm{SL}_{2,k}$; that of $\mathrm{PGL}_{2,k}$ can be treated similarly, or deduced using the quotient morphism $\mathrm{SL}_{2,k} \to \mathrm{PGL}_{2,k}$.
		We denote by $\mathrm{U}^\pm_{2,k}$ the unipotent radical of $\mathrm{B}^\pm_{2,k}$. The multiplication map
		\[
		\mathrm{U}^-_{2,k} \times \mathrm{B}^+_{2,k} \to \mathrm{SL}_{2,k}
		\]
		is an open immersion. Its image $\mathrm{C}_{2,k}$ (the ``big cell'') intersects $K$, hence by connectedness $\mathrm{C}_{2,k} \cap K$ is open and dense in $K$. On the other hand, by~\cite[Proposition~2.1.8(3)]{CGP10} the multiplication morphism
		\[
		(\mathrm{U}^-_{2,k} \cap K) \times (\mathrm{B}^+_{2,k} \cap K) \to \mathrm{C}_{2,k} \cap K
		\]
		is an isomorphism, and since $K$ contains $\mathrm{T}_{2,k}$ it follows that multiplication induces an isomorphism
		\[
		(\mathrm{U}^-_{2,k} \cap K) \times \mathrm{T}_{2,k} \times (\mathrm{U}^+_{2,k} \cap K) \simto \mathrm{C}_{2,k} \cap K.
		\]
		The other statement in~\cite[Proposition~2.1.8(3)]{CGP10} guarantees that $\mathrm{U}^\pm_{2,k} \cap K$ is smooth. It is a $\mathbb{G}_{\mathrm{m},k}$-stable subgroup of $\mathrm{U}^\pm_{2,k}$, hence is either trivial or equal to $\mathrm{U}^\pm_{2,k}$. 
		The resulting four possible cases lead to the four cases $\mathrm{T}_{2,k}$, $\mathrm{B}^\pm_{2,k}$, or $\mathrm{SL}_{2,k}$.
	\end{proof}
	
	Finally we come to the main result of the subsection.
	
	\begin{prop}
		\label{prop:dual.gp.sl2.case}
		Theorem~\ref{thm_dual_group_fixpts} holds in case $G$ is semisimple of $F$-rank $1$.
	\end{prop}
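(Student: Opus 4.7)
My plan is to follow the strategy outlined at the end of~\S\ref{ss:identification-dual-gp}. By Lemma~\ref{lem:reduction-adjoint} we may assume $G$ is adjoint. The case $\Lambda = \bO$ then reduces, via Lemma~\ref{lem:fs} applied to $\tilde{f}_{\cG,\bO}$ (whose source and target are flat over $\bO$ by \cite[Theorem~5.1(1)]{alrr} and Theorem~\ref{thm:coalgebra}), to showing that the scheme-theoretic image $H_\Lambda$ of $\varphi_{\cG,\Lambda}$ equals all of $(G^\vee_\Lambda)^I$ for both $\Lambda = \bK$ and $\Lambda = \bk$.

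Fix such a $\Lambda$. The image $H_\Lambda$ is a closed subgroup scheme of $(G^\vee_\Lambda)^I$, and by Corollary~\ref{cor:compatibility-f-r} applied to $P = B$, combined with the already-settled torus case of the theorem, $H_\Lambda$ contains $(T^\vee_\Lambda)^I$. In the adjoint $F$-rank-$1$ situation the neutral component $(G^\vee_\Lambda)^{I,0}$ is, by the results of \cite{alrr}, one of $\mathrm{SL}_{2,\Lambda}$, $\mathrm{PGL}_{2,\Lambda}$, or---only when $G$ is of type $\mathrm{PU}_3$ and $\Lambda = \bk$ has characteristic~$2$---the explicit quasi-reductive group mentioned in~\S\ref{ss:intro-fixed-pts}. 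In the three reductive cases, Lemma~\ref{lem:subgroups-SL2} restricts $H_\Lambda^0$ (which I verify to be smooth and connected using the structure of $\cG^\vee_\Lambda$ produced in the proof of Proposition~\ref{prop:commutativity}) to one of the diagonal torus, a Borel, or the full group. To exclude the first three options I fix a lift $\omega^\vee \in \bbX_*(T)^+$ of the relative fundamental coweight $\bar\omega^\vee \in \bbX_*(T)^+_I$---available by Lemma~\ref{lem:dominant-coweights}\eqref{it:dominant-coweights-lifting} in the adjoint case---and invoke Lemma~\ref{lem:Psi-IC} to see that $\scJ_{!*}(\bar\omega^\vee,\Lambda)$ is a composition factor of $\Psi_\cG(\scJ_{!*}^{\abs}(\omega^\vee,\Lambda))$. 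Via Corollary~\ref{cor:morph-coalg-Psi}, this produces a nontrivial representation of $H_\Lambda$ with a composition factor of highest weight $\bar\omega^\vee$, which no proper subgroup of $\mathrm{SL}_{2,\Lambda}$ or $\mathrm{PGL}_{2,\Lambda}$ containing the diagonal torus can support.

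The most delicate step will be the characteristic-$2$ quasi-reductive case, in which Lemma~\ref{lem:subgroups-SL2} is no longer available. There I plan to use the explicit description from \cite{alrr} of the smooth connected subgroups of the relevant quasi-reductive group that contain its maximal torus, combined with the same nearby-cycles highest-weight input as above, to pin down $H_\bk^0$ as the whole group. Once the neutral components are identified, matching the component groups of $\cG^\vee_\Lambda$ and $(G^\vee_\Lambda)^I$ is straightforward using the bijection $\pi_0(\Gr_\cG) \cong \pi_1(G)_I$ from~\eqref{eqn:conn-comp-Gr} together with the parallel description of $\pi_0((G^\vee_\Lambda)^I)$ furnished by \cite{alrr}.
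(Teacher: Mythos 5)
Your overall skeleton (reduce to adjoint, apply Lemma~\ref{lem:fs} to $\tilde f_{\cG,\bO}$, reduce to showing the scheme-theoretic image $H$ of $\varphi_{\cG,\Lambda}$ is all of $(G^\vee_\Lambda)^I$ for $\Lambda=\bK,\bk$, and note $H \supset (T^\vee_\Lambda)^I$) matches the paper, but there is a genuine gap at the decisive step over $\bk$. You assert that $H^0_\Lambda$ is smooth and connected ``using the structure of $\cG^\vee_\Lambda$ produced in the proof of Proposition~\ref{prop:commutativity}.'' That proof only gives flatness over $\bO$, commutativity, and the description of $\cG^\vee_\Lambda$ in terms of $(\cG_{\mathrm{ad}})^\vee_\Lambda$; it says nothing about smoothness of the special fibre, and in positive characteristic the image of a morphism of affine group schemes has no reason to be reduced. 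Indeed, in case (b) with $\ell=2$ the correct answer $(G^\vee_\bk)^I$ is itself non-smooth, so the smoothness claim cannot be proved in the generality you need, and even in the cases where the final answer is smooth you cannot assume it before the theorem is established. This possible non-reducedness is exactly the difficulty the paper's proof is built around: it applies Lemma~\ref{lem:subgroups-SL2} only to $H_{\mathrm{red}}$, and then transfers information back to $H$ via the factorization of a high Frobenius power $H \to (H_{\mathrm{red}})^{(n)} \hookrightarrow H^{(n)}$.

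Your mechanism for excluding the torus and Borel is also insufficient as stated: a torus or Borel subgroup of $\mathrm{SL}_{2}$ or $\mathrm{PGL}_{2}$ has one-dimensional representations of \emph{every} weight, so producing a representation of $H$ ``with a composition factor of highest weight $\bar\omega^\vee$'' rules nothing out. What one needs is a nontrivial \emph{simple} object of $\Perv(\Hk_\cG,\bk)$ in the image of $\Rep(H)$ that is not invertible, together with the statement that if $H_{\mathrm{red}}$ were $T$ or $B$ then every simple object of $\Rep(H)$ (not merely of $\Rep(H_{\mathrm{red}})$) would be invertible --- and that last implication is false without controlling the infinitesimal part of $H$, which is where the paper's Frobenius argument and the classification of invertible simple objects (using that $G$ is adjoint, so no nonzero $W_0$-fixed $\lambda$) enter; the paper also proves connectedness of $H$ by a separate Tannakian argument (freeness of $\bbX_*(T)_I$), which your sketch does not supply. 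Finally, your plan for the quasi-reductive case in characteristic $2$ --- invoking a classification of smooth connected subgroups containing the maximal torus --- cannot work for the same reason: smooth subgroups do not see the non-reduced structure of $(G^\vee_\bk)^I$. The paper instead proves $H \supset (G^\vee_\bk)^I_{\mathrm{red}}$ by the argument above, reduces to $\bO=\Z_2$ by change of scalars, and then quotes the purely group-theoretic result~\cite[Proposition~6.9]{alrr} to upgrade the containment of the reduced subgroup to equality with the full (non-smooth) fixed-point group scheme.
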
 
	
	\begin{proof}
		By Lemma~\ref{lem:reduction-adjoint} we can (and will) assume that $G$ is
		furthermore semisimple of adjoint type. Then $G_{F^s}$ is also semisimple of adjoint type, hence a product of simple groups (of adjoint type). Since $\# \Phi^{\mathrm{s}}=1$, there are two possibilities:
		\begin{itemize}
			\item[(a)]
			either $G_{F^s}$ is a product of copies of $\mathrm{PGL}_{2,F^s}$ and $I$ acts by a transitive permutation of the factors;
			\item[(b)]
			or $G_{F^s}$ is a product of copies of $\mathrm{PGL}_{3,F^s}$, $I$ permutes transitively the factors, and the stabilizer of each factor acts non trivially on that factor (via the unique nontrivial diagram automorphism).
		\end{itemize}
		In case (a), $G^\vee_\Lambda$ is a product of copies of $\mathrm{SL}_{2,\Lambda}$ permuted transitively by $I$, so that we have
		\begin{equation}
			\label{eqn:fixed-pts-rk1-1}
			(G^\vee_\Lambda)^I \cong \mathrm{SL}_{2,\Lambda}.
		\end{equation}
		In case (b), $G^\vee_\Lambda$ is a product of copies of $\mathrm{SL}_{3,\Lambda}$, and we have
		\begin{equation}
			\label{eqn:fixed-pts-rk1-2}
			(G^\vee_\Lambda)^I \cong (\mathrm{SL}_{3,\Lambda})^{\Z/2\Z}
		\end{equation}
		where in the right-hand side the action is that considered in~\cite[\S 2.3]{alrr}. (See~\cite[\S 2.3]{alrr} for details.) Moreover, if $2$ is invertible in $\Lambda$ then the group in~\eqref{eqn:fixed-pts-rk1-2} identifies with $\mathrm{PGL}_{2,\Lambda}$, see~\cite[Example~5.9(1)]{alrr}.
		
		Now we treat the case $\Lambda=\bK$. 
		As seen in the course of the proof of Proposition~\ref{prop:commutativity}, in this case we know that $\varphi_{\cG,\bK}$ is a closed immersion. Using tannakian formalism one checks as in~\cite[Lemma~1.9.3]{br} that $\cG^\vee_\bK$ is connected (using the fact that $\bbX_*(T)_I$ is free over $\Z$ under our present assumptions), and as in~\cite[Lemma~1.9.4]{br} (using Lemma~\ref{lem:semisimplicity}) that it is reductive. This group is not a torus since it admits simple representations whose dimension is at least $2$. (This property can e.g.~be checked using the results of~\S\ref{ss:preliminaries-standard-costandard}.) Hence $\varphi_{\cG,\bK}$ is an isomorphism.
		
		Now we assume that $\Lambda=\bk$, and denote by 
		$H$ the scheme-theoretic image of $\varphi_{\cG,\bk}$, or in other words 
		the spectrum of the image of $\widetilde{f}_{\cG,\bk}$ (see the comments at the end of~\S\ref{ss:identification-dual-gp}). Then $H$ is a closed subgroup scheme of $(G^\vee_\bk)^I$.
		The morphism $\scO(H) \to \scO(\cG^\vee_{\bk})$ is injective, hence $H$ is a quotient of $\cG^\vee_{\bk}$; in particular, we have a fully faithful monoidal functor
		\[
		\Rep(H) \to \Rep(\cG^\vee_{\bk}) \cong \Perv(\Hk_\cG,\bk)
		\]
		whose essential image is stable under subquotients.
		Consider the reduced subgroup $(G^\vee_{\bk})^I_{\mathrm{red}}$.
		We will now prove that
		\begin{equation}
			\label{eqn:description-H-red}
			H \supset (G^\vee_{\bk})^I_{\mathrm{red}}.
		\end{equation}
		(Here the right-hand side is isomorphic to $\mathrm{SL}_{2,\bk}$ in case (a) and in case (b) when $\ell=2$, and to $\mathrm{PGL}_{2,\bk}$ otherwise.)
		
		First, we claim that $H$ is connected. In fact, if it were not then $\Rep(H)$ would contain a subcategory stable under tensor products and containing finitely many isomorphism classes of simple objects (see e.g.~\cite[Proposition~1.2.11(2)]{br}). Hence the same would be true for $\Perv(\Hk_\cG,\bk)$. As in the ``classical'' case (see~\cite[Lemma~1.9.3]{br}) this is impossible because $\bbX_*(T)_I$ is free.
		
		Let us note also that since the morphism $(T^\vee_\bk)^I \to (G^\vee_{\bk})^I$ factors through $\cG^\vee_\bk$ (see~\S\ref{ss:identification-dual-gp}), $H$ contains a maximal torus of $(G^\vee_{\bk})^I_{\mathrm{red}}$.
		If $H$ does not contain $(G^\vee_{\bk})^I_{\mathrm{red}}$, then $H_{\mathrm{red}}$ is a strict smooth connected subgroup containing a maximal torus; by Lemma~\ref{lem:subgroups-SL2} it is therefore either equal to the latter subgroup, or to one of the Borel subgroups containing it. In any case, any simple representation of $H_{\mathrm{red}}$ is invertible in the monoidal category $\Rep(H_{\mathrm{red}})$. On the other hand, as in the proof of~\cite[Lemma~1.14.6]{br}, for $n \gg 0$ the $n$-th Frobenius morphism $\mathrm{Fr}^n_H$ of $H$ can be written as a composition
		\[
		H \xrightarrow{\mathrm{Fr}^{n \prime}_H} (H_{\mathrm{red}})^{(n)} \hookrightarrow H^{(n)}
		\]
		where $\mathrm{Fr}^{n \prime}_H$ is a quotient morphism.
		Choosing a nontrivial simple representation of $(H_{\mathrm{red}})^{(n)}$, pulling it back to $H$ and then to $\cG^\vee_{\bk}$, we obtain a nontrivial simple representation of $\cG^\vee_{\bk}$ which is invertible in the monoidal category
		\[
		\bigl( \Rep(\cG^\vee_{\bk}), \otimes \bigr) \cong \bigl( \Perv(\Hk_{\cG},\bk), \star^0 \bigr).
		\]
		By the same considerations as in~\cite[Beginning of~\S 1.9]{br} one sees that the invertible objects in $\Perv(\Hk_{\cG},\bk)$ are the objects $\scJ_{!*}(\lambda, \bk)$ where $\lambda \in \bbX_*(T)_I$ is $W_0$-stable. Since $G$ is assumed to be semisimple of adjoint type this implies that $\lambda=0$; in other words $\Perv(\Hk_{\cG},\bk)$
		has no nontrivial simple invertible object, which provides a contradiction.
		
		Now, assume that we are in case (a), or otherwise in case (b) with $\ell \neq 2$. Then the group scheme $(G^\vee_{\bk})^I$ is smooth, so that~\eqref{eqn:description-H-red} implies that $H=(G^\vee_{\bk})^I$. Lemma~\ref{lem:fs} applied to the morphism $\tilde{f}_{\cG,\bO}$ implies that this morphism is an isomorphism. It follows that $\tilde{f}_{\cG,\bk} = \bk \otimes_{\bO} \tilde{f}_{\cG,\bO}$ is an isomorphism too, which finishes the proof.
		
		Finally we consider case (b) when $\ell=2$. If $\bK=\Q_2$, i.e.~$\bO=\Z_2$, then~\cite[Proposition~6.9]{alrr} implies that we in fact have $H=(G^\vee_{\F_2})^I$, which allows us to conclude as above. For a general $\bO$, we have a continuous morphism of rings $\Z_2 \to \bO$; using the associated change-of-scalars functors (see~\S\ref{ss:const-scalars}) we deduce the desired claim from the case of $\Z_2$.
	\end{proof}
		
	\subsection{The general case}\label{sec:general-case}
		
	We can finally complete the proof of Theorem~\ref{thm_dual_group_fixpts}.
	
	\begin{proof}[Proof of Theorem~\ref{thm_dual_group_fixpts}]
		By Lemma~\ref{thm_dual_group_fixpts} we can (and will) assume that $G$ is semisimple of adjoint type. As explained at the end of~\S\ref{ss:identification-dual-gp},
		to conclude it suffices to prove that 
		the scheme-theoretic image of $\varphi_{\cG,\Lambda}$ is $(G^\vee_\Lambda)^I$ for $\Lambda=\bK$ or $\bk$.
		
		So, we finally assume that $\Lambda$ is either $\bK$ or $\bk$, and denote by $H$
		the scheme-theoretic image of $\varphi_{\cG,\Lambda}$.
		As explained in~\S\ref{ss:application-coalg}, the formalism of constant term functors gives us morphisms $\cM^\vee_\Lambda\to \cG^\vee_\Lambda$ for every standard Levi subgroup $M\subset G$. (Here $\cM$ is the scheme-theoretic closure of $M$; see~\eqref{eqn:parabolics-cocharacters-2}.) Applying this construction to each standard Levi subgroup $M_\gamma$ attached to a simple relative root $\gamma$ and
		using Proposition~\ref{prop:dual.gp.sl2.case}, we deduce that $H$ contains each $((M_\gamma)^\vee_{\Lambda})^I$. Here, $(M_\gamma)^\vee_{\Lambda}$ identifies with the Levi subgroup of $G^\vee_\Lambda$ attached to the subset of simple roots whose coroots are the inverse image of $\gamma$ under~\eqref{eqn:surjection-Phis}, see~\S\ref{ss:application-coalg}. Therefore, the desired claim follows from~\cite[Corollary~6.6]{alrr}.
	\end{proof}
	
	\appendix

	\section{Equivariant versus constructible perverse sheaves}
	\label{app:equiv-const}
	
	In the setting considered in the body of the paper, let
	\[
	\Db_{(\Loop^+ \cG)}(\Gr_\cG,\Lambda)
	\]
	be the full subcategory of $\Dbc(\Gr_\cG,\Lambda)$ whose objects are the complexes $\scF$ such that for any $i \in \Z$ and $\lambda \in \bbX_*(T)_I^+$ the sheaf $\cH^i(\scF_{|\Gr_\cG^\lambda})$ is constant. Lemma~\ref{lem:constant-loc-sys} below implies that this subcategory is triangulated. It is easily seen that the perverse t-structure on $\Dbc(\Gr_\cG,\Lambda)$ restricts to a t-structure on $\Db_{(\Loop^+ \cG)}(\Gr_\cG,\Lambda)$, whose heart will be denoted
	\[
	\Perv_{(\Loop^+ \cG)}(\Gr_\cG,\Lambda).
	\]
	In case $\Lambda$ is a field, $\Perv_{(\Loop^+ \cG)}(\Gr_\cG,\Lambda)$ is the Serre subcategory of the category of perverse sheaves on $\Gr_\cG$ generated by the objects $\scJ_{!*}(\lambda,\Lambda)$ for $\lambda \in \bbX_*(T)_I^+$. It is clear that the functor $h^*$ (see~\eqref{eqn:pullback-Hk-Gr}) factors through an exact functor
	\begin{equation}
		\label{eqn:For-Perv}
		\Perv(\Hk_\cG,\Lambda) \to \Perv_{(\Loop^+ \cG)}(\Gr_\cG,\Lambda).
	\end{equation}
	Our goal in this appendix is to prove that~\eqref{eqn:For-Perv} is an equivalence of categories.  This will be achieved in Proposition~\ref{prop:Perv-equiv-const} below. (In the body of the paper, we use this statement through its consequence proved in Corollary~\ref{cor:passing-to-adjoint}.)
	
	\begin{rmk}
		\begin{enumerate}
			\item 
			Proposition~\ref{prop:Perv-equiv-const} is a ramified analogue of~\cite[Proposition~2.1]{mv}. Unfortunately, a direct adaptation of the proof of that result (given in~\cite[Appendix~A]{mv}; see also~\cite[\S 1.10.2]{br}) presents some technical difficulties. To bypass this problem we will give a slightly different proof of this property (based on similar considerations) which applies in both settings. 
			None of the details introduced in this proof are required elsewhere in the paper.
			\item
			In case $\Lambda=\bK$, the fact that~\eqref{eqn:For-Perv} is an equivalence can be obtained in a much simpler way, by observing that the same arguments as for Lemma~\ref{lem:semisimplicity} show that the category $\Perv_{(\Loop^+ \cG)}(\Gr_\cG,\bK)$ is semisimple.
		\end{enumerate}
	\end{rmk}
	
	\begin{lem}
		\label{lem:constant-loc-sys}
		Let $\lambda \in \bbX_*(T)^+_I$. The subcategory of the category of sheaves on $\Gr_\cG^\lambda$ whose objects are the constant local systems is stable under subquotients and extensions.
	\end{lem}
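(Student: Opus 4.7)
The plan is to reduce the statement to the cohomological vanishing
\[
H^1_\et(\Gr_\cG^\lambda, \underline{K}) = 0
\]
for every finitely generated $\Lambda$-module $K$. Since $\Gr_\cG^\lambda$ is a connected smooth $\F$-scheme, local systems of finitely generated $\Lambda$-modules on it correspond to continuous representations of $\pi := \pi_1^\et(\Gr_\cG^\lambda, \bar x)$, with constant local systems corresponding to trivial representations.

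To handle subquotients, I would exploit the fact that on the connected scheme $\Gr_\cG^\lambda$, a morphism between constant sheaves $\underline{M} \to \underline{N}$ of finitely generated $\Lambda$-modules is uniquely given by a single $\Lambda$-linear map $M \to N$, since $\Gamma(\Gr_\cG^\lambda, \underline{\Hom_\Lambda(M,N)}) = \Hom_\Lambda(M,N)$. Hence the kernel, image and cokernel of such a morphism, computed in the abelian category of étale sheaves of $\Lambda$-modules, coincide with the constant sheaves associated with the corresponding linear algebra constructions. This handles all sub- and quotient-objects of constant local systems that arise from morphisms of constant local systems, which is what is needed for the application to the long exact sequence of cohomology sheaves.

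For extensions, consider a short exact sequence $0 \to \cL_1 \to \cF \to \cL_2 \to 0$ of sheaves of $\Lambda$-modules with $\cL_i$ constant local systems. Over any strictly henselian local étale neighborhood, positive-degree étale cohomology of constant sheaves vanishes, so the sequence splits locally and $\cF$ is itself a local system. The corresponding representation of $\pi$ on the stalk of $\cF$ is an extension of two trivial representations, and is classified by a class in
\[
H^1_{\mathrm{cont}}(\pi, \Hom_\Lambda(N,M)) \cong H^1_\et(\Gr_\cG^\lambda, \underline{\Hom_\Lambda(N,M)}),
\]
where $M,N$ are the $\Lambda$-module stalks of $\cL_1, \cL_2$. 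The middle term $\cF$ is a constant local system precisely when this class vanishes.

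The required vanishing would then follow from the affine paving of $\Gr_\cG^\lambda$ recalled in Section~\ref{ss:Iwahori-Weyl-Schubert}. Inducting on the number of cells via the localization long exact sequence for étale cohomology, together with $H^i_\et(\mathbb{A}^n_\F, \underline{K}) = 0$ for $i > 0$ (equivalently, $H^i_c(\mathbb{A}^n_\F, \underline{K})$ being concentrated in degree $2n$), one obtains that $H^i_\et(\Gr_\cG^\lambda, \underline{K})$ vanishes for odd $i$, so in particular $H^1$ vanishes. The main subtlety will be to verify that the affine paving of the Schubert variety $\Gr_\cG^{\leq\lambda}$ recalled in Section~\ref{ss:Iwahori-Weyl-Schubert} can be chosen to refine the orbit stratification, so that it induces an affine paving of the open stratum $\Gr_\cG^\lambda$ itself; this is certainly known but will require a careful check using the Demazure resolution construction alluded to in that section.
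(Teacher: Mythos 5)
Your overall strategy is essentially the paper's: both arguments reduce the extension statement to the vanishing of $H^1$ of constant sheaves on $\Gr_\cG^\lambda$, obtained from the affine paving together with smoothness (note that the localization induction you describe naturally runs with \emph{compactly supported} cohomology, so the passage to ordinary $H^1$ uses Poincar\'e duality on the smooth scheme $\Gr_\cG^\lambda$, exactly the paper's parenthetical remark). Your reading of the subquotient clause --- kernels and cokernels of morphisms between constant sheaves, which is all that is needed for the long exact sequence of cohomology sheaves --- is the right one, and your argument for it (a morphism of constant sheaves on a connected scheme is the constant-sheaf functor applied to a linear map, and that functor is exact) is fine. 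The worry at the end is unnecessary: the paper asserts in \S\ref{ss:Iwahori-Weyl-Schubert} (with the reference to Pappas--Rapoport and Demazure resolutions) that the orbit $\Gr_\cG^\lambda$ itself admits a paving by affine spaces, so no refinement of a paving of $\Gr_\cG^{\leq\lambda}$ is required.

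The genuine problem is in the extension step when $\Lambda=\bO$. The assertion that the sequence ``splits locally'' is false whenever $\Ext^1_\Lambda(N,M)\neq 0$: for instance $0\to\underline{\bk}\to\underline{\bO/\varpi^2}\to\underline{\bk}\to 0$ splits over no \'etale neighborhood, although all terms are constant. This is exactly the subtlety the lemma must address over $\bO$: the extension group is not zero, it equals $\Ext^1_\bO(M,N)$, and the content is that every class is realized by a constant sheaf. Your constancy criterion is nonetheless correct --- an extension of constant local systems is lisse, its monodromy is a cocycle in $H^1_{\mathrm{cont}}(\pi_1,\Hom_\Lambda(N,M))$, and the sheaf is constant if and only if that class vanishes --- but such extensions are classified by $\Ext^1_{\Lambda[\pi_1]}(N,M)$, not by $H^1(\pi_1,\Hom_\Lambda(N,M))$; the latter only records the obstruction to triviality of the action. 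To repair the local statement, show instead that over a strictly henselian local base the extension is isomorphic to the constant sheaf on the module extension given by its global sections (global sections are exact there, then apply the five lemma). Finally, for $\Lambda=\bO$ or $\bK$ the dictionary ``sheaf of $\Lambda$-modules $=$ continuous $\pi_1$-representation'' and the identification $H^1_{\mathrm{cont}}(\pi_1,K)\cong H^1_{\et}(\Gr_\cG^\lambda,\underline{K})$ require the adic/continuous formalism of Appendix~\ref{app:sheaves}; the paper's proof avoids this entirely by computing $\Hom_{\Dbc(\Gr_\cG^\lambda,\Lambda)}(\underline{M},\underline{N}[1])$ and comparing it with $\Ext^1_\Lambda(M,N)$ via the triangle $\Hom_\bO(M,N)\to R\Hom_\bO(M,N)\to\Ext^1_\bO(M,N)[-1]$, which handles the three coefficient rings uniformly.
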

	
	\begin{proof}
		Stability under subquotients is a classical fact, known for any connected scheme of finite type. For stability under extensions, what we need to prove is that for any finitely generated $\Lambda$-modules $M,N$ the natural morphism
		\begin{equation}
			\label{eqn:Ext1-loc-sys}
			\Ext^1_\Lambda(M,N) \to \Hom_{\Dbc(\Gr_\cG^\lambda,\Lambda)}(\underline{M}_{\Gr_\cG^\lambda},\underline{N}_{\Gr_\cG^\lambda}[1])
		\end{equation}
		is an isomorphism.
		
		First, assume that $\Lambda=\bK$ or $\Lambda=\bk$. Then it suffices to treat the case $M=N=\Lambda$, and the $\Ext^1$-space vanishes. On the other hand we have
		\[
		\Hom_{\Dbc(\Gr_\cG^\lambda,\Lambda)}(\underline{\Lambda}_{\Gr_\cG^\lambda},\underline{\Lambda}_{\Gr_\cG^\lambda}[1]) = \mathsf{H}^1(\Gr_\cG^\lambda; \Lambda),
		\]
		which vanishes because $\Gr_\cG^\lambda$ is smooth (so that cohomology is dual to cohomology with compact support up to shift) and admits a paving by affine spaces (see~\S\ref{ss:Iwahori-Weyl-Schubert}).
		
		Now we consider the case $\Lambda=\bO$. 
		We have
		\[
		\Hom_{\Dbc(\Gr_\cG^\lambda,\bO)}(\underline{M}_{\Gr_\cG^\lambda},\underline{N}_{\Gr_\cG^\lambda}[1]) = \mathsf{H}^1(R\Gamma(\Gr_\cG^\lambda, \underline{R\Hom_{\bO}(M,N)})).
		\]
		Here the complex $R\Hom_\bO(M,N)$ is concentrated in degrees $0$ and $1$; more explicitly we have a distinguished triangle
		\[
		\Hom_\bO(M,N) \to R\Hom_\bO(M,N) \to \Ext^1_\bO(M,N)[-1] \xrightarrow{[1]}.
		\]
		Using the fact that
		\[
		\mathsf{H}^1(R\Gamma(\Gr_\cG^\lambda,\underline{\Hom_\bO(M,N)}))=0
		\]
		(for the same reason as above) and that $\Gr_\cG^\lambda$ is connected, applying the functor $R\Gamma(\Gr_\cG^\lambda,\underline{?})$ to this triangle we obtain an embedding
		\[
		\mathsf{H}^1(R\Gamma(\Gr_\cG^\lambda, \underline{R\Hom_{\bO}(M,N)})) \hookrightarrow \Ext^1_\bO(M,N).
		\]
		It is clear that the composition of~\eqref{eqn:Ext1-loc-sys} with this morphism is the identity morphism of $\Ext^1_\bO(M,N)$. Hence both of this morphisms are isomorphisms, which finishes the proof.
	\end{proof}

	\begin{prop}
		\label{prop:Perv-equiv-const}
		The functor~\eqref{eqn:For-Perv} is an equivalence of categories.
	\end{prop}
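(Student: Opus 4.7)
The plan is to identify both categories with the category of finitely generated modules over a common endomorphism algebra via Gabriel--Popescu, using a common projective generator built out of the formula of~\S\ref{ss:representability}. First, I would reduce to a closed sub-ind-scheme $Z \subset \Gr_\cG$ that is a finite union of $\Loop^+\cG$-orbits: both categories and the forgetful functor~\eqref{eqn:For-Perv} decompose compatibly as filtered colimits over such $Z$, so it suffices to show that
\[
\Perv([\Loop^+ \cG \backslash Z]_{\et}, \Lambda) \to \Perv_{(\Loop^+ \cG)}(Z, \Lambda)
\]
is an equivalence. Both categories are abelian (using Lemma~\ref{lem:constant-loc-sys} for the target) and the functor is faithful and exact, so it remains to produce a common projective generator with matching endomorphism algebra.

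Next, I would construct a projective generator on the target side by mimicking~\S\ref{ss:representability}. For $\nu \in \bbX_Z$ consider the functor $\sF^Z_{\cG,\nu} := \mathsf{H}^{\langle \nu, 2\rho \rangle}_{Z \cap \rmT_\nu}(Z, -) \colon \Perv_{(\Loop^+\cG)}(Z, \Lambda) \to \modf_\Lambda$. A key first step is to show that $\sF^Z_{\cG,\nu}$ is exact by establishing the vanishing $\mathsf{H}^i_{Z \cap \rmT_\nu}(Z, \scG) = 0$ for $i \neq \langle \nu, 2\rho \rangle$ on this category; the proof copies that of Proposition~\ref{prop:exactness-CT} and Remark~\ref{rmk:exactness-CT-vanishing}, using only $\Loop^+\cG$-orbit-constructibility and the dimension estimates of Lemma~\ref{lem:dim-estimate}, not equivariance. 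One then forms
\[
\scP := \pH^0\Bigl(\bigoplus_{\nu \in \bbX_Z} (a_{Z,\nu})_! (p_{Z,\nu})^! (i_{Z,\nu})_! \underline{\Lambda}_{Z \cap \rmT_\nu}[-\langle \nu, 2\rho \rangle]\Bigr)
\]
in $\Perv(Z, \Lambda)$. Because the formula is built from the $\Loop^+\cG$-action map $a_{Z,\nu}$, base change implies that the restrictions of $\scP$ to each $\Loop^+\cG$-orbit in $Z$ have constant cohomology sheaves, so $\scP$ lies in $\Perv_{(\Loop^+\cG)}(Z, \Lambda)$. The same adjunction manipulation as in~\S\ref{ss:representability} then shows that $\scP$ represents $\bigoplus_\nu \sF^Z_{\cG,\nu}$; combined with exactness this forces projectivity, and conservativity of $\bigoplus_\nu \sF^Z_{\cG,\nu}$ (proved again as in Proposition~\ref{prop:exactness-CT}) makes $\scP$ a projective generator.

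Finally, the forgetful functor sends the projective generator $\scP_Z(\Lambda)$ of~\S\ref{ss:representability} to $\scP$, as both are defined by the same complex. By Yoneda, the endomorphism algebras of $\scP_Z(\Lambda)$ in $\Perv([\Loop^+ \cG \backslash Z]_{\et}, \Lambda)$ and of $\scP$ in $\Perv_{(\Loop^+ \cG)}(Z, \Lambda)$ are both canonically isomorphic to $\sF_\cG(\scP)^{\mathrm{op}}$ as $\Lambda$-algebras. The Gabriel--Popescu statement in~\cite[Proposition~1.13.1]{br} then identifies both categories with $\modf_{\End(\scP)^{\mathrm{op}}}$ compatibly with~\eqref{eqn:For-Perv}, which is therefore an equivalence. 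I expect the main technical obstacle to be the middle step: verifying that $\scP$ genuinely lies in $\Perv_{(\Loop^+\cG)}(Z, \Lambda)$, and checking that all the ingredients of~\S\ref{ss:representability} (the key vanishing, the adjunction argument for representability, and conservativity of the weight functors) actually survive when equivariance is dropped in favour of mere orbit-constructibility with constant cohomology sheaves.
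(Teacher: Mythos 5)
Your overall scaffolding (reduce to finitely many orbits, exhibit a common projective generator, conclude via Gabriel--Popescu) is reasonable, but the middle step hides a genuine gap, and it is precisely the point where a direct adaptation of~\cite[Appendix~A]{mv} breaks down. The adjunction manipulation of~\S\ref{ss:representability} that identifies $\Hom(\scP_Z(\nu,\Lambda),\scF)$ with $\sF_{\cG,\nu}(\scF)$ begins with
$\Hom\bigl((a_{Z,\nu})_!(p_{Z,\nu})^!(i_{Z,\nu})_!\underline{\Lambda}[\cdots],\scF\bigr)\cong\Hom\bigl((p_{Z,\nu})^!(i_{Z,\nu})_!\underline{\Lambda}[\cdots],a_{Z,\nu}^!\scF\bigr)$
and then replaces $a_{Z,\nu}^!\scF$ by $p_{Z,\nu}^!\scF$; this replacement is exactly the datum of a $\Loop^+\cG$-equivariant structure on $\scF$. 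For an object of $\Perv_{(\Loop^+\cG)}(Z,\Lambda)$, which is only assumed to have constant cohomology sheaves along orbits, no such identification is available --- producing it is essentially equivalent to the proposition you are trying to prove, so the argument is circular. Equivalently: projectivity of $\scP_Z(\Lambda)$ in the full subcategory $\Perv([\Loop^+\cG\backslash Z]_\et,\Lambda)$ does not pass to the larger category, because one must kill $\Ext^1$ in $\Perv_{(\Loop^+\cG)}(Z,\Lambda)$ against \emph{arbitrary} orbit-constant objects, not just equivariant ones; and without representability on the larger category your final Yoneda/endomorphism-algebra comparison also collapses. (Your exactness and conservativity claims for $\sF^Z_{\cG,\nu}$ on the larger category are fine: every object there is an extension of equivariant ones, so the vanishing of Remark~\ref{rmk:exactness-CT-vanishing} propagates.)

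The paper's proof acknowledges this difficulty and takes a different route. It first observes that the functor is fully faithful with image stable under subquotients, so only essential surjectivity is at stake, and it suffices to write each $\scJ_{!*}(\lambda,\Lambda)_{|X}$ as a quotient of an equivariant object whose image is projective in the \emph{non-equivariant} category. This is done by induction on the number of orbits: for strata in the open complement $X'$ one takes $\pH^0(j_!\scP)$ of an inductively obtained projective (projectivity in the larger category follows by adjunction from exactness of $j^*$), while for the closed stratum one embeds $\Perv_{(\Loop^+\cG)}(X,\Lambda)$ into Vilonen's gluing category $\mathscr{C}$ built from $\pH^0 j_!$, $\pH^0 j_*$ and the weight functor $\sF^X_{\cG,\mu}$, and shows that the image of $\scP_X(\mu,\Lambda)$ represents the manifestly exact functor $(\scF,V,m,n)\mapsto V$; this last identification is what replaces your representability claim, and it requires a concrete computation with the standard filtration of $\scP_X(\mu,\Lambda)$ (plus a reduction-mod-$\varpi$ argument to handle $\Lambda=\bO$). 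If you want to salvage your plan, it is this projectivity-in-the-larger-category statement that you must prove by some such device rather than by quoting the equivariant adjunction argument.
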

	
	\begin{proof}
		For any $\Loop^+\cG$-stable locally closed subscheme $X \subset \Gr_\cG$ 
		such that $|X|$
		has finitely many $\Loop^+\cG$-orbits, we can consider as above the categories
		\[
		\Perv([\Loop^+\cG \backslash X]_{\et},\Lambda) \quad \text{and} \quad \Perv_{(\Loop^+\cG)}(X,\Lambda)
		\]
		of perverse sheaves which are $\Loop^+\cG$-equivariant and constant on each $\Loop^+\cG$-orbit, respectively, and the natural exact ``forgetful'' functor
		\[
		\Perv([\Loop^+\cG \backslash X]_{\et},\Lambda) \to \Perv_{(\Loop^+\cG)}(X,\Lambda).
		\]
		We will prove that this functor is an equivalence of categories for any $X$, which will imply the proposition. Note that the general theory of perverse sheaves implies that this functor is fully faithful, and that its essential image is stable under subquotients. What we have to prove is that it is also essentially surjective, and for that it suffices to show that any object in $\Perv_{(\Loop^+\cG)}(X,\Lambda)$ is a quotient of an object in the image of $\Perv([\Loop^+\cG \backslash X]_\et,\Lambda)$. In case $\Lambda = \bK$ or $\Lambda=\bk$, the category $\Perv_{(\Loop^+\cG)}(X,\Lambda)$ is a finite-length abelian category, whose simple objects are the perverse sheaves $\scJ_{!*}(\lambda,\Lambda)_{|X}$ where $\lambda \in \bbX_*(T)^+_I$ is such that $|\Gr_\cG^\lambda| \subset |X|$. 
		In case $\Lambda=\bO$, the same arguments as in~\cite[Lemma~2.1.4]{rsw} show that any object is an extension of objects which are quotients of objects of the form $\scJ_{!*}(\lambda,\Lambda)_{|X}$ where $\lambda$ satisfies the same conditions. Hence, in any case, to conclude it suffices to prove that for any such $\lambda$ the object $\scJ_{!*}(\lambda,\Lambda)_{|X}$ is a quotient of an object in $\Perv([\Loop^+\cG \backslash X]_\et,\Lambda)$ whose image in $\Perv_{(\Loop^+\cG)}(X,\Lambda)$ is projective.
		
		We will proceed by induction on the number of orbits contained in $|X|$.
		Of course, there is nothing to prove in case $X$ is empty. So, we consider a nonempty $X$ as above, and
		choose some $\mu \in \bbX_*(T)^+_I$ such that $\Gr_\cG^\mu$ is closed in $X$ and such that the claim is known for the complement $X'$ of $\Gr_\cG^\mu$ in $X$. The open immersion $X' \to X$ will be denoted $j$. If $\lambda \in \bbX_*(T)^+_I$ is such that $|\Gr_\cG^\lambda| \subset |X|$ and $\lambda \neq \mu$ (i.e.~$|\Gr_\cG^\lambda| \subset |X'|$), then as explained in~\S\ref{ss:representability} there exists a projective object $\scP$ in $\Perv([\Loop^+\cG \backslash X']_{\et},\Lambda)$ and a surjection
		\[
		\scP \twoheadrightarrow \scJ_{!*}(\lambda,\Lambda)_{|X'} = j^*(\scJ_{!*}(\lambda,\Lambda)_{|X}).
		\]
		By induction the image of $\scP$ in $\Perv_{(\Loop^+\cG)}(X',\Lambda)$ is projective, and adjunction provides a morphism
		\[
		\pH^0(j_! \scP) \to \scJ_{!*}(\lambda,\Lambda)_{|X}.
		\]
		This morphism can be written as a composition
		\[
		\pH^0(j_! \scP) \to j_{!*} \scP \to j_{!*} \bigl( \scJ_{!*}(\lambda,\Lambda)_{|X'} \bigr) = \scJ_{!*}(\lambda,\Lambda)_{|X}
		\]
		where the first morphism is surjective by definition, and the second one is also surjective because intermediate extension functors preserve surjections.
		Since the functor $j^!=j^* \colon \Perv_{(\Loop^+\cG)}(X,\Lambda) \to \Perv_{(\Loop^+\cG)}(X',\Lambda)$ is exact, by adjunction again the object $\pH^0(j_! \scP)$ is projective; the problem is therefore solved for these $\lambda$'s.
		
		It remains to treat the case $\lambda=\mu$. In this case we have
		\[
		\scJ_{!*}(\mu,\Lambda)_{|X} = \underline{\Lambda}_{\Gr_\cG^\mu}[\dim(\Gr_\cG^\mu)].
		\]
		Using the notation of~\S\ref{ss:representability},
		we claim that there exists a surjective morphism
		\[
		\scP_X(\mu,\Lambda) \to \scJ_{!*}(\mu,\Lambda)_{|X}.
		\]
		In fact, using Lemma~\ref{lem:representability-F-X} we see that
		\[
		\Hom(\scP_X(\mu,\Lambda), \scJ_{!*}(\mu,\Lambda)_{|X}) \cong \sF^X_{\cG,\mu}(\scJ_{!*}(\mu,\Lambda)_{|X}) = \Lambda.
		\]
		The element $1 \in \Lambda$ defines a morphism $\scP_X(\mu,\Lambda) \to \scJ_{!*}(\mu,\Lambda)_{|X}$, which is surjective in case $\Lambda$ is a field because its codomain is simple, and in case $\Lambda=\bO$ because its image under derived tensor product with $\bk$ is surjective (see Lemma~\ref{lem:change-scalars-P}).
		
		We claim that the functor
		\[
		\mathsf{H}^{\langle \mu, 2\rho \rangle}_{\rmT_\mu \cap X}(X,-) \colon \Perv_{(\Loop^+\cG)}(X,\Lambda) \to \modf_\Lambda
		\]
		is exact.
		In fact, as noted in~\S\ref{ss:representability},
		for any $\scF \in \Perv([\Loop^+\cG \backslash X]_{\et},\Lambda)$ we have
		\[
		\mathsf{H}^{k}_{\rmT_\mu \cap X}(X,\scF) = 0
		\]
		unless $k=\langle \mu, 2\rho \rangle$. Since any object in $\Perv_{(\Loop^+\cG)}(X,\Lambda)$ is an extension of objects in $\Perv([\Loop^+\cG \backslash X]_\et,\Lambda)$, the same property holds for $\scF \in \Perv_{(\Loop^+\cG)}(X,\Lambda)$, which implies the desired exactness. For simplicity, this functor will also be denoted $\sF_{\cG,\mu}^X$.
		
		We now consider the category $\mathscr{C}$ constructed as in~\cite[\S 1]{vilonen} out of the following data:
		\begin{itemize}
			\item
			the categories are $\mathscr{A}=\Perv_{(\Loop^+\cG)}(X',\Lambda)= \Perv([\Loop^+\cG \backslash X']_\et,\Lambda)$ and $\mathscr{B}=\modf_\Lambda$;
			\item
			the functors $\mathbf{F},\mathbf{G} \colon \mathscr{A} \to \mathscr{B}$ are 
			\[
			\mathbf{F}= \sF_{\cG,\mu}^X \circ \pH^0 \circ j_! \quad \text{and} \quad \mathbf{G}= \sF_{\cG,\mu}^X \circ \pH^0 \circ j_*; 
			\]
			\item
			the morphism $\mathbf{T} \colon \mathbf{F} \to \mathbf{G}$ is the morphism induced by the canonical morphism $j_! \to j_*$.
		\end{itemize}
		Explicitly,
		the objects in $\mathscr{C}$ are the quadruples $(\scF,V,m,n)$ where $\scF$ is an object in $\Perv_{(\Loop^+\cG)}(X',\Lambda) = \Perv([\Loop^+\cG \backslash X']_\et,\Lambda)$, $V$ is an object in $\modf_\Lambda$, and
		\[
		m \colon \sF_{\cG,\mu}^X( \pH^0 (j_!(\scF))) \to V, \quad n \colon V \to \sF_{\cG,\mu}^X( \pH^0 ( j_*(\scF)))
		\]
		are morphisms such that $nm$ is induced by our morphism of functors $\mathbf{T}$. The morphisms in this category are pairs consisting of a morphism in $\Perv_{(\Loop^+\cG)}(X',\Lambda)$ and a morphism in $\modf_\Lambda$, which make the obvious diagram commutative. For later use we note that the functor $\pH^0 \circ j_*$ takes values in the subcategory $\Perv([\Loop^+\cG \backslash X]_\et,\Lambda)$ of $\Perv_{(\Loop^+\cG)}( X,\Lambda)$; for $\scF \in \Perv_{(\Loop^+\cG)}(X',\Lambda)$ we therefore have
		\[
		\sF_{\cG,\mu}^X \circ \pH^0 \circ j_*(\scF) = \Hom(\scP_X(\mu,\Lambda), \pH^0 ( j_*(\scF))) = \Hom(j^* \scP_X(\mu,\Lambda), \scF);
		\]
		in other words, the functor $\sF_{\cG,\mu}^X \circ \pH^0 \circ j_*$ is represented by $j^* \scP_X(\mu,\Lambda)$.
		
		By~\cite[Proposition~1.1(a)]{vilonen}, $\mathscr{C}$ is an abelian category and, by~\cite[Proposition~1.2]{vilonen}, the functor
		\[
		E \colon \Perv_{(\Loop^+\cG)}(X,\Lambda) \to \mathscr{C}
		\]
		sending $\scG$ to the quadruple
		\[
		(j^* \scG, \sF_{\cG,\mu}^X(\scG),m,n)
		\]
		where $m,n$ are the obvious morphisms (provided by adjunction) is fully faithful and exact. (We apply this proposition with $\widetilde{\mathscr{B}}$ the full subcategory of perverse sheaves supported on $\Gr_\cG^\mu$, which is equivalent to the category of constant sheaves on $\Gr_\cG^\mu$, see Lemma~\ref{lem:constant-loc-sys}.) To prove that the image of $\scP_X(\mu,\Lambda)$ in $\Perv_{(\Loop^+\cG)}(X,\Lambda)$ is projective, it therefore suffices to prove that its image in $\mathscr{C}$ is projective. What we will show is that this image represents the functor
		\begin{equation}
			\label{eqn:proj-C-modf}
			\mathscr{C} \to \modf_\Lambda
		\end{equation}
		given by $(\scF,V,m,n) \to V$. This functor is clearly exact, which will imply projectivity and finish the proof.
		
		In more concrete terms we will prove that
		\[
		E(\scP_X(\mu,\Lambda)) = (j^*\scP_X(\mu,\Lambda), \sF_{\cG,\mu}^X( \pH^0 (j_! j^* \scP_X(\mu,\Lambda))) \oplus \Lambda, m, n)
		\]
		where $m$ is the obvious embedding and $n$ is the sum of the morphism induced by $\mathbf{T}$ with the morphism
		\[
		\Lambda \to \sF_{\cG,\mu}^X (\pH^0 ( j_* j^* \scP_X(\mu,\Lambda))) = \End(j^* \scP_X(\mu,\Lambda))
		\]
		sending $1 \in \Lambda$ to the identity morphism. From this description, and using the fact that $j^* \scP_X(\mu,\Lambda)$ represents $\sF_{\cG,\mu}^X \circ \pH^0 \circ j_*$, it is not difficult to check that $E(\scP_X(\mu,\Lambda))$ indeed represents the functor~\eqref{eqn:proj-C-modf}. (In case $\Lambda$ is a field, this object
		is the projective cover of the simple object $(0,\Lambda,0,0)$ described in~\cite[p.~667]{vilonen}, see also~\cite[p.~317]{mirollo-vilonen}.) By definition, the second component in $E(\scP_X(\mu,\Lambda))$ is
		\[
		\sF^X_{\cG,\mu}(\scP_X(\mu,\Lambda)) = \End(\scP_X(\mu,\Lambda)),
		\]
		where we use Lemma~\ref{lem:representability-F-X}.
		To prove the above claim it therefore suffices to prove that the natural morphism
		\begin{multline*}
			\sF_{\cG,\mu}^X( \pH^0 (j_! j^* \scP_X(\mu,\Lambda))) \oplus \Lambda \\
			= \Hom(\scP_X(\mu,\Lambda), \pH^0 (j_! j^* \scP_X(\mu,\Lambda))) \oplus \Lambda \to \End(\scP_X(\mu,\Lambda))
		\end{multline*}
		given by the sum of the morphism induced by the adjunction morphism
		\[
		\pH^0 (j_! j^* \scP_X(\mu,\Lambda)) \to \scP_X(\mu,\Lambda) 
		\]
		and the morphism $\Lambda \to \End(\scP_X(\mu,\Lambda))$ sending $1 \in \Lambda$ to the identity morphism is an isomorphism.
		
		First we consider the case when $\Lambda$ is a field, i.e.~$\Lambda=\bK$ or $\Lambda=\bk$.
		The projective object $\scP_{X}(\mu,\Lambda)$ in the highest weight category $\Perv([\Loop^+\cG \backslash X]_\et, \Lambda)$ (see Remark~\ref{rmk:hw-category}) admits a standard filtration; hence there exists an exact sequence
		\begin{equation}
			\label{eqn:standard-filtration-P}
			\scF_1 \hookrightarrow \scP_{X}(\mu,\Lambda) \twoheadrightarrow \scF_2
		\end{equation}
		where $\scF_1$ is an extension of objects $\scJ_!(\lambda, \Lambda)_{|X}$ where $\lambda \in \bbX_*(T)^+_I$ is such that $|\Gr_\cG^\lambda| \subset |X|$ and $\lambda \neq \mu$, and $\scF_2$ is a direct sum of copies of $\scJ_!(\mu, \Lambda)_{|X} = \underline{\Lambda}_{\Gr_\cG^\mu}[\dim(\Gr^\mu_\cG)]$. Here we necessarily have
		\[
		\scF_1 = \pH^0(j_! j^* \scP_{X}(\mu,\Lambda)) 
		\]
		and the multiplicity of $\scJ_!(\mu, \Lambda)_{|X}$ in $\scF_2$ is the multiplicity of $\scJ_!(\mu, \Lambda)$ in $\scP_Z(\mu,\Lambda)$ (where $Z \subset \Gr_\cG$ is any closed subscheme as in~\S\ref{ss:representability} in which $X$ is open), i.e.~the dimension of
		\[
		\Hom(\scP_Z(\mu,\Lambda), \scJ_*(\mu, \Lambda)) \cong \sF_{\cG,\mu}(\scJ_*(\mu, \Lambda)) = \Lambda.
		\]
		Applying the exact functor $\sF_{\cG,\mu}^X$ to the exact sequence~\eqref{eqn:standard-filtration-P} we obtain an exact sequence
		\[
		\sF_{\cG,\mu}^X(\pH^0(j_! j^* \scP_{X}(\mu,\Lambda))) \hookrightarrow \sF_{\cG,\mu}^X(\scP_{X}(\mu,\Lambda)) \twoheadrightarrow \sF_{\cG,\mu}^X(\scF_2),
		\]
		where the rightmost term is $1$-dimensional. 
		Since the identity morphism of the perverse sheaf $\scP_{X}(\mu,\Lambda)$ does not factor through $\pH^0(j_! j^* \scP_{X}(\mu,\Lambda))$, we therefore have
		\[
		\sF_{\cG,\mu}^X(\scP_{X}(\mu,\Lambda)) = \sF_{\cG,\mu}^X(\pH^0(j_! j^* \scP_{X}(\mu,\Lambda))) \oplus \Lambda
		\]
		where the $1$-dimensional factor corresponds to $\id \in \End(\scP_{X}(\mu,\Lambda))$. This completes the proof in this case.
		
		Finally we consider the case $\Lambda=\bO$. Using the fact that $\bk \lotimes_\bO \scP_{X}(\mu,\bO) = \scP_{X}(\mu,\bk)$ is perverse (see Lemma~\ref{lem:change-scalars-P}) and the vanishing property~\eqref{eqn:vanishing-local-cohom} one checks that
		\[
		\bk \lotimes_{\bO} \sF_{\cG,\mu}^X(\scP_{X}(\mu,\bO)) = \sF_{\cG,\mu}^X(\scP_{X}(\mu,\bk)),
		\]
		so that in particular $\sF_{\cG,\mu}^X(\scP_{X}(\mu,\bO))$ is free over $\bO$. Similarly, we claim that
		\[
		\bk \lotimes_{\bO} \pH^0(j_! j^* \scP_{X}(\mu,\bO))
		\]
		is perverse, and hence identifies with $\pH^0(j_! j^* \scP_{X}(\mu,\bk))$. In fact, this follows from the fact that $\scP_{Z}(\mu,\bO)$ is a direct summand in $\scP_Z(\bO)$ (where $Z \subset \Gr_\cG$ is as above in the proof), which admits a filtration with subquotients of the form $\scJ_!(\lambda,\bO)$ (see~\S\ref{ss:structure-projective}), and Lemma~\ref{lem:properties-J!*}\eqref{it:properties-J!*-2}. From this claim we deduce as above that we have
		\[
		\bk \lotimes_{\bO} \sF_{\cG,\mu}^X(\pH^0(j_! j^* \scP_{X}(\mu,\bO))) = \sF_{\cG,\mu}^X(\pH^0(j_! j^* \scP_{X}(\mu,\bk))),
		\]
		hence in particular that $\sF_{\cG,\mu}^X(\pH^0(j_! j^* \scP_{X}(\mu,\bO)))$ is free over $\bO$. We now have a morphism of finite free $\bO$-modules
		\[
		\sF_{\cG,\mu}^X( \pH^0 (j_! j^* \scP_X(\mu,\bO))) \oplus \bO \to \sF^X_{\cG,\mu}(\scP_X(\mu,\bO))
		\]
		whose image under the functor $\bk \otimes_{\bO} (-)$ is an isomorphism by the case $\Lambda=\bk$ treated above; it follows that this morphism itself is an isomorphism.
	\end{proof}
	
	Let us consider as in the proof of Proposition~\ref{prop:commutativity}
	the quotient map $G\to G_{\rm ad}$ to the adjoint group, the induced morphism $\cG\to\cG_{\rm ad}$ on special parahoric group schemes, and finally the associated map $\Hk_\cG\to \Hk_{\cG_{\rm ad}}$ on Hecke stacks. We also have an induced
	map $\pi_1(G)_I\to \pi_1(G_{\rm ad})_I$, denoted $\tau\mapsto \tau_{\rm ad}$, and for any $\tau$ a map between the associated connected components
	\begin{equation}
		\label{eq:passingtoadjoint}
		\Hk_{\cG}^\tau \to \Hk_{\cG_{\rm ad}}^{\tau_{\rm ad}}.
	\end{equation}
	
	\begin{cor}
		\label{cor:passing-to-adjoint}
		For each $\tau\in \pi_1(G)_I$ the map \eqref{eq:passingtoadjoint} induces an equivalence of categories
		\[
		\Perv(\Hk_\cG^\tau,\Lambda)\overset{\sim}{\longrightarrow}\Perv(\Hk_{\cG_{\rm ad}}^{\tau_{\rm ad}},\Lambda).
		\]
	\end{cor}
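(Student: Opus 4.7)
The plan is to reduce, via Proposition~\ref{prop:Perv-equiv-const}, to a statement about constructible perverse sheaves, and then to verify a purely geometric claim about the map $f^\tau \colon \Gr_\cG^\tau \to \Gr_{\cG_{\mathrm{ad}}}^{\tau_{\mathrm{ad}}}$.

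First, I would invoke the obvious connected-component variant of Proposition~\ref{prop:Perv-equiv-const} (its proof goes through verbatim for any $\Loop^+\cG$-stable union of Schubert orbits, in particular for a single connected component), to identify
\[
\Perv(\Hk_\cG^\tau, \Lambda) \simeq \Perv_{(\Loop^+\cG)}(\Gr_\cG^\tau, \Lambda)
\]
and similarly on the adjoint side. It therefore suffices to show that pullback along $f^\tau$ induces an equivalence between the corresponding categories $\Perv_{(\Loop^+\cG)}$ and $\Perv_{(\Loop^+\cG_{\mathrm{ad}})}$ of perverse sheaves that are constant along orbits.

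The central geometric claim is that $f^\tau$ is an integral universal homeomorphism which identifies the two Schubert stratifications in a precise manner: the reduction map $\bbX_*(T)^+_I \to \bbX_*(T_{\mathrm{ad}})^+_I$ restricts to a bijection between $\{\mu \in \bbX_*(T)^+_I \mid \mu \mapsto \tau \text{ in } \pi_1(G)_I\}$ and $\{\bar\mu \in \bbX_*(T_{\mathrm{ad}})^+_I \mid \bar\mu \mapsto \tau_{\mathrm{ad}}\}$, and for each such pair $(\mu,\bar\mu)$ the restriction $\Gr_\cG^\mu \to \Gr_{\cG_{\mathrm{ad}}}^{\bar\mu}$ is a universal homeomorphism. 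The bijection on strata follows from the fact that the kernel of $\bbX_*(T)_I \to \bbX_*(T_{\mathrm{ad}})_I$ is precisely the part that controls the change of connected component, so fixing $\tau$ and $\tau_{\mathrm{ad}}$ removes the ambiguity. For the universal-homeomorphism statement on each Schubert cell, one describes both sides as quotients of $\Loop^+\cG$ (resp.~$\Loop^+\cG_{\mathrm{ad}}$) by the stabilizers of $t^\mu$ and $t^{\bar\mu}$, and notes that these stabilizers differ only by a central subgroup coming from $\Loop^+\cZ$, where $\cZ$ is the integral model of the center of $G$; since $\cZ$ acts trivially on $\Gr_\cG$, this central factor contributes nothing more than an infinitesimal thickening to the fibers of $f^\tau$ restricted to the cell.

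Once this geometric claim is established, the corollary follows quickly: universal homeomorphisms induce equivalences on small étale topoi (a fact repeatedly used in the paper, e.g.~in the discussion of geometric unibranchness in Section~\ref{sec:aff-Grass}), so étale pullback and pushforward give mutually inverse equivalences on each stratum. These assemble via the usual recollement formalism for perverse sheaves on a stratified space into the desired equivalence on $\Perv_{(\Loop^+\cG)}$, compatibly with the fact that ``constant on orbits'' is preserved (since each $\Loop^+\cG_{\mathrm{ad}}$-orbit pulls back to a single $\Loop^+\cG$-orbit under our bijection). The main obstacle is the explicit verification that $f^\tau$ restricts to a universal homeomorphism on each Schubert cell; this is essentially a calculation with the group scheme $\Loop^+\cZ$ of multiplicative type, combined with a Kottwitz-type comparison of the $\F$-points of both cells.
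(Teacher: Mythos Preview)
Your approach uses the same two ingredients as the paper's proof—Proposition~\ref{prop:Perv-equiv-const} and the fact that $\Gr_\cG^\tau \to \Gr_{\cG_{\mathrm{ad}}}^{\tau_{\mathrm{ad}}}$ is a universal homeomorphism—but you take a longer and shakier route to the second one. The paper simply cites \cite[Proposition~3.5]{HainesRicharz:CM} for the statement that this map is a universal homeomorphism \emph{globally}; this immediately gives an equivalence on the full categories of \'etale sheaves, hence on perverse sheaves and on the subcategory $\Perv_{(\Loop^+\cG)}$, with no stratum-by-stratum argument and no recollement needed.

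Your cell-by-cell plan is not wrong in outline, but the stabilizer sketch is where the actual content hides, and what you wrote does not establish it. Saying the stabilizers ``differ only by a central subgroup coming from $\Loop^+\cZ$'' controls the kernel of $\Loop^+\cG \to \Loop^+\cG_{\mathrm{ad}}$, but to get a universal homeomorphism on orbits you also need the map to hit the target orbit (at least up to an infinitesimal thickening), i.e.\ you need control over the image of $\Loop^+\cG \to \Loop^+\cG_{\mathrm{ad}}$, not just its kernel. That is essentially the content of the Haines--Richarz result you are trying to avoid, and it is not obviously easier on a single cell than on the whole component. The cleanest fix is to replace your middle paragraph with the citation and drop the recollement step entirely.
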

	\begin{proof}
		The map $\Gr_\cG^\tau\to \Gr_{\cG_{\rm ad}}^{\tau_{\rm ad}}$ is a universal homeomorphism by \cite[Proposition 3.5]{HainesRicharz:CM}, so induces an equivalence on categories of sheaves.
		The corollary is now immediate from Proposition \ref{prop:Perv-equiv-const}. 
	\end{proof}

	\section{\'Etale sheaves on stacks}
	\label{app:sheaves}
		
	\subsection{Constructible derived categories of stacks}
	\label{ss:const}
	
	In this paper we consider constructible derived categories of \'etale sheaves on some stacks. In this subsection we make a few comments on the definition of such categories, and give appropriate references.
	
	\subsubsection{Finite coefficients}
	
	Let $S$ be a base scheme which is the spectrum of a field, either finite or separably closed. First, assume that $\Lambda$ is a finite field, of characteristic $\ell$ which is invertible on $S$. For any Artin stack $X$ of finite type over $S$, in~\cite{laszlo-olsson} the authors define the constructible derived category $\Dc(X,\Lambda)$ of \'etale sheaves of $\Lambda$-modules on $X$, together with their bounded versions $\Dc^+(X,\Lambda)$, $\Dc^-(X,\Lambda)$, $\Dbc(X,\Lambda)$. They also define bifunctors
	\begin{gather*}
		(-) \lotimes_\Lambda (-) \colon \Dc^-(X,\Lambda) \times \Dc^-(X,\Lambda) \to \Dc^-(X,\Lambda), \\
		R\sHom_\Lambda(-,-) \colon \Dc^-(X,\Lambda) \times \Dc^+(X,\Lambda) \to \Dc^+(X,\Lambda)
	\end{gather*}
	and, for an $S$-morphism $f \colon X \to Y$ between such stacks,
	push/pull functors
	\begin{gather*}
		f_* \colon \Dc^+(X,\Lambda) \to \Dc^+(Y,\Lambda), \quad f_! \colon \Dc^-(X,\Lambda) \to \Dc^-(Y,\Lambda), \\
		f^* \colon \Dc(Y,\Lambda) \to \Dc(X,\Lambda), \quad f^! \colon \Dc(Y,\Lambda) \to \Dc(X,\Lambda).
	\end{gather*}
	They prove that these functors satisfy the familiar properties usually gathered under the term ``six functors formalism,'' with the exception of the base change theorem, for which only weaker versions are obtained. 
	
	A more general formalism is constructed in~\cite{liu-zheng}, based on the theory of $\infty$-categories. (This does not require assumptions on $S$.) Passing to homotopy categories in their construction one recovers the categories of~\cite{laszlo-olsson} in the setting above, see~\cite[\S 6.5]{liu-zheng}. Their constructions also provide alternative constructions for the push/pull functors (assuming $f$ is quasi-separated for the functor $f_*$) and the other functors considered above, and they show that they do satisfy the base change theorem in its usual form.
	
	By construction the (bi)functors $f^*$, $f^!$, $R\sHom_\Lambda(-,-)$ and $(-) \lotimes_\Lambda (-)$ restrict to bounded constructible categories. For the functors $f_*$ and $f_!$ this is not always true (e.g.~for the natural morphism $\mathrm{pt} \to \mathrm{pt}/\Gm$), but it \emph{is} in case $f$ is obtained from an equivariant morphism of schemes by passing to quotient schemes (with respect to the action of an affine group scheme of finite type). This is the only case we consider in the body of the paper.
	
	In~\cite{laszlo-olsson-perv}, the authors explain the construction of the perverse t-structure (associated with the middle perversity) on the category $\Dbc(X,\Lambda)$. See also~\cite[\S 3]{liu-zheng-2} for an alternative construction.
	
	\subsubsection{Adic coefficients}
	\label{sss:coeff-O}
	
	We continue with the geometric setting above, and with our prime number $\ell$ invertible on $S$, but take now for $\Lambda$ the ring of integers in a finite extension of $\Q_\ell$. In~\cite{laszlo-olsson-2} the authors explain how to define for any Artin stack of finite type $X$ the constructible derived category $\Dc(X,\Lambda)$ of \'etale sheaves of $\Lambda$-modules on $X$, together with their bounded versions $\Dc^+(X,\Lambda)$, $\Dc^-(X,\Lambda)$, $\Dbc(X,\Lambda)$. They also define bifunctors $(-) \lotimes_\Lambda (-)$ and $R\sHom_\Lambda(-,-)$ and, for any $S$-morphism $f \colon X \to Y$ between such stacks, push/pull functors $f_*$, $f_!$, $f^*$ and $f^!$ as above. They prove that these functors satisfy all the expected properties, except for the base change theorem. 
	
	A more general formalism is constructed in~\cite{liu-zheng-2}, based on the theory of $\infty$-categories; see in particular~\cite[\S 2.1 and~\S 2.3]{liu-zheng-2}. Passing to homotopy categories in their construction one recovers the categories of~\cite{laszlo-olsson-2}, see~\cite[\S 2.5]{liu-zheng-2}. Their constructions also provide alternative constructions for the push/pull functors (assuming $f$ is quasi-separated for the functor $f_*$) and the bifunctors considered above, and they show that they do satisfy the base change theorem in its usual form.
	
	The same comments as above apply regarding restrictions to bounded derived categories. In the case $X$ is a scheme, the category $\Dbc(X,\Lambda)$ is equivalent to the version defined by Deligne, as explained in~\cite[\S 3.1]{laszlo-olsson-2}.
	
	In this setting also we have a perverse t-structure (for the middle perversity), as explained in~\cite{laszlo-olsson-perv} and in~\cite[\S 3]{liu-zheng-2}.
	
	\subsubsection{Characteristic-\texorpdfstring{$0$}{0} coefficients}
	\label{sss:coeff-K}
	
	We consider once again the geometric setting above and our prime number $\ell$, and take for $\Lambda$ a finite extension of $\Q_\ell$. For an Artin stack of finite type $X$, one can define the derived category $\Dc(X,\Lambda)$ following~\cite[Remark~3.1.7]{laszlo-olsson-2} (see also the discussion in~\cite[\S 6]{zheng}). Namely, denoting by $\Lambda_0$ the ring of integers in $\Lambda$, one defines $\Dc(X,\Lambda)$ as the Verdier quotient of $\Dc(X,\Lambda_0)$ by the triangulated subcategory consisting of complexes all of whose cohomology objects are annihilated by a power of a uniformizer. One can make similar definitions for the bounded versions.
	
	We will denote by
	\[
	\Lambda \otimes_{\Lambda_0} (-) \colon \Dbc(X,\Lambda_0) \to \Dbc(X,\Lambda)
	\]
	the quotient functor; it has the property that for any complexes $\scF, \scG \in \Dbc(X,\Lambda_0)$ we have a canonical identification
	\[
	\Lambda \otimes_{\Lambda_0} \Hom_{\Dbc(X,\Lambda_0)}(\scF,\scG) \simto \Hom_{\Dbc(X,\Lambda)}(\Lambda \otimes_{\Lambda_0} \scF, \Lambda \otimes_{\Lambda_0} \scG).
	\]
	
	The six operations for sheaves with coefficients in $\Lambda_0$ induce functors between the versions for $\Lambda$, which we will denote in a similar way, and these functors satisfy the same properties. Moreover, by construction the functor $\Lambda \otimes_{\Lambda_0} (-)$ commutes with all sheaf operations. Finally we have a perverse t-structure in this case too, such that the functor $\Lambda \otimes_{\Lambda_0} (-)$ is t-exact.
	
	\subsubsection{Change of scalars}
	\label{ss:const-scalars}
	
	An important role in our constructions is played by some ``change of scalars'' functors, which are defined as follows. First we consider an extension of finite fields $\Lambda \to \Lambda'$. For such data, and for any Artin stack $X$ as above, as explained in~\cite[\S 6.2]{liu-zheng}\footnote{It is not stated explicitly in~\cite{liu-zheng} that these functors send constructible complexes to constructible complexes. However, by definition it suffices to prove this property in the case of schemes, where it is clear. Similar comments apply for the variants considered below.} there exist natural adjoint functors
	\[
	\Lambda' \otimes_{\Lambda} (-) \colon \Dbc(X,\Lambda) \to \Dbc(X,\Lambda'), \quad \Dbc(X,\Lambda') \to \Dbc(X,\Lambda)
	\]
	which we will call extension and restriction of scalars, respectively. (The second functor will be given no notation.) By construction these functors commute with all sheaf-theoretic functors, and they are t-exact for the perverse t-structures.
	
	Next we consider an extension between finite extensions of $\Q_\ell$, and the induced morphism $\Lambda \to \Lambda'$ between the rings of integers. This morphism induces a morphism between the associated diagrams involved in the $\mathfrak{m}$-adic formalism of~\cite[\S 2]{liu-zheng-2}, and for any stack $X$ as above we have associated adjoint functors
	\[
	\Lambda' \otimes_{\Lambda} (-) \colon \Dbc(X,\Lambda) \to \Dbc(X,\Lambda'), \quad \Dbc(X,\Lambda') \to \Dbc(X,\Lambda).
	\]
	Once again, these functors commute with all sheaf-theoretic constructions in the obvious way, and they are t-exact for the perverse t-structures. (We also have similar functors for the fraction fields of $\lambda$ and $\Lambda'$, but they will not be considered in this paper.)
	
	Finally, we consider the case when $\Lambda$ is the ring of integers in a finite extension of $\Q_\ell$, and $\Lambda'$ is its residue field. The definition of $\Dbc(X,\Lambda)$ involves the ``ringed diagram'' $(\mathbb{N}, \Lambda_\bullet)$ in the notation of~\cite[\S 2.1]{liu-zheng-2}. There exists a natural morphism from this ringed diagram to the ringed diagram $(\{0\}, \Lambda')$, and associated with this diagram we have natural adjoint functors
	\[
	\Lambda' \lotimes_{\Lambda} (-) \colon \Dbc(X,\Lambda) \to \Dbc(X,\Lambda'), \quad \Dbc(X,\Lambda') \to \Dbc(X,\Lambda).
	\]
	Here again, these functors commute with all sheaf-theoretic constructions in the obvious way. The right-hand functor is t-exact for the perverse t-structures, but the functor $\Lambda' \lotimes_{\Lambda} (-)$ is only left t-exact.
	
	\subsubsection{Nearby cycles}
	\label{sss:const-nc}
	
	The last ingredient from (\'etale) sheaves theory we use in the paper is an appropriate version of the nearby cycles functor. (See Remark~\ref{rmk:nc} for the relation with more standard constructions.) First we assume that $\Lambda$ is a finite field. Let $R$ be an absolutely integrally closed valuation ring of rank $1$, set $S:=\Spec(R)$, and denote by $s$ and $\eta$ the special and generic points in $S$ respectively. (See~\S\ref{ss:definition-nc} for an example of this setting; in this example the scheme playing the role of $S$ is denoted $\overline{S}$, and the generic point is denoted $\overline{\eta}$.) Given an Artin stack of finite type $X$ over $S$, we set
	\[
	X_s := X \times_S s, \quad X_\eta := X \times_S \eta,
	\]
	and consider the obvious morphisms
	\[
	i \colon X_s \to X, \quad j \colon X_\eta \to X.
	\]
	We then set
	\[
	\Psi := i^* j_* \colon D(X_\eta, \Lambda) \to D(X_s, \Lambda),
	\]
	where the derived categories of sheaves we consider here are the categories denoted $\mathrm{D}_{\mathrm{cart}}(X_{\mathrm{lis-\acute{e}t}}, \Lambda)$ in~\cite{liu-zheng}. It follows from~\cite[Corollary~4.2]{hs} that this functor restricts to a functor
	\[
	\Dbc(X_\eta, \Lambda) \to \Dbc(X_s, \Lambda).
	\]
	By~\cite[Lemma~6.3]{hs} this functor is t-exact for the perverse t-structures.
	(The authors in~\cite{hs} work with schemes and not with stacks, but the properties we consider here can be tested on schemes.)
	
	The same constructions can be considered for the other choices of coefficients considered in~\S\S\ref{sss:coeff-O}--\ref{sss:coeff-K}.
	
	\subsection{A semismallness criterion}
	\label{ss:semismall}
	
	Let $k$ be a separably closed field, and let $X$, $Y$ be $k$-schemes of finite type. 
	We assume we are given some finite sets $\mathcal{A}$ and $\mathcal{B}$ any, for any $\alpha \in \mathcal{A}$, resp.~$\beta \in \mathcal{B}$, a smooth and irreducible locally closed subscheme $X_\alpha \subset X$, resp.~$Y_\beta \subset Y$, such that
	\[
	|X| = \bigsqcup_{\alpha \in \mathcal{A}} |X_\alpha|, \quad \text{resp.} \quad |Y| = \bigsqcup_{\beta \in \mathcal{B}} |Y_\beta|,
	\]
	and such that the closure of each stratum is a union of strata. We will denote by $j_\alpha \colon X_\alpha \to X$ and $j_\beta \colon Y_\beta \to Y$ the inclusions.  Recall that a morphism of schemes $f \colon X \to Y$ is said to be:
	\begin{itemize}
		\item
		\emph{stratified locally trivial} if for any $\alpha \in \mathcal{A}$ the subset $f(X_\alpha)$ is a union of strata in $Y$ and if moreover for any $\alpha \in \mathcal{A}$ and $\beta \in \mathcal{B}$ such that $Y_\beta \subset f(X_\alpha)$ the restriction of $f$ to a morphism $f^{-1}(Y_\beta) \cap X_\alpha \to Y_\beta$ is an \'etale locally trivial fibration;
		\item
		\emph{stratified semismall} if for any $\alpha \in \mathcal{A}$, $\beta \in \mathcal{B}$ and $y \in Y_\beta$ we have
		\begin{equation}
			\label{eqn:ineq-semismall}
			\dim(f^{-1}(y) \cap X_\alpha) \leq \textstyle\frac{1}{2} (\dim(X_\alpha)-\dim(Y_\beta)).
		\end{equation}
	\end{itemize}
	It is well known that if $f$ is proper, stratified locally trivial and stratified semi-small, then the push-forward functor $f_*$ (for any ring of coefficients as considered in the paper) sends perverse sheaves on $X$ that are constructible with respect to the given stratification of $X$ to perverse sheaves on $Y$ that are constructible with respect to the given stratification on $Y$.  (See, for instance,~\cite[Lemma~4.3]{mv} or~\cite[Proposition~1.6.1]{br}.)
	
	It turns out that this statement has a partial converse, which we will now explain. Let $\ell$ be a prime number that is invertible in $k$, and denote by $\bK$ a finite extension of $\Q_\ell$.
	Assume that the following conditions are satisfied:
	\begin{itemize}
		\item 
		for any $\alpha, \alpha' \in \mathcal{A}$
		and any $n \in \Z$ the sheaf
		\[
		\mathscr{H}^n((j_{\alpha'})^* (j_\alpha)_* \underline{\bK}_{X_\alpha})
		\]
		is constant. 
		\item
		for any $\alpha \in \mathcal{A}$ we have $\mathsf{H}^1(X_\alpha ; \bK)=0$.
	\end{itemize}
	Then as in~\cite[\S\S 2.2.9--2.2.18]{bbd} one can consider the full triangulated subcategory $\Db_{\mathcal{A}}(X,\bK)$ of $\Dbc(X,\bK)$
	whose objects are the complexes $\scF$ such that for any $n \in \Z$ and $\alpha \in \mathcal{A}$ the sheaf $\mathscr{H}^n((j_\alpha)^* \scF)$ is constant.
	The perverse t-structure on $\Dbc(X,\bK)$ restricts to a t-structure on $\Db_{\mathcal{A}}(X,\bK)$; in particular, for any $\alpha \in \mathcal{A}$ we have the intersection cohomology complex $\IC(X_\alpha, \underline{\bK}) \in \Db_{\mathcal{A}}(X,\bK)$ associated with the constant local system on $X_\alpha$, which is a simple perverse sheaf.
	Similarly, under the analgous assumptions on $Y$ and its stratification,
	one can consider the category $\Db_{\mathcal{B}}(Y,\bK)$ and its perverse t-structure.
	
	Let us assume that both of these conditions are satisfied, and consider a morphism $f \colon X \to Y$.
	
	\begin{lem}
		\label{lem:pushforward-semismall}
		Let $f \colon X \to Y$ be a morphism of $k$-schemes, and assume that the following conditions are satisfied:
		\begin{itemize}
			\item 
			for any $\alpha \in \mathcal{A}$, $\beta \in \mathcal{B}$ and $n \in \Z$ the sheaf
			\[
			\mathscr{H}^n((j_\beta)^* f_! \IC(X_\alpha, \underline{\bK}))
			\]
			is constant (in other words, for any $\alpha \in \mathcal{A}$ the complex $f_! \IC(X_\alpha, \underline{\bK})$ belongs to $\Db_{\mathcal{B}}(Y,\bK)$);
			\item
			for any $\alpha \in \mathcal{A}$ 
			the complex $f_! \IC(X_\alpha, \underline{\bK})$ is a perverse sheaf.
		\end{itemize}
		Then $f$ is stratified semismall.
	\end{lem}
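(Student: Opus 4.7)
The plan is to prove the inequality~\eqref{eqn:ineq-semismall} by induction on $\dim X_\alpha$. The base case is when $X_\alpha$ is already of minimal dimension among the strata contained in its closure, i.e., when $\overline{X_\alpha}=X_\alpha$; then $\IC(X_\alpha,\underline{\bK})=(j_\alpha)_!\underline{\bK}_{X_\alpha}[\dim X_\alpha]$, and the argument below simplifies.

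For the inductive step, fix $\alpha\in\mathcal{A}$, $\beta\in\mathcal{B}$, $y\in Y_\beta$, and set $d=\dim(f^{-1}(y)\cap X_\alpha)$. Using proper base change for $f_!$, the stalk at $y$ of $f_!\IC(X_\alpha,\underline{\bK})$ is $R\Gamma_{\mathrm{c}}(f^{-1}(y),\IC(X_\alpha,\underline{\bK})_{|f^{-1}(y)})$. Consider the distinguished triangle attached to the open/closed decomposition of $\overline{X_\alpha}\cap f^{-1}(y)$ by $X_\alpha\cap f^{-1}(y)$ and its boundary $Z:=(\overline{X_\alpha}\smallsetminus X_\alpha)\cap f^{-1}(y)$:
\[
R\Gamma_{\mathrm{c}}(X_\alpha\cap f^{-1}(y),\underline{\bK}[\dim X_\alpha]) \to R\Gamma_{\mathrm{c}}(f^{-1}(y),\IC_{|f^{-1}(y)}) \to R\Gamma_{\mathrm{c}}(Z,\IC_{|Z}) \xrightarrow{[1]}.
\]
The first term, being top compactly supported cohomology of a constant sheaf on a variety of dimension $d$ (shifted by $\dim X_\alpha$), is nonzero precisely in the highest degree $n_0:=2d-\dim X_\alpha$, where it has rank equal to the number of $d$-dimensional irreducible components.

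The key step is to bound the cohomological amplitude of the boundary term: I claim $R\Gamma_{\mathrm{c}}(Z,\IC_{|Z})$ has cohomology concentrated in degrees $\leq -\dim Y_\beta-1$. Indeed, $Z$ is stratified by the $X_{\alpha'}\cap f^{-1}(y)$ for $X_{\alpha'}\subsetneq \overline{X_\alpha}$ (so $\dim X_{\alpha'}<\dim X_\alpha$), and by strict support of $\IC$, the stalks of $\IC(X_\alpha,\underline{\bK})$ on $X_{\alpha'}$ are concentrated in degrees $\leq -\dim X_{\alpha'}-1$. Hence each stratum contributes to $R\Gamma_{\mathrm{c}}$ in degrees $\leq 2\dim(X_{\alpha'}\cap f^{-1}(y))-\dim X_{\alpha'}-1$, and by the inductive hypothesis applied to $\alpha'$ (which is available since $\dim X_{\alpha'}<\dim X_\alpha$ and $f_!\IC(X_{\alpha'},\underline{\bK})$ is assumed perverse), $\dim(X_{\alpha'}\cap f^{-1}(y))\leq \tfrac12(\dim X_{\alpha'}-\dim Y_\beta)$, yielding the upper bound $-\dim Y_\beta-1$ independently of $\alpha'$.

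To conclude, suppose for contradiction that $n_0=2d-\dim X_\alpha>-\dim Y_\beta$. Then both $n_0$ and $n_0-1$ are strictly greater than $-\dim Y_\beta-1$, so the boundary term has vanishing cohomology in degrees $n_0-1$ and $n_0$. The long exact sequence of the triangle then shows that the class in $\coH^{n_0}$ of the open part injects into $\coH^{n_0}$ of the stalk, so the stalk has nonzero cohomology in degree $n_0>-\dim Y_\beta$. This contradicts the perversity of $f_!\IC(X_\alpha,\underline{\bK})$, which forces the stalk at $y\in Y_\beta$ to be concentrated in degrees $\leq -\dim Y_\beta$. Hence $2d-\dim X_\alpha\leq -\dim Y_\beta$, establishing the desired inequality. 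The main technical point is keeping the open contribution from being cancelled by the boundary; this is ensured by the strict support condition of $\IC$ combined with the inductive bound, which provides an off-by-one gap in cohomological degrees that is just enough to preclude cancellation in the range that matters.
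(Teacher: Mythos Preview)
Your proof is correct and follows essentially the same approach as the paper: both argue by induction (you on $\dim X_\alpha$, the paper on the closure order of strata, which amounts to the same thing), use the open/closed triangle on $\overline{X_\alpha}\cap f^{-1}(y)$, bound the boundary contribution via the strict support condition on $\IC$ together with the inductive hypothesis, and derive a contradiction from the perversity of $f_!\IC(X_\alpha,\underline{\bK})$.
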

	
	\begin{proof}
		We need to prove the inequality~\eqref{eqn:ineq-semismall} for any $\alpha \in \mathcal{A}$, $\beta \in \mathcal{B}$ and $y \in Y_\beta$.
		We proceed by induction on $\alpha$, with respect to the order given by inclusions of closures of strata.
		Let $\partial X_\alpha := \overline{X_\alpha} \smallsetminus X_\alpha$, so that we can assume the claim is known for any $\alpha'$ such that $X_{\alpha'} \subset \partial X_\alpha$. Fix also $\beta \in \mathcal{B}$ and $y \in Y_\beta$. The assumption that $f_! \IC(X_\alpha, \underline{\bK})$ is perverse implies that the $\bK$-vector space
		\[
		\sH^j \bigl( (f_! \IC(X_\alpha, \underline{\bK}))_y  \bigr)  = \sH^j_c( f^{-1}(y), \IC(X_\alpha, \underline{\bK}){}_{| f^{-1}(y)})
		\]
		vanishes unless $j \leq -\dim(Y_\beta)$. On the other hand, $\IC(X_\alpha, \underline{\bK})$ is supported on $\overline{X_\alpha}$, and its restriction to $X_\alpha$ is $\underline{\bK}_{X_\alpha}[\dim(X_\alpha)]$. We deduce a long exact sequence
		\begin{multline*}
			\cdots 
			\to \sH^{j-1}_c( f^{-1}(y) \cap \partial X_\alpha, \IC(X_\alpha, \underline{\bK})) 
			\to \sH^{j+\dim(X_\alpha)}_c( f^{-1}(y) \cap X_\alpha ; \bK) \\
			\to \sH^j_c( f^{-1}(y), \IC(X_\alpha, \underline{\bK}))
			\to \sH^{j}_c( f^{-1}(y) \cap \partial X_\alpha, \IC(X_\alpha, \underline{\bK})) \to \cdots
		\end{multline*}
		For any $\alpha' \in \mathcal{A}$ such that $X_{\alpha'} \subset \partial X_\alpha$,
		the complex $(j_{\alpha'})^* \IC(X_\alpha, \underline{\bK})$ is concentrated in degrees $\leq -\dim(X_{\alpha'})-1$, and by induction we have
		\[
		\dim(f^{-1}(y) \cap X_{\alpha'}) \leq \textstyle\frac{1}{2} (\dim(X_{\alpha'})-\dim(Y_\beta)).
		\]
		It follows that the space $\sH^{j}_c( f^{-1}(y) \cap \partial X_\alpha, \IC(X_\alpha, \underline{\bK}))$ vanishes unless
		\[
		j \leq \dim(X_{\alpha'})-\dim(Y_\beta) -\dim(X_{\alpha'})-1,
		\]
		i.e.~unless
		\[
		j \leq -\dim(Y_\beta) -1.
		\]
		(Here we use the fact that cohomology with compact supports of a scheme of finite type is concentrated in degrees at most twice the dimension of the scheme.)
		
		Now, assume for a contradiction that
		\[
		d:= \dim(f^{-1}(y) \cap X_\alpha) > \textstyle\frac{1}{2} (\dim(X_{\alpha})-\dim(Y_\beta)).
		\]
		Then we have an injection
		\[
		\sH^{2d}_c( f^{-1}(y) \cap X_\alpha ; \bK) \hookrightarrow
		\sH^{2d-\dim(X_\alpha)}_c( f^{-1}(y), \IC(X_\alpha, \underline{\bK}))
		\]
		since $\sH^{2d-\dim(X_\alpha)-1}_c( f^{-1}(y) \cap \partial X_\alpha, \IC(X_\alpha, \underline{\bK}))=0$. The left-hand side is nonzero, and hence so is $\sH^{2d-\dim(X_\alpha)}_c( f^{-1}(y), \IC(X_\alpha, \underline{\bK}))$, which implies that
		\[
		2d-\dim(X_\alpha) \leq -\dim(Y_\beta),
		\]
		a contradiction.
	\end{proof}
	
	\begin{rmk}
		\label{rmk:semismall}
		The principle expressed in Lemma~\ref{lem:pushforward-semismall} is mentioned in~\cite[Remark~4.5]{mv}, and was explained to the fourth named author by I.~Mirkovi{\'c} a long time ago. It justifies~\cite[Remark~1.6.5(2)]{br}, which explains that the stratified semismallness claim needed in the ``classical'' geometric Satake equivalence can be deduced from a claim about convolution of intersection cohomology complexes over $\Q_\ell$, which itself can be deduced from the results of~\cite{lusztig}.
	\end{rmk}
	

	\bibliography{biblio.bib}
	\bibliographystyle{alpha}

\end{document}